\documentclass[11pt]{amsart}

\usepackage{stmaryrd}
\usepackage{amscd,amsxtra,amssymb,mathrsfs, bbm}

\usepackage{multirow}
\usepackage[pdftex]{hyperref}
\usepackage{longtable}
\usepackage{tikz}
\usepackage[utf8]{inputenc}
\usepackage{float}
\usepackage{color}
\usetikzlibrary{calc, positioning, shapes, fit, matrix, decorations}
\usetikzlibrary{decorations.shapes}

\usepackage[cmtip]{xypic}
\usepackage[all]{xy}
\xyoption{arc}

\setlength{\textheight}{22cm}
\setlength{\textwidth}{15cm}
\setlength{\oddsidemargin}{0.5cm}
\setlength{\evensidemargin}{0.5cm}

\newtheorem{theorem}{Theorem}[section]
\newtheorem{corollary}[theorem]{Corollary}
\newtheorem{lemma}[theorem]{Lemma}
\newtheorem{proposition}[theorem]{Proposition}

\theoremstyle{definition}
\newtheorem{definition}[theorem]{Definition}
\newtheorem{remark}[theorem]{Remark}
\newtheorem{example}[theorem]{Example}

\numberwithin{equation}{section}

\DeclareMathAlphabet{\mathpzc}{OT1}{pzc}{m}{it}

\DeclareMathOperator{\TF}{\mathsf{TF}}

\DeclareMathOperator{\rk}{rk}

\renewcommand{\ker}{\mathsf{Ker}}

\renewcommand{\dim}{\mathsf{dim}}
\DeclareMathOperator{\tor}{\mathsf{tor}}

\DeclareMathOperator{\Coh}{\mathsf{Coh}}

\DeclareMathOperator{\Tri}{\mathsf{Tri}}
\DeclareMathOperator{\Pic}{\mathsf{Pic}}

\DeclareMathOperator{\CM}{\mathsf{CM}}

\DeclareMathOperator{\Pro}{\mathsf{Pro}}

\DeclareMathOperator{\depth}{\mathsf{depth}}

\DeclareMathOperator{\Hom}{\mathsf{Hom}}

\DeclareMathOperator{\Aut}{\mathsf{Aut}}

\DeclareMathOperator{\End}{\mathsf{End}}
\DeclareMathOperator{\Mat}{\mathsf{Mat}}
\DeclareMathOperator{\Spec}{\mathsf{Spec}}

\input xy
\xyoption{all}

\def\bu{{\scriptscriptsize\bullet}}
\def\astar{{\scriptsize\ast}}

\newcommand{\kk}{\mathbbm{k}}

\newcommand{\llbrace}{(\!(}
\newcommand{\rrbrace}{)\!)}

\newcommand{\FF}{\mathbb{F}}

\newcommand{\LL}{\mathbb{L}}

\def\bu{{\scriptscriptstyle\bullet}}

\newcommand{\kB}{\mathcal{B}}

\newcommand{\kF}{\mathcal{F}}

\newcommand{\kO}{\mathcal{O}}
\newcommand{\kL}{\mathcal{L}}
\newcommand{\kP}{\mathcal{P}}

\newcommand{\lar}{\longrightarrow}

\newcommand{\gB}{\mathfrak{B}}

\newcommand{\gD}{\mathfrak{D}}
\newcommand{\gE}{\mathfrak{E}}

\newcommand{\gP}{\mathfrak{S}}
\newcommand{\gO}{\mathfrak{W}}

\newcommand{\rQ}{Q}

\newcommand{\RR}{\mathbb R}
\newcommand{\CC}{\mathbb C}

\newcommand{\idm}{\mathfrak{m}}
\newcommand{\idn}{\mathfrak{n}}
\newcommand{\idp}{\mathfrak{p}}
\newcommand{\idq}{\mathfrak{q}}

\renewcommand{\AA}{\mathbb{A}}

\newcommand{\ZZ}{\mathbb{Z}}

\newcommand{\DD}{\mathbb{D}}


			\def\bu{\bullet}

\def\Mat{\mathop\mathrm{Mat}}

\def\8{\infty}			
	\def\+{\oplus}		
\def\*{\otimes}




\def\kB{\mathcal B} \def\kO{\mathcal O}
 \def\kP{\mathcal P}

\def\kF{\mathcal F} 
 
 \def\kU{\mathcal U}

\def\kL{\mathcal L}

	\def\NN{\mathbb N}

\def\bu{{\scriptscriptstyle\bullet}}

\def\DMO{\DeclareMathOperator}
\DMO{\ob}{Ob}            \DMO{\mor}{Mor}
\DMO{\Ker}{Ker}
\DMO{\id}{Id}


\title[Cohen--Macaulay modules and Calogero--Moser systems]{Cohen--Macaulay modules over the algebra of planar  quasi--invariants  and Calogero--Moser systems}

\author{Igor Burban}
\address{
Universit\"at Paderborn,
Institut f\"ur Mathematik,
Warburger Strasse 100,
33098 Paderborn,
Germany
}
\email{burban@math.uni-paderborn.de}

\author{Alexander Zheglov}
\address{
Moscow State University,
Faculty of Mechanics and Mathematics, Leninskie gory, GSP-1, Moscow, 119899, Russian Federation
}
\email{azheglov@math.msu.su}

\begin{document}

\begin{abstract} In this paper, we study  properties of the  algebras of planar  quasi--invariants. These algebras  are  Cohen--Macaulay and Gorenstein in codimension one.
Using the technique of matrix problems, we classify all Cohen--Macaulay modules of rank one over them  and determine  their Picard groups.
In terms of this classification,  we describe the spectral modules of the planar rational Calogero--Moser systems. Finally, we elaborate  the theory of the algebraic inverse scattering method, providing explicit computations of certain "isospectral deformations" of the planar rational Calogero--Moser system in the case of the split rational potential.
\end{abstract}

\maketitle

\section{Introduction}
\noindent A prototype for our work is given by the following setting.
For any $m \in \NN$, consider the so--called rational Lam\'e operator
$
L_m := \dfrac{d}{dx^2} - \dfrac{m(m+1)}{x^2}.
$
It is well--known (see e.g. \cite{Chalykh,EtingofStrickland}) that there exists a differential  operator $P_m$ of order $2m+1$ with meromorphic coefficients, such that
$
\bigl[P_m, L_m\bigr] = 0$ and $P_m^2 = L_m^{2m+1}$.

\smallskip
\noindent
Let $A_m := \CC[P_m, L_m] \cong \CC[t^2, t^{2m+1}]$. Then the algebra $A_m$ is Gorenstein and the following results are true (see for instance Theorem \ref{T:Cusp}):
\begin{itemize}
\item There exists an isomorphism of algebraic groups:
\begin{equation}\label{E:PicardCusp}
\Pic(A_m) \cong K_m := \bigl(\CC[\sigma]/(\sigma^m), \circ\bigr),
\end{equation} where $\Pic(A_m)$ is the Picard group of $A_m$ and
$\gamma_1 \circ \gamma_2 := (\gamma_1+ \gamma_2)\cdot (1 + \sigma \gamma_1 \gamma_2)^{-1}$ for any $\gamma_1, \gamma_2 \in K_m$.
\item Let $Q$ be a torsion free $A_m$--module of rank one. Then either $Q$ is projective or there exists $m' < m$ and a projective module $A_{m'}$--module $Q'$ of rank one  such that $Q$ is isomorphic to  $Q'$ viewed as a module over  $A_m  \subset A_{m'}$.
\end{itemize}
One goal of this work is  to generalize the described picture on the two--dimensional case. As an input datum, we take
 any pair  $(\Pi, \underline{\mu})$ (called \emph{weighted line arrangement}), where
\begin{itemize}
\item
$\Pi \subset \CC$ is a \emph{finite} subset satisfying the condition: $\alpha-\beta \notin \pi \ZZ$ for any $\alpha \ne \beta \in \Pi$.
\item $\Pi \stackrel{\underline{\mu}}\lar \NN_0, \; \alpha \mapsto \mu_\alpha:= \underline{\mu}(\alpha)$ is any \emph{multiplicity function}.
\end{itemize}
For any $\alpha \in \Pi$, let us  denote:
$
l_\alpha(z_1, z_2) :=  -\sin(\alpha) z_1  + \cos(\alpha) z_2 \in R:= \CC[z_1, z_2].
$
The main object of our paper is the following $\CC$--algebra  of  $(\Pi, \underline{\mu})$--quasi--invariant polynomials:
\begin{equation}\label{E:quasiinvariants}
A = A\bigl(\Pi, \underline{\mu}\bigr):=
\Bigl\{
f \in R \; \big| \; l_\alpha^{2\mu_\alpha +1} \;\mbox{divides}\;  \bigl(f  - s_\alpha(f)\bigr) \; \mbox{for all}\; \alpha \in \Pi
\Bigr\},
\end{equation}
where $R \stackrel{s_\alpha}\lar R$ is  the involution associated with the reflection
$\CC^2 \lar \CC^2$, which  keeps  the line  $l_\alpha = 0$ invariant.
It is not difficult to show  that $A$ (called \emph{algebra of planar quasi--invariants}) is a finitely generated  $\CC$--algebra  of Krull dimension two.

\smallskip
\noindent
A  motivation to study ring-- and module--theoretic properties of the algebra $A$ comes from the theory
of rational Calogero--Moser systems. Assume that $\bigl(\Pi, \underline{\mu}\bigr) = \bigl(\Lambda_n, m\bigr)$ is a \emph{Coxeter} weighted line arrangement, i.e.
$$\Pi = \Lambda_n:= \bigl\{0, \frac{1}{n} \pi, \dots, \frac{n-1}{n} \pi \bigr\} \subset \RR\; \;  \mbox{\rm for some}\; \;  n \in \NN,
$$ whereas  $\mu_\alpha = m$ for all $\alpha \in \Pi$ and some $m \in \NN$.
For any  vector  $\vec\xi \in \CC^2$ such that $l_\alpha(\vec\xi) \ne 0$ for each $\alpha \in \Lambda_n$,  consider the following \emph{rational Calogero--Moser} operator
\begin{equation}\label{E:CMHamiltonian}
H = H\bigl(\bigl(\Lambda_n, m\bigr); \vec\xi \bigr) :=
\left(\frac{\partial^2}{\partial x_1^2} + \frac{\partial^2}{\partial x_2^2}\right)
- \sum\limits_{\alpha \in \Pi} \dfrac{m(m+1)}{l_\alpha^2(\vec{x} - \vec\xi)}
\end{equation}
According to  a result of Chalykh and Veselov \cite{ChalykhVeselov90} (who proved a much more general statement for the real Coxeter groups of arbitrary rank),
there exists an injective algebra homomorphism (defining a so--called algebraically integrable quantum system)
\begin{equation}\label{E:IntegrabilityCM}
A\bigl(\Lambda_n, m\bigr) \stackrel{\Xi(\vec\xi)}\lar \gD := \CC\llbracket x_1, x_2\rrbracket[\partial_1, \partial_2].
\end{equation}
mapping the polynomial $z_1^2 + z_2^2 \in A$ to the operator $H$. In other words, $H$ can be included into a large family of pairwise commuting differential operators (\emph{quantum integrability}).

It was proven by Feigin and Veselov \cite{FeiginVeselov}, that in the Coxeter case, the algebra  $A$ is Gorenstein
(hence Cohen--Macaulay). This result was vastly generalized by Etingof and Ginzburg \cite{EtingofGinzburg} on the case of arbitrary real Coxeter groups
and  multiplicity  functions, invariant under the action of the Weyl group. In \cite{FeiginVeselov2, FeiginJohnston}, the authors  proved that the algebra $A\bigl(\Pi, \underline{\mu}\bigr)$ is Gorenstein in several
non--Coxeter cases.

Let $\gB := \mathrm{Im}\bigl(\Xi(\vec\xi)\bigr) \subset \gD$.  The $\gB$--module
$F:= \gD/(x_1, x_2)\gD \cong \CC[\partial_1, \partial_2]$ (called \emph{spectral module} of $\gB$) is the key object relating algebraic and  analytic tools in the study of  Calogero--Moser systems.
Combining  \cite[Corollary 3.1]{Chalykh} with \cite[Theorem 3.1]{KurkeZheglov}, one can conclude that
$F$ is a \emph{Cohen--Macaulay} $\gB$--module   \emph{of rank one}.  The  analytic meaning  of   $F$  can be illustrated by the following fact. For any algebra homomorphism $\gB \stackrel{\chi}\lar \CC$, consider the vector space
\begin{equation}\label{E:solutionspace}
\mathsf{Sol}\bigl(\gB, \chi\bigr):= \Bigl\{f\in \CC\llbracket x_1, x_2\rrbracket \big| P\diamond f = \chi(P) f \; \mbox{for all}\; P \in \gB\Bigr\},
\end{equation}
where $\diamond$ denotes the usual action of $\gD$ on $\CC\llbracket x_1, x_2\rrbracket$.
Then there exists  a canonical isomorphism of vector spaces (see Theorem \ref{T:spectralmodule}) $$F\big|_{\chi} := F \otimes_{\gB} \bigl(\gB/\mathrm{Ker}(\chi)\bigr) \cong \mathsf{Sol}\bigl(\gB, \chi\bigr)^\ast,$$
explaining why the $A$--module $F$ is called spectral. The statement that $F$ has rank one can be rephrased  by saying that the vector space $\mathsf{Sol}\bigl(\gB, \chi\bigr)$ is one--dimensional for a ``generic'' character $\chi$, i.e.~the Calogero--Moser system (\ref{E:IntegrabilityCM}) is \emph{algebraically integrable}; see \cite{ChalykhVeselov90, ChalykhStyrkasVeselov, Chalykh}.

In their recent paper  \cite[Section 8]{FeiginJohnston}, Feigin and Johnston raised a question about an \emph{explicit} description of the spectral module $F$ for the two--dimensional rational Calogero--Moser systems. This leads to the problem of classification of  \emph{all} Cohen--Macaulay modules of rank one over an arbitrary  algebra
of planar quasi--invariants $A = A\bigl(\Pi, \underline{\mu}\bigr)$. Another natural problem is to describe the Picard group $\Pic(A)$ of $A$. Obviously, the finitely generated projective $A$--modules form a proper subcategory of the category of Cohen--Macaualy $A$--modules. It turned out that it is in fact easier first to describe
all  Cohen--Macaulay $A$--modules of rank one and then specify those of them which are locally free.

\smallskip
\noindent
Now, let us give a short overview of  main  results, which were  obtained in our work.

\medskip
\noindent
\textsl{1.~Ring--theoretic properties of the algebra of planar quasi--invariants}. For any weighted line arrangement $\bigl(\Pi, \underline{\mu}\bigr)$,  the algebra $A= A\bigl(\Pi, \underline{\mu}\bigr)$ is a finitely generated,   Cohen--Macaulay  and  of Krull dimension two. The main new feature  about the algebra $A$ is the result asserting that  it  is Gorenstein in codimension one; see Theorem \ref{T:CMandGorenstein}.

\smallskip
\noindent
\textsl{2.~The divisor class group of $A$}.~Let $\CM_1^{\mathrm{lf}}(A)$ be the abelian group of Cohen--Macaulay $A$--modules of rank one, which are \emph{locally free in codimension one} \cite{SurvOnCM, BDNonIsol} (it is an analogue of the divisor class group of a normal domain \cite[Section 7.3]{BrunsHerzog}). Then there exists  an isomorphism of abelian groups
    $$
    \CM_1^{\mathrm{lf}}(A) \lar K\bigl(\Pi, \underline{\mu}\bigr):= \prod\limits_{\alpha \in \Pi} \bigl(\CC(\rho)[\sigma]/(\sigma^{\mu_\alpha}), \circ\bigr),
    $$
    where the group law $\circ$ on $\CC(\rho)[\sigma]/(\sigma^{\mu_\alpha})$ is the same as in the case of the one--dimensional cuspidal curves (\ref{E:PicardCusp}); see
    Theorem \ref{T:listCMlf}.

\smallskip
\noindent
\textsl{3.~Cohen--Macaulay $A$--modules of rank one}. Let $M \in \CM_1(A)$ be a Cohen--Macaulay $A$--module of rank one, which is not locally free in the codimension one.  Then there exists a multiplicity function
$\Pi \stackrel{\underline{\mu}'}\lar \NN_0$ satisfying  $\mu_\alpha \ge  \mu'_\alpha$ for any $\alpha \in \Pi$  and $M' \in
\CM_1^{\mathrm{lf}}(A')$ for $A'= A(\Pi, \underline{\mu}')$ such that $M$ is isomorphic to  $M'$, where $M'$ is viewed as a module over $A \subset A'$; see Corollary \ref{C:remarkClassifCM}.

\smallskip
\noindent
\textsl{4.~Description of a canonical  module of $A$}. In the Coxeter case $\Pi = \bigl\{0, \frac{1}{n} \pi, \dots, \frac{n-1}{n} \pi \bigr\}$, we get an explicit description  of a  canonical  module
of $A$ for an arbitrary multiplicity function $\underline{\mu}$; see Theorem \ref{T:canonicalmodule} and Lemma \ref{L:mixedweight}.

\smallskip
\noindent
\textsl{5.~The Picard group of $A$}.~For any $\alpha \in \CC$ and $m \in \NN$,
we construct  a certain homomorphism of abelian groups
$
\bigl(\CC\llbracket z_1, z_2\rrbracket, +\bigr) \xrightarrow{\Upsilon_{(\alpha, m)}} \bigl(\CC\llbracket \rho\rrbracket[\sigma]/(\sigma^m), \circ\bigr)
$
(see Lemma \ref{L:prepCompPicard}),
defining    a  homomorphism
$$
\bigl(\CC\llbracket z_1, z_2\rrbracket, +\bigr)
 \xrightarrow{\Upsilon} \prod\limits_{\alpha \in \Pi} \bigl(\CC\llbracket \rho\rrbracket[\sigma]/(\sigma^{\mu_\alpha}),  \circ\bigr), \quad h \mapsto \bigl(\Upsilon_{(\alpha, \mu_\alpha)}(h)\bigr)_{\alpha \in \Pi}.
$$
 In these terms we have: $\Pic(A) \cong \mathrm{Im}(\Upsilon) \cap K^\diamond\bigl(\Pi, \underline{\mu}\bigr)$, where $K^\diamond\bigl(\Pi, \underline{\mu}\bigr) := \prod\limits_{\alpha \in \Pi} \CC[\rho][\sigma]/(\sigma^{\mu_\alpha})$; see Theorem \ref{T:descriptionPicard}.

\smallskip
\noindent
More explicitly, let
$
\Gamma\bigl(\Pi, \underline{\mu}\bigr) := \bigl\{h \in \CC\llbracket z_1, z_2\rrbracket  \big|  \Upsilon(h) \in K^\diamond\bigl(\Pi, \underline{\mu}\bigr)\bigr\}$. Then for any such $h \in \Gamma\bigl(\Pi, \underline{\mu}\bigr)$, we have a projective $A$--module $P(h)$ of rank one, defined as
$$
P(h) := \bigl\{f \in  R \,\big|\, \exp(h) f \;\mbox{is} \; (\Pi, \underline{\mu})-\mbox{quasi--invariant}\bigr\}.
$$
Conversely,  for any $P \in \Pic(A)$, there exists $h \in \Gamma\bigl(\Pi, \underline{\mu}\bigr)$ such that $P \cong P(h)$. Moreover,
\begin{itemize}
\item $P(h_1) \cong P(h_2)$ if and only if $\Upsilon(h_1) = \Upsilon(h_2)$.
\item The multiplication map $P(h_1) \otimes_A P(h_2) \lar P(h_1 + h_2), \; f_1 \otimes f_2 \mapsto f_1 f_2$ is an isomorphism of $A$--modules.
\end{itemize}

\smallskip
\noindent
\textsl{6.~The spectral module of a Calogero--Moser system of rank two}.~In the cases when there exists  an embedding $A \stackrel{\Xi(\vec\xi)}\lar \gD$ (these are the so--called \emph{Baker--Akhieser} weighted line arrangements $(\Pi, \underline{\mu})$,  e.g.~a Coxeter one; see Definition \ref{D:BakerAkhieserWL} and Example \ref{E:CoxeterLA}), we  have an $(\gB-A)$--equivariant isomorphism
$F \cong P(-u)$ (where $u(z_1, z_2)  = \xi_1 z_1 + \xi_2 z_2$), i.e.~an isomorphism of vector spaces $F \stackrel{\cong}\lar P(-u)$ such that  the  diagram
$$
\xymatrix{
F \ar[rr]^-\cong \ar[d]_{-\,\circ \Xi(\vec\xi)(f)} & & P(-u) \ar[d]^-{-\,\cdot f} \\
F \ar[rr]^-\cong  & & P(-u)
}
$$
is commutative for any $f \in A$, where $\circ$ (respectively $\cdot$) denotes the action of $\gB$ (respectively $A$) on $F$ (respectively $P(-u)$).
This answers  a  question of Feigin and Johnston \cite{FeiginJohnston} and proves that
the spectral module $F$ of the rational Calogero--Moser system $\gB$ is projective.

The key role  in the proof of this result is plaid by the following formula for  the Hilbert function $\ZZ \xrightarrow{H_{P(-u)}} \NN_0$ of the filtered module $P(-u) \subset R$:
$$
H_{P(-u)}(k):= \dim_\CC\Bigl\{w \in P(-u) \; \big|\; \deg(w) \le k\Bigr\} = \dfrac{(k-\mu +1)(k-\mu +2)}{2}\quad \mbox{\rm for}\; k \in \ZZ,
$$
where $\mu:= \sum\limits_{\alpha \in \Pi}\mu_\alpha$; see  Theorem \ref{T:DescriptSpectModule}.

\smallskip
\noindent
\textsl{7.~Elements of the higher--dimensional Sato theory}.
 One motivation of our work was  to find an appropriate generalization of   Wilson's description  \cite{WilsonCrelle} of  \emph{bispectral} commutative subalgebras of rank one
of \emph{ordinary} differential operators on the case of \emph{partial} differential operators (the main point is that bispectrality can be characterized by the property of the spectral curve being rational with bijective normalization; see  \cite[Theorem 1]{WilsonCrelle}).
Following the main idea of  the works  \cite{Zheglov, KOZ, KurkeZheglov},
it seems to be natural to replace the algebra $\gD$ by a bigger algebra.
Namely, we introduce an  algebra of formal power series of differential operators having  the following special form:
$$
\gP := \left\{\sum\limits_{k_1, k_2 \ge 0} a_{k_1, k_2}(x_1, x_2) \partial_1^{k_1} \partial_2^{k_2} \, \Big| \, \exists d \in \ZZ: \, k_1 + k_2 - \upsilon\bigl(a_{k_1, k_2}(x_1, x_2)\bigr) \le d \;\, \forall  k_1, k_2 \ge 0\right\},
$$
where $\upsilon\bigl(a(x_1, x_2)\bigr)$ is the valuation of the power series $a(x_1, x_2) \in \CC\llbracket x_1, x_2\rrbracket$; see Definition
 \ref{D:AlgebraPi}. It turns out that the algebra
$\gP$  acts on $\CC\llbracket x_1, x_2\rrbracket$ (this action extends the natural action of the algebra $\gD$ on $\CC\llbracket x_1, x_2\rrbracket$) and contains some natural operators (e.g.~operators of a change of variables, delta--functions and integration operators) which do not belong to $\gD$; see Example \ref{Ex:DeltaFunctionSeries} and Example \ref{Ex:Integration}.

\smallskip
\noindent
Inspired by the theory of Sato Grassmannian \cite{Sato, Mulase, WilsonCrelle}, we introduce  the following sets:
$$
\mathsf{Gr}_\mu(R) := \Bigl\{W \subseteq R \, \Big| \, \dim_\CC\Bigl(w \in W \; \big|\; \deg(w) \le k + \mu\Bigr)  = \binom{k+2}{2} \; \mbox{\rm for any} \;  k \in \NN_0 \Bigr\}
$$
where  $\mu \in \NN_0$. It turns out that any \emph{Schur pair} $(W, A)$, where $W \in \mathsf{Gr}_\mu(R)$ and $A \subseteq R$ is a $\CC$--subalgebra such that
$W \cdot A = W,$ determines an injective algebra homomorphism $A \lar \gP$, which is unique up to an appropriate inner automorphism of $\gP$; see Theorem \ref{T:SatoSchur}. If $A = A\bigl(\Pi, \underline{\mu}\bigr)$ is the algebra of quasi--invariants of a Baker--Akhieser line arrangement  and
$W = P(-\xi_1 z_1 - \xi_2 z_2)$, then we recover in this way the algebra embedding (\ref{E:IntegrabilityCM}).

\smallskip
\noindent
\textsl{8.~Deformations of Calogero--Moser systems arising from Cohen--Macaulay modules}.
In Section \ref{S:InverseScattering}, we illustrate the developed ``algebraic inverse scattering method'' by constructing an ``isospectral deformation'' of the simplest dihedral Calogero--Moser system associated with the operator
$$
H =
\left(\frac{\partial^2}{\partial x_1^2} + \frac{\partial^2}{\partial x_2^2}\right)
- 2 \left(
\frac{1}{(x_1 - \xi_1)^2} + \frac{1}{(x_2 - \xi_2)^2}\right).
$$

\medskip
\noindent
We tried to keep the exposition of this paper  self--contained.
Still, our work is based on the following two external ingredients. Firstly, we essentially use the theory of multivariate Baker--Akhieser functions
of (generalized) Calogero--Moser systems \cite{ChalykhVeselov90, ChalykhStyrkasVeselov,  Berest, ChalykhFeiginVeselov2, Chalykh}.
  The second  ingredient is the ``matrix--problem method'' of \cite{SurvOnCM, BDNonIsol} to study Cohen--Macaulay modules over singular surfaces with non--isolated singularities.

\smallskip
\noindent
\textit{Acknowledgement}. The work of the first--named author was partially  supported by the CRC/TRR 191 project ``Symplectic Structures in Geometry, Algebra and Dynamics'' of German Research Council (DFG). The research of the second--named author was partially supported  by the DAAD program  ``Bilateral Exchange of Academics 2016'' and the RSF grant 16--11--10069.
We are also grateful to Misha Feigin  for  fruitful discussions.

\smallskip
\noindent
\textit{List of notations}.
For reader's convenience  we make an account of   the most important  notations used in this paper.

\smallskip
\noindent
1.~A weighted line arrangement  $(\Pi, \underline{\mu})$ is an input datum defining the corresponding algebra of quasi--invariant polynomials  $A = A(\Pi, \underline{\mu}) \subset R = \CC[z_1, z_2] = \CC\bigl[\rho\cos(\varphi), \rho \sin(\varphi)\bigr]$. It consists of a finite set $\Pi \subset \CC$ and a function $\Pi \stackrel{\underline{\mu}}\lar \NN_0, \; \alpha \mapsto \mu_\alpha = \underline{\mu}(\alpha)$. We put $\mu = \sum_{\alpha \in \Pi} \mu_\alpha$.
For a Baker--Akhieser weighted line arrangement, the function $\underline{\mu}$ has to satisfy  very strong constraints; see Section \ref{S:SpectralModule}.
In the Coxeter case, $\Pi = \Lambda_n = \bigl\{0, \frac{1}{n} \pi, \dots,  \frac{n-1}{n} \pi\bigr\}$. Finally, $X$ always denotes the affine surface, determined by the algebra $A$ and $\kP$ is the set of prime ideals in $A$ of height one.

\smallskip
\noindent
2.~Next, $I = \mathsf{Ann}_A(R/A)$ is the conductor ideal of the algebra extension $A \subseteq R$. For any $m \in \NN$, we denote  $K_m = \CC(\rho)[\sigma]/(\sigma^m)$ and $L_m = \CC(\rho)[\varepsilon]/(\varepsilon^{2m})$. For any $\alpha \in \Pi$, we denote  $K_\alpha = K_{\mu_\alpha}, L_\alpha = L_{\mu_\alpha}$ and $l_\alpha(z_1, z_2) = - \sin(\alpha)z_1 + \cos(\alpha) z_2 \in R$.
For each $f \in \CC\bigl[\rho\cos(\varphi), \rho \sin(\varphi)\bigr]$, $\alpha \in \Pi$ and $k \in \NN_0$ we put
$f^{(k)}_\alpha := \left.\dfrac{\partial^k f}{\partial \varphi^k}\right|_{\varphi = \alpha} \in \CC[\rho]$.

\smallskip
\noindent
3.~Cohen--Macaulay modules in our work are always \emph{maximal} Cohen--Macaulay and $\CM(A)$ denotes the category of Cohen--Macaulay $A$--modules, whereas
$\CM^{\mathsf{lf}}(A)$  stands for  its full subcategory consisting of those modules, which are locally free in codimension one. Next, $\Pic(A)$ is the Picard group of $A$, whereas $\CM_1(A)$ (respectively, $\CM^{\mathsf{lf}}_1(A)$) denotes the set of the isomorphism classes of rank one Cohen--Macaulay $A$--modules (respectively, the subset consisting of  those modules, which are locally free in codimension one). We denote by
$\Tri(A)$  the category of triples (see Definition \ref{D:triples}).

\smallskip
\noindent
Finally, $M(\vec{\nu}, \vec{\gamma})$ (respectively, $B(\vec\gamma)$ and $P(h)$) denote elements of $\CM_1(A)$ (respectively, $\CM^{\mathsf{lf}}_1(A)$ and $\Pic(A)$), expressed through the corresponding classifying parameters; see formula (\ref{E:classificationCM}) (respectively (\ref{E:cosnstraints}) and
(\ref{E:projectiveQuasiInv})).

\smallskip
\noindent
4.~Gothic letters stand for  objects, related with partial differential operators. In particular, $\gD = \CC\llbracket x_1, x_2\rrbracket[\partial_1, \partial_2]$, whereas $\gE = \CC\llbracket x_1, x_2\rrbracket \llbrace \partial_1^{-1}\rrbrace \llbrace \partial_2^{-1}\rrbrace$ is the algebra of pseudo--differential operators.  Next, $\gP$ is another algebra of ``infinite'' partial differential operators (see Section \ref{S:SatoTheory}).
We denote by $\diamond$ the natural left action of $\gD$ or $\gP$ on $\CC\llbracket x_1, x_2\rrbracket$, whereas $\circ$ stands for the natural right action of $\gD$ or $\gP$ on $\CC[\partial_1, \partial_2]$.
 Finally,
$\gB \subset \gD$ is  the   commutative subalgebra of $\gD$  containing the Calogero--Moser operator $H$ (\ref{E:CMHamiltonian})
 and given by the formula (\ref{E:IntegrabilityCM}),  whereas $F$ is the corresponding  spectral module.

\smallskip
\noindent
5.~For any $\mu \in \NN_0$, $\mathsf{Gr}_\mu(R)$ denotes the set of all subspaces $W \subset R$ with Hilbert polynomial $H_W(k + \mu) = \binom{k+2}{2}$; $S \in \gP$ is a  Sato operator of such $W \in \mathsf{Gr}_\mu(R)$ (see Definition \ref{D:SatoGrassmannian}).

\section{Ring--theoretic properties  of the algebra of surface   quasi--invariants}

\subsection{Some results on the Macaulayfication} 
Let $S$ be a local Noetherian ring of Krull dimension $d$ with the residue field $\mathbbm{k}$. Recall that a finitely generated  $S$--module $M$  is  
\emph{maximal Cohen--Macaulay} if $\mathsf{Ext}^i_S(\kk, M) = 0$ for all $0 \le i < d$. For a general Noetherian ring  $A$, a  finitely generated $A$--module $M$  is  Cohen--Macaulay if $M_\idm$ is maximal Cohen--Macaulay for any maximal ideal 
$\idm$ in $A$. It what follows, we shall drop the word ``maximal'' when talking about Cohen--Macaulay modules. A Noetherian ring is Cohen--Macaulay if it is Cohen-Macaulay viewed as a module over itself. 
We refer to the monographs  \cite{BrunsHerzog, LeuschkeWiegand, Serre} for the definition and main properties of Cohen--Macaulay rings and modules.

The
exposition in this subsection closely follows  the survey article \cite{SurvOnCM}, where a special attention  to the study of Cohen--Macaulay modules on singular surfaces  was paid.
From now on, let $A$ be a finitely generated integral $\CC$--algebra of Krull dimension two, $Q = Q(A)$ its field of fractions, $X$  an affine surface, whose coordinate ring is isomorphic to $A$ and  $\kP := \bigl\{\idp \in \Spec(A) \, \big|\,  \mathsf{ht}(\idp) = 1\bigr\}$.

\smallskip
\noindent
A proof of the following proposition can be for instance found in \cite[Appendix 2]{KOZ}.
\begin{proposition}\label{P:MacaulayficationRings} Let $A^\dagger := \bigcap\limits_{\idp \in \kP} A_{\idp} \subset Q.$
Then the following statements are true.
\begin{itemize}
\item $A^\dagger$ is a finitely generated Cohen--Macaulay $\CC$--algebra of Krull dimension two.
\item We have: $\dim_{\CC}\bigl(A^\dagger/A\bigr) < \infty$.
\item Moreover, $A = A^\dagger$ if and only if $A$ is Cohen--Macaulay.
\end{itemize}
The $\CC$--algebra $A^\dagger$ is called \emph{Macaulayfication} of $A$.
\end{proposition}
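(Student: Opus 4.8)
The plan is to realize $A^\dagger$ as an intermediate ring between $A$ and its normalization $\widetilde{A}$ in $Q$, and then to pin down the non--Cohen--Macaulay locus of $A^\dagger$ by a local cohomology computation over punctured spectra. First, since $\widetilde{A}$ is a normal domain of dimension two, it equals $\bigcap_{\mathsf{ht}(\mathfrak{P})=1}\widetilde{A}_{\mathfrak{P}}$; for each height one prime $\mathfrak{P}$ of $\widetilde{A}$ the contraction $\mathfrak{p}:=\mathfrak{P}\cap A$ again has height one (heights are preserved under contraction along the integral extension $A\subseteq\widetilde{A}$, both rings being equidimensional of dimension two), so $A^\dagger\subseteq A_{\mathfrak{p}}\subseteq\widetilde{A}_{\mathfrak{P}}$, and intersecting over $\mathfrak{P}$ gives $A^\dagger\subseteq\widetilde{A}$. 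As the normalization of a finitely generated $\CC$--domain is module--finite, $A^\dagger$ is a finite $A$--module; hence it is a finitely generated $\CC$--algebra, a domain with field of fractions $Q$, and integral over $A$, so of Krull dimension two.

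The key technical point is the following. For any prime $\mathfrak{p}$ with $\mathsf{ht}(\mathfrak{p})\le 1$ we have $A\subseteq A^\dagger\subseteq A_{\mathfrak{p}}$, and localizing at $\mathfrak{p}$ squeezes $(A^\dagger)_{\mathfrak{p}}=A_{\mathfrak{p}}$. Two consequences follow. (i) $\Supp_A(A^\dagger/A)$ meets only closed points of $\Spec A$; being the support of a finite module it is closed, hence zero--dimensional, hence finite, so $A^\dagger/A$ has finite length and $\dim_\CC(A^\dagger/A)<\infty$. (ii) Over each height one prime $\mathfrak{q}$ of $A$ lies a single height one prime $\mathfrak{Q}$ of $A^\dagger$, and $(A^\dagger)_{\mathfrak{Q}}=A_{\mathfrak{q}}$ (as $(A^\dagger)_{\mathfrak{q}}=A_{\mathfrak{q}}$ is already local); therefore $\bigcap_{\mathsf{ht}(\mathfrak{Q})=1}(A^\dagger)_{\mathfrak{Q}}=\bigcap_{\mathsf{ht}(\mathfrak{q})=1}A_{\mathfrak{q}}=A^\dagger$, i.e. $A^\dagger$ is stable under the Macaulayfication operation.

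Now I would prove that $B:=A^\dagger$ is Cohen--Macaulay. Since the localizations of a domain at primes of height $\le 1$ are automatically Cohen--Macaulay, it is enough to show $\depth B_{\mathfrak{m}}=2$ for every maximal ideal $\mathfrak{m}$. Let $Z\subseteq\Spec B$ be the non--Cohen--Macaulay locus; by the last remark $Z$ is contained in the closed points, and $Z$ is closed since $B$, being finitely generated over a field, is excellent (openness of the Cohen--Macaulay locus). Hence $\Spec B\setminus Z$ contains every prime of height $\le 1$, so
\[
\Gamma\bigl(\Spec B\setminus Z,\,\mathcal{O}_{B}\bigr)\;=\;\bigcap_{\mathfrak{p}\notin Z}B_{\mathfrak{p}}\;=\;B,
\]
because this intersection already contains $\bigcap_{\mathsf{ht}(\mathfrak{p})=1}B_{\mathfrak{p}}=B$ by consequence (ii), while $B\subseteq B_{\mathfrak{m}}$ for the finitely many remaining maximal ideals. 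Plugging this into the local cohomology exact sequence $0\to H^0_Z(B)\to B\to\Gamma(\Spec B\setminus Z,\mathcal{O}_{B})\to H^1_Z(B)\to H^1(\Spec B,\mathcal{O}_{B})$, and using that $\Spec B$ is affine and $B$ a domain, gives $H^1_Z(B)=0$. Since local cohomology commutes with localization, for $\mathfrak{m}\in Z$ we get $H^0_{\mathfrak{m}}(B_{\mathfrak{m}})=H^1_{\mathfrak{m}}(B_{\mathfrak{m}})=0$, i.e. $\depth B_{\mathfrak{m}}\ge 2$, contradicting $\mathfrak{m}\in Z$. Therefore $Z=\emptyset$ and $A^\dagger$ is Cohen--Macaulay.

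It remains to prove the equivalence. If $A=A^\dagger$ then $A$ is Cohen--Macaulay by the previous paragraph. Conversely, if $A$ is Cohen--Macaulay then $\depth A_{\mathfrak{m}}=2$ for every maximal $\mathfrak{m}$, so the punctured--spectrum exact sequence gives $A_{\mathfrak{m}}=\bigcap_{\mathsf{ht}(\mathfrak{p})=1,\,\mathfrak{p}\subseteq\mathfrak{m}}A_{\mathfrak{p}}$, which contains $(A^\dagger)_{\mathfrak{m}}$ (as $(A^\dagger)_{\mathfrak{m}}\subseteq A_{\mathfrak{p}}$ whenever $\mathfrak{p}\subseteq\mathfrak{m}$); combined with $A_{\mathfrak{m}}\subseteq(A^\dagger)_{\mathfrak{m}}$ this forces $(A^\dagger/A)_{\mathfrak{m}}=0$, and $\mathfrak{m}$ being arbitrary, $A^\dagger=A$. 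I expect the main obstacle to be the Cohen--Macaulayness of $A^\dagger$: it rests both on controlling the non--Cohen--Macaulay locus (that it is a finite set of closed points) and on the stability of $A^\dagger$ under Macaulayfication, which together feed the local cohomology argument; by contrast, once the inclusion $A^\dagger\subseteq\widetilde{A}$ is observed, the finiteness statements and the equivalence are essentially bookkeeping.
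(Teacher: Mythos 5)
Your argument is correct and complete. Note, however, that the paper does not prove this proposition at all: it simply cites \cite[Appendix 2]{KOZ}, so there is no internal proof to compare against. What you have written is the standard ``$S_2$--ification'' argument, and all the steps check out: the inclusion $A^\dagger\subseteq\widetilde{A}$ via height preservation along the finite extension (valid since both rings are equidimensional, catenary, finitely generated domains over $\CC$), whence finiteness of $A^\dagger$ over $A$; the squeeze $(A^\dagger)_{\idp}=A_{\idp}$ in height $\le 1$, which yields both the finite length of $A^\dagger/A$ and the idempotency $(A^\dagger)^\dagger=A^\dagger$; and the local cohomology computation over the complement of the (closed, hence finite) non--Cohen--Macaulay locus, which forces $\depth B_{\idm}\ge 2$ at every closed point and hence $Z=\emptyset$. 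Two places where you lean on standard facts without stating them --- that every height--one prime of $A^\dagger$ contracts to a height--one prime of $A$ (needed to identify $\bigcap_{\mathsf{ht}(\mathfrak{Q})=1}(A^\dagger)_{\mathfrak{Q}}$ with $\bigcap_{\mathsf{ht}(\mathfrak{q})=1}A_{\mathfrak{q}}$), and openness of the Cohen--Macaulay locus for excellent rings --- are both true and routine, so they do not constitute gaps. One cosmetic point: in the computation of $\Gamma(\Spec B\setminus Z,\kO_B)$ the word ``contains'' points the wrong way; the intersection over all $\idp\notin Z$ is \emph{contained in} the intersection over height--one primes (which is $B$) and trivially contains $B$, which is what you actually use.
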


\smallskip
\noindent
Let $M$ be a Noetherian torsion free $A$--module. Recall that the rank of $M$ is defined to be $\rk(M):= \rk_Q\bigl(Q(M)\bigr)$, where
$Q(M):= Q \otimes_A M$ is the rational envelope of $M$.

\smallskip
\noindent
A proof  of an analogous  result for modules  can be  for instance found in \cite[Section 3]{SurvOnCM}.
\begin{proposition}\label{P:MacaulayficationModules}  Let $M^\dagger := \bigcap\limits_{\idp \in \kP} M_{\idp} \subset Q(M).$
Then the following statements are true.
\begin{itemize}
\item $M^\dagger$ is Noetherian  and Cohen--Macaulay module (both over $A^\dagger$ and $A$).
\item We have: $\dim_{\CC}\bigl(M^\dagger/M\bigr) < \infty$.
\item Moreover, $M = M^\dagger$ if and only if $M$ is Cohen--Macaulay.
\item Assume $A$ is already Cohen--Macaulay. Let $\Omega$ be a \emph{canonical module} of $A$. Then we have: $M^\dagger \cong M^{\vee\vee}$, where
$M^\vee := \Hom_A(M, \Omega)$.
\end{itemize}
Similarly to Proposition \ref{P:MacaulayficationRings}, the $A^\dagger$--module $M^\dagger$ is called \emph{Macaulayfication} of $M$.
\end{proposition}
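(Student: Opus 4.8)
The plan is to reduce everything to the statement for rings (Proposition \ref{P:MacaulayficationRings}) applied to the idealized ring $A \ltimes M$ (the trivial extension), or — more elementarily — to argue directly, localization by localization, using the standard characterization of maximal Cohen--Macaulay modules over a two-dimensional ring in terms of depth. Recall that over a Noetherian local ring $(A_\idp, \idp A_\idp)$ of dimension $d$, a finitely generated module $N$ is maximal Cohen--Macaulay \iff $\depth_{A_\idp}(N) = d$, and that for $d \le 1$ this is automatic once $N$ is torsion free (depth $\ge 1$) and for $d = 2$ it says $N$ is a second syzygy, equivalently $N$ is reflexive when $A_\idp$ is Cohen--Macaulay. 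Since $A$ has Krull dimension two, the primes in $\kP$ are exactly those of height one, so each $A_\idp$ with $\idp \in \kP$ has dimension one.

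First I would check that $M^\dagger = \bigcap_{\idp \in \kP} M_\idp$, computed inside the rational envelope $Q(M)$, is again a torsion free $A$-module containing $M$, and that it is finitely generated: the quotient $M^\dagger / M$ is supported only on the (finitely many) non-Cohen--Macaulay points of $\Spec(A)$, which have codimension two, hence is a finite-length module; this is the content of the second bullet, and it is proved exactly as in the ring case (for all but finitely many height-one primes $\idp$ one already has $M_\idp = (M^{\vee\vee})_\idp$, so the intersection stabilizes to a coherent module). Granting finiteness, the first bullet follows: for $\idp$ of height one, $(M^\dagger)_\idp = M_\idp$ is torsion free over the one-dimensional ring $A_\idp$, hence has depth $1 = \dim A_\idp$; for $\idp$ of height two, the short exact sequence $0 \to M \to M^\dagger \to T \to 0$ with $T$ of finite length, together with $\depth_{A_\idp}(M_\idp) \ge 1$ and $\depth_{A_\idp}(T) = 0$, forces $\depth_{A_\idp}(M^\dagger_\idp) \ge 2$, so $M^\dagger$ is maximal Cohen--Macaulay over $A$; over $A^\dagger$ one argues identically using $\dim A^\dagger_\idp = \dim A_\idp$, since $A \subseteq A^\dagger$ is a finite birational extension. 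The third bullet is then immediate: if $M = M^\dagger$ then $M$ is Cohen--Macaulay by what was just shown, and conversely if $M$ is Cohen--Macaulay then for every $\idp$ of height $\le 2$ the module $M_\idp$ is a depth-$\ge\min(2,\dim A_\idp)$ submodule of $M^\dagger_\idp$ with finite-length cokernel, which forces equality at every such $\idp$, hence $M = M^\dagger$.

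For the last bullet, assume $A$ is Cohen--Macaulay with dualizing module $\Omega$ and set $M^\vee = \Hom_A(M, \Omega)$, $M^{\vee\vee} = \Hom_A(M^\vee, \Omega)$. The point is that dualizing against $\Omega$ commutes with localization, and that over a Cohen--Macaulay local ring of dimension $\le 2$ the double $\Omega$-dual of a torsion free module $N$ agrees with $\bigcap_{\idq \in \kP} N_\idq$; indeed $N \to N^{\vee\vee}$ is an isomorphism in codimension one (both sides are reflexive there and agree), and $N^{\vee\vee}$ is maximal Cohen--Macaulay (as $\Hom$ into a dualizing module always is, by local duality), so $N^{\vee\vee}$ has the defining property of the Macaulayfication; by uniqueness of $M^\dagger$ as the smallest maximal Cohen--Macaulay module birationally containing $M$ that is an extension of $M$ by a finite-length module — or simply because both equal $\bigcap_{\idp \in \kP} M_\idp$ — we get $M^\dagger \cong M^{\vee\vee}$.

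The main obstacle, and the only genuinely non-formal input, is the finiteness $\dim_\CC(M^\dagger / M) < \infty$; everything else is a routine depth chase. Since the excerpt directs us to \cite[Section 3]{SurvOnCM} for precisely this point, I would invoke that reference for the finiteness (its proof mirrors \cite[Appendix 2]{KOZ} for rings: pass to a Cohen--Macaulay — equivalently $S_2$ — modification, note that $M$ and $M^\dagger$ differ only over the finitely many associated primes of $M^\dagger/M$, all of codimension two, and use Noetherianity of $A$), and then assemble the four bullets as above.
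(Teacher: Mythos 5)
The paper gives no proof of this proposition at all --- it simply cites \cite[Section 3]{SurvOnCM} --- so I can only assess your sketch on its own terms. Your handling of the second bullet (deferred to the reference), of the converse direction of the third bullet (the Depth Lemma applied to $0 \to M \to M^\dagger \to T \to 0$ gives $\depth(T) \ge \min\bigl(\depth(M)-1, \depth(M^\dagger)\bigr) \ge 1$ when $M$ is Cohen--Macaulay, forcing $T=0$), and of the fourth bullet (both $M^\dagger$ and $M^{\vee\vee}$ are maximal Cohen--Macaulay and agree in codimension one, hence coincide by the universal property of the Macaulayfication) is sound and consistent with how the paper argues the analogous points in the lemma that follows.

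There is, however, a genuine gap in your first bullet at the height-two primes. You assert that the sequence $0 \to M_\idp \to M^\dagger_\idp \to T_\idp \to 0$ with $T_\idp$ of finite length, $\depth(M_\idp)\ge 1$ and $\depth(T_\idp)=0$ ``forces'' $\depth(M^\dagger_\idp)\ge 2$. It does not: the Depth Lemma only yields $\depth(M^\dagger_\idp) \ge \min\bigl(\depth(M_\idp), \depth(T_\idp)\bigr) = 0$ from this sequence, and as a formal implication your claim is refuted by taking $M$ any torsion--free module of depth one with $T=0$. The depth-two property at a closed point $\idm$ is precisely where one must use that $M^\dagger$ is the \emph{full} intersection $\bigcap_{\idp \in \kP} M_\idp$, i.e.\ that $(M^\dagger)^\dagger = M^\dagger$: any $N$ with $M^\dagger \subseteq N \subseteq Q(M)$ and $\dim_{\CC}(N/M^\dagger)<\infty$ has $N_\idp = M^\dagger_\idp$ for all $\idp \in \kP$ and hence lies in $M^\dagger$, which is equivalent to $H^0_{\idm}(M^\dagger) = H^1_{\idm}(M^\dagger) = 0$, the cohomological characterization of depth $\ge 2$. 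Without this saturation input the first bullet (and with it the ``only if'' half of the third) is unproved, so your closing remark that everything beyond the finiteness statement is ``a routine depth chase'' is too optimistic. A smaller imprecision: in dimension two, maximal Cohen--Macaulay is equivalent to reflexivity with respect to $\Omega$, not with respect to $A$; the $A$-reflexive version requires Gorenstein in codimension one, which is exactly why the paper isolates $M^\dagger \cong M^{\ast\ast}$ as a separate lemma under that hypothesis. Your last bullet correctly works with $\Omega$-duals, so only the parenthetical aside is affected.
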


\smallskip
\noindent
From now on, we shall assume that  the algebra $A$ is Cohen--Macaulay. In what follows, $\CM(A)$ denotes the category of all Cohen--Macaulay $A$--modules.

\begin{lemma} Suppose  that $A$ is Gorenstein in codimension one, i.e.~$A_\idp$ is Gorenstein for any $\idp \in \kP$. Then for any torsion free Noetherian $A$--module $M$ we have:
$
M^\dagger \cong M^{\ast\ast},
$
where $M^\ast = \Hom_A(M, A)$.
\end{lemma}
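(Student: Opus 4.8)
The plan is to reduce the claim to Proposition \ref{P:MacaulayficationModules} by showing that, when $A$ is Gorenstein in codimension one, the bidual $M^{\ast\ast}$ with respect to $A$ agrees with the bidual $M^{\vee\vee}$ with respect to a dualizing module $\Omega$; the latter is already known to equal $M^\dagger$. The key point is that being Gorenstein in codimension one means precisely that $\Omega_\idp$ is a free $A_\idp$-module of rank one for every $\idp \in \kP$, so that $\Omega$ and $A$ become ``indistinguishable'' after localizing at height-one primes.

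First I would recall that $M^\dagger = \bigcap_{\idp \in \kP} M_\idp$ depends only on the localizations $M_\idp$ for $\idp \in \kP$, and that by Proposition \ref{P:MacaulayficationModules} we have $M^\dagger \cong M^{\vee\vee}$ where $M^\vee = \Hom_A(M, \Omega)$. Since $\Omega$ is reflexive (it is itself Cohen--Macaulay, hence equal to its own Macaulayfication), it suffices to produce an isomorphism $M^{\ast\ast} \cong M^{\vee\vee}$ that is compatible with localization at the primes of $\kP$; equivalently, since both $M^{\ast\ast}$ and $M^{\vee\vee}$ are Macaulayfications (they equal their own double duals and have finite colength over $M$ up to the appropriate identification), it is enough to check that $\bigl(M^{\ast\ast}\bigr)_\idp \cong \bigl(M^{\vee\vee}\bigr)_\idp$ for each $\idp \in \kP$. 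Localization commutes with $\Hom$ out of the finitely presented module $M$ (and then out of $M^\ast$, which is again finitely presented over the Noetherian ring $A$), so $\bigl(M^{\ast\ast}\bigr)_\idp \cong \bigl(M_\idp\bigr)^{\ast\ast}$ computed over $A_\idp$, and likewise $\bigl(M^{\vee\vee}\bigr)_\idp \cong \Hom_{A_\idp}\bigl(\Hom_{A_\idp}(M_\idp, \Omega_\idp), \Omega_\idp\bigr)$.

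Now fix $\idp \in \kP$. Because $A$ is Gorenstein in codimension one, $A_\idp$ is a Gorenstein local ring, so a choice of generator of the rank-one free module $\Omega_\idp$ gives an isomorphism $\Omega_\idp \cong A_\idp$ of $A_\idp$-modules. Transporting duals along this isomorphism yields a natural isomorphism $\Hom_{A_\idp}(N, \Omega_\idp) \cong \Hom_{A_\idp}(N, A_\idp) = N^\ast$ for every $A_\idp$-module $N$, and applying this twice (to $N = M_\idp$ and then to $N = (M_\idp)^\ast$) gives $\bigl(M^{\vee\vee}\bigr)_\idp \cong \bigl(M_\idp\bigr)^{\ast\ast} \cong \bigl(M^{\ast\ast}\bigr)_\idp$. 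Since this holds for all $\idp \in \kP$ and both $M^{\ast\ast}$ and $M^{\vee\vee} = M^\dagger$ are recovered from their height-one localizations as $\bigcap_{\idp \in \kP}(-)_\idp$ inside the common rational envelope $Q(M) = Q(M^\ast) = Q(M^\vee)$ (all three modules have the same generic rank), we conclude $M^{\ast\ast} \cong M^\dagger$, as claimed.

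The main obstacle I anticipate is not the local Gorenstein reduction itself but the bookkeeping needed to make the final intersection argument rigorous: one must check that $M^{\ast\ast}$, $M^{\vee\vee}$ and $M^\dagger$ are genuinely being compared inside the \emph{same} rational envelope (so that equality of all height-one localizations forces equality of the modules), and that the isomorphisms $\bigl(M^{\ast\ast}\bigr)_\idp \cong \bigl(M^{\vee\vee}\bigr)_\idp$ glue — or, more cleanly, avoid the gluing issue altogether by invoking the fact that a reflexive module over a two-dimensional Cohen--Macaulay ring is determined by its localizations at $\kP$, which is exactly the content of Proposition \ref{P:MacaulayficationModules} applied to $M^{\ast\ast}$. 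With that observation, the proof collapses to: $M^{\ast\ast}$ is Cohen--Macaulay (being a second dual into $A$, hence reflexive, hence its own Macaulayfication), and $\bigl(M^{\ast\ast}\bigr)_\idp \cong \bigl(M^\dagger\bigr)_\idp$ for all $\idp \in \kP$ by the local Gorenstein computation, whence $M^{\ast\ast} \cong M^\dagger$.
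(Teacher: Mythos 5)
Your proposal is correct and follows essentially the same route as the paper: compare $M^{\ast\ast}$ with $M^{\vee\vee}\cong M^\dagger$, localize at $\idp\in\kP$, and use that $\Omega_\idp\cong A_\idp$ (Gorenstein in codimension one) together with the fact that $M_\idp$ is already maximal Cohen--Macaulay over the one-dimensional ring $A_\idp$, so that both canonical maps become isomorphisms there. The only (cosmetic) difference is in the globalization step: the paper constructs a comparison map $M^{\vee\vee}\to M^{\ast\ast}$ via the universal property of Macaulayfication and shows it is surjective by a depth argument, whereas you invoke the description of a Cohen--Macaulay module as the intersection of its height-one localizations inside the common rational envelope.
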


\begin{proof}
We prove this fact here since the  proof given in \cite[Proposition 3.7]{SurvOnCM} contained an error. First note that both modules $M^{\vee\vee}$ and $M^{\ast\ast}$ are Cohen--Macaulay; see for example \cite[Lemma 3.1]{SurvOnCM}. Let $M \stackrel{i}\lar M^{\vee\vee}$ and $M \stackrel{j}\lar M^{\ast\ast}$ be the canonical morphisms. By the universal property of Macaulayfication \cite[Proposition 3.2]{SurvOnCM}, there exists a unique morphism $M^{\vee\vee} \stackrel{f}\lar M^{\ast\ast}$ such that
the diagram
$$
\xymatrix{
& M \ar[ld]_-{i} \ar[rd]^-{j} & \\
M^{\vee\vee} \ar[rr]^-{f} & & M^{\ast\ast}
}
$$
is commutative. Since both morphisms $i$ and $j$ induce isomorphisms of the corresponding rational envelopes, we may conclude that $f$ is a monomorphism.
Next, $M_\idp$ is automatically Cohen--Macaulay over $A_\idp$ and $\Omega_\idp \cong A_\idp$ for any $\idp \in \kP$.
Therefore, the morphisms of $A_\idp$--modules $i_\idp$ and $j_\idp$ are isomorphisms, hence $f_\idp$ is an isomorphism, too.
As a consequence, the cokernel of $f$ is a finite dimensional $A$--module. Since both $A$--modules $M^{\vee\vee}$ and $M^{\ast\ast}$ have depth two, $f$ is surjective  by the Depth Lemma \cite[Proposition 1.2.9]{BrunsHerzog}.
\end{proof}

\begin{definition}
A torsion free Noetherian  $A$--module $M$ is called \emph{locally free in codimension one} if $M_\idp$ is a free $A_\idp$--module for any $\idp \in \kP$. If $M$ is additionally assumed to be Cohen--Macaulay, this equivalently means that  there exists a finite set $Z \subset X$ and a locally free sheaf $N$ on the surface $X \setminus Z$ such that $M \cong \imath_*(N)$, where
$X \setminus Z \stackrel{\imath}\lar X$ is the canonical open inclusion (the proof of this statement is the same as in \cite[Corollary 3.12]{SurvOnCM}).
In what follows, $\CM^{\mathsf{lf}}(A)$ denotes the category of all Cohen--Macaulay $A$--modules, which are locally free in codimension one.
\end{definition}

\begin{remark}
Let $I \subset A$ be a maximal ideal. Then $I$ is a torsion free Noetherian $A$--module of rank one which is locally free in codimension one. 
It follows from Proposition \ref{P:MacaulayficationModules} that $I^\dagger = A$.
In particular, $I$ is not Cohen--Macaulay. 
\end{remark}

\smallskip
\noindent
Recall that the Picard group of $A$ (denoted $\Pic(A)$) is the set of the isomorphism classes of projective $A$--modules of rank one with the group operation given by the tensor product. 

\begin{proposition}\label{P:CMPicard}
 Assume that the algebra $A$ is Gorenstein in codimension one. Abusing the notation, let $\CM_1^{\mathsf{lf}}(A)$ be the set of the isomorphism classes of Cohen--Macaualy $A$--modules of rank one, which are locally free in codimension one.  Then $\CM_1^{\mathsf{lf}}(A)$ is an abelian group  (which is an analogue of the \emph{divisor class group} of a normal domain \cite[Section 7.3]{BrunsHerzog}) with respect to the operation
\begin{equation}\label{E:tensorproduct}
M_1 \boxtimes_A M_2 := \bigl((M_1 \otimes_A M_2)/\tor)^\dagger,
\end{equation}
where $\tor$ denotes the torsion submodule of $M_1 \otimes_A M_2$.
The neutral element of $\CM_1^{\mathsf{lf}}(A)$ is $A$, whereas  the inverse element of $M$ is $M^\ast = \Hom_A(M, A)$.
\end{proposition}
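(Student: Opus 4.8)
The plan is to verify the group axioms for $\bigl(\CM_1^{\mathsf{lf}}(A), \boxtimes_A\bigr)$ one by one, using Proposition \ref{P:MacaulayficationModules} and the preceding Lemma (identifying $(-)^\dagger$ with $(-)^{\ast\ast}$ under the Gorenstein-in-codimension-one hypothesis) as the main tools. First I would check that $\boxtimes_A$ is well defined, i.e.\ that $M_1 \boxtimes_A M_2$ is again Cohen--Macaulay, of rank one, and locally free in codimension one whenever $M_1, M_2$ are. Rank one is clear since $(M_1 \otimes_A M_2)/\tor$ has the same rational envelope $Q$ as each factor and Macaulayfication does not change the rational envelope. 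Cohen--Macaulayness is immediate from Proposition \ref{P:MacaulayficationModules}. For local freeness in codimension one, I would localize at $\idp \in \kP$: since $(-)^\dagger$ commutes with localization at height-one primes (it is a height-one intersection, and $(M_1 \boxtimes_A M_2)_\idp \cong M_{1,\idp} \otimes_{A_\idp} M_{2,\idp}$ modulo torsion, as $A_\idp$ is a one-dimensional Gorenstein local ring), the localization is a tensor product of two free rank-one $A_\idp$-modules, hence free of rank one.

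Next I would establish that $A$ is a neutral element: $M \boxtimes_A A = \bigl((M \otimes_A A)/\tor\bigr)^\dagger = M^\dagger = M$, the last equality because $M$ is already Cohen--Macaulay. Associativity is the step I expect to take the most care. The natural approach is to show that for $M_1, M_2, M_3 \in \CM_1^{\mathsf{lf}}(A)$ there is a canonical isomorphism
$$
\bigl((M_1 \boxtimes_A M_2) \otimes_A M_3 / \tor\bigr)^\dagger \cong \bigl(M_1 \otimes_A M_2 \otimes_A M_3 / \tor\bigr)^\dagger \cong \bigl(M_1 \otimes_A (M_2 \boxtimes_A M_3) / \tor\bigr)^\dagger.
$$
The key point is that for a Cohen--Macaulay module $N$ of rank one and any torsion-free $N'$ of rank one, the canonical map $(N \otimes_A N')/\tor \to (N^\dagger \otimes_A N')/\tor$ has kernel and cokernel of finite length (it is an isomorphism after localizing at every $\idp \in \kP$, since $N \to N^\dagger$ is), and therefore induces an isomorphism after applying $(-)^\dagger$; the universal property of Macaulayfication \cite[Proposition 3.2]{SurvOnCM} then glues these comparison maps into the asserted isomorphisms, all of which are compatible because they come from the associativity and functoriality of $\otimes_A$.

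Finally I would prove that $M^\ast = \Hom_A(M, A)$ is the inverse of $M$. By the Lemma, $M \boxtimes_A M^\ast = \bigl((M \otimes_A M^\ast)/\tor\bigr)^\dagger \cong \bigl((M \otimes_A M^\ast)/\tor\bigr)^{\ast\ast}$. The evaluation pairing gives a morphism $M \otimes_A M^\ast \to A$; I would check it induces an isomorphism after localizing at each $\idp \in \kP$ (where $M_\idp$ is free of rank one), so its image is an ideal of $A$ agreeing with $A$ in codimension one, and hence $\bigl((M \otimes_A M^\ast)/\tor\bigr)^{\ast\ast} \cong A^{\ast\ast} = A$, using that $A$ is Cohen--Macaulay (so $A^\dagger = A$) together with the Lemma applied to $A$ itself. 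Commutativity of $\boxtimes_A$ is inherited from that of $\otimes_A$. The main obstacle is organizing the associativity argument so that the three-fold Macaulayfied tensor product is manifestly symmetric and the comparison isomorphisms are canonical; once the ``isomorphism after applying $(-)^\dagger$ to a map which is an isomorphism in codimension one'' principle is isolated as a lemma, the rest is bookkeeping.
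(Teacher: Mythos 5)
Your proposal is correct and follows essentially the same route as the paper: the paper's own proof delegates associativity to \cite[Proposition 3.15]{SurvOnCM} and, for the inverse, uses exactly your argument that the evaluation map $M \boxtimes_A M^\ast \to A$ is an isomorphism at every $\idp \in \kP$ and hence an isomorphism (citing \cite[Lemma 3.6]{SurvOnCM} rather than re-deriving it via $(-)^\dagger = \bigcap_{\idp} (-)_\idp$). Your ``isomorphism in codimension one implies isomorphism after Macaulayfication'' lemma is precisely the principle behind both citations, so you have merely unpacked what the paper outsources.
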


\begin{proof} For the associativity of $\boxtimes_A$, see \cite[Proposition 3.15]{SurvOnCM}. It is also clear that $A$ is the neutral element of $\CM_1^{\mathsf{lf}}(A)$. Finally, let $M$ be an element of $\CM_1^{\mathsf{lf}}(A)$.
Then the  evaluation  morphism $M \otimes_A \Hom_A(M, A) \stackrel{\mathsf{ev}}\lar A$ induces a morphism of Cohen--Macaulay modules
$
M \boxtimes_A \Hom_A(M, A) \stackrel{\mathsf{ev}'}\lar A
$
such that $\mathsf{ev}'_\idp$ is an isomorphism for any $\idp \in \kP$. By \cite[Lemma 3.6]{SurvOnCM}, $\mathsf{ev}'$ is an isomorphism. Hence, $M^\ast$ is indeed inverse to  $M$ in $\CM_1^{\mathsf{lf}}(A)$.
\end{proof}

\begin{remark}
Let $Z_k \subset X$ be a finite subset, $N_k$ be a locally free  sheaf on $U_k := X \setminus Z_k$ and $M_k := \imath_{k\ast}(N_k) \in \CM_1^{\mathsf{lf}}(A)$ for $k = 1,2$. Then we have:
$
M_1 \boxtimes_A M_2 \cong \imath_\ast\bigl(N_1\big|_{U} \otimes N_2\big|_{U}\bigr),
$
where $U := X \setminus \{Z_1 \cup Z_2\}$ and $U \stackrel{\imath}\lar X$ is the canonical embedding. The proof of this result follows from \cite[Proposition 3.10]{SurvOnCM}. Note that $\Pic(A)$ is a subgroup of $\CM_1^{\mathsf{lf}}(A)$.
\end{remark}

\subsection{Category of triples}
Now we introduce a  certain categorical construction \cite{SurvOnCM,BDNonIsol}, playing the key role in our paper.
Let $A$ be a reduced \emph{Cohen--Macaulay}
$\CC$--algebra  of Krull dimension two, either finitely generated or complete. Let $R$ be the normalization of $A$. Note that $R$ is automatically
Cohen--Macaulay \cite[Theorem IV.D.11]{Serre} and
the algebra extension $A \subseteq R$ is finite.

 We denote by $I = \mathsf{Ann}_A(R/A) = \bigl\{r \in A \,\big|\, rR \subseteq A \bigr\} \cong \Hom_A(R, A)$ the conductor ideal of the algebra extension $A \subseteq R$. Observe that  $I$ is an ideal both in $A$ and $R$. Moreover, $I$ is Cohen--Macaulay (both  over $A$ and $R$); see e.g.~\cite[Lemma 3.1]{SurvOnCM}. Both $\CC$--algebras
$\bar{A} = A/I$ and $\bar{R} = R/I$ are Cohen--Macaulay of Krull dimension one (but not necessarily reduced).
Let
$\rQ(\bar{A})$ and $\rQ(\bar{R})$ be the corresponding total rings
of fractions. Then the algebra extension  $\bar{A} \subseteq \bar{R}$ induces  a canonical embedding
$\rQ(\bar{A}) \subseteq  \rQ(\bar{R})$.

Let $\mathsf{Ass}_R(I) = \left\{\idq_1, \dots, \idq_n\right\}$ be the set of the associated prime ideals of $I$ in $R$. Since the algebra $Q(\bar{R})$ is Artinian,
the Chinese Remainder theorem implies that
$$
Q(\bar{R}) \cong \bar{R}_{\bar{\idq}_1} \times \dots \times \bar{R}_{\bar{\idq}_n},
$$
where $\bar{\idq}_1, \dots, \bar{\idq}_n$ are the images of $\idq_1, \dots, \idq_n$ in $\bar{S}$. Of course, a similar result holds for  the algebra $Q(\bar{A})$, too.

\smallskip
\noindent
The proof of the following result can be found in \cite[Lemma 3.1]{BDNonIsol}.

\begin{lemma}
For any $M \in \CM(A)$, the following statements are true:

\smallskip
\noindent
1.~the canonical morphism of $\rQ(\bar{R})$--modules
$$
\theta_M:
Q(\bar{R}) \otimes_{Q(\bar{A})} \bigl(Q(\bar{A})
\otimes_{A} M\bigr)
\lar Q(\bar{R}) \otimes_{R} \bigl(R  \otimes_{A} M\bigr)
 \lar
 Q(\bar{R}) \otimes_{R} \bigl(R  \boxtimes_{A} M\bigr)
$$
is an epimorphism.

\smallskip
\noindent
2.~the adjoint morphism of $Q(\bar{A})$--modules
$$\tilde\theta_M: \;  Q(\bar{A}) \otimes_{A} M \lar Q(\bar{R}) \otimes_{Q(\bar{A})} \bigl(Q(\bar{A}) \otimes_A M\bigr)
\stackrel{\theta_M}\lar Q(\bar{R}) \otimes_{R} \bigl(R  \boxtimes_{A} M\bigr)
$$
is a monomorphism.
\end{lemma}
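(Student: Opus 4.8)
The plan is to treat the two assertions by the same guiding principle: the functor $Q(\bar{R})\otimes_R(-)$ is, via the decomposition $Q(\bar{R})\cong\prod_i\bar{R}_{\bar{\idq}_i}$, nothing but localisation of $R$ at the \emph{height-one} primes $\idq_1,\dots,\idq_n\in\mathsf{Ass}_R(I)$, and hence it annihilates every finite-length $R$-module $C$ (the support of such a $C$ consists of maximal ideals, none of which is contained in a height-one prime, so $C_{\idq_i}=0$ for all $i$). The same remark applies to $Q(\bar{A})\otimes_A(-)$ and the height-one primes $\mathsf{Ass}_A(I)=\{\idp_1,\dots,\idp_m\}$, and $Q(\bar{A})\cong\prod_j\bar{A}_{\bar{\idp}_j}$. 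For Part 1: $\theta_M$ is the composite of a canonical base-change isomorphism $Q(\bar{R})\otimes_{Q(\bar{A})}\bigl(Q(\bar{A})\otimes_A M\bigr)\xrightarrow{\ \sim\ }Q(\bar{R})\otimes_R\bigl(R\otimes_A M\bigr)$ (both sides are identified with $Q(\bar{R})\otimes_A M$ by associativity of $\otimes$, using that $Q(\bar{R})$ is an algebra over $Q(\bar{A})$ through the embedding $Q(\bar{A})\subseteq Q(\bar{R})$ and over $R$), followed by $Q(\bar{R})\otimes_R(-)$ applied to the canonical morphism $\mathrm{can}\colon R\otimes_A M\to R\boxtimes_A M$. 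Writing $N:=(R\otimes_A M)/\tor$ (a Noetherian torsion-free $A$-module, since $A$- and $R$-torsion coincide as $Q(R)=Q(A)$), $\mathrm{can}$ factors as $R\otimes_A M\twoheadrightarrow N\hookrightarrow N^{\dagger}=R\boxtimes_A M$, so $\mathrm{coker}(\mathrm{can})\cong N^{\dagger}/N$ is finite-dimensional over $\CC$ by Proposition \ref{P:MacaulayficationModules}, hence of finite length over $R$. By the principle above $Q(\bar{R})\otimes_R\mathrm{coker}(\mathrm{can})=0$, so right exactness of $Q(\bar{R})\otimes_R(-)$ makes the second map an epimorphism; hence $\theta_M$ is one.

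For Part 2 I would argue by localising $\tilde{\theta}_M$, a morphism of $Q(\bar{A})$-modules, at each $\idp_j\in\mathsf{Ass}_A(I)$ (a map of modules over the finite product $Q(\bar{A})=\prod_j\bar{A}_{\bar{\idp}_j}$ is a monomorphism iff each component is). The formation of $I$, $\bar{A}$, $\bar{R}$, $R\otimes_A M$, $\tor$, and the Macaulayfication $(-)^{\dagger}$ all commute with localisation, and after localising at the height-one prime $\idp_j$ the rings $A_{\idp_j}$, $R_{\idp_j}$ are one-dimensional, so $\bar{A}_{\bar{\idp}_j}=A_{\idp_j}/I_{\idp_j}$ is Artinian and $(R\boxtimes_A M)_{\idp_j}=N_{\idp_j}$, since for a height-one prime $(N^{\dagger})_{\idp_j}=N_{\idp_j}$. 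Thus $(\tilde{\theta}_M)_{\idp_j}$ is just the map $M_{\idp_j}/I_{\idp_j}M_{\idp_j}\to N_{\idp_j}/I_{\idp_j}N_{\idp_j}$ induced by $\phi:=\mathrm{can}_{\idp_j}\colon M_{\idp_j}\to N_{\idp_j}$. Now $\phi$ is injective because $M_{\idp_j}$ is torsion-free over $A_{\idp_j}$ and $Q\otimes_A\phi$ is an isomorphism; and $N_{\idp_j}$ is generated over $R_{\idp_j}$ by $\phi(M_{\idp_j})$, since $R_{\idp_j}\otimes_{A_{\idp_j}}M_{\idp_j}$ already is and $N_{\idp_j}$ is a quotient of it. Hence
$$
I_{\idp_j}N_{\idp_j}=I_{\idp_j}R_{\idp_j}\,\phi(M_{\idp_j})=I_{\idp_j}\,\phi(M_{\idp_j})=\phi\bigl(I_{\idp_j}M_{\idp_j}\bigr),
$$
using $I_{\idp_j}R_{\idp_j}=I_{\idp_j}$ as $I$ is an ideal of $R$. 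Since $\phi$ is injective, the preimage of $I_{\idp_j}N_{\idp_j}$ under $\phi$ is exactly $I_{\idp_j}M_{\idp_j}$, which says that $(\tilde{\theta}_M)_{\idp_j}$ is a monomorphism; so $\tilde{\theta}_M$ is a monomorphism. (If $A$ is merely reduced one replaces $Q(A)$ by the total ring of fractions and $R$ by the integral closure of $A$ in it, and the argument is unchanged.)

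The hard part is not any single computation but the bookkeeping in Part 2: one must check carefully that $(-)^{\dagger}$ commutes with localisation at a height-one prime and becomes trivial there, so that the localised $(\tilde{\theta}_M)_{\idp_j}$ really is the elementary map above; after that, the identity $I_{\idp_j}N_{\idp_j}=\phi(I_{\idp_j}M_{\idp_j})$ is immediate. A purely global argument would instead require knowing that $\bar{R}$ is flat over $\bar{A}$, which in general fails, so the passage to height-one localisations is essential.
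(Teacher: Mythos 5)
Your argument is correct, and it is worth noting that the paper itself gives no proof of this lemma at all: it simply cites \cite[Lemma 3.1]{BDNonIsol}, so what you have written is a self-contained substitute rather than a variant of an in-paper argument. Both halves check out. For part 1, the identification of the first arrow with the associativity isomorphism $Q(\bar R)\otimes_A M$ is correct, and the surjectivity of the second arrow follows exactly as you say: $\coker(R\otimes_A M\to R\boxtimes_A M)\cong N^\dagger/N$ is finite--dimensional by Proposition \ref{P:MacaulayficationModules}, hence vanishes after localizing at the height--one primes $\idq_i$, and $Q(\bar R)\otimes_R(-)=\prod_i\bigl((-)_{\idq_i}/I_{\idq_i}(-)_{\idq_i}\bigr)$ is right exact. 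For part 2, the reduction to the components over $Q(\bar A)=\prod_j\bar A_{\bar\idp_j}$ is legitimate (here one uses that $\bar A$ is one--dimensional Cohen--Macaulay, so it has no embedded primes and its total ring of fractions is the product of the localizations at the minimal primes), the identification $(N^\dagger)_{\idp_j}=N_{\idp_j}$ again follows from finite--dimensionality of $N^\dagger/N$, and the two pillars of the injectivity argument --- that $\phi\colon M_{\idp_j}\hookrightarrow N_{\idp_j}$ is injective because $M$ is torsion free and $Q\otimes\phi$ is an isomorphism, and that $I_{\idp_j}N_{\idp_j}=\phi(I_{\idp_j}M_{\idp_j})$ because $N_{\idp_j}=R_{\idp_j}\cdot\phi(M_{\idp_j})$ and $I$ is an ideal of $R$, so $I_{\idp_j}R_{\idp_j}=I_{\idp_j}$ --- are both sound. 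This is essentially the mechanism of the Burban--Drozd proof; the only point I would make explicit is that the $j$--th component of the target is the product of $N_{\idq_i}/I_{\idq_i}N_{\idq_i}$ over all $\idq_i$ lying over $\idp_j$ (there may be several), which coincides with $N_{\idp_j}/I_{\idp_j}N_{\idp_j}$ precisely because $\bar R_{\bar\idp_j}$ is the Artinian product $\prod_{\idq_i\mid\idp_j}\bar R_{\bar\idq_i}$; your write-up passes over this silently but the step is harmless.
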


\begin{definition}\label{D:triples}
Consider the following diagram of categories and functors:
$$
\CM(R) \xrightarrow{\; \; \; Q(\bar{R}) \otimes_{R} \,-\,\;\;  }  Q(\bar{R})-\mathsf{mod} \xleftarrow{Q(\bar{R}) \otimes_{Q(\bar{A})} \,-\,} Q(\bar{A})-\mathsf{mod}
$$
According to \cite[Section II.6]{MacLane},  the corresponding \emph{comma category} $\mathsf{Comma}(A)$ is defined as follows. Its objects are   \emph{triples}
 $(N, V, \theta)$, where $N$
is a Cohen---Macaulay $R$--module, $V$ is a Noetherian
$Q(\bar{A})$--module and
$Q(\bar{R}) \otimes_{Q(\bar{A})} V \stackrel{\theta}\lar Q(\bar{R}) \otimes_{R} N$
is a morphism of $Q(\bar{R})$--modules (called \emph{gluing map}).

A morphism between two objects  $(N, V, \theta)$
and $(N', V', \theta')$ of the category $\mathsf{Comma}(B)$  is given by a pair $(f, g)$, where
$N \stackrel{f}\lar N'$ is a morphism of $R$--modules and
$V \stackrel{g}\lar V'$ is a morphism of $Q(\bar{A})$--modules such that
the following diagram
\begin{equation}\label{E:morphismtriples}
\begin{array}{c}
\xymatrix
{
\rQ(\bar{R}) \otimes_{\rQ(\bar{A})} V \ar[rr]^-\theta \ar[d]_{1 \otimes g} & &
\rQ(\bar{R}) \otimes_R N \ar[d]^{1 \otimes f}\\
\rQ(\bar{R}) \otimes_{\rQ(\bar{A})} V' \ar[rr]^-{\theta'} & &
\rQ(\bar{R}) \otimes_{R} N'
}
\end{array}
\end{equation}
is commutative.

The \emph{category of triples} $\Tri(A)$ is the full subcategory of $\mathsf{Comma}(A)$ consisting of those triples $(N, V, \theta)$, for which
the gluing map $\theta$ satisfies the following two conditions:
\begin{itemize}
\item $\theta$ is an epimorphism;
\item the adjoint morphism
of $Q(\bar{A})$--modules $\tilde\theta$ defined as the composition
$$V \lar
Q(\bar{R}) \otimes_{Q(\bar{A})} V \stackrel{\theta}\lar Q(\bar{R}) \otimes_{R} N
$$
is a monomorphism. \qed
\end{itemize}
\end{definition}

\noindent
In the above terms, we have a commutative diagram of $\CC$--algebras
\begin{equation}\label{E:keydiag}
\begin{array}{c}
\xymatrix{
A  \ar@{_{(}->}[d] \ar[rr] & & Q(\bar{A}) \ar@{^{(}->}[d] \\
R \ar[rr]   & & Q(\bar{R}).
}
\end{array}
\end{equation}
The main idea is to realize the category $\CM(A)$ as a ``categorical gluing'' of the categories $\CM(R)$ and $Q(\bar{A})-\mathsf{mod}$ along the category
$Q(\bar{R})-\mathsf{mod}$. This is implemented
by  the following result \cite[Theorem 3.5]{BDNonIsol}.

\begin{theorem}\label{T:BurbanDrozd} The
functor
$
\CM(A) \stackrel{\mathbb{F}}\lar  \Tri(A),
$
mapping a Cohen-Macaulay $A$--module $M$ to the triple
$\bigl(R \boxtimes_A M, Q(\bar{A}) \otimes_A M,
\theta_M\bigr)$, is an equivalence of categories.
The functor $\FF$ establishes  an equivalence of categories between
$\CM^{\mathsf{lf}}(A)$ and  the full subcategory $\Tri^{\mathsf{lf}}(A)$ of $\Tri(A)$ consisting
of those triples $(N, V, \theta)$ for which the $Q(\bar{A})$--module $V$ is free  and the morphism
$\theta$ is an isomorphism.

\smallskip
\noindent
Moreover, for any objects $M_1$ and $M_2$ of $\CM^{\mathsf{lf}}(A)$ we have:
$$
\FF(M_1 \boxtimes_A M_2) \cong \FF(M_1) \otimes \FF(M_2),
$$
where
$
(N_1, V_1, \theta_1) \otimes (N_2, V_2, \theta_2):=
\bigl(N_1 \boxtimes_R N_2, V_1 \otimes_{Q(\bar{A})} V_2, \theta_1 \otimes \theta_2\bigr)
$
for any two objects of the category $\Tri^{\mathsf{lf}}(A)$.

\smallskip
\noindent
Finally, for any maximal ideal $\idm$ in the algebra  $A$, we have a commutative diagram of categories and functors
\begin{equation}\label{E:LocalizeCompleteTriples}
\begin{array}{c}
\xymatrix{
\CM(A) \ar[rr]^-{\mathbb{F}} \ar[d]_-{\widehat{(-)}_{\idm}} & & \Tri(A)  \ar[d]^-{\mathbb{L}_\idm} \\
\CM(\widehat{A}_\idm) \ar[rr]^-{\mathbb{F}_\idm}  & & \Tri(\widehat{A}_\idm),
}
\end{array}
\end{equation}
where $\mathbb{L}_\idm\bigl(N, V, \theta\bigr) = \bigl(\widehat{N}_\idm, \widehat{V}_\idm, \widehat{\theta}_\idm\bigr)$.
\end{theorem}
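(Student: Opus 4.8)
The plan is to construct the quasi-inverse functor $\mathbb{G}\colon \Tri(A) \to \CM(A)$ explicitly by a pullback (fibre product) of modules, and then to verify that $\mathbb{G}\circ\mathbb{F}$ and $\mathbb{F}\circ\mathbb{G}$ are naturally isomorphic to the respective identity functors. Given a triple $(N,V,\theta)$, I would set
$$
\mathbb{G}(N,V,\theta) := N \times_{Q(\bar R)\otimes_R N} V,
$$
the $A$-module of pairs $(n,v)$ with $n \in N$, $v \in V$, whose images in $Q(\bar R)\otimes_R N$ agree, where $n$ maps via the canonical $N \to \bar R\otimes_R N \to Q(\bar R)\otimes_R N$ and $v$ maps via $V \to Q(\bar R)\otimes_{Q(\bar A)} V \xrightarrow{\theta} Q(\bar R)\otimes_R N$. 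The conditions in Definition \ref{D:triples} ($\theta$ epi, $\tilde\theta$ mono) are exactly what guarantees that this pullback has the right rank and is Cohen--Macaulay: the mono condition on $\tilde\theta$ ensures $\mathbb{G}(N,V,\theta)$ is torsion free with the expected rational envelope, while the key point is that $\mathbb{G}(N,V,\theta)$ has depth two. For this I would use the defining short exact sequence of the pullback, namely $0 \to M \to N\oplus V \to \mathrm{im}(\theta) \to 0$ after identifying the relevant cokernel with an image inside the one-dimensional Cohen--Macaulay modules $\bar R\otimes_R N$ and $Q(\bar A)\otimes_A \bullet$, and then apply the Depth Lemma \cite[Proposition 1.2.9]{BrunsHerzog} together with the fact that $I$, $\bar A$, $\bar R$ are Cohen--Macaulay.

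Next I would build the natural transformations. For $M \in \CM(A)$ there is a canonical map $M \to \mathbb{G}\mathbb{F}(M)$, sending $m$ to the pair consisting of its images in $R\boxtimes_A M$ and $Q(\bar A)\otimes_A M$; one checks this is well-defined using the commutativity already encoded in the construction of $\theta_M$. To see it is an isomorphism, localise: at every prime of height $\le 1$ the conductor $I$ becomes the unit ideal except at the (finitely many) primes lying on the non-normal locus, and on those one reduces to the classical Milnor-square / conductor-square description of $M_\idp$ as a pullback — which is standard for one-dimensional Cohen--Macaulay rings. Since both $M$ and $\mathbb{G}\mathbb{F}(M)$ are Cohen--Macaulay of the same rank, the map being an isomorphism in codimension one plus the Depth Lemma (both sides have depth two) forces it to be an isomorphism globally, by the same argument as in the Lemma proved just above in the excerpt. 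The converse natural transformation $\mathbb{F}\mathbb{G}(N,V,\theta) \to (N,V,\theta)$ is handled similarly: the component on the $R$-module part is $R\boxtimes_A \mathbb{G}(N,V,\theta) \to N$, shown to be an isomorphism because $\theta$ is epi (so that $N$ is recovered as $R\cdot(\text{image of }M)$), and the $Q(\bar A)$-module part $Q(\bar A)\otimes_A\mathbb{G}(N,V,\theta) \to V$ is an isomorphism because $\tilde\theta$ is mono and $V$ is already a $Q(\bar A)$-module so it is recovered after the localisation that kills $I$.

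For the refinement to $\CM^{\mathsf{lf}}(A) \simeq \Tri^{\mathsf{lf}}(A)$: a module $M$ is locally free in codimension one iff $M_\idp$ is free for every $\idp \in \kP$ that lies over the non-normal locus (elsewhere it is automatic since $R$ is normal of dimension two, hence locally free in codimension one, and $M^\dagger\cong M^{\ast\ast}$ is a reflexive $R$-module there). Tracking through the pullback description, this freeness at the conductor locus is equivalent to $V$ being $Q(\bar A)$-free and $\theta$ being an isomorphism. The monoidal statement $\mathbb{F}(M_1\boxtimes_A M_2) \cong \mathbb{F}(M_1)\otimes\mathbb{F}(M_2)$ follows because $\boxtimes$ is computed by tensor-then-Macaulayfy, and both $R\boxtimes_A(-)$ and $Q(\bar A)\otimes_A(-)$ are (right exact and) compatible with this after localisation, with the gluing maps multiplying; here one uses the Remark after Proposition \ref{P:CMPicard} identifying $\boxtimes$ with sheaf tensor product away from a finite set. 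Finally, the compatibility square \eqref{E:LocalizeCompleteTriples} with completion at $\idm$ is formal: completion is flat and exact, commutes with the finite algebra extension $A\subseteq R$, with passage to conductor and to total rings of fractions of $\bar A$, $\bar R$, and hence transports the pullback description verbatim; so $\mathbb{L}_\idm$ as defined makes the diagram commute up to natural isomorphism.

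\medskip
\noindent\emph{Main obstacle.} The delicate point is not the bookkeeping but the depth argument: proving that the pullback $\mathbb{G}(N,V,\theta)$ genuinely has depth two (equivalently, is a \emph{maximal} Cohen--Macaulay $A$-module, not merely torsion free). This is where the two conditions on $\theta$ in Definition \ref{D:triples} are indispensable — the epi condition controls the cokernel appearing in the defining exact sequence so that it is a one-dimensional Cohen--Macaulay module, and the mono condition on $\tilde\theta$ prevents spurious torsion — and one must assemble the Depth Lemma carefully around the short exact sequence $0 \to \mathbb{G}(N,V,\theta) \to N \oplus V \to C \to 0$ with $C$ identified inside $Q(\bar R)\otimes_R N$. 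Everything else is a localisation-and-Milnor-square routine, reducing to the classical one-dimensional case over each local ring on the non-normal locus of $X$.
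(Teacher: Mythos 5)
This theorem is not proved in the paper at all: it is quoted verbatim from \cite[Theorem 3.5]{BDNonIsol}, so there is no in-text argument to compare yours against. Your plan is, in outline, the same reconstruction argument used in that reference: the quasi-inverse is the preimage construction $\mathbb{G}(N,V,\theta)=\gamma^{-1}\bigl(\tilde\theta(V)\bigr)$ for $\gamma\colon N\to Q(\bar R)\otimes_R N$ (equivalently your fibre product, since $\tilde\theta$ is injective), Cohen--Macaulayness of the result is the crux and is obtained from the Depth Lemma, and the unit/counit are shown to be isomorphisms by checking them in codimension one and invoking depth two on both sides. You have also correctly located where the two axioms on $\theta$ enter. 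Two technical points deserve more care than your sketch gives them. First, $V$ is a $Q(\bar A)$-module and hence \emph{not} finitely generated over $A$, so the Depth Lemma cannot be applied literally to $0\to M\to N\oplus V\to C\to 0$; one must instead work with $0\to M\to N\to \mathrm{im}(\gamma)/\bigl(\mathrm{im}(\gamma)\cap\tilde\theta(V)\bigr)\to 0$, whose cokernel is a finitely generated $A$-module of dimension at most one, and one still has to argue that this cokernel has positive depth (no finite-length submodules) before the Depth Lemma yields $\depth M=2$. Second, the compatibility with completion is not purely formal: $\widehat{V}_\idm$ has to be interpreted correctly because formation of the total ring of fractions of $\bar A$ does not commute naively with $\idm$-adic completion; one has to match the branches of $\bar A$ at $\idm$ with the minimal primes of $\widehat{\bar A}_\idm$, which is exactly the bookkeeping carried out in the cited memoir. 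Neither point invalidates your strategy, but both are where the real work lies if you were to write the proof out in full.
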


\subsection{Category of triples for the algebra of planar quasi--invariants}\label{SS:CatTriplesQuasiInv}
From now on, we put $R := \CC[z_1, z_2]$. As in Introduction, we fix the following datum $(\Pi, \underline{\mu}\bigr)$, called
\emph{weighted line arrangement}:
\begin{itemize}
\item
$\Pi \subset \CC$ is a finite subset satisfying the condition: $\alpha-\beta \notin \pi \ZZ$ for any $\alpha \ne \beta \in \Pi$.
\item $\Pi \stackrel{\underline{\mu}}\lar \NN_0, \; \alpha \mapsto \mu_\alpha = \underline{\mu}(\alpha)$ is a multiplicity function.
\end{itemize}
The  above restriction on the elements of the set $\Pi$ has the following meaning:  the lines  $V(l_\alpha)$ and $V(l_\beta)$ are distinct for any $\alpha \ne \beta \in \Pi$, where $l_\alpha = - \sin(\alpha)z_1 + \cos(\alpha) z_2 \in R$ and $V(f) \subset \AA^2$ denotes the vanishing set of $f \in R$. Note that any complex line $V(l) \subset \AA^2$ passing through the origin, with the only exception of two  lines $V(z_1 \pm i z_2)$, can be written as $V(l_\alpha)$ for an appropriate $\alpha \in \CC$.

\smallskip
\noindent
For any $\alpha \in \Pi$, consider the reflection
$$\CC^2 \stackrel{r_\alpha}\lar \CC^2, \; \vec{x} \mapsto \vec{z} - 2 \bigl(\vec{z}, \vec{e}_\alpha\bigr) \vec{e}_\alpha,$$
where  $\vec{e}_\alpha := \bigl(-\sin(\alpha), \cos(\alpha)\bigr)$ and $\bigl(\vec{z}, \vec{e}_\alpha\bigr) = - \sin(\alpha) z_1 + \cos(\alpha) z_2$.
Let
$$R \stackrel{s_\alpha}\lar R, \quad f(\vec{z}) \mapsto \bigl(s_\alpha(f)\bigr)(\vec{z}):= f\bigl(r_\alpha(\vec{z})\bigr)
$$ be the involution associated with the reflection $r_\alpha$.

\smallskip
\noindent
Let $A = A\bigl(\Pi, \underline{\mu}\bigr)$ be the  algebra of surface quasi--invariant polynomials (\ref{E:quasiinvariants}) corresponding to the datum
$\bigl(\Pi, \underline{\mu}\bigr)$.
In what follows, we shall denote $$
\delta:= \prod\limits_{\alpha \in \Pi} l_\alpha^{\mu_\alpha} \quad \mbox{\rm and} \quad  \omega:= z_1^2 + z_2^2.
$$
Observe that both polynomials $\omega$ and $\delta^2$  belong to the algebra $A$.

\smallskip
\noindent
Although the following  result is not original (compare with \cite[Lemma 7.3]{BerestEtingofGinzburg} and \cite[Theorem 3.3.2]{Johnston}), we provide  its detailed proof for the sake of completeness and convenience of the reader.
\begin{proposition}\label{P:BasicsQuasiinv}
The algebra  $A$ is a finitely generated homogeneous subalgebra of $R$ of Krull dimension two. Next, $Q(A) = Q(R) = \CC(z_1, z_2)$ and the algebra $R$ is the normalization of $A$ (i.e.~the integral closure of $A$ is $Q(A)$).
Let $X$ be an affine surface, whose coordinate ring is isomorphic to $A$. Then the corresponding normalization map $\AA^2 \stackrel{\nu}\lar  X$ is bijective.
\end{proposition}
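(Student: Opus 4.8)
The plan is to verify each assertion in turn, exploiting the explicit definition (\ref{E:quasiinvariants}) of $A$ as a subalgebra of $R = \CC[z_1, z_2]$.

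\emph{Finite generation, homogeneity, Krull dimension.} First I would observe that $A$ is a \emph{graded} subalgebra of $R = \bigoplus_{d \geq 0} R_d$: indeed, if $f \in A$ then, since each $s_\alpha$ is a graded automorphism of $R$, the condition $l_\alpha^{2\mu_\alpha+1} \mid (f - s_\alpha(f))$ holds degree by degree, so every homogeneous component of $f$ again lies in $A$. Hence $A = \bigoplus_{d \geq 0} A_d$ with $A_d = A \cap R_d$. Finite generation then follows from a standard argument: $A$ is a graded subalgebra of the Noetherian ring $R$, and $R$ is a finitely generated $A$-module once we know $A$ contains an element making $R$ integral over it — concretely, $\delta^2 = \prod_\alpha l_\alpha^{2\mu_\alpha} \in A$ and the system of parameters $\{\omega, \delta^2\} \subset A$ (with $\omega = z_1^2+z_2^2$) already cuts out a finite set in $\AA^2$, so $R$ is module-finite over $\CC[\omega,\delta^2] \subseteq A$, whence over $A$; by Artin--Tate (or Eakin--Nagata) $A$ is then a finitely generated $\CC$-algebra. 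This simultaneously gives $\krdim A = \krdim R = 2$.

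\emph{Fraction fields and normalization.} Since $\delta^2 \in A$ and $l_\alpha^2 \in A$ for each $\alpha$ (the square of $l_\alpha$ is fixed by $s_\alpha$, so it is trivially quasi-invariant, provided one checks it against \emph{all} the conditions — here one uses that $l_\alpha^2$ is divisible by a high power only at the single line $\alpha$, but $l_\alpha^2 - s_\beta(l_\alpha^2)$ for $\beta \neq \alpha$ vanishes on $V(l_\beta)$ to order... this needs a small check, see below), one builds enough elements of $A$ to recover all of $Q(R)$. More robustly: $A \subseteq R$ with $R$ module-finite over $A$ forces $Q(A) \subseteq Q(R)$ to be a finite field extension, and since $\delta^2 f \in A$ for suitable $f$ generating $R$ over $\CC[\omega, \delta^2]$... the cleanest route is to note that any $g \in R$ satisfies $\delta^{2N} g \in A$ for $N \gg 0$ (multiplying by a sufficiently high power of $\delta^2$ kills all the divisibility defects of $g - s_\alpha(g)$), hence $g = (\delta^{2N}g)/\delta^{2N} \in Q(A)$, giving $Q(A) = Q(R) = \CC(z_1,z_2)$. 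Since $R$ is integrally closed (it is a polynomial ring), $R$ is the normalization of $A$.

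\emph{Bijectivity of the normalization map.} Finally, the normalization map $\AA^2 \xrightarrow{\nu} X = \Spec A$ is a finite morphism between irreducible surfaces, birational by the previous paragraph; it is surjective because finite. Injectivity is the point requiring genuine work, and I expect \textbf{this to be the main obstacle}: one must show $A$ separates points of $\AA^2$. For two points $\vec{p} \neq \vec{q}$ in general position (neither on any line $V(l_\alpha)$, and not related by any composition of the reflections), one expects a high power of $\delta^2$ times a generic linear form, or a generic element of $\CC[\omega]$ suitably modified, to separate them; the delicate cases are when $\vec{p}$ and $\vec{q}$ lie on one of the reflection lines or are swapped by a reflection. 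Here the key algebraic input is that $R^{s_\alpha} \subset A + l_\alpha^{2\mu_\alpha+1}R$ in an appropriate sense: every $s_\alpha$-invariant polynomial is quasi-invariant \emph{at $\alpha$} for free, and one can correct the defects at the other (finitely many) lines by multiplying by a power of the product of the \emph{other} $l_\beta$'s, which is again fixed by $s_\alpha$; iterating over all $\alpha$ shows $A$ contains, up to multiplying by $\delta^{2N}$, all of $R$. Concretely: given $\vec p \ne \vec q$, pick $g \in R$ with $g(\vec p) \ne g(\vec q)$ and $\delta^{2N}(\vec p), \delta^{2N}(\vec q)$ both nonzero (possible as $V(\delta)$ is a proper closed set, unless one of the points lies on $V(\delta)$, which is handled separately by choosing $g$ vanishing appropriately); then $\delta^{2N} g \in A$ separates $\vec p$ and $\vec q$. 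The remaining boundary cases, where a point lies on a reflection line, are dispatched by the same multiply-and-correct trick together with the observation that on $V(l_\alpha)$ the involution $s_\alpha$ acts trivially, so quasi-invariance imposes no obstruction there. Assembling these cases yields injectivity of $\nu$, completing the proof.
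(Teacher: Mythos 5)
Your overall strategy coincides with the paper's on every point: homogeneity via the graded action of the $s_\alpha$'s, module--finiteness of $R$ over $\CC[\omega,\delta^2]\subseteq A$ (using that $V(\omega,\delta^2)=\{0\}$, which silently uses the standing exclusion of the lines $V(z_1\pm iz_2)$) followed by Artin--Tate, the inclusion $\delta^2 R\subseteq A$ to get $Q(A)=Q(R)$, and point separation for injectivity of $\nu$. You were also right to distrust the claim $l_\alpha^2\in A$ and to abandon it. The first three parts of your argument are correct as written.

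The gap is exactly where you predicted it, in separating two points that \emph{both} lie on $V(\delta)$ (if only one does, your $\delta^{2N}g$ already works, since it vanishes at the point on the arrangement and can be made nonzero at the other). For that case your correction factor, ``a power of the product of the other $l_\beta$'s, which is again fixed by $s_\alpha$,'' is not $s_\alpha$--invariant for a general weighted line arrangement: $s_\alpha$ permutes the lines of $\Pi$ only in the Coxeter case, and the proposition is stated for arbitrary $\Pi$. The fix, which is what the paper does, is to symmetrize: for $q\neq(0,0)$ lying on the single line $V(l_\alpha)$, take $g$ with $g(p)=0$, $g(q)\neq 0$ and set
$f=\bigl(\prod_{\beta\neq\alpha}(s_\alpha(l_\beta)\cdot l_\beta)^{2\mu_\beta}\bigr)\cdot\bigl(s_\alpha(g)\cdot g\bigr)$.
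This $f$ satisfies $s_\alpha(f)=f$ (so the quasi--invariance condition at $\alpha$ is vacuous), is divisible by $l_\beta^{2\mu_\beta}$ for every $\beta\neq\alpha$ (so $f-s_\beta(f)=l_\beta^{2\mu_\beta}(h-s_\beta h)$ is divisible by $l_\beta^{2\mu_\beta+1}$), vanishes at $p$ because of the factor $g$, and is nonzero at $q$ because $r_\alpha(q)=q$ forces $s_\alpha(l_\beta)(q)=l_\beta(q)\neq 0$ and $s_\alpha(g)(q)=g(q)\neq 0$. Your remark that ``iterating over all $\alpha$ shows $A$ contains, up to multiplying by $\delta^{2N}$, all of $R$'' does not rescue this case, since every element of $\delta^{2N}R$ vanishes at both points.
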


\begin{proof} First note the following
 elementary fact: if $l \in R$ is a homogeneous polynomial dividing
another (possibly, non--homogeneous) polynomial $f \in R$, then $l$ divides the  homogeneous component of $f$ of the highest degree. It follows from the definition (\ref{E:quasiinvariants}) that $A$ is a homogeneous subalgebra of $R$.

To prove that the algebra $A$ is finitely generated, put
$A^\circ := \CC[\omega, \delta^2] \subseteq A$. We claim that $R$ is finite viewed as an $A^\circ$--module.
 Indeed, let $J = \langle \omega, \delta^2\rangle_R$.
Since $V\bigl(z_1 \pm i z_2, l_\alpha\bigr) = \{0\}$ for any $\alpha \in \Pi$, we have: $V(J) = \{0\}$. Hence, the ideal $J$ is $(z_1, z_2)$--primary
implying that  $\dim_{\CC}(R/J) < \infty$. Let $h_1, \dots, h_m$ be a basis of $R/J$ consisting of homogeneous polynomials.  We claim that
\begin{equation}\label{E:finitelygenerated}
R = \bigl\langle h_1, \dots, h_m\rangle_A.
\end{equation}
Indeed, let  $f\in R$ be an arbitrary homogeneous polynomial. Then there exist $\lambda_1\dots, \lambda_m \in \CC$ as well as homogeneous polynomials
$g, h \in R$ satisfying  $\deg(g) < \deg(f)$ and $\deg(h) < \deg(f)$, such that
$
f = \bigl(\sum\limits_{j = 1}^m \lambda_j h_j\bigr) + g \omega + h \delta^2.
$
Proceeding inductively with $g, h$, we get the result (\ref{E:finitelygenerated}). Since we have a tower of algebra extensions
$A^\circ \subseteq A \subseteq R$ and $R$ is finitely generated as $A^\circ$--module, the algebra $A$ is finitely generated of Krull dimension two.

Since for any $f \in R$, the polynomial $f \delta^2$ belongs to $A$, we have: $f \in Q(A)$. Hence, $Q(A) = Q(R)$.
The normalization map
$\AA^2 \stackrel{\nu}\lar X$  is automatically surjective. We have to show  that $\nu$ is injective.
Our proof is a slightly modified version of the argument from \cite[Lemma 7.3]{BerestEtingofGinzburg}. By Hilbert's Nullstellensatz, the
injectivity of $\nu$ is equivalent to the statement that for any two maximal ideals $\idm \ne \idn$ of $R$ we have: $A \cap \idm \ne A\cap \idn$. Equivalently, for any pair of points $p \ne q \in \AA^2$, there exists $f \in A$ such that $f(p) = 0$ and $f(q) \ne 0$. Without loss of generality,
we may assume that $q \ne (0, 0)$. Then the following two cases can occur.

\smallskip
\noindent
\underline{Case 1}. $q \notin V(\delta) = \cup_{\alpha \in \Pi} V(l_\alpha)$. Again,  take any $g \in R$ such that $g(p) = 0$ and $g(q) \ne 0$
 and put
$f := \delta g$. Then $f \in A$ and $f(p) = 0$, whereas $f(q) \ne 0$.

\smallskip
\noindent
\underline{Case 2}. $q \in V(\delta) = \cup_{\alpha \in \Pi} V(l_\alpha)$. Since $q \ne (0,0)$, there exists precisely one $\alpha \in \Pi$ such that
$l_\alpha(q) = 0$. Again, take any $g \in R$ such that $g(p) = 0$ and $g(q) \ne 0$. Now we put
$$
f = \bigl(\prod\limits_{\beta \ne \alpha} \bigl(s_\alpha(l_\beta) \cdot l_\beta\bigr)^{2\mu_\beta}\bigr) \cdot \bigl(s_\alpha(g)\cdot g\bigr).
$$
By construction, $l_\beta^{2\mu_\beta} \big| f$ for any $\beta \ne \alpha$, whereas $s_\alpha(f) = f$. Hence, $f \in A$. Obviously,
$f(p) = 0$. On the other hand, $\bigl(s_\alpha(g))(q) = g\bigl(s_\alpha(q)\bigr) = g(q) \ne 0$ and in a similar way,
$\bigl(s_\alpha(l_\beta)\bigr)(q) = l_\beta\bigl(s_\alpha(q)\bigr)\ne 0$ for any $\beta \ne \alpha$. Therefore, $f(q) \ne 0$.
\end{proof}

It is convenient to rewrite the definition of the algebra $A\bigl(\Pi, \underline{\mu}\bigr)$  in terms of  polar coordinates.
We put   $z_1 = \rho \cos(\varphi), z_2 = \rho \sin(\varphi)$ and identify the algebra $R$ with the subalgebra
$\CC\bigl[\rho \cos(\varphi), \rho \sin(\varphi)\bigr]$ of  the algebra $\CC\{\rho, \varphi\}$ of analytic functions on
$\mathbb{C} \times \CC = \CC_\rho \times \CC_\varphi$. For any $f \in \CC\{\rho, \varphi\}$, $\alpha \in \CC$ and $k \in \NN_0$, consider the analytic function
\begin{equation}\label{E:directderivat}
f^{(k)}_\alpha := \left.\frac{\partial^k f}{\partial \varphi^k}\right|_{\varphi = \alpha}: \; \mathbb{C} \lar \CC.
\end{equation}
In these terms, we have an injective algebra homomorphism
$$
\CC\{\rho, \varphi\} \lar \CC\{\rho\}\llbracket \varepsilon \rrbracket, \quad f \mapsto T_\alpha(f) := \sum\limits_{k = 0}^\infty f^{(k)}_\alpha \dfrac{\varepsilon^k}{k!}.
$$
In the polar coordinates, we have: $l_\alpha  = \rho \sin(\varphi - \alpha)$ and
$$\bigl(s_\alpha(f)\bigr)(\rho, \varphi) = f(\rho, 2 \alpha -\varphi) \quad \mbox{\rm for any }\; f \in R.$$

\begin{lemma}\label{L:TaylorFractFields} For $k \in \NN$, let $P_k:= \CC[\rho, \varepsilon]/(\varepsilon^k)$ and
\begin{equation}
T_{(\alpha, k)}: R \lar P_k, \quad f \mapsto \sum\limits_{i = 0}^{k-1} f^{(i)}_{\alpha} \frac{\varepsilon^i}{i!}
\end{equation}
Then the following results are true.
\begin{itemize}
\item The map $T_{(\alpha, k)}$ is an algebra homomorphism and $\ker\bigl(T_{(\alpha, k)}\bigr) = \bigl(l_\alpha^k\bigr)$.
\item The algebra inclusion  $\widetilde{R}_{\alpha, k} := R/(l_\alpha^k) \stackrel{\tau}\lar P_k$  induces an isomorphism $\tilde\tau$ of the corresponding total rings of fractions.
\end{itemize}
\end{lemma}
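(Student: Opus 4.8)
The plan is to handle the two assertions separately, reducing both to the observation that $T_{(\alpha,k)}$ is nothing but the substitution $\varphi\mapsto\alpha+\varepsilon$ made explicit and truncated modulo $\varepsilon^k$. First I would record that $\partial/\partial\varphi$ acts on $R=\CC[z_1,z_2]=\CC[\rho\cos\varphi,\rho\sin\varphi]$ as the $\CC$--linear derivation $z_1\partial_{z_2}-z_2\partial_{z_1}$ (check it on the generators $z_1,z_2$); in particular $f^{(i)}_\alpha=\bigl(\partial_\varphi^i f\bigr)\big|_{\varphi=\alpha}\in\CC[\rho]$ for $f\in R$, so $T_{(\alpha,k)}$ really takes values in $P_k=\CC[\rho][\varepsilon]/(\varepsilon^k)$. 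That $T_{(\alpha,k)}$ is an algebra homomorphism is then immediate, since it is the restriction to $R$ of the composition of the injective algebra homomorphism $T_\alpha\colon\CC\{\rho,\varphi\}\to\CC\{\rho\}\llbracket\varepsilon\rrbracket$ with the truncation $\CC\{\rho\}\llbracket\varepsilon\rrbracket\to\CC\{\rho\}\llbracket\varepsilon\rrbracket/(\varepsilon^k)$. Concretely $T_{(\alpha,k)}(z_1)=\rho\cos(\alpha+\varepsilon)$ and $T_{(\alpha,k)}(z_2)=\rho\sin(\alpha+\varepsilon)$ (truncated), whence $T_{(\alpha,k)}(l_\alpha)=\rho\sin\varepsilon$ and $T_{(\alpha,k)}(v)=\rho\cos\varepsilon$, where $v:=\cos(\alpha)z_1+\sin(\alpha)z_2$ and $(u,v):=(l_\alpha,v)$ is a new linear coordinate system on $R$.

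For the kernel, the inclusion $(l_\alpha^k)\subseteq\ker T_{(\alpha,k)}$ is clear because $\rho\sin\varepsilon=\rho\varepsilon\bigl(1+O(\varepsilon^2)\bigr)$, so its $k$-th power is divisible by $\varepsilon^k$. For the reverse inclusion I would write an arbitrary $f\in R$ as $f=\sum_{j\ge0}u^j g_j$ with $g_j\in\CC[v]$, expand
$$T_{(\alpha,k)}(f)=\sum_{j\ge0}(\rho\sin\varepsilon)^j\,g_j(\rho\cos\varepsilon)\ \bmod \varepsilon^k,$$
and observe that, since $(\sin\varepsilon)^j$ involves only powers $\varepsilon^j,\varepsilon^{j+2},\dots$ and $g_j(\rho\cos\varepsilon)$ only even powers of $\varepsilon$ with $\varepsilon^0$--coefficient $g_j(\rho)$, the coefficient of $\varepsilon^m$ in this sum equals $\rho^m g_m(\rho)$ plus a $\CC[\rho]$--linear expression in $g_0,\dots,g_{m-1}$ and their derivatives. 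Hence if $T_{(\alpha,k)}(f)=0$, induction on $m=0,1,\dots,k-1$, using that $\rho^m$ is a non--zerodivisor of $\CC[\rho]$, forces $g_0=\dots=g_{k-1}=0$, i.e.\ $l_\alpha^k\mid f$ in $R$. This bookkeeping with the $\varepsilon^m$--coefficients is the one genuinely computational point of the lemma; everything else is formal. In particular $\ker T_{(\alpha,k)}=(l_\alpha^k)$ and $\tau$ is injective.

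For the second assertion I would first check that $\tau$ carries non--zerodivisors to non--zerodivisors. Since $R$ is a unique factorization domain and $l_\alpha$ is prime, $\bar f\in\widetilde{R}_{\alpha,k}=R/(l_\alpha^k)$ is a non--zerodivisor iff $l_\alpha\nmid f$, equivalently iff $f^{(0)}_\alpha=f|_{\varphi=\alpha}\ne0$ in $\CC[\rho]$ (the case $k=1$ above); an element of $P_k$ is a non--zerodivisor iff its $\varepsilon^0$--coefficient, which lies in the domain $\CC[\rho]$, is nonzero; and the $\varepsilon^0$--coefficient of $T_{(\alpha,k)}(f)$ is exactly $f^{(0)}_\alpha$. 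So $\tau$ induces a homomorphism $\tilde\tau$ of the total rings of fractions. Next I would identify those rings: the only associated prime of $\CC[u,v]/(u^k)$ is $(u)$, and similarly for $P_k$, so $Q(\widetilde{R}_{\alpha,k})\cong\CC(v)[u]/(u^k)$ and $Q(P_k)\cong\CC(\rho)[\varepsilon]/(\varepsilon^k)$, both local Artinian rings whose maximal ideal $\idm$ satisfies $\idm^k=0$. Finally, $\tilde\tau$ is a local homomorphism, it induces the isomorphism $\CC(v)\to\CC(\rho)$, $v\mapsto\rho$, on residue fields (because $\tilde\tau(v)=\rho\cos\varepsilon\equiv\rho$), and on the $i$-th graded piece $\idm^i/\idm^{i+1}$ of the $\idm$--adic filtration it sends the generator $u^i$ to $\rho^i\varepsilon^i$ modulo $\idm^{i+1}$, which generates $(\varepsilon)^i/(\varepsilon)^{i+1}$ over $\CC(\rho)$ since $\rho^i\ne0$. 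Thus $\mathrm{gr}(\tilde\tau)$ is bijective in every degree, and since the filtrations terminate at step $k$, a standard successive--approximation argument upgrades this to bijectivity of $\tilde\tau$, so that $\tilde\tau$ is an isomorphism of total rings of fractions.
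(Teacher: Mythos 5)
Your proof is correct, and the core of it coincides with the paper's: both arguments hinge on the coordinates $\rho\cos(\varphi-\alpha)$, $\rho\sin(\varphi-\alpha)$ on $R$ and the observation that their images under $T_{(\alpha,k)}$ are $\rho\cos\varepsilon$ and $\rho\sin\varepsilon$, so that $l_\alpha^i$ times the complementary coordinate to any power maps to $\rho^{i+j}\varepsilon^i$ plus higher order terms in $\varepsilon$. Where you diverge is in the packaging of the second bullet: the paper proves surjectivity of $\tilde\tau$ directly by exhibiting the elements $\bar u^{1-i}\bar v^i$ of $Q(P_k)$ in the image and noting that they, together with $\rho$, generate $Q(P_k)$, whereas you first identify both total quotient rings as the local Artinian rings $\CC(v)[u]/(u^k)$ and $\CC(\rho)[\varepsilon]/(\varepsilon^k)$ and then show the associated graded map of $\tilde\tau$ is bijective. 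The two routes buy slightly different things: the paper's is shorter once one accepts that $\tilde\tau$ exists and is injective, while yours makes explicit two points the paper glosses over, namely that $\tau$ carries non--zerodivisors to non--zerodivisors (so $\tilde\tau$ is well defined) and the induction behind $\ker T_{(\alpha,k)}=(l_\alpha^k)$, which the paper dismisses as ``easily proven by induction.'' Your coefficient-of-$\varepsilon^m$ bookkeeping for the kernel is the right way to fill that gap, and the graded argument at the end is sound because the $\idm$--adic filtrations on both sides terminate at step $k$.
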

\begin{proof}
The fact that $T_{(\alpha, k)}$ is an algebra homomorphism, is a basic property of Taylor series. If $f \in R$ is such that $f'_{\alpha} = 0$ then $l_\alpha \mid f$.
The statement that  $\ker(T_{(\alpha, k)}) = \bigl(l_\alpha^k\bigr)$ can be easily proven by induction.
Let $$u := \rho \cos(\varphi - \alpha)\quad  \mbox{\rm and} \quad v := \rho \sin(\varphi - \alpha).$$
 It is clear that $R = \CC[u, v]$. We put:
$$
\left\{
\begin{array}{l}
\bar{u} := \tau(u) = \rho\bigl(1 - \dfrac{\varepsilon^2}{2!} + \dfrac{\varepsilon^4}{4!} - \dots \bigr) \vspace{2mm}\\
\bar{v} := \tau(v) = \rho\bigl(\varepsilon - \dfrac{\varepsilon^3}{3!} + \dfrac{\varepsilon^5}{5!} - \dots \bigr).
\end{array}
\right.
$$
Next, we denote: $\bar{w}_0 = \bar{u}$  and $\bar{w}_i = \bar{u}^{1-i} \bar{v}^i \in Q(P_k)$ for $1 \le i \le k-1$.
It is clear that
\begin{itemize}
\item $\bar{w}_i \in \mathrm{Im}\bigl(Q(\widetilde{R}_{\alpha, k}) \stackrel{\tilde\tau}\lar Q(P_k)\bigr)$ and
\item $\bar{w}_i = \rho \cdot s_i$, where $s_i = \varepsilon^i + \mathrm{h.o.t} \in \CC[\varepsilon]/(\varepsilon^k)$
\end{itemize}
for any $0 \le i \le k-1$. It is now easy to see that $\rho$ and any element $s \in \CC[\varepsilon]/(\varepsilon^k)$ belong to the image of $\tilde\tau$. Hence, $Q(\widetilde{R}_{\alpha, k}) = Q(P_k)$, as claimed.
\end{proof}

\begin{corollary}\label{C:DescriptQuasiInv}
We get  the following description of the algebra of quasi--invariants:
\begin{equation}\label{E:quasiinvpolar}
A = A\bigl(\Pi, \underline{\mu}\bigr) =\bigl\{f \in R \,\big|\, f'_\alpha = f^{'''}_\alpha = \dots = f^{(2\mu_\alpha -1)}_{\alpha} = 0 \;\;  \mbox{\rm for all} \;\; \alpha \in \Pi\bigr\}.
\end{equation}
\end{corollary}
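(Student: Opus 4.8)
The plan is to translate the divisibility condition defining $A$ into the vanishing of the directional derivatives $f^{(i)}_\alpha$, using the Taylor homomorphisms $T_{(\alpha,k)}$ from Lemma \ref{L:TaylorFractFields}. First, I would note that by the kernel computation $\ker\bigl(T_{(\alpha,k)}\bigr) = \bigl(l_\alpha^k\bigr)$ established there, a polynomial $h \in R$ is divisible by $l_\alpha^{2\mu_\alpha+1}$ if and only if $T_{(\alpha,\,2\mu_\alpha+1)}(h) = 0$ in $P_{2\mu_\alpha+1} = \CC[\rho,\varepsilon]/(\varepsilon^{2\mu_\alpha+1})$. Applying this with $h = f - s_\alpha(f)$ and using that $T_{(\alpha,k)}$ is an algebra homomorphism, the condition $f \in A$ becomes: for every $\alpha \in \Pi$, $\sum_{i=0}^{2\mu_\alpha} \bigl(f - s_\alpha(f)\bigr)^{(i)}_\alpha \tfrac{\varepsilon^i}{i!} = 0$ in $P_{2\mu_\alpha+1}$, i.e.\ $\bigl(f - s_\alpha(f)\bigr)^{(i)}_\alpha = 0$ for all $0 \le i \le 2\mu_\alpha$.

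The second step is to compute $\bigl(s_\alpha(f)\bigr)^{(i)}_\alpha$ from the polar-coordinate formula $\bigl(s_\alpha(f)\bigr)(\rho,\varphi) = f(\rho, 2\alpha - \varphi)$ recorded just before Lemma \ref{L:TaylorFractFields}. Differentiating $i$ times with respect to $\varphi$ by the chain rule and evaluating at $\varphi = \alpha$ yields $\bigl(s_\alpha(f)\bigr)^{(i)}_\alpha = (-1)^i f^{(i)}_\alpha$. Consequently $\bigl(f - s_\alpha(f)\bigr)^{(i)}_\alpha = \bigl(1 - (-1)^i\bigr) f^{(i)}_\alpha$, which vanishes identically for even $i$ and equals $2 f^{(i)}_\alpha$ for odd $i$. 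Thus the system of conditions $\bigl(f - s_\alpha(f)\bigr)^{(i)}_\alpha = 0$ for $0 \le i \le 2\mu_\alpha$ collapses to $f^{(i)}_\alpha = 0$ for the odd indices $i = 1, 3, \dots, 2\mu_\alpha - 1$, which is precisely the description (\ref{E:quasiinvpolar}).

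The argument is entirely elementary; the only point requiring a small amount of care is the index bookkeeping — that the even-order derivatives impose no constraint and only the $\mu_\alpha$ odd conditions $f'_\alpha = f'''_\alpha = \dots = f^{(2\mu_\alpha-1)}_\alpha = 0$ per line survive — so I do not anticipate a genuine obstacle. (One could alternatively bypass the polar-coordinate computation by splitting $R = R^{+}_\alpha \oplus R^{-}_\alpha$ into the $\pm 1$-eigenspaces of $s_\alpha$ and observing that $l_\alpha$-divisibility of an anti-invariant is governed by its odd $\varphi$-derivatives, but the direct derivative computation above is the shortest route.)
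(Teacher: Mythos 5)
Your proof is correct and is exactly the derivation the paper intends: Corollary \ref{C:DescriptQuasiInv} is stated without proof as a consequence of Lemma \ref{L:TaylorFractFields}, and your argument (divisibility by $l_\alpha^{2\mu_\alpha+1}$ equals vanishing of $T_{(\alpha,2\mu_\alpha+1)}(f-s_\alpha(f))$, combined with $\bigl(s_\alpha(f)\bigr)^{(i)}_\alpha=(-1)^i f^{(i)}_\alpha$ so that only the odd-order conditions survive) is precisely the intended filling-in of that gap.
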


\begin{lemma}\label{L:conductor}
Let $f \in R$ be such that $T_{(\alpha, 2m)}(fg) \in \CC[\rho, \varepsilon^2]/(\varepsilon^{2m})$ for any $g \in R$. Then we have:
$T_{(\alpha, 2m)}(f) = 0$.
\end{lemma}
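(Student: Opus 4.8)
The plan is to reduce the statement to a parity computation in the ring $P_{2m}=\CC[\rho][\varepsilon]/(\varepsilon^{2m})$. Consider the involution $\iota\colon P_{2m}\to P_{2m}$ determined by $\varepsilon\mapsto-\varepsilon$; it is well defined because $(\varepsilon^{2m})$ is $\iota$--stable, and its ring of invariants is exactly $\CC[\rho,\varepsilon^2]/(\varepsilon^{2m})$. Hence the hypothesis says precisely that $T_{(\alpha,2m)}(fg)$ is $\iota$--even for every $g\in R$, and I will only use this for the two choices $g=1$ and $g=l_\alpha$. Throughout, write $\bar f:=T_{(\alpha,2m)}(f)$ and $\bar v:=T_{(\alpha,2m)}(l_\alpha)$, and recall from Lemma \ref{L:TaylorFractFields} that $T_{(\alpha,2m)}$ is an algebra homomorphism.

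First I would take $g=1$: the hypothesis then says that $\bar f$ is $\iota$--even, i.e.\ only even powers of $\varepsilon$ occur in $\bar f$. Next I record the explicit shape of $\bar v$. In polar coordinates $l_\alpha=\rho\sin(\varphi-\alpha)$, so $\bar v=T_{(\alpha,2m)}\bigl(\rho\sin(\varphi-\alpha)\bigr)=\rho\sin\varepsilon=\rho\varepsilon\cdot w$ in $P_{2m}$, where $w=\sum_{k\ge0}(-1)^k\varepsilon^{2k}/(2k+1)!$. Since $w$ has constant term $1$, it is a unit in $P_{2m}$; in particular $\bar v$ is $\iota$--odd, and $\mathrm{Ann}_{P_{2m}}(\bar v)=\mathrm{Ann}_{P_{2m}}(\rho\varepsilon)$.

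Now I would plug in $g=l_\alpha$. As parity is multiplicative, $T_{(\alpha,2m)}(f\,l_\alpha)=\bar f\,\bar v$ is the product of an $\iota$--even and an $\iota$--odd element, hence $\iota$--odd; but the hypothesis forces it to be $\iota$--even, and therefore $\bar f\,\bar v=0$. A direct computation gives $\mathrm{Ann}_{P_{2m}}(\rho\varepsilon)=\CC[\rho]\,\varepsilon^{2m-1}$: writing $\bar f=\sum_{i=0}^{2m-1}c_i(\rho)\varepsilon^i$, one has $\rho\varepsilon\,\bar f=\rho\sum_{i=0}^{2m-2}c_i(\rho)\varepsilon^{i+1}$, which vanishes iff $c_i=0$ for all $i\le 2m-2$. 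Thus $\bar f\in\CC[\rho]\,\varepsilon^{2m-1}$; but $\bar f$ is $\iota$--even while $\varepsilon^{2m-1}$ is $\iota$--odd, so $\bar f=0$, as claimed.

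The whole argument is parity bookkeeping, and the single point needing a little care is that $P_{2m}$ is not a domain, so $\bar f\,\bar v=0$ does not by itself let one cancel $\bar v$; the way around it is exactly the observation that $\bar v$ equals the monomial $\rho\varepsilon$ up to a unit, together with the transparent description of the annihilator of $\rho\varepsilon$, whose odd generator is then excluded by the evenness of $\bar f$ obtained from the case $g=1$. I do not anticipate any further difficulty.
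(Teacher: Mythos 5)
Your proof is correct, and it is a genuine (if mild) variant of the paper's argument. Both proofs start the same way: apply the hypothesis with $g=1$ to conclude that $\bar f:=T_{(\alpha,2m)}(f)$ involves only even powers of $\varepsilon$, and then test against powers of $l_\alpha$, whose image $\bar v=\rho\,\varepsilon\cdot(\text{unit})$ is the crucial odd element. The difference is in how the test is organized. The paper writes $\bar f=\lambda_j\varepsilon^{2j}+\dots+\lambda_{m-1}\varepsilon^{2m-2}$ with $\lambda_j\ne 0$, chooses $g=v^{2(m-j)-1}$ \emph{adaptively} (depending on the lowest nonvanishing coefficient of $\bar f$), and exhibits the single surviving odd monomial $\rho^{2(m-j)-1}\lambda_j\varepsilon^{2m-1}$ as a direct contradiction. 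You instead use only the \emph{fixed} test element $g=l_\alpha$, observe by parity that $\bar f\,\bar v$ is simultaneously $\iota$--odd and (by hypothesis) $\iota$--even, hence zero, and then finish with the annihilator computation $\mathrm{Ann}_{P_{2m}}(\rho\varepsilon)=\CC[\rho]\varepsilon^{2m-1}$ together with the evenness of $\bar f$. Your version is slightly more economical in its hypotheses (it uses the assumption only for $g=1$ and $g=l_\alpha$, so it proves a formally stronger statement), while the paper's version is more self-contained at the level of explicit coefficients; you also correctly identify and handle the one delicate point, namely that $P_{2m}$ is not a domain so $\bar f\,\bar v=0$ cannot simply be cancelled. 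No gaps.
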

\begin{proof} Taking $g =1$, we see that
$
T_{(\alpha, 2m)}(f) = \lambda_{j} \varepsilon^{2j} + \lambda_{j+1} \varepsilon^{2j+2} + \dots +\lambda_{m-1} \varepsilon^{2m-2}
$
for some $\lambda_{j}, \dots, \lambda_{m-1} \in \CC[\rho]$. Suppose that $j \le m-1$ and $\lambda_{j} \ne 0$. Take
$g := v^{2(m-j)-1} \in R$. Then we have:
$
T_{(\alpha, 2m)}(gf) = \rho^{2(m-j)-1} \lambda_{j} \cdot \varepsilon^{2m-1} \notin \CC[\rho, \varepsilon^2]/(\varepsilon^{2m}),
$
giving a contradiction.
\end{proof}

\begin{corollary}\label{C:ConductorDescription} Let $I := \mathsf{Ann}_A(R/A) = \bigl\{f \in R \,|\, gf \in A \; \mbox{\rm for any} \; g \in R\bigr\} \cong \Hom_A(R, A)$ be the conductor ideal.  Then we have:
$
I = (\delta^2)_R \cong R.
$
In particular, the  ideal $I$ is Cohen--Macaulay (both over $A$ and over $R$, since $\depth_A(I) = \depth_R(I) = 2$) and we have: $$\mathsf{Ass}_{R}(I) = \bigl\{\idq_\alpha \, |\, \alpha \in \Pi\bigr\}, \quad \mbox{\rm where}\quad  \idq_\alpha := (l_\alpha)_R  \subset R.$$ In other words, the affine variety
$V_R(I) \subset \AA^2$ is a union of $n$ lines defined by the set $\Pi$.
\end{corollary}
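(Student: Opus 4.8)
The plan is to establish the two inclusions $(\delta^2)_R \subseteq I$ and $I \subseteq (\delta^2)_R$, and then to read off the remaining assertions from the resulting identification $I \cong R$ and from the primary decomposition of $(\delta^2)$ in the polynomial ring $R$.

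For the inclusion $(\delta^2)_R \subseteq I$ it suffices to show that $\delta^2 \in I$, i.e.\ that $g\delta^2 \in A$ for every $g \in R$ (this was already used in the proof of Proposition \ref{P:BasicsQuasiinv}). Fix $\alpha \in \Pi$ and write $\delta^2 = l_\alpha^{2\mu_\alpha}\cdot e_\alpha$ with $e_\alpha := \prod_{\beta \ne \alpha} l_\beta^{2\mu_\beta}$. Since $s_\alpha(l_\alpha) = -l_\alpha$ and $s_\alpha$ is a ring automorphism, one has $g\delta^2 - s_\alpha(g\delta^2) = l_\alpha^{2\mu_\alpha}\bigl(g e_\alpha - s_\alpha(g e_\alpha)\bigr)$; as $l_\alpha$ divides $h - s_\alpha(h)$ for every $h \in R$ (because $s_\alpha$ acts trivially on $R/(l_\alpha)$), we conclude that $l_\alpha^{2\mu_\alpha+1}$ divides $g\delta^2 - s_\alpha(g\delta^2)$. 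Running over all $\alpha \in \Pi$, the defining condition (\ref{E:quasiinvariants}) yields $g\delta^2 \in A$, hence $\delta^2 \in I$ and $(\delta^2)_R \subseteq I$.

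For the reverse inclusion, take $f \in I \subseteq R$ and fix $\alpha \in \Pi$. Since $gf \in A$ for every $g \in R$, Corollary \ref{C:DescriptQuasiInv} shows that all odd $\varphi$--derivatives of $gf$ of order at most $2\mu_\alpha - 1$ vanish at $\varphi = \alpha$; equivalently, $T_{(\alpha, 2\mu_\alpha)}(gf) \in \CC[\rho, \varepsilon^2]/(\varepsilon^{2\mu_\alpha})$ for all $g \in R$. Applying Lemma \ref{L:conductor} with $m = \mu_\alpha$ forces $T_{(\alpha, 2\mu_\alpha)}(f) = 0$, and then Lemma \ref{L:TaylorFractFields} gives $l_\alpha^{2\mu_\alpha} \mid f$ in $R$. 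Because the hypothesis $\alpha - \beta \notin \pi\ZZ$ makes the linear forms $\{l_\alpha\}_{\alpha \in \Pi}$ pairwise non-associate primes of the unique factorization domain $R = \CC[z_1, z_2]$, divisibility by each $l_\alpha^{2\mu_\alpha}$ is equivalent to divisibility by $\delta^2 = \prod_{\alpha \in \Pi} l_\alpha^{2\mu_\alpha}$; hence $f \in (\delta^2)_R$. This proves $I = (\delta^2)_R$, and multiplication by $\delta^2$ is an isomorphism of $R$--modules $R \xrightarrow{\ \sim\ } (\delta^2)_R = I$.

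It then remains to deduce the consequences. Since $R = \CC[z_1, z_2]$ is regular of Krull dimension two and $I \cong R$, we get $\depth_R I = 2 = \dim_R I$, and, $R$ being finite over $A$, $\depth_A I = \depth_R I = 2 = \dim_A I$; thus $I$ is maximal Cohen--Macaulay over both $A$ and $R$ (cf.~\cite[Lemma 3.1]{SurvOnCM}). For the associated primes, $R/I = R/(\delta^2)$ is Cohen--Macaulay of dimension one (as $\delta^2$ is a nonzerodivisor in the Cohen--Macaulay ring $R$), so it has no embedded primes and its associated primes are the minimal primes of $(\delta^2) = \bigl(\prod_{\alpha \in \Pi} l_\alpha^{2\mu_\alpha}\bigr)$, namely $\idq_\alpha = (l_\alpha)_R$ for $\alpha \in \Pi$; equivalently $V_R(I) = V(\delta) = \bigcup_{\alpha \in \Pi} V(l_\alpha)$ is a union of $n = |\Pi|$ lines. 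The step I expect to be the main obstacle is the reverse inclusion $I \subseteq (\delta^2)_R$: the point is to transcribe the membership condition ``$gf \in A$ for all $g$'' precisely into the hypothesis of Lemma \ref{L:conductor} via the polar description of $A$, and then to glue the local divisibility conditions at the various lines $l_\alpha$ using unique factorization.
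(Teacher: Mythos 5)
Your proof is correct and follows essentially the same route as the paper: the paper's argument also reduces the identification $I=(\delta^2)_R$ to Lemma \ref{L:TaylorFractFields} (for $\ker T_{(\alpha,2\mu_\alpha)}=(l_\alpha^{2\mu_\alpha})$) and Lemma \ref{L:conductor}, with the inclusion $(\delta^2)_R\subseteq I$ being the fact $f\delta^2\in A$ already noted in Proposition \ref{P:BasicsQuasiinv}. Your write-up merely spells out the details (the reflection computation for the easy inclusion, the coprimality of the $l_\alpha$, and the depth/associated-primes bookkeeping) that the paper leaves implicit.
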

\begin{proof} It follows from the definition of the conductor ideal $I$ that
$$
I = \bigl\{f \in R \; \big| \;  T_{(\alpha, 2\mu_\alpha)}(gf) \in \CC[\rho, \varepsilon^2]/(\varepsilon^{2\mu_\alpha}) \; \mbox{for all} \; g \in R
\; \mbox{and} \; \alpha \in \Pi\bigr\}.
$$
Hence, the  statement  immediately follows from Lemma \ref{L:TaylorFractFields} and Lemma \ref{L:conductor}.
\end{proof}

\begin{remark} The algebra $\bar{A}:= A/I$ is \emph{Cohen--Macaulay} of Krull dimension one, since we have a finite
 extension $\bar{A} \subseteq \bar{R}:= R/I$ and the algebra
$\bar{R}$ has these properties.
\end{remark}

\begin{lemma} The map $\mathsf{Ass}_{R}(I) \lar \mathsf{Ass}_{A}(I), \; \idq \mapsto \idp:= A \cap \idq$ is a bijection. Moreover, for any
$\idq \in \mathsf{Ass}_{R}(I)$ we have: $R_\idp = R_\idq$ and the algebra  extension $A_\idp \subseteq R_\idp$ induces an isomorphism of the corresponding residue fields.
\end{lemma}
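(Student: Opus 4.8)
The plan is to split the statement into its combinatorial part (bijectivity of the contraction map on associated primes) and its two local parts, reducing everything to Corollary~\ref{C:ConductorDescription} and to the bijectivity of the normalisation map $\nu\colon \AA^2 \to X$ from Proposition~\ref{P:BasicsQuasiinv}. Throughout, $\mathsf{Ass}_A(I)$ is read as $\mathsf{Ass}_A(A/I)$ and similarly over $R$, and I write $\idp_\alpha := A\cap\idq_\alpha$.

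First I would pin down $\mathsf{Ass}_A(I)$. Since $\bar A = A/I$ is Cohen--Macaulay of Krull dimension one (Corollary~\ref{C:ConductorDescription} and the remark following it), it has no embedded primes, so $\mathsf{Ass}_A(I)$ is exactly the set of primes of $A$ minimal over $I$. For each $\alpha$ one has $I\subseteq\idp_\alpha$ (because $\delta^2\in(l_\alpha)_R$, so $I=(\delta^2)_R\subseteq\idq_\alpha$), and since $A\subseteq R$ is a finite extension of domains the inclusion $A/\idp_\alpha\subseteq R/\idq_\alpha$ is finite, whence $\dim(A/\idp_\alpha)=\dim(R/\idq_\alpha)=1$; by the dimension formula for affine domains $\idp_\alpha$ has height one, hence is minimal over $I$. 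Conversely, if $\idp$ is minimal over $I$, then a prime of $R$ lying over $\idp$ (lying--over for the integral extension $A\subseteq R$) contains $I$, hence contains one of the $\idq_\alpha$, these being precisely the minimal primes over $I$ in $R$ by Corollary~\ref{C:ConductorDescription}; then $\idp_\alpha=A\cap\idq_\alpha$ lies between $I$ and $\idp$, so $\idp=\idp_\alpha$ by minimality. Thus $\mathsf{Ass}_A(I)=\{\idp_\alpha\mid\alpha\in\Pi\}$, and $\idq_\alpha\mapsto\idp_\alpha$ is surjective.

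Next I would show that $\alpha\mapsto\idp_\alpha$ is injective; this is the step where Proposition~\ref{P:BasicsQuasiinv} is genuinely needed, since finiteness and birationality of $A\subseteq R$ alone would not force it. As $\nu$ is finite and surjective, the image of the line $V_R(\idq_\alpha)=V(l_\alpha)$ under $\nu$ is the closed set $V_A(\idp_\alpha)$. If $\idp_\alpha=\idp_\beta$ with $\alpha\neq\beta$, then $\nu(V(l_\alpha))=\nu(V(l_\beta))$; choosing a point $q\in V(l_\alpha)\setminus V(l_\beta)$, which exists since these are distinct lines, gives $\nu(q)=\nu(q')$ for some $q'\in V(l_\beta)$, contradicting injectivity of $\nu$. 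Hence $\mathsf{Ass}_R(I)\to\mathsf{Ass}_A(I)$ is a bijection. The same circle of ideas yields $R_{\idp_\alpha}=R_{\idq_\alpha}$: any prime $\idq'$ of $R$ over $\idp_\alpha$ contains $I$, hence some $\idq_\gamma$, so $\idp_\gamma=A\cap\idq_\gamma\subseteq\idp_\alpha$, and minimality of $\idp_\alpha$ over $I$ forces $\idp_\gamma=\idp_\alpha$, so $\gamma=\alpha$ by injectivity; incomparability of primes in the integral extension then gives $\idq'=\idq_\alpha$. Thus $\idq_\alpha$ is the only prime of $R$ over $\idp_\alpha$, the semilocal ring $R_{\idp_\alpha}$ is in fact local, and therefore $R_{\idp_\alpha}=R_{\idq_\alpha}$.

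Finally, for the residue fields I would argue by an explicit computation. Passing to polar coordinates and writing $u:=\rho\cos(\varphi-\alpha)\in R$, one has $R/\idq_\alpha=R/(l_\alpha)\cong\CC[u]$, so $\kappa(\idq_\alpha)=\CC(u)$, and under this identification $A/\idp_\alpha$ is the image of $A$ under $f\mapsto f^{(0)}_\alpha$ (Corollary~\ref{C:DescriptQuasiInv}). Evaluating the quasi--invariants $f_g:=\bigl(\prod_{\beta\neq\alpha}\bigl(s_\alpha(l_\beta)\,l_\beta\bigr)^{2\mu_\beta}\bigr)\,s_\alpha(g)\,g\in A$, of the type appearing in the proof of Proposition~\ref{P:BasicsQuasiinv}, at $\varphi=\alpha$ gives $(f_g)^{(0)}_\alpha=c\,u^{N}\,\bigl(g^{(0)}_\alpha\bigr)^2$ with $c=\prod_{\beta\neq\alpha}\sin(\alpha-\beta)^{4\mu_\beta}\neq 0$ and $N=4\sum_{\beta\neq\alpha}\mu_\beta$; a short computation with $g=1,\,u,\,u+1$ then shows that $u^{N},u^{N+1},u^{N+2}$ all lie in $A/\idp_\alpha$, so $u=u^{N+1}/u^{N}\in\mathrm{Frac}(A/\idp_\alpha)$ and hence $\kappa(\idp_\alpha)=\CC(u)=\kappa(\idq_\alpha)$, i.e.\ the natural embedding $\kappa(\idp_\alpha)\hookrightarrow\kappa(\idq_\alpha)$ is an isomorphism. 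I expect this last step to be the only non-formal point: one must exhibit explicit quasi--invariants whose restrictions to the line $\varphi=\alpha$ generate a field as large as $\CC(u)$; the rest is routine commutative algebra layered on the structural results already available.
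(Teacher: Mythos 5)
Your proof is correct, and on the key point it takes a genuinely different route from the paper. For the bijectivity of $\idq\mapsto\idq\cap A$ and the equality $R_\idp=R_\idq$, the paper simply invokes the bijectivity of $\nu$ from Proposition \ref{P:BasicsQuasiinv}; your argument reaches the same conclusions but fills in the commutative algebra the paper leaves implicit (identifying $\mathsf{Ass}_A(I)$ with the minimal primes over $I$ via the Cohen--Macaulayness of $\bar A$, getting surjectivity from lying--over, and deducing $R_{\idp_\alpha}=R_{\idq_\alpha}$ from incomparability once $\idq_\alpha$ is shown to be the unique prime over $\idp_\alpha$) — this is a more careful version of the same idea. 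The real divergence is in the residue--field statement: the paper observes that the bijective morphism of integral affine curves $\AA^1\cong V(\idq)\to V(\idp)$ is automatically birational (a soft argument that quietly uses characteristic zero — in characteristic $p$ a bijective finite morphism such as Frobenius need not be birational), whereas you exhibit explicit quasi--invariants $f_g$ whose restrictions to the line $\varphi=\alpha$ produce $u^N,u^{N+1},u^{N+2}$ in the image of $A/\idp_\alpha\subseteq\CC[u]$, so that $u=u^{N+1}/u^N$ lies in $\mathrm{Frac}(A/\idp_\alpha)$. Your computation checks out (the constants $\sin(\alpha-\beta)^{4\mu_\beta}$ are nonzero precisely because of the hypothesis $\alpha-\beta\notin\pi\ZZ$, and $f_g\in A$ by the same verification as in Case 2 of Proposition \ref{P:BasicsQuasiinv}). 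What the paper's route buys is brevity; what yours buys is an explicit, characteristic--independent identification of $A/\idp_\alpha$ inside $\CC[u]$, at the cost of a computation.
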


\begin{proof}
The first statement is a consequence of the fact that the normalization morphism $\AA^2 \stackrel{\nu}\lar X$ is bijective; see Proposition \ref{P:BasicsQuasiinv}. It follows that $\idq R_\idp$ is the unique maximal ideal of $R_\idp$, hence $R_\idp = R_\idq$.

Next, the morphism of affine curves
$
\AA^1 \cong V(\idq) \stackrel{\nu}\lar V(\idp)
$
is bijective, hence it is automatically birational. Therefore, the algebra extension $A/\idp \subseteq R/\idq$ induces an isomorphism of the corresponding fields of fractions and we have:
$
A_\idp/\idp A_\idp \cong R_\idq/\idq R_\idq.
$
\end{proof}

\begin{corollary}\label{C:residuefields}
Let $\bar\idq \in \mathsf{Ass}_{\bar{R}}(0)$ and $\bar\idp = \bar\idq \cap \bar{A}$. Then the algebra extension
$\bar{A}_{\bar\idp} \subseteq \bar{R}_{\bar\idq}$ induces an isomorphism of the corresponding residue fields.
\end{corollary}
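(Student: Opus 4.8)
The plan is to deduce this as the reduction of the preceding Lemma modulo the conductor ideal $I$. First I would identify the primes involved: by Corollary \ref{C:ConductorDescription}, any $\bar\idq \in \mathsf{Ass}_{\bar{R}}(0)$ is the image $\idq_\alpha/I$ of $\idq_\alpha = (l_\alpha)_R$ for a unique $\alpha \in \Pi$, so it lifts to a unique prime $\idq := \idq_\alpha \subset R$ containing $I$; and $\bar\idp = \bar\idq \cap \bar{A}$ is the image of $\idp := \idq \cap A \in \mathsf{Ass}_A(I)$.

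Next I would invoke the preceding Lemma: it gives $R_\idp = R_\idq$ and asserts that the extension $A_\idp \subseteq R_\idp$ induces an isomorphism of residue fields $A_\idp/\idp A_\idp \xrightarrow{\,\cong\,} R_\idq/\idq R_\idq$. To finish, I would identify this with the map on residue fields coming from $\bar{A}_{\bar\idp} \subseteq \bar{R}_{\bar\idq}$. Since $I \subseteq \idp \subseteq \idq$, localization commutes with the quotient by $I$: one has natural algebra isomorphisms $\bar{A}_{\bar\idp} \cong A_\idp/I A_\idp$ and $\bar{R}_{\bar\idq} \cong R_\idq/I R_\idq$, compatible with the respective inclusions, whence $\bar{A}_{\bar\idp}/\bar\idp\bar{A}_{\bar\idp} \cong A_\idp/\idp A_\idp$ and $\bar{R}_{\bar\idq}/\bar\idq\bar{R}_{\bar\idq} \cong R_\idq/\idq R_\idq$, again compatibly. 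Composing with the isomorphism of the preceding Lemma gives the claim.

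There is essentially no genuine obstacle; the only point that warrants a moment of care is the bookkeeping in the final step — verifying that the isomorphisms $\bar{A}_{\bar\idp} \cong A_\idp/I A_\idp$ and $\bar{R}_{\bar\idq} \cong R_\idq/I R_\idq$ respect the two algebra inclusions, so that the residue field map induced by $\bar{A}_{\bar\idp} \subseteq \bar{R}_{\bar\idq}$ is precisely the reduction of the map treated in the preceding Lemma rather than an a priori different one. This is immediate from the universal properties of localization and of quotients, using that $I$ is an ideal of both $A$ and $R$ and that $A \cap \idq = \idp$.
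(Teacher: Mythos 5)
Your argument is correct and is exactly the reduction the paper intends (the corollary is stated without proof as an immediate consequence of the preceding Lemma): since $I\subseteq\idp\subseteq\idq$, the residue fields of $\bar{A}_{\bar\idp}$ and $\bar{R}_{\bar\idq}$ are canonically identified with those of $A_\idp$ and $R_\idq$, compatibly with the inclusions, so the Lemma's isomorphism descends. Nothing is missing.
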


\begin{remark} If we view $I$ as an ideal in $A$ then the corresponding  affine variety $V_A(I) \subset X$ is the locus  of those points where the surface $X$ is not normal. If we view $I$ as an ideal in $R$ then we have $\AA^2 \supset V_R(I) = \nu^{-1}\bigl(V_A(I)\bigr)$. Since the map
$\AA^2 \setminus V_R(I) \stackrel{\nu}\lar X \setminus V_A(I)$ is an isomorphism, $V_A(I)$ is precisely the singular locus of $X$. According to Corollary
\ref{C:ConductorDescription}, the curve $V_R(I) \subset \AA^2$ is a line arrangement
 consisting of $n$ lines passing through the origin $(0, 0)$, whose slopes are determined
by the set $\Pi$.  On the other hand, $V_A(I) = \nu\bigl(V_R(I)\bigr)$ is  a union of $n$ rational cuspidal curves (the order of each cusp is determined
by the corresponding value of the multiplicity function $\underline{\mu}$) meeting at the common  point $\nu(0,0)$.
\end{remark}

\begin{proposition} For any $\alpha \in \Pi$, put  $\idq = \idq_\alpha$, $\idp = \idq\cap A$ and consider the following diagram of $\CC$--algebras:
\begin{equation}
\begin{array}{c}
\xymatrix{
A \ar@{->>}[r] \ar@{_{(}->}[d] & \bar{A} \ar@{^{(}->}[d] \ar[r] & \bar{A}_{\bar\idp} \ar@{^{(}->}[d] \ar@/^30pt/[ddd]^-{\bar{T}} \\
R \ar@{->>}[r] \ar@{->>}[rd] \ar@/_10pt/[rdd]_{T}  & \bar{R}  \ar[r] & \bar{R}_{\bar\idq} \\
 & \widetilde{R} \ar@{^{(}->}[r] \ar@{->>}[u] \ar@{_{(}->}[d]_-\tau & \widetilde{R}_{\tilde\idq} \ar[u]_-\cong \ar[d]^{\tilde\tau}\\
 & P \ar@{^{(}->}[r] & L
}
\end{array}
\end{equation}
where
\begin{itemize}
\item $\widetilde{R} = \widetilde{R}_{\alpha, 2 \mu_\alpha} := R/(l_\alpha^{2 \mu_\alpha})$ and $\tilde\idq$ is the image of $\idq$ in $\widetilde{R}$;
\item $P = P_{2\mu_\alpha} := \CC[\rho, \varepsilon]/(\varepsilon^{2\mu_\alpha})$ and $T = T_{(\alpha, 2 \mu_\alpha)}$;
\item $L = L_\alpha := \CC(\rho)[\varepsilon]/(\varepsilon^{2\mu_\alpha})$.
\end{itemize}
Then we have: $\mathsf{Im}(\bar{T}) = K = K_\alpha :=  \CC(\rho)[\varepsilon^2]/(\varepsilon^{2\mu_\alpha})$.
\end{proposition}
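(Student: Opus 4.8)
The plan is to identify $\mathsf{Im}(\bar T)$ explicitly as a subring of $L$ and to squeeze it between two copies of $K=K_\alpha$. Chasing the diagram, the composite $A\to\bar A\to\bar A_{\bar\idp}\xrightarrow{\bar T}L$ is the Taylor homomorphism $T=T_{(\alpha,2\mu_\alpha)}\colon A\to P\hookrightarrow L$; and since $\bar A_{\bar\idp}$ is obtained from the image of $A$ by inverting the images of $A\setminus\idp=\{s\in A\mid l_\alpha\nmid s\}$, we get
$$
\mathsf{Im}(\bar T)=\bigl\{\,T(a)/T(s)\ \big|\ a,s\in A,\ l_\alpha\nmid s\,\bigr\}\subseteq L ,
$$
a subring of $L$ (here $l_\alpha\nmid s$ is precisely the condition that $T(s)$ be a unit of $L$). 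The inclusion $\mathsf{Im}(\bar T)\subseteq K$ is immediate from Corollary \ref{C:DescriptQuasiInv}: for $a\in A$ all odd-order derivatives $a^{(1)}_\alpha,\dots,a^{(2\mu_\alpha-1)}_\alpha$ vanish, so $T(a)\in\CC[\rho][\varepsilon^2]/(\varepsilon^{2\mu_\alpha})\subseteq K$; and if $l_\alpha\nmid s$ then $T(s)$ is also a unit of the local ring $K$ (whose maximal ideal is $(\varepsilon^2)$), hence $T(a)/T(s)\in K$.

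For the reverse inclusion, the idea is to realise every localised $s_\alpha$-invariant function inside $\mathsf{Im}(\bar T)$ by correcting it with one fixed multiplier coprime to $l_\alpha$; since $K=L^{s_\alpha}$ this is enough. Put
$$
\chi:=\prod_{\beta\in\Pi\setminus\{\alpha\}}\bigl(l_\beta\cdot s_\alpha(l_\beta)\bigr)^{2\mu_\beta}\in R .
$$
Then $\chi$ is $s_\alpha$-invariant (as $s_\alpha^2=\id$), is not divisible by $l_\alpha$ (no $l_\beta$ or $s_\alpha(l_\beta)$ with $\beta\ne\alpha$ is proportional to $l_\alpha$, by the standing hypothesis $\alpha-\beta\notin\pi\ZZ$), and is divisible by $l_\beta^{2\mu_\beta}$ for every $\beta\ne\alpha$. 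Hence, for every $f$ in the reflection-invariant ring $R^{s_\alpha}=\CC[u,v^2]$ (where $u=\rho\cos(\varphi-\alpha)$, $v=l_\alpha=\rho\sin(\varphi-\alpha)$, so $R=\CC[u,v]$), the product $\chi f$ lies in $A$: it is $s_\alpha$-invariant, so $l_\alpha^{2\mu_\alpha+1}\mid(\chi f-s_\alpha(\chi f))=0$, and it is divisible by $l_\beta^{2\mu_\beta}$, so $l_\beta^{2\mu_\beta+1}\mid(\chi f-s_\beta(\chi f))$ since $p-s_\beta(p)$ is always divisible by $l_\beta$ (exactly as in the proof of Proposition \ref{P:BasicsQuasiinv}). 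In particular $\chi\in A$ and $l_\alpha\nmid\chi$. Therefore, for $f,g\in R^{s_\alpha}$ with $l_\alpha\nmid g$ we have $\chi f,\chi g\in A$, $\chi g\notin\idp$, and $\bar T\bigl(\overline{\chi f}/\overline{\chi g}\bigr)=T(\chi f)/T(\chi g)=T(f)/T(g)$ in $L$; so all these $T(f)/T(g)$ lie in $\mathsf{Im}(\bar T)$.

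It remains to see that these elements exhaust $K$. A polynomial of $R^{s_\alpha}=\CC[u,v^2]$ divisible by $l_\alpha=v$ is automatically divisible by $v^2$, so $\{g\in R^{s_\alpha}\mid l_\alpha\nmid g\}$ is the complement of the prime ideal $(v^2)$, and $\{T(f)/T(g)\mid f,g\in R^{s_\alpha},\ l_\alpha\nmid g\}$ is the image $S$ of the localisation $(R^{s_\alpha})_{(v^2)}$ in $L$. By Lemma \ref{L:TaylorFractFields}, $L\cong R_{(l_\alpha)}/l_\alpha^{2\mu_\alpha}R_{(l_\alpha)}$; localising the free decomposition $R=R^{s_\alpha}\oplus R^{s_\alpha}l_\alpha$ and reducing modulo $l_\alpha^{2\mu_\alpha}=(v^2)^{\mu_\alpha}$ (an element of $R^{s_\alpha}$) gives $L=S\oplus Sl_\alpha$. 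Since $s_\alpha$ acts by $+1$ on $S$ and by $-1$ on $Sl_\alpha$, we get $S=L^{s_\alpha}$; and since $\tilde\tau$ carries the involution $s_\alpha$ to $\varepsilon\mapsto-\varepsilon$ (which fixes $\rho$), we have $L^{s_\alpha}=\CC(\rho)[\varepsilon^2]/(\varepsilon^{2\mu_\alpha})=K_\alpha$. Thus $S=K_\alpha$, and combined with the first paragraph, $\mathsf{Im}(\bar T)=K=K_\alpha$.

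The only genuine obstacle is the surjectivity statement, and it is concentrated entirely in producing the multiplier $\chi$ and checking $\chi R^{s_\alpha}\subseteq A$; after that, identifying $S$ with $K_\alpha$ is a formal localisation-and-eigenspace computation. (Alternatively, using that $A$ is Gorenstein in codimension one — Theorem \ref{T:CMandGorenstein} — one can note that $\bar A_{\bar\idp}=A_\idp/(\text{conductor})_\idp$ and $K_\alpha$ have the same finite length over $A_\idp$, which forces equality; but the $\chi$-argument is self-contained.)
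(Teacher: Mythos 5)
Your proof is correct, and for the substantive half (surjectivity of $\bar{T}$ onto $K$) it takes a genuinely different route from the paper. The inclusion $\mathsf{Im}(\bar T)\subseteq K$ is argued exactly as in the paper (images of elements of $A$ land in $K$, and units of $L$ lying in $K$ have their inverses in $K$). For the reverse inclusion the paper multiplies by $\delta_\alpha=l_\alpha^2\cdot\chi$ (your $\chi$ with an extra $l_\alpha^2$), invokes Corollary \ref{C:residuefields} (the residue--field isomorphism coming from bijectivity of the normalization) to produce, for each $g\in\CC(\rho)$, an element of $\mathsf{Im}(\bar T)$ with leading term $g$, and then fills up $K$ from the socle $\varepsilon^{2\mu_\alpha-2}\CC(\rho)$ downward by an induction using powers of $\bar T(\delta_\alpha)$. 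You instead show directly that the whole module $\chi\cdot R^{s_\alpha}$ sits inside $A$, observe that the unit $T(\chi)$ cancels in the fractions, and identify the resulting subring of $L$ as the image of $(R^{s_\alpha})_{(v^2)}$, which equals $L^{s_\alpha}=K$ by the even/odd decomposition $R=R^{s_\alpha}\oplus R^{s_\alpha}l_\alpha$ localized and reduced modulo $l_\alpha^{2\mu_\alpha}$, together with the fact that $\tilde\tau$ intertwines $s_\alpha$ with $\varepsilon\mapsto-\varepsilon$. Your argument buys self--containedness: it does not need the residue--field corollary (hence not the bijectivity of $\nu$) and replaces the socle induction with a clean eigenspace computation; the paper's version is shorter once Corollary \ref{C:residuefields} is in hand and reuses $\delta_\alpha$ elsewhere. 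One caveat: your parenthetical alternative via Theorem \ref{T:CMandGorenstein} would be circular, since the proof of that theorem relies on the diagram (\ref{E:keydiagram}), which is established using precisely this proposition; the $\chi$--argument you actually give does not have this problem.
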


\begin{proof} We first show that $\mathsf{Im}(\bar{T}) \subseteq  K$. Indeed, for any $a \in A$ we have:
$\bar{T}(\bar{a}) = T(a) \in K$. Next, observe that if $c \in L$ is invertible and $c \in K$, then $c^{-1} \in K$. Therefore, for any
$\dfrac{\bar{a}}{\bar{b}} \in \bar{A}_{\bar\idp}$ we have:
$
\bar{T}\Bigl(\dfrac{\bar{a}}{\bar{b}}\Bigr) = T(a) \cdot T(b)^{-1} \in K.
$

\smallskip
\noindent
Let $K':= \mathsf{Im}(\bar{T})$, then we have to prove that $K' = K$. Note the following two facts.
\begin{enumerate}
\item Consider the  element $\delta_\alpha:= l_\alpha^2 \cdot \prod\limits_{\beta \ne \alpha} \bigl(s_\alpha(l_\beta) \cdot l_\beta\bigr)^{2\mu_\beta} \in R$. Since $s_\alpha(\delta_\alpha) = \delta_\alpha$ and $l_\beta^{2\mu_\beta} \, \big| \,\delta_\alpha$ for all $\beta \ne \alpha$, we have:
    $\delta_\alpha \in A$. Moreover,
$$
\bar{\delta}_\alpha:= \bar{T}(\delta_\alpha) = \lambda_1 \varepsilon^2 + \dots + \lambda_{m-1}\varepsilon^{2m-2} \in K'
$$
for some $\lambda_1, \dots, \lambda_{m-1} \in \CC(\rho)$ such that $\lambda_1 \ne 0$.
\item By Corollary \ref{C:residuefields}, the algebra extension $\bar{A}_{\bar\idp} \subseteq \bar{R}_{\bar\idq}$ induces an isomorphism of the residue fields. Next, according to Lemma \ref{L:TaylorFractFields}, the morphism $\tilde\tau$ is an isomorphism. Therefore, we have an algebra extension
    $$
    K' \subseteq K = \CC(\rho)[\varepsilon^2]/(\varepsilon^{2\mu_\alpha}) \subset L = \CC(\rho)[\varepsilon]/(\varepsilon^{2\mu_\alpha}),
    $$
which induces an isomorphism of the corresponding residue fields.
\end{enumerate}
The last assumption implies  that for any $g \in \CC(\rho)$,  there exists an element $\gamma_g \in K'$ of the form
$\gamma_g = g + g_1 \varepsilon^2 + \dots + g_{\mu_\alpha-1}\varepsilon^{2\mu_\alpha-2}$.
If $\mu_\alpha = 1$ then we are done. Otherwise, if $\mu_\alpha \ge 2$, consider  the element
$
\bar\delta_\alpha^{\mu_\alpha-1} \cdot \gamma_g = \bigl(\lambda_1^{\mu_\alpha-1} g\bigr) \cdot \varepsilon^{2\mu_\alpha - 2} \in K'.
$
It follows  that for any $h \in  \CC(\rho)$ we have: $h \varepsilon^{2\mu_\alpha - 2} \in K'$. Proceeding inductively, we conclude that
$K' = K$.
\end{proof}

\begin{corollary}
We have the following commutative diagram of $\CC$--algebras:
\begin{equation}\label{E:gluingdiag}
\begin{array}{c}
\xymatrix{
           & & & K_{\alpha_1} \times \dots \times K_{\alpha_n}  \ar@{^{(}->}[ddd]^{\mathsf{diag}}\\
A \ar[rr] \ar@{_{(}->}[d] \ar[rrru] & & Q(\bar{A}) \ar[ru]_-{\cong} \ar@{^{(}->}[d]& \\
R \ar[rr]  \ar@/_10pt/[rrrd]_-{T} & & Q(\bar{R}) \ar[rd]^-{\cong} & \\
            &  & & L_{\alpha_1} \times \dots \times L_{\alpha_n} \\
}
\end{array}
\end{equation}
where $\bigl\{\alpha_1, \dots \alpha_n\bigr\} = \Pi$ and $T(f) = \bigl(T_{(\alpha_1, 2\mu_{\alpha_1})}(f), \dots, T_{(\alpha_n, 2\mu_{\alpha_n})}(f)\bigr)$ for any $f \in R$, whereas
$K_\alpha = \CC(\rho)[\varepsilon^2]/(\varepsilon^{2\mu_\alpha})$ and  $L_\alpha = \CC(\rho)[\varepsilon]/(\varepsilon^{2\mu_\alpha})$ for each $\alpha \in \Pi$.
\end{corollary}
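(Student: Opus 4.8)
The plan is to bolt together the ingredients already assembled above. By Corollary~\ref{C:ConductorDescription} the conductor ideal is $I = (\delta^2)_R$ and $\mathsf{Ass}_R(I) = \{\idq_\alpha \mid \alpha \in \Pi\}$ with $\idq_\alpha = (l_\alpha)_R$; since $\bar{R} = R/I$ is a one--dimensional Cohen--Macaulay algebra, the $\bar\idq_\alpha$ are precisely its minimal (equivalently, associated) primes, and, as recalled before Definition~\ref{D:triples}, the Chinese Remainder Theorem gives $Q(\bar{R}) \cong \prod_{\alpha \in \Pi} \bar{R}_{\bar\idq_\alpha}$, the $\alpha$--th projection being induced by $R \twoheadrightarrow \bar{R} \to \bar{R}_{\bar\idq_\alpha}$. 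I would then identify each factor with $L_\alpha$: since $\delta^2 = \prod_\beta l_\beta^{2\mu_\beta}$ and $l_\beta \notin \idq_\alpha$ for $\beta \ne \alpha$, the $l_\beta$ with $\beta \ne \alpha$ become invertible in $\bar{R}_{\bar\idq_\alpha}$, so this localisation equals $(\widetilde{R}_{\alpha, 2\mu_\alpha})_{\tilde\idq_\alpha}$; as $\tilde\idq_\alpha$ is the unique associated prime of $\widetilde{R}_{\alpha, 2\mu_\alpha}$, this localisation is its full ring of fractions, which by Lemma~\ref{L:TaylorFractFields} is carried isomorphically by $\tilde\tau$ onto $Q(P_{2\mu_\alpha}) = \CC(\rho)[\varepsilon]/(\varepsilon^{2\mu_\alpha}) = L_\alpha$. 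Unwinding the maps, the resulting composite $R \to \bar{R} \to \bar{R}_{\bar\idq_\alpha} \xrightarrow{\cong} L_\alpha$ kills $(l_\alpha^{2\mu_\alpha})$ and equals $\tilde\tau \circ \tau$, i.e.~it is $T_{(\alpha, 2\mu_\alpha)}$; hence $R \to Q(\bar{R}) \xrightarrow{\cong} \prod_\alpha L_\alpha$ is exactly the map $T$ of the statement.

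For $\bar{A}$ I would argue in parallel. The algebra $\bar{A} = A/I$ is one--dimensional Cohen--Macaulay, and by the bijection $\mathsf{Ass}_R(I) \to \mathsf{Ass}_A(I)$ its minimal primes are the $\bar\idp_\alpha := \bar\idq_\alpha \cap \bar{A}$, so $Q(\bar{A}) \cong \prod_\alpha \bar{A}_{\bar\idp_\alpha}$. Exactness of localisation makes $\bar{A}_{\bar\idp_\alpha} \hookrightarrow \bar{R}_{\bar\idp_\alpha}$, and $\bar{R}_{\bar\idp_\alpha} = \bar{R}_{\bar\idq_\alpha} \cong L_\alpha$ by the lemma giving $R_\idp = R_\idq$ together with the identification above; under this embedding $\bar{A}_{\bar\idp_\alpha} \hookrightarrow L_\alpha$ is precisely the map $\bar{T}$ of the Proposition immediately preceding this corollary, whose image is $K_\alpha$. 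Therefore $\bar{A}_{\bar\idp_\alpha} \cong K_\alpha$ and $Q(\bar{A}) \cong \prod_\alpha K_\alpha$, and under these identifications the embedding $Q(\bar{A}) \hookrightarrow Q(\bar{R})$ from diagram~(\ref{E:keydiag}) becomes the componentwise inclusion $\prod_\alpha K_\alpha \hookrightarrow \prod_\alpha L_\alpha$.

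It remains to see that the whole diagram commutes. The inner square is diagram~(\ref{E:keydiag}), which was already established, and the lower triangle commutes by the description of $T$ obtained above. For the upper triangle, note that for $a \in A$ Corollary~\ref{C:DescriptQuasiInv} gives $a'_\alpha = a'''_\alpha = \dots = a^{(2\mu_\alpha - 1)}_\alpha = 0$, so $T_{(\alpha, 2\mu_\alpha)}(a)$ contains only even powers of $\varepsilon$ and hence lies in $\CC[\rho, \varepsilon^2]/(\varepsilon^{2\mu_\alpha}) \subset K_\alpha$; thus $T$ restricts to a map $A \to \prod_\alpha K_\alpha$, and compatibility of localisation with the algebra inclusion $A \subseteq R$ (using $\bar{R}_{\bar\idp_\alpha} = \bar{R}_{\bar\idq_\alpha}$) shows that this restriction coincides with $A \to Q(\bar{A}) \xrightarrow{\cong} \prod_\alpha K_\alpha$. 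The one genuinely delicate point is precisely this last piece of bookkeeping — checking that the Chinese Remainder decompositions of $Q(\bar{R})$ and $Q(\bar{A})$ are compatible with the structure maps out of $R$ and $A$, so that the ``direct'' arrows $T$ and $T|_A$ agree with the composites through $Q(\bar{R})$ and $Q(\bar{A})$; everything else is a direct translation of the statements already proved.
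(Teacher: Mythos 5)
Your proof is correct and follows the same route the paper intends: the corollary is presented there as an immediate assembly of the Chinese Remainder decomposition of $Q(\bar R)$, Lemma \ref{L:TaylorFractFields}, the bijection $\mathsf{Ass}_R(I)\to\mathsf{Ass}_A(I)$, and the preceding Proposition computing $\mathsf{Im}(\bar T)=K_\alpha$, which is exactly what you spell out. The commutativity bookkeeping you flag as delicate is handled correctly (the restriction $T|_A$ landing in $\prod_\alpha K_\alpha$ via Corollary \ref{C:DescriptQuasiInv}), so there is nothing to add.
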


\smallskip
\noindent
Now we are prepared to prove the following statement.
\begin{theorem}\label{T:CMandGorenstein}
The algebra of quasi--invariants $A = A\bigl(\Pi, \underline{\mu}\bigr)$ is Cohen--Macaulay and Gorenstein in codimension one.
More precisely, let $\idm \in \mathsf{Max}(A)$ be the maximal ideal, corresponding to any point of the surface $X \setminus \{p\}$, where $p = \nu(0,0)$
for the normalization map $\AA^2 \stackrel{\nu}\lar X$. Then the local ring $A_\idm$ is Gorenstein.
\end{theorem}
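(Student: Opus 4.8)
The plan is to prove the two assertions separately: first that $A$ is Cohen--Macaulay, and then that $A_\idm$ is Gorenstein for every $\idm \in \mathsf{Max}(A)$ corresponding to a point of $X \setminus \{p\}$. The second statement already subsumes Gorensteinness in codimension one: the prime $(0)$ is harmless since $A_{(0)} = Q(A)$ is a field, and any height one prime $\idp$ is the generic point of a curve $C \subset X$ which contains a closed point $\ne p$; localizing the corresponding Gorenstein ring $A_\idm$ at $\idp$ shows $A_\idp$ is Gorenstein.

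For Cohen--Macaulayness I would apply the criterion of Proposition \ref{P:MacaulayficationRings} and show $A = A^\dagger = \bigcap_{\idp \in \kP} A_\idp$; only $A^\dagger \subseteq A$ needs proof. Take $f \in A^\dagger \subseteq Q(A) = Q(R)$. For every height one prime $\idq$ of $R$ the contraction $\idp := A \cap \idq$ lies in $\kP$ and $A_\idp \subseteq R_\idq$; moreover $A[1/\delta^2] = R[1/\delta^2]$ because $\delta^2 g \in A$ for all $g \in R$ (Proposition \ref{P:BasicsQuasiinv}), so in fact $A_\idp = R_\idq$ as soon as $\delta^2 \notin \idp$. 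In every case $f \in R_\idq$, hence $f \in \bigcap_\idq R_\idq = R$ by normality of $R$. It remains to check the quasi--invariance conditions. Fix $\alpha \in \Pi$ with $\mu_\alpha \ge 1$, put $\idq_\alpha = (l_\alpha)_R$, $\idp_\alpha = A \cap \idq_\alpha$, and write $f = a/s$ with $a \in A$ and $s \in A \setminus \idp_\alpha$. Then $l_\alpha \nmid s$ in $R$ and $l_\alpha^{2\mu_\alpha+1} \mid (s - s_\alpha(s))$; substituting into $l_\alpha^{2\mu_\alpha+1} \mid (a - s_\alpha(a)) = sf - s_\alpha(s)\,s_\alpha(f)$ gives $l_\alpha^{2\mu_\alpha+1} \mid s\,(f - s_\alpha(f))$, hence $l_\alpha^{2\mu_\alpha+1} \mid (f - s_\alpha(f))$ since $l_\alpha$ is prime in the UFD $R$. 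For $\mu_\alpha = 0$ the condition is automatic. Thus $f \in A$, so $A = A^\dagger$ is Cohen--Macaulay.

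For the local Gorenstein statement, fix $\idm \ne \idm_p$ and let $q = \nu^{-1}(\idm) \ne (0,0)$; since Gorensteinness may be checked after faithfully flat local base change, it suffices to treat $\widehat A_\idm$. If $q \notin V_R(I)$ then $\nu$ is a local isomorphism and $A_\idm \cong R_{\idm'}$ is regular. Otherwise $q$ lies on a \emph{unique} line $V(l_\alpha)$ with $\mu_\alpha \ge 1$ (distinct lines of the arrangement meet only at the origin). Choose linear coordinates $u = l_\alpha$ and $w = \cos(\alpha) z_1 + \sin(\alpha) z_2$, so that $z_1^2 + z_2^2 = u^2 + w^2$, the reflection $s_\alpha$ acts by $u \mapsto -u$, $w \mapsto w$, and $c := w(q) \ne 0$. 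The quasi--invariance condition at $\alpha$ forces $\widehat A_\idm \subseteq B := \CC[[u^2, w-c]] \oplus u^{2\mu_\alpha+1}\CC[[u^2, w-c]] \subseteq \CC[[u, w-c]] = \widehat R_{\idm'}$. For the reverse inclusion I would exhibit generators of $B$ inside $\widehat A_\idm$: one has $u^2 = l_\alpha^2 \in A$; the element $\zeta := \tfrac{1}{2c}(\omega - c^2) - \tfrac{1}{2c}\,l_\alpha^2 \in A$ equals $(w-c) + \tfrac{1}{2c}(w-c)^2$, so $w-c \in \CC[[\zeta]] \subseteq \widehat A_\idm$ by the formal inverse function theorem; and $u^{2\mu_\alpha+1} = \bigl(l_\alpha^{2\mu_\alpha+1}h^2\bigr)\cdot(h^2)^{-1} \in \widehat A_\idm$, where $h := \prod_{\beta \ne \alpha}\bigl(l_\beta\,s_\alpha(l_\beta)\bigr)^{2\mu_\beta}$, using that both $h^2$ and $l_\alpha^{2\mu_\alpha+1}h^2$ lie in $A$ (each $l_\gamma$, $\gamma \ne \alpha$, occurs in $h^2$ to the power $4\mu_\gamma \ge 2\mu_\gamma+1$, and $h^2$ is $s_\alpha$--invariant) while $h^2$ is a unit of $\widehat A_\idm$ since $h(q) \ne 0$. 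Hence $\widehat A_\idm = B \cong \CC[[s,t,\theta]]/(\theta^2 - s^{2\mu_\alpha+1})$, a hypersurface ring, so $A_\idm$ is Gorenstein.

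The technical heart of the argument is this last identification $\widehat A_\idm \cong \CC[[s,t,\theta]]/(\theta^2 - s^{2\mu_\alpha+1})$: one must see that completing at a point of $V(l_\alpha) \setminus \{0\}$ kills the contributions of all reflections $s_\beta$ with $\beta \ne \alpha$, and one must produce \emph{honest} elements of $A$ (not merely of its normalization $R$) generating the completed local ring. The delicate point is the auxiliary polynomial $h$, rigged so that $h^2$ and $l_\alpha^{2\mu_\alpha+1}h^2$ are genuinely quasi--invariant while $h$ itself is a local unit at $q$ — this is exactly what lets one cancel the extra factor and recover the pure power $u^{2\mu_\alpha+1}$, and hence realize $\widehat A_\idm$ as a hypersurface. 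I expect everything else (the height and contraction bookkeeping for $\kP$, the coordinate change, the formal inversion) to be routine.
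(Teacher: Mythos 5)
Your overall strategy is sound but genuinely different from the paper's: for Cohen--Macaulayness the paper does not verify $A=A^\dagger$ directly, but instead passes to the Macaulayfication $A'$, shows the conductor ideals of $A$ and $A'$ coincide, and then recovers $A'$ as $\End_{\Tri(A')}(U_\circ)$ via the category of triples, concluding $A'=A$; for the Gorenstein statement it likewise computes $\End_{\Tri(\widehat A)}(U_\bu)$ to identify $\widehat A_\idm$ with $\CC\llbracket u,v^2,v^{2\mu_\alpha+1}\rrbracket$. Your replacement of the first step by the elementary argument ``$f\in A^\dagger\Rightarrow f\in R$ by normality, then transfer the quasi--invariance condition from $a=sf$ to $f$ by dividing by $s$ in the UFD $R$'' is correct and arguably cleaner, since it avoids the matrix--problem formalism entirely; the paper's approach buys uniformity, because the same triple computation is reused verbatim for the local Gorenstein statement and later for the classification of rank--one modules.

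There is, however, a concrete false step in your local computation: the claim $l_\alpha^2\in A$ (used twice, once for $u^2$ and once inside $\zeta$). Quasi--invariance of $l_\alpha^2$ at a direction $\beta\ne\alpha$ with $\mu_\beta\ge 1$ would require $l_\beta^{2\mu_\beta+1}$, of degree $\ge 3$, to divide the homogeneous quadratic $l_\alpha^2-s_\beta(l_\alpha)^2$, which forces $s_\beta(l_\alpha)=\pm l_\alpha$; this holds only when the lines $V(l_\alpha)$ and $V(l_\beta)$ are orthogonal. Already for $\Lambda_3$ with constant multiplicity $1$ one has $l_0^2\notin A$ (it works in your implicit model case $\Pi=\{0,\pi/2\}$, where $A=\CC[z_1^2,z_1^3,z_2^2,z_2^3]$, which may be the source of the slip). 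The gap is repairable by exactly the device you already use for $u^{2\mu_\alpha+1}$: replace $l_\alpha^2$ and $\zeta$ by $l_\alpha^2h^2$ and $\zeta h^2$, which do lie in $A$, and divide by the unit $h^2$ in $\widehat A_\idm$. You should also say a word about why the inclusion $\widehat A_\idm\subseteq B$ survives completion: the divisibility condition $u^{2\mu_\alpha+1}\mid f-s_\alpha(f)$ cuts out a closed subring of $\widehat R_{\idm'}$ containing $A$, and by Artin--Rees $\widehat A_\idm$ is the closure of $A_\idm$ in $\widehat R_{\idm'}$; as stated this step is only asserted.
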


\begin{proof}
Let $A'$ be the Macaulayfication of $A$ (see Proposition \ref{P:MacaulayficationRings}) and $I' := \Hom_{A'}(R, A')$ be the corresponding conductor ideal.
Since $Q(A) = Q(A')$,  we have: $\Hom_{A'}(R, A') = \Hom_{A}(R, A')$. Moreover,  the embedding $A \stackrel{j}\lar A'$ induces a commutative diagram
$$
\xymatrix{
I \ar[r]^-\cong \ar@{_{(}->}[d]_-{j^\ast_{\mid}} & \Hom_A(R, A) \ar@{^{(}->}[d]^{j^\ast} \\
I' \ar[r]^-\cong & \Hom_A(R, A').
}
$$
Since the $\CC$--vector space $A'/A$ is finite dimensional, $j^\ast_\idp$ is an isomorphism for any $\idp \in \kP$. Therefore, the cokernel of $j^\ast_{\mid}$ is finite dimensional, too. On the other hand, both $A$--modules $I$ and $I'$ are Cohen--Macaulay; see Corollary \ref{C:ConductorDescription}
and \cite[Lemma 3.1]{SurvOnCM} respectively. From  \cite[Lemma 3.6]{SurvOnCM} we deduce  that $I = I'$.

Let $\bar{A}' := A'/I$. Then we have an algebra extension $\bar{A} \subseteq \bar{A}'$, where  both algebras $\bar{A}$ and $\bar{A}'$ are Cohen--Macaualy and $\dim_{\CC}\bigl(\bar{A}'/\bar{A}\bigr) < \infty$. This implies that $Q(\bar{A}) \cong Q(\bar{A}')$.
Next, we have the following commutative diagram:
\begin{equation}\label{E:keydiagram}
\begin{array}{c}
\xymatrix{
 A'\ar[r] \ar@{_{(}->}[d]& Q(\bar{A}') \ar[r]^-\cong \ar@{_{(}->}[d] & Q(\bar{A}) \ar@{_{(}->}[d] \ar[r]^-\cong & \bar{A}_{\bar{\idp}_1} \times \dots \times  \bar{A}_{\bar{\idp}_n} \ar@{^{(}->}[d]^-{\mathsf{diag}} \ar[r]^-\cong & K_{\alpha_1} \times \dots \times K_{\alpha_n} \ar@{^{(}->}[d]^-{\mathsf{diag}}\\
R \ar[r] & Q(\bar{R}) \ar[r]^-{=} &  Q(\bar{R}) \ar[r]^-{\cong} & \bar{R}_{\bar{\idq}_1} \times \dots \times \bar{R}_{\bar{\idq}_n}\ar[r]^-\cong &
L_{\alpha_1} \times \dots \times L_{\alpha_n}
}
\end{array}
\end{equation}
where $\idp_k := A \cap \idq_k$ for $1 \le k \le n$; compare with  diagram (\ref{E:gluingdiag}).

Let $U_\circ := \FF(A')$, where
$\CM(A') \stackrel{\FF}\lar \Tri(A')$ is the equivalence of categories from Theorem \ref{T:BurbanDrozd}. Clearly, $U_\circ =  \bigl(R, Q(\bar{A}'), \theta\bigr)$, where
$Q(\bar{A}') \stackrel{\theta}\lar Q(\bar{R})$ is the canonical inclusion. In the terms of  diagram (\ref{E:gluingdiag}) we have:
\begin{itemize}
\item $Q(\bar{A}') = \bigoplus_{\alpha \in \Pi} K_\alpha$,
\item $\theta = \bigl((1), \dots, (1)\bigr)$.
\end{itemize}
Observe that $A' \cong \Hom_{A'}(A', A') \cong \Hom_{\Tri(A')}(U_\circ, U_\circ)$.
Spelling out the definition (\ref{E:morphismtriples}) of morphisms in $\Tri(A')$, we obtain:
\begin{equation*}
\Hom_{\Tri(A')}\bigl(U_\circ, U_\circ)\bigr) =
\left\{f \in R \, \left| \, \forall \alpha \in \Pi
\begin{array}{c}
\xymatrix{
L_\alpha \ar[d]_{T_{(\alpha, 2\mu_\alpha)}(f)} & L_\alpha \ar[d]^-{g_\alpha} \ar[l]_-{1} \\
L_\alpha & L_\alpha  \ar[l]^-{1}
}
\end{array}
\right.
\;
\mbox{\rm for some} \; g_\alpha \in K_\alpha\right\}.
\end{equation*}
It follows that
$
R \supset A' =
\bigl\{
f \in R \, \big| \, T_{(\alpha, 2\mu_\alpha)}(f) \in \CC[\rho][\varepsilon^2]/(\varepsilon^{2\mu_\alpha}) \; \mbox{for all} \; \alpha \in \Pi
\bigr\} = A\bigl(\Pi, \underline{\mu}\bigr).
$
Hence, the algebra  $A$  is indeed Cohen--Macaulay.

It follows that the local ring $A_\idm$ is Cohen--Macaulay for any  $\idm \in \mathsf{Max}(A)$. Moreover, $A_\idm$ is Gorenstein if and only if its completion
$\widehat{A} := \widehat{A}_\idm$ is Gorenstein. Let $q \in X$ be the point corresponding to $\idm$. If $q$ is smooth then $\widehat{A} \cong \CC\llbracket
u, v\rrbracket$ is regular.

Now, assume that the point $q$ is singular and $\alpha \in \Pi$ is such that $q \in V(l_\alpha) \setminus\{p\}$. It follows from the formula $I = \Hom_A(R, A)$
that $\widehat{I} := \widehat{I}_\idm$ is the conductor ideal of the algebra extension $\widehat{A} \subseteq \widehat{R} = \CC\llbracket u, v\rrbracket$.
The diagram (\ref{E:keydiagram}) for the algebra $\widehat{A}$  has   the form
\begin{equation}
\begin{array}{c}
\xymatrix{
\widehat{A} \ar[r] \ar@{_{(}->}[d]&  \widehat{K}_\alpha= \CC\llbrace u\rrbrace[v^2]/(v^{2\mu_\alpha}) \ar@{^{(}->}[d] \\\
\widehat{R}  \ar[r] &  \widehat{L}_\alpha = \CC\llbrace u\rrbrace[v]/(v^{2\mu_\alpha}).
}
\end{array}
\end{equation}
Applying the same trick as in the proof of the Cohen--Macaulayness of $A$, we get
$$
\widehat{A} \cong \Hom_{\widehat{A}}\bigl(\widehat{A}, \widehat{A}\bigr) = \End_{\Tri(\widehat{A})}(U_\bu) \cong \CC\llbracket u, v^2, v^{2\mu_\alpha+1}\rrbracket,
$$
where $U_\bu := \bigl(\widehat{R}, \widehat{K}_\alpha, (1)\bigr)  \in \Tri(\widehat{A})$ is the triple corresponding to the regular module $\widehat{A}$.
Summing up,
$
\widehat{A} \cong \CC\llbracket u, v^2, v^{2\mu_\alpha+1}\rrbracket \cong \CC\llbracket u, z, t\rrbracket/(t^2 - z^{2\mu_\alpha +1})
$
is a hypersurface singularity. Hence, $\widehat{A}$ is Gorenstein. Summing up,  the algebra  $A$ is Gorenstein in codimension one.
\end{proof}

\begin{remark} As a consequence of Theorem \ref{T:CMandGorenstein}, we get the following statement: the algebra of planar quasi--invariants $A(\Pi, \underline{\mu}\bigr)$ is Gorenstein if and only if the its completion
\begin{equation}\label{E:quasiinvariants1}
\widehat{A}_p:= \Bigl\{
f \in \CC\llbracket z_1, z_2\rrbracket \; \big| \; l_\alpha^{2\mu_\alpha +1} \;\mbox{divides}\;  \bigl(f  - s_\alpha(f)\bigr) \; \mbox{for all}\; \alpha \in \Pi
\Bigr\}
\end{equation}
at its ``most singular'' point $p = \nu(0, 0)$
is Gorenstein.  Another proof of the fact that  $A(\Pi, \underline{\mu}\bigr)$ is Cohen--Macaulay can be found in the thesis of Johnston \cite[Theorem 3.3.2]{Johnston}.
\end{remark}

\section{Rank one Cohen--Macaulay modules over the algebra of planar quasi--invariants}
In this section, we classify  all Cohen--Macaualy $A$--modules of rank one, specifying  those of them, which are locally free in codimension one. Next, we give an explicit description of a canonical  module  of $A$. Finally, we describe the Picard group $\Pic(A)$ viewed as a subgroup of
the group $\CM^{\mathsf{lf}}_1(A)$, defined in Proposition \ref{P:CMPicard}.

\subsection{Description of the group $\CM^{\mathsf{lf}}_1(A)$}
For any  $\alpha \in \Pi$ we denote $K_\alpha := \CC(\rho)[\sigma]/(\sigma^{\mu_\alpha})$ and $L_\alpha := \CC(\rho)[\varepsilon]/(\varepsilon^{2\mu_\alpha})$.
It what follows, we shall  view $K_\alpha$ as a $\CC(\rho)$--subalgebra of $L_\alpha$ via the identification $\sigma = \varepsilon^2$.
Note that we have a direct sum decomposition $L_\alpha = K_\alpha \dotplus \varepsilon K_\alpha$.
The proof of the following lemma is a straightforward computation.

\begin{lemma}
For any $\gamma', \gamma'' \in K_\alpha$ we put: $\gamma' \circ \gamma'' := (\gamma' + \gamma'') \cdot (1 + \sigma \gamma' \cdot \gamma'')^{-1}$,
where $+$ and $\cdot$ are the usual addition and multiplication operations in the $\CC(\rho)$--algebra $K_\alpha$. Then we have: $(K_\alpha, \circ)$ is an abelian group. Note that for $\mu_\alpha = 0$,  $K_\alpha$ is simply the additive group of 
the field $\CC(\rho)$.
\end{lemma}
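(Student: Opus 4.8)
The plan is simply to check the abelian-group axioms; the paper is right that this is a straightforward computation, and I would organize it so that the only real manipulation is the associativity check. As a preliminary remark, note that $\sigma$ is nilpotent in $K_\alpha$ (indeed $\sigma^{\mu_\alpha}=0$), so for any $\gamma',\gamma''\in K_\alpha$ the element $u:=1+\sigma\gamma'\gamma''$ has constant term $1\in\CC(\rho)$ and is therefore a \emph{unit} of $K_\alpha$, with $u^{-1}=\sum_{j\ge 0}(-\sigma\gamma'\gamma'')^{j}$ a finite sum. Consequently $\gamma'\circ\gamma'':=(\gamma'+\gamma'')\,u^{-1}$ lies in $K_\alpha$, so $\circ$ is a well-defined binary operation, and it is commutative because $+$ and $\cdot$ are commutative in $K_\alpha$.

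Next I would pin down the identity and the inverses. Taking $\gamma'=0$ gives $u=1$, hence $0\circ\gamma=\gamma$, so $0$ is a two-sided identity (using commutativity). For $\gamma\in K_\alpha$ one has $\gamma\circ(-\gamma)=0\cdot(1-\sigma\gamma^{2})^{-1}=0$, so $-\gamma$ is inverse to $\gamma$. The one axiom with any content is associativity. Here I would substitute $\gamma_{1}\circ\gamma_{2}=(\gamma_{1}+\gamma_{2})(1+\sigma\gamma_{1}\gamma_{2})^{-1}$ into the defining formula for $(\gamma_{1}\circ\gamma_{2})\circ\gamma_{3}$ and clear the denominator by multiplying numerator and denominator by the unit $1+\sigma\gamma_{1}\gamma_{2}$, obtaining
\[
(\gamma_{1}\circ\gamma_{2})\circ\gamma_{3}=\bigl(\gamma_{1}+\gamma_{2}+\gamma_{3}+\sigma\,\gamma_{1}\gamma_{2}\gamma_{3}\bigr)\bigl(1+\sigma(\gamma_{1}\gamma_{2}+\gamma_{1}\gamma_{3}+\gamma_{2}\gamma_{3})\bigr)^{-1}.
\]
The right-hand side is manifestly symmetric in $\gamma_{1},\gamma_{2},\gamma_{3}$, and the same computation applied to $\gamma_{1}\circ(\gamma_{2}\circ\gamma_{3})$ produces literally the same expression; hence $\circ$ is associative and $(K_\alpha,\circ)$ is an abelian group.

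I do not expect any genuine obstacle: the only point that needs a word of justification is that every ``denominator'' occurring above is an honest unit of $K_\alpha$ (because $\sigma$ is nilpotent), so all the fraction manipulations take place inside $K_\alpha$ itself and never require a localization. For orientation, this computation is the truncated version of the statement that $F(x,y)=(x+y)(1+\sigma xy)^{-1}$ is a one-dimensional commutative formal group law over $\CC\llbracket\sigma\rrbracket$ — namely the additive formal group transported along the $\sigma$-twisted arctangent series $x\mapsto\sum_{j\ge 0}\tfrac{\sigma^{j}}{2j+1}\,x^{2j+1}$. Reducing modulo $\sigma^{\mu_\alpha}$ gives the group $(K_\alpha,\circ)$, and the correspondingly truncated series furnishes an explicit isomorphism $(K_\alpha,\circ)\cong(K_\alpha,+)$, which is an alternative, more structural, way to conclude.
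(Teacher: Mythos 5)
Your proof is correct, and since the paper dismisses this lemma with ``the proof is a straightforward computation,'' you are in effect supplying the computation it omits; the route (unit of a nilpotent extension, identity $0$, inverse $-\gamma$, and the symmetric closed form $(\gamma_1+\gamma_2+\gamma_3+\sigma\gamma_1\gamma_2\gamma_3)\bigl(1+\sigma(\gamma_1\gamma_2+\gamma_1\gamma_3+\gamma_2\gamma_3)\bigr)^{-1}$ for the triple product) is exactly what the authors must have in mind. One point worth making explicit: the lemma as printed defines $\gamma'\circ\gamma''=(\gamma'+\gamma'')\cdot(1+\sigma\gamma'\gamma'')$ \emph{without} the inverse, whereas the formula (\ref{E:PicardCusp}) in the Introduction and Theorem \ref{T:Cusp} both carry the $(1+\sigma\gamma'\gamma'')^{-1}$; you have silently proved the latter (correct) version. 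This is not a cosmetic issue --- without the inverse the operation fails associativity already at order $\sigma^2$, i.e.~for $\mu_\alpha\ge 3$ --- so your restatement is a necessary correction of a typo, not a deviation. Your closing observation that $\circ$ is the truncation of the (hyperbolic-)tangent formal group law, with the truncated $\operatorname{arctanh}$-type series $x\mapsto\sum_{j\ge 0}\frac{\sigma^j}{2j+1}x^{2j+1}$ furnishing an explicit isomorphism $(K_\alpha,\circ)\cong(K_\alpha,+)$, goes beyond the paper and gives a cleaner structural reason for the lemma; it is consistent with (and explains) the descriptions of $\CM_1^{\mathsf{lf}}(A)$ appearing later, e.g.~in Example \ref{EX:PicardSingSurfaces}.
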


\smallskip
\noindent
For any $\alpha \in \Pi$ and $f \in R$, we define the following two elements $T_{(\alpha, 2\mu_\alpha)}^\pm(f) \in K_\alpha$:
\begin{equation}
\left\{
\begin{array}{ccl}
T_{(\alpha, 2\mu_\alpha)}^+(f) & = & \sum\limits_{j = 0}^{\mu_\alpha-1} f^{(2j)}_\alpha \frac{\displaystyle \sigma^j}{\displaystyle (2j)!} \\
T_{(\alpha, 2\mu_\alpha)}^-(f) & = & \sum\limits_{j = 0}^{\mu_\alpha-1} f^{(2j+1)}_\alpha \frac{\displaystyle \sigma^j}{\displaystyle (2j+1)!}.
\end{array}
\right.
\end{equation}
Note that $T_{(\alpha, 2\mu_\alpha)}(f) = T_{(\alpha, 2\mu_\alpha)}^+(f) + \varepsilon T_{(\alpha, 2\mu_\alpha)}^-(f) \in L_\alpha$ for any $f \in R$. Moreover,  $f \in A$ if and only if $T_{(\alpha, 2\mu_\alpha)}^-(f) = 0$ for all $\alpha \in \Pi$.

\begin{theorem}\label{T:listCMlf} There is an isomorphism of abelian groups
\begin{equation}\label{E:DefKPi}
\CM^{\mathsf{lf}}_1(A) \stackrel{\Theta}\lar K\bigl(\Pi, \underline{\mu}\bigr) := \bigoplus\limits_{\alpha \in \Pi} (K_\alpha, \circ)
\end{equation}
such that for  any element $\vec\gamma = (\gamma_\alpha)_{\alpha \in \Pi} \in K\bigl(\Pi, \underline{\mu}\bigr)$ we have:
\begin{equation}\label{E:cosnstraints}
B(\vec\gamma) := \Theta^{-1}(\vec\gamma) \cong
\bigl\{
f \in R \; \big| \; T_{(\alpha, 2\mu_\alpha)}^-(f) = \gamma_\alpha \cdot T_{(\alpha, 2\mu_\alpha)}^+(f) \;\;   \mbox{\rm for all} \;\;
\alpha \in \Pi
\bigr\}.
\end{equation}
\end{theorem}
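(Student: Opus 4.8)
The plan is to use the equivalence $\FF\colon\CM(A)\to\Tri(A)$ of Theorem~\ref{T:BurbanDrozd} together with the explicit gluing diagram~(\ref{E:gluingdiag}), in which $Q(\bar{A})\cong\prod_{\alpha\in\Pi}K_\alpha$ and $Q(\bar{R})\cong\prod_{\alpha\in\Pi}L_\alpha$ (with $K_\alpha=\CC(\rho)[\sigma]/(\sigma^{\mu_\alpha})\hookrightarrow L_\alpha=\CC(\rho)[\varepsilon]/(\varepsilon^{2\mu_\alpha})$ via $\sigma=\varepsilon^2$), the structure map $R\to Q(\bar{R})$ being $f\mapsto\bigl(T_{(\alpha,2\mu_\alpha)}(f)\bigr)_{\alpha}$. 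Under $\FF$, the group $\CM^{\mathsf{lf}}_1(A)$ matches the isomorphism classes of rank one triples in $\Tri^{\mathsf{lf}}(A)$, and $\boxtimes_A$ becomes the tensor product of triples.

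I would first pin down these triples. A rank one object of $\Tri^{\mathsf{lf}}(A)$ has the form $(N,V,\theta)$ with $N$ a rank one Cohen--Macaulay $R$--module, $V$ a free rank one $Q(\bar{A})$--module and $\theta$ an isomorphism; since $R=\CC[z_1,z_2]$ is a polynomial ring we have $N\cong R$, and $V\cong Q(\bar{A})$ because the module is locally free of rank one in codimension one. So, up to isomorphism in $\Tri(A)$, the triple is $(R,Q(\bar{A}),\theta)$ with $\theta$ a $Q(\bar{R})$--linear automorphism of $Q(\bar{R})\cong\prod_\alpha L_\alpha$, that is, multiplication by a tuple of units $u=(u_\alpha)_\alpha\in\prod_\alpha L_\alpha^{\times}$. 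Applying $\FF$ directly to the explicit $A$--module $\{f\in R\mid T^-_{(\alpha,2\mu_\alpha)}(f)=\gamma_\alpha\,T^+_{(\alpha,2\mu_\alpha)}(f)\ \forall\alpha\in\Pi\}$ appearing in~(\ref{E:cosnstraints}) and using the splitting $T_{(\alpha,2\mu_\alpha)}(f)=T^+_{(\alpha,2\mu_\alpha)}(f)+\varepsilon\,T^-_{(\alpha,2\mu_\alpha)}(f)$, one reads off that this module corresponds to $u_\alpha=1+\varepsilon\gamma_\alpha$; in particular every tuple $\vec\gamma$ is realized.

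Next I would classify the triples $(R,Q(\bar{A}),u)$ up to isomorphism. By~(\ref{E:morphismtriples}), an isomorphism $(R,Q(\bar{A}),u)\to(R,Q(\bar{A}),u')$ is a pair $(f,g)$ with $f\in\Aut_R(R)=\CC^{\times}$ and $g=(g_\alpha)\in\Aut_{Q(\bar{A})}(Q(\bar{A}))=\prod_\alpha K_\alpha^{\times}$ satisfying $f\,u_\alpha=u'_\alpha\,g_\alpha$ for all $\alpha$; hence $(R,Q(\bar{A}),u)\cong(R,Q(\bar{A}),u')$ iff $u'_\alpha\in K_\alpha^{\times}u_\alpha$ for every $\alpha$. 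Using $L_\alpha=K_\alpha\dotplus\varepsilon K_\alpha$ and the nilpotence of $\varepsilon$, any unit $u_\alpha$ has a unique associate in $K_\alpha^{\times}u_\alpha$ of the shape $1+\varepsilon\gamma_\alpha$ with $\gamma_\alpha\in K_\alpha$. Combined with the previous paragraph, this yields a bijection $\Theta\colon\CM^{\mathsf{lf}}_1(A)\to\prod_\alpha K_\alpha$ whose inverse sends $\vec\gamma$ to the module $B(\vec\gamma)$ of~(\ref{E:cosnstraints}).

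It remains to see that $\Theta$ is an isomorphism onto the group $\bigoplus_\alpha(K_\alpha,\circ)$ of~(\ref{E:DefKPi}). The neutral element $A$ gives $\FF(A)=(R,Q(\bar{A}),\mathrm{id})$, i.e.\ $\vec\gamma=0$. For the group law, the monoidal property in Theorem~\ref{T:BurbanDrozd} gives $\FF\bigl(B(\vec\gamma_1)\boxtimes_A B(\vec\gamma_2)\bigr)\cong\bigl(R,Q(\bar{A}),u\bigr)$ with $u_\alpha=(1+\varepsilon\gamma_{1,\alpha})(1+\varepsilon\gamma_{2,\alpha})$; expanding, $u_\alpha=(1+\sigma\gamma_{1,\alpha}\gamma_{2,\alpha})+\varepsilon(\gamma_{1,\alpha}+\gamma_{2,\alpha})$, and factoring out the unit $1+\sigma\gamma_{1,\alpha}\gamma_{2,\alpha}\in K_\alpha^{\times}$ brings $u_\alpha$ to the normal form $1+\varepsilon\gamma_\alpha$ with $\gamma_\alpha=(\gamma_{1,\alpha}+\gamma_{2,\alpha})(1+\sigma\gamma_{1,\alpha}\gamma_{2,\alpha})^{-1}=\gamma_{1,\alpha}\circ\gamma_{2,\alpha}$. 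Hence $\Theta$ is a group homomorphism, and being bijective it is the desired isomorphism. I expect the main obstacle to be the bookkeeping in the second paragraph — carrying out the computation of $\FF$ on $B(\vec\gamma)$ and keeping the identifications $Q(\bar{A})\cong\prod_\alpha K_\alpha$, $Q(\bar{R})\cong\prod_\alpha L_\alpha$ consistent — whereas the appearance of the exotic operation $\circ$ is the short, conceptually central computation in the last paragraph.
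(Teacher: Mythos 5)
Your proposal is correct and follows essentially the same route as the paper: reduce via $\FF$ to rank one objects of $\Tri^{\mathsf{lf}}(A)$, normalize each gluing unit to $1+\varepsilon\gamma_\alpha$ using $L_\alpha=K_\alpha\dotplus\varepsilon K_\alpha$, and obtain the operation $\circ$ from multiplying the units $(1+\varepsilon\gamma_{1,\alpha})(1+\varepsilon\gamma_{2,\alpha})$. The one place where the paper proceeds in the opposite (and cleaner) direction is the identification of $B(\vec\gamma)$ with the explicit set in~(\ref{E:cosnstraints}): rather than ``applying $\FF$ directly'' to that set --- which would first require checking it is a rank one Cohen--Macaulay module locally free in codimension one and then computing its triple --- the paper computes $\Hom_{\Tri(A)}\bigl(U_\circ, U(\vec\gamma)\bigr)$ via~(\ref{E:morphismtriples}) and reads off the constraints, so you should phrase that step accordingly.
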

\begin{proof} By Theorem \ref{T:BurbanDrozd}, we have an equivalence of categories $\CM^{\mathsf{lf}}(A) \stackrel{\FF}\lar
\Tri^{\mathsf{lf}}(A)$, preserving the monoidal structure on both sides. Let $U$ be an object of $\Tri^{\mathsf{lf}}(A)$ corresponding to a Cohen--Macaulay $A$--module of rank one. Then we have: $U = \bigl(R, \oplus_{\alpha \in \Pi} K_\alpha, (\theta_\alpha)_{\alpha \in \Pi}\bigr)$,
where $\theta_\alpha \in L_\alpha$ are some elements. Since the map $L_\alpha \stackrel{\theta_\alpha \cdot}\lar L_\alpha$ is an   isomorphism for any
$\alpha \in \Pi$, we conclude that all elements $\theta_\alpha$ are in fact invertible. Applying an appropriate automorphism of $K_\alpha$, we can  find
a \emph{uniquely determined} element $\gamma_\alpha \in K_\alpha$ such that
$$
U \cong U(\vec{\gamma}) := \bigl(R, \oplus_{\alpha \in \Pi} K_\alpha, (1 + \varepsilon \gamma_\alpha)_{\alpha \in \Pi}\bigr).
$$
Let $B(\vec{\gamma})$ be the unique (up to an isomorphism) element of the group $\CM^{\mathsf{lf}}_{1}(A)$ such that $\FF\bigl(B(\vec{\gamma})\bigr) \cong U(\vec{\gamma})$. By Theorem \ref{T:BurbanDrozd} we have:
$
\FF\bigl(B(\vec{\gamma}') \boxtimes_A B(\vec{\gamma}'')\bigr) \cong
U(\vec{\gamma}') \otimes U(\vec{\gamma}'') \cong $
$$
\bigl(R, \oplus_{\alpha \in \Pi} K_\alpha, \bigl((1 + \sigma \gamma_\alpha' \gamma_\alpha'') +
\varepsilon (\gamma_\alpha'  + \gamma_\alpha'')\bigr)_{\alpha \in \Pi}\bigr) \cong
\bigl(R, \oplus_{\alpha \in \Pi} K_\alpha, (\gamma_\alpha' \circ \gamma_\alpha'')_{\alpha \in \Pi}\bigr) =
U(\vec{\gamma}' \circ \vec{\gamma}'').
$$
This implies that $\Theta$ is indeed an isomorphism of abelian groups.
To get an explicit description of the module $B(\vec\gamma)$, observe that
$$
B(\vec{\gamma}) \cong \Hom_A\bigl(A, B(\vec{\gamma})\bigr) \cong \Hom_{\Tri(A)}\bigl(U_\circ, U(\vec{\gamma})\bigr),
$$
where $U_\circ := \FF(A) = \bigl(R, \oplus_{\alpha \in \Pi} K_\alpha, \bigl((1), \dots, (1)\bigr)\bigr)$. Writing down  the definition of morphisms
in the category  $\Tri(A)$, we conclude  that
\begin{equation*}
\Hom_{\Tri(A)}\bigl(U_\circ, U(\vec{\gamma})\bigr) =
\left\{f \in R \, \left| \, \forall \alpha \in \Pi
\begin{array}{c}
\xymatrix{
L_\alpha \ar[d]_{T_{(\alpha, 2\mu_\alpha)}(f)} & L_\alpha \ar[d]^-{g_\alpha} \ar[l]_-{1} \\
L_\alpha & L_\alpha  \ar[l]^-{1+ \varepsilon \gamma_\alpha}
}
\end{array}
\right.
\;
\mbox{\rm for some} \; g_\alpha \in K_\alpha\right\}.
\end{equation*}
It is easy to see that the constraints on  $f$ are precisely the ones given by (\ref{E:cosnstraints}).
\end{proof}

\subsection{Classification of all rank one Cohen--Macaulay $A$--modules} We begin with the following  preparatory results.

Let $\kk$ be any field, $m \in \NN$, $K = \kk[\sigma]/(\sigma^m)$ and $L = \kk[\varepsilon]/(\varepsilon^{2m})$. We view $K$ as a $\kk$--subalgebra
of $L$, identifying $\sigma$ with $\varepsilon^2$. For any $0 \le j \le m$ we put: $K_j := K/(\sigma^j)$ (in particular, $K_0 = 0$ and $K_m = K$).

\begin{lemma}\label{L:structureMP}
Let $V$ be a $K$--module and $V\stackrel{\widetilde\theta}\lar L$ an injective map of $K$--modules such that the adjoint map of $L$--modules
$L \otimes_K V \stackrel{\theta}\lar L$ is surjective. Then there exists $0 \le j \le m$ such that $V \cong K \oplus K_j$.
\end{lemma}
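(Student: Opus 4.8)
The plan is to use the surjectivity of the adjoint $\theta$ to produce a \emph{free} cyclic direct summand of $V$, and then to identify the corresponding quotient as a submodule of $K$, so that the whole module falls apart as $K\oplus K_j$.

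First I would note that $\im(\theta)$ is precisely the ideal of $L$ generated by $\widetilde\theta(V)$, since $\theta\bigl(\sum_i\ell_i\otimes v_i\bigr)=\sum_i\ell_i\widetilde\theta(v_i)$. Because $L=\kk[\varepsilon]/(\varepsilon^{2m})$ is local and its ideals form the chain $(\varepsilon^t)$, $0\le t\le 2m$, surjectivity of $\theta$ forces $\widetilde\theta(V)\not\subseteq(\varepsilon)$, so there is a $v_0\in V$ with $\widetilde\theta(v_0)$ invertible in $L$. Composing $\widetilde\theta$ with the $L$-linear (hence $K$-linear) automorphism ``multiplication by $\widetilde\theta(v_0)^{-1}$'' of $L$ changes neither the injectivity of $\widetilde\theta$ nor the surjectivity of its adjoint, so I may assume $\widetilde\theta(v_0)=1$. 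Moreover, if $\sigma^kv_0=0$ then $\varepsilon^{2k}=\widetilde\theta(\sigma^kv_0)=0$ in $L$, whence $k\ge m$; thus $\Ann_K(v_0)=0$ and $Kv_0\cong K$.

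Next, using the $K$-module decomposition $L=K\dotplus\varepsilon K$ (with $\sigma=\varepsilon^2$), write $\widetilde\theta(v)=a(v)+\varepsilon\, b(v)$; the components $a,b\colon V\to K$ are $K$-linear, and $a(v_0)=1$, $b(v_0)=0$. I claim $\ker(b)=Kv_0$: if $b(v)=0$ then $\widetilde\theta(v)=a(v)=\widetilde\theta\bigl(a(v)v_0\bigr)$, so $v=a(v)v_0$ by injectivity of $\widetilde\theta$; conversely $b(cv_0)=c\,b(v_0)=0$ for all $c\in K$. Hence $b$ factors through an injective $K$-linear map $V/Kv_0\hookrightarrow K$. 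Since $K=\kk[\sigma]/(\sigma^m)$ is a chain ring whose ideals are exactly the $(\sigma^i)$, $0\le i\le m$, and $(\sigma^i)\cong K/(\sigma^{m-i})$ as $K$-modules, every submodule of $K$ is isomorphic to some $K_j$; therefore $V/Kv_0\cong K_j$ for a suitable $0\le j\le m$.

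It remains to split the short exact sequence $0\to Kv_0\to V\to K_j\to 0$, i.e.\ $0\to K\to V\to K_j\to 0$. This holds because $K$ is injective as a module over itself (being a zero-dimensional Gorenstein local ring); alternatively one checks $\Ext^1_K(K_j,K)=0$ directly from the $2$-periodic free resolution $\cdots\to K\xrightarrow{\sigma^{m-j}}K\xrightarrow{\sigma^{j}}K\to K_j\to 0$. Either way $V\cong K\oplus K_j$. I expect the only steps needing genuine care to be the passage from ``$\theta$ surjective'' to ``$\widetilde\theta$ takes a unit value'' and the identification $\ker(b)=Kv_0$; after that the argument is soft homological algebra over the chain ring $K$.
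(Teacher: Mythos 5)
Your proof is correct, but it follows a genuinely different route from the paper's. The paper first invokes the classification of finitely generated modules over the chain ring $K$ (so $V$ is a priori a direct sum of $K_j$'s), then argues in two separate steps: Nakayama's lemma applied to $\theta$ modulo $\varepsilon$ shows that surjectivity forces a free summand $K$ to occur, while applying the duality functor $\DD=\Hom_{\kk}(-,\kk)$ to the injection $V\hookrightarrow L\cong K^2$ turns it into a surjection $K^2\twoheadrightarrow V^\ast$, bounding the number of indecomposable summands by two. You instead work constructively: surjectivity of $\theta$ over the local ring $L$ hands you a $v_0$ with $\widetilde\theta(v_0)$ a unit, which (after normalization and the annihilator computation $\sigma^k v_0=0\Rightarrow k\ge m$) generates a free submodule $Kv_0\cong K$; the $\varepsilon$-component $b$ of $\widetilde\theta$ relative to $L=K\dotplus\varepsilon K$ then embeds $V/Kv_0$ into $K$, so the quotient is cyclic of the form $K_j$, and self-injectivity of the zero-dimensional Gorenstein ring $K$ (or your explicit $\Ext^1_K(K_j,K)=0$ computation from the $2$-periodic resolution) splits the extension. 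Your argument buys a more self-contained and somewhat more elementary proof — it avoids both the a priori decomposition of $V$ and the duality functor — at the modest cost of needing the splitting step at the end, which the paper's structural approach sidesteps. Both arguments are sound; all the delicate points you flag (extracting a unit value from surjectivity of $\theta$, and the identification $\ker(b)=Kv_0$ via injectivity of $\widetilde\theta$) check out.
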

\begin{proof} Let $\widetilde{V} := L \otimes_K V$.  Then we have an isomorphism of $K$--modules $\widetilde{V} \cong V \oplus \varepsilon V$.
By Nakayama's Lemma, the map $\widetilde{V} \stackrel{\theta}\lar L$ is surjective if and only if the induced map
$\widetilde{V}/\varepsilon \widetilde{V} \stackrel{\bar\theta}\lar L/\varepsilon L$ is surjective. Note that  $\widetilde{V}/\varepsilon
\widetilde{V} \cong
V/\varepsilon^2 V = V/\sigma V$.

\smallskip
\noindent
Next, any $K$--linear map $K_j \stackrel{\widetilde\psi}\lar L$ is fully determined by the element $a = \widetilde\psi(\bar{1}) \in L$, which has to satisfy the condition
$\varepsilon^{2j} a = 0$, i.e. $a = \varepsilon^{2(m-j)} \tilde{a}$ for some $\tilde{a} \in L$. The induced map
$$
\kk \cong K_j/\sigma K_j \stackrel{\bar\psi}\lar  L/\varepsilon L \cong \kk
$$
sends $1$ to $a(0)$, i.e.~is zero for any $j < m$. Since any  finitely generated $K$--module $V$ splits into a finite direct sum of $K_j$-s, it  follows that $\theta$ can be surjective only if $V$ contains $K$ as a direct summand.

In the next step, we prove that the $K$--linear map $\widetilde\theta$ can be injective only if $V$ has at most two direct summands.
Let $\DD := \Hom_{\kk}(\,-\,,\kk): K-\mathsf{mod} \lar K-\mathsf{mod}$ be the Nakayama functor. Obviously, $\DD$ is an exact contravariant functor  and $\DD(K_j) \cong K_j$ for
 all $0 \le j \le m$. We have: $L \cong K^2$ and
 $$
 0 \lar V \stackrel{\widetilde\theta}\lar  K \oplus K  \quad  \mbox{\rm induces} \quad
 K \oplus K \stackrel{\widetilde\theta^\ast}\lar V^\ast \lar 0.
 $$
 It implies that $V^\ast$ has at most two direct summands, hence $V$ has at most two direct summands, too. Summing up, there exists $0 \le j \le m $ such that $V \cong K \oplus K_j$.
\end{proof}

For  $0 \le j \le m$ put  $V := K \oplus K_j$. Let $V \stackrel{\widetilde\theta}\lar L$ be a $K$--linear map and $L \otimes_K V \cong L\oplus L_j \stackrel{\theta}\lar L$ be its adjoint map,
where $L_j := L/(\varepsilon^{2j})$. Then both morphisms $\theta$ and $\widetilde{\theta}$ can be  presented by a matrix $(a\,|\, b) \in \Mat_{(1 \times 2)}(L)$, where $\varepsilon^{2j}b = 0$.

\begin{definition}
We  call two such maps $\theta, \theta' \in \Hom_L(L \otimes_K V, L)$ \emph{equivalent} if and only if
there exists an automorphism $\varphi \in \Aut_K(V)$ such that $\theta' = \theta \circ \varphi$.
\end{definition}

\smallskip
\noindent
We also assume that $\theta = (a\,|\, b) \in \Hom_L(L \otimes_K V, L)$ is surjective and the corresponding map $\widetilde{\theta} \in \Hom_K(V, L)$ is injective:
$$
\xymatrix{
K \oplus K_j \ar@{^{(}->}[rr]^-{(a\,|\, b)} \ar@{_{(}->}[d] & &  L \ar[d]^{=}\\
L \oplus L_j \ar@{->>}[rr]^-{(a\,|\, b)} & & L.
}
$$
It is clear that both these properties of the matrix $(a\,|\, b)$ are preserved when we replace it by an equivalent matrix. Let us  first treat  the following two ``boundary cases''.

\begin{lemma}\label{L:normalform} The following results are true.
\begin{enumerate}
\item Assume $j = 0$, i.e.~$V = K$. Then $\theta$ is equivalent to $(1 + \varepsilon \gamma)$ for some $\gamma \in K$. Moreover,
$(1 + \varepsilon \gamma) \sim (1 + \varepsilon \gamma')$ if and only if $\gamma = \gamma'$.
\item Assume $j = m$, i.e.~$V = K \oplus K$. Then $\theta$ is equivalent to $(1 \,\mid\, \varepsilon)$.
\end{enumerate}
\end{lemma}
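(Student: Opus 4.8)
The plan is to represent $\theta$ by the $1\times2$ matrix $(a\,\mid\,b)$ over $L$ as in the statement --- in part (1) this is a single entry $a\in L$, in part (2) a pair $(a\,\mid\,b)$ with $a,b\in L$ --- and to exploit the splitting $L = K\dotplus\varepsilon K$ together with the fact that the allowed equivalences come from $\Aut_K(V)$ only, i.e.\ from column operations with coefficients in $K$ (not in $L$).

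For part (1), where $V=K$ and $\theta$ is multiplication by $a$, I would first note that surjectivity of $\theta$ forces $a$ to be invertible in $L$. Writing $a = a_+ + \varepsilon a_-$ with $a_\pm\in K$, invertibility of $a$ is equivalent to invertibility of $a_+$ in $K$; since $\Aut_K(K)$ consists precisely of multiplications by invertible elements $u\in K$ and $\theta\circ(\cdot\,u)$ is multiplication by $au$, I may replace $a$ by $a\,a_+^{-1} = 1 + \varepsilon(a_-a_+^{-1})$, which is the asserted normal form with $\gamma := a_-a_+^{-1}$. For the uniqueness claim, if $(1+\varepsilon\gamma)\,u = 1+\varepsilon\gamma'$ for an invertible $u\in K$, then comparing the $K$--components and the $\varepsilon K$--components (using that $\gamma u\in K$, that $\varepsilon\gamma u\in\varepsilon K$, and that $x\mapsto\varepsilon x$ is injective on $K$) gives $u=1$ and then $\gamma=\gamma'$.

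For part (2), where $V = K\oplus K$ and $\Aut_K(V) = \GL_2(K)$ acts on $(a\,\mid\,b)$ by right multiplication (i.e.\ by $K$--column operations), I would proceed in steps. Surjectivity of $\theta$ forces $a$ or $b$ to be invertible in $L$, so after a possible column swap I may assume $a$ is invertible, i.e.\ (writing $a = a_+ + \varepsilon a_-$, $b = b_+ + \varepsilon b_-$ with all summands in $K$) that $a_+$ is invertible in $K$. Subtracting $b_+a_+^{-1}\in K$ times the first column from the second replaces $b$ by $b - (b_+a_+^{-1})a = \varepsilon b'$ with $b' := b_- - b_+a_+^{-1}a_-\in K$, so $(a\,\mid\,b)\sim(a\,\mid\,\varepsilon b')$. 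Now I would invoke injectivity of $\widetilde\theta$ (preserved under equivalence): if $b'$ were not invertible in the local Artinian ring $K=\kk[\sigma]/(\sigma^m)$ then $\sigma\mid b'$, hence $\varepsilon b'\cdot\sigma^{m-1} = \varepsilon\sigma^m b'' = 0$, so the nonzero element $(0,\sigma^{m-1})\in K\oplus K$ would lie in $\ker\widetilde\theta$, a contradiction; thus $b'$ is invertible in $K$. Scaling the second column by $(b')^{-1}$ gives $(a\,\mid\,\varepsilon)$; subtracting $a_-$ times the second column from the first kills the $\varepsilon$--part of $a$ and yields $(a_+\,\mid\,\varepsilon)$; finally scaling the first column by $a_+^{-1}$ gives $(1\,\mid\,\varepsilon)$.

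The point I expect to require the most care --- and the reason one cannot simply normalize in one line --- is that the equivalence group is $\GL_2(K)$, not $\GL_2(L)$, so one is not permitted to scale a column by an element of $L$ lying outside $K$. The device that gets around this is to first peel off the $K$--parts of the matrix entries by $K$--column operations, and then extract the invertibility of the remaining $\varepsilon$--coefficient $b'$ from the injectivity hypothesis on $\widetilde\theta$; everything else is bookkeeping with the decomposition $L = K\dotplus\varepsilon K$.
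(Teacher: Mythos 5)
Your proof is correct and follows essentially the same route as the paper: reduction to normal form under the action of $\Aut_K(V)$, using the decomposition $L = K \dotplus \varepsilon K$ and the fact that surjectivity forces the relevant entries to be units. The only cosmetic difference is in part (2), where the paper observes that surjectivity together with $\rk_K(L)=2$ makes $\{a,b\}$ a $K$--basis of $L$, hence $\GL_2(K)$--equivalent to the basis $\{1,\varepsilon\}$, whereas you perform the column reduction explicitly and extract the invertibility of the $\varepsilon$--coefficient $b'$ from the injectivity of $\widetilde\theta$; both arguments are valid.
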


\begin{proof}
In the first case we have: $\theta = (a)$ for some $a \in L$. The surjectivity of $\theta$ is equivalent to the condition  $a(0) \ne 0$, which also insures the injectivity of $\widetilde{\theta}$. Applying an appropriate automorphism of $K$, we get $(a) \sim (1 + \varepsilon \gamma)$, where  $\gamma \in K$ is uniquely determined.

In the second case, first observe that $\rk_K(L) = 2$, hence the surjectivity of $\theta$ is equivalent to its bijectivity (which in its turn, implies the injectivity of $\widetilde{\theta}$).  Since both elements $1, \varepsilon \in L$ belong to the image of the map $\theta$, we can transform
$\theta$ to $(1 \,\mid\, \varepsilon)$.
\end{proof}

\begin{proposition}\label{P:normalform} As above, let  $0 \le j \le m$ and $V  = K \oplus K_j$. Let $V \stackrel{\widetilde\theta}\lar L$ be an injective  $K$--linear map such that its adjoint map  $L \otimes_K V \cong L\oplus L_j \stackrel{\theta}\lar L$ is surjective. Then there exists
an element $\gamma = \alpha_0 + \alpha_1 \varepsilon^2 + \dots + \alpha_{m-j-1} \varepsilon^{2 (m-j-1)} \in L$ such that
\begin{equation}\label{E:MPnormalform}
\theta \sim \vartheta_\gamma:= (1 + \varepsilon \gamma \,\mid \, \varepsilon^{2 (m-j)+1}).
\end{equation}
Moreover, $\vartheta_\gamma \sim \vartheta_{\gamma'}$ if and only if $\gamma = \gamma'$.
\end{proposition}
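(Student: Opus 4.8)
The plan is to put the matrix $(a\mid b)$ into normal form under the column operations induced by $\Aut_K(V)$, in the same spirit as Lemma~\ref{L:normalform}. The cases $j=0$ (where $V=K$ and $\varepsilon^{2m+1}=0$, so the asserted form is just $(1+\varepsilon\gamma)$ with $\gamma\in K$) and $j=m$ (where $V=K\oplus K$, $\gamma$ has no coefficients, and the form is $(1\mid\varepsilon)$) are already handled there, so I would assume $1\le j\le m-1$ from now on.

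First I would record the operations. Since $\Hom_K(K_j,K)\cong\sigma^{m-j}K$, every $\varphi\in\Aut_K(V)$ is a matrix $M=\begin{pmatrix}p&u\\ q&s\end{pmatrix}$ with $p\in K^\times$, $s\in K_j^\times$, $u\in\sigma^{m-j}K$ and $q\in K_j$; after extending scalars to $L$ all entries of $M$ involve only even powers of $\varepsilon$, and $\theta\sim\theta'$ means $\theta'=(ap+bq\mid au+bs)$. Since $b\in\varepsilon^{2(m-j)}L\subseteq\mathrm{rad}(L)$, surjectivity of $\theta$ together with Nakayama's lemma forces $a$ to be a unit; writing $a=a^{+}+\varepsilon a^{-}$ with $a^{\pm}\in K$ and applying $\mathrm{diag}\bigl((a^{+})^{-1},1\bigr)$ I may assume $a=1+\varepsilon\eta$ with $\eta\in K$. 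Write $b=\varepsilon^{2(m-j)}c$ with $c=c^{+}+\varepsilon c^{-}$ (determined modulo $\varepsilon^{2j}$).

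The crux, and the point where injectivity of $\widetilde\theta$ is used, is the normalisation of the second entry. Applying $\begin{pmatrix}1&u\\ 0&1\end{pmatrix}$ with $u=\varepsilon^{2(m-j)}u_0$ and $u_0=-c^{+}$ leaves $a$ unchanged and brings $b$ to $\varepsilon^{2(m-j)+1}c'$ with $c'=c^{-}-\eta c^{+}$; since the adjoint of the resulting (still admissible) triple remains injective while it sends the nonzero element $(0,\sigma^{j-1})\in K\oplus K_j$ to $\varepsilon^{2m-1}c'=c'(0)\,\varepsilon^{2m-1}$, the element $c'$ must be a unit of $L$. A single further automorphism $\begin{pmatrix}p&u\\ q&s\end{pmatrix}$ then finishes the reduction: one picks $s\in K_j^\times$ and $u_0\in K$ so that the second entry becomes exactly $\varepsilon^{2(m-j)+1}$, then $p$ so that the even part of the first entry remains $1$, and finally $q\in K_j$ so that the odd part of the first entry becomes $\gamma:=\eta_{<m-j}$ (the truncation of $\eta$ to $\sigma$-degree $<m-j$); these choices exist precisely because $c'(0)\ne 0$. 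This yields $\theta\sim\vartheta_\gamma$, and one also checks directly that every $\vartheta_\gamma$ does satisfy the surjectivity and injectivity hypotheses.

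For the uniqueness statement I would take $\vartheta_\gamma\, M=\vartheta_{\gamma'}$ with $M$ admissible and compare even and odd parts. Equating even parts in the second coordinate forces $u=0$ and $s=1$; equating even parts in the first coordinate then forces $p=1$; and the odd part of the first coordinate gives $\gamma'=\gamma+\sigma^{m-j}q$. As $\gamma,\gamma'$ involve only $\sigma$-powers $<m-j$ while $\sigma^{m-j}q$ involves only powers $\ge m-j$, both sides of $\gamma'-\gamma=\sigma^{m-j}q$ vanish, so $\gamma'=\gamma$ (and $M$ is forced to be the identity). The step I expect to be most delicate is the second-entry normalisation under the constraint $u\in\sigma^{m-j}K$: one cannot simply divide by $a$ or by $c'$, since neither $a^{-1}$ nor $(c')^{-1}$ lies in $K$, so every reduction must be performed inside the even subring of $L$, and the invertibility of the leading coefficient $c'$ — read off from the injectivity of $\widetilde\theta$ — is exactly what makes this possible.
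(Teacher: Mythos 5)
Your argument is correct and follows essentially the same route as the paper: normalize the first entry to a unit of the form $1+\varepsilon\eta$ using surjectivity, kill the even part of the second entry by an elementary column operation, read off from the injectivity of $\widetilde\theta$ applied to $(0,\sigma^{j-1})$ that the remaining leading coefficient $c'$ is a unit, and then complete the reduction to $\vartheta_\gamma$. The only point where you diverge is the uniqueness statement: you verify it by a direct computation with the admissible matrices $\begin{pmatrix}p&u\\ q&s\end{pmatrix}$, forcing $u=0$, $s=1$, $p=1$ and comparing $\sigma$-degrees, whereas the paper passes to the induced map $V/t(V)\lar L/t(L)$ on the quotients by the torsion submodules and reduces to the case $j=0$; both arguments are valid, yours being more explicit and the paper's more structural.
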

\begin{proof} Let $\theta = (a \,\mid\, b)\in \Mat_{(1 \times 2)}(L)$, where $\varepsilon^{2j}b = 0$. By definition, the second component of $\theta$ is void if $j = 0$. Since the ``boundary cases''
$j = 0, m$ were already treated in Lemma \ref{L:normalform}, we can without loss of generality assume that $1 \le j \le m-1$.

According to the proof of Lemma \ref{L:structureMP}, the surjectivity of $\theta$ is equivalent to the non--vanishing $a(0) \ne 0$. Moreover, the action of the group $\Aut_K(V)$ leads to the following equivalence relations:
\begin{enumerate}
\item $(a \,\mid\, b) \sim (\lambda a \,\mid\, b) \sim (a \,\mid\, \lambda b)$ for any $\lambda \in K^\ast$;
\item $(a \,\mid\, b) \sim (a\,\mid\, \varepsilon^{2(m-j)}\nu a + b)$ for any $\nu \in K$;
\item $(a \,\mid\, b) \sim (a + \mu b\,\mid\, b)$ for any $\mu \in K$.
\end{enumerate}
Using transformations of the first type, we get:
$
(a \,\mid\, b) \sim  (1 + \varepsilon c \,\mid\, \varepsilon^{2(m-j)} d)
$
 for some $c \in K$ and $d = \beta_0 + \varepsilon \beta_1 + \dots + \varepsilon^{2j-1} \beta_{2j-1} \in L$. Using an appropriate transformation of the second type, we can kill all coefficients $\beta_0, \beta_2, \dots, \beta_{2j-1}$ (i.e.~entries at $1, \varepsilon^2,  \dots, \varepsilon^{2j-2}$ of the element $d$). In other words,
$
\theta \sim \theta' = (1 + \varepsilon c \,\mid\, \varepsilon^{2(m-j)+1} e)
$
for a certain  $e =\xi_0 + \varepsilon^2 \xi_1 + \dots + \varepsilon^{2(j-1)} \xi_{j-1} \in K \subset L$. Now observe that for
$ x= (0, \varepsilon^{2(j-1)}) \in K \oplus K_j$ we have:
$\widetilde{\theta}'(x) = \varepsilon^{2m-1} \xi_0$. Since the map $\widetilde{\theta}'$ is injective, we conclude that $\xi_0 \ne 0$, i.e.~
$e \in K$ is a unit. Hence, we get:
$
\theta \sim \theta' \sim  (1 + \varepsilon c \,\mid\, \varepsilon^{2(m-j)+1}).
$
Finally, using an appropriate transformation of the third type, we can kill all entries at $\varepsilon^{2(m-j)}, \dots, \varepsilon^{2(m-1)}$ of the element $c$ and end up with  a normal form $\theta \sim \vartheta_\gamma$ as in (\ref{E:MPnormalform}).

It is not difficult  to see that the $K$--linear map $\widetilde\theta$ corresponding to the $L$--linear map $\theta = \vartheta_\gamma$ given by the formula (\ref{E:MPnormalform}), is injective for any
$\gamma \in L$ as in the statement of Proposition. Next, consider the following
$K$--modules
$$
\left\{
\begin{array}{ccl}
t(V) & = &  \bigl\{v \in V \,|\, \sigma^j v = 0\bigr\} = \sigma^{m-j} K \oplus K_j \\
t(L) & = &  \bigl\{u \in L \,|\, \varepsilon^{2j} u = 0\bigr\} = \varepsilon^{2(m-j)} L.
\end{array}
\right.
$$
Since the submodule  $t(V)$ is mapped to itself under arbitrary automorphisms of $V$, we obtain  an induced map
$V/t(V) \stackrel{\bar{\vartheta}_\gamma}\lar L/t(L)$ such that
$$
\xymatrix{
V/t(V) \ar[rr]^-{\bar{\vartheta}_\gamma} \ar[d]_-{\cong} & &  L/t(L) \ar[d]^-{\cong}\\
\kk[\sigma]/(\sigma^{m-j}) \ar[rr]^-{1 + \varepsilon \gamma} & & \kk[\varepsilon]/(\varepsilon^{2(m-j)}).
}
$$
Hence, $\vartheta_\gamma \sim \vartheta_{\gamma'}$ if and only if $\gamma = \gamma'$.
\end{proof}

\smallskip
\noindent
Now we are ready to prove the main result of this subsection.

\begin{theorem}\label{T:listCM} For  $\vec{\nu} =\bigl(\nu_\alpha\bigr)_{\alpha \in \Pi} \in \NN_0^n$ such that $0 \le \nu_\alpha \le \mu_\alpha$ and
$\vec{\gamma} =\bigl(\gamma_\alpha\bigr)_{\alpha \in \Pi}$ such that $\gamma_\alpha \in \CC(\rho)[\sigma]/(\sigma^{\mu_\alpha - \nu_\alpha})$ for  any $\alpha \in \Pi$ we put:
\begin{equation}\label{E:classificationCM}
M(\vec{\nu}, \vec\gamma) :=
\bigl\{
f \in R \; \big| \; T_{(\alpha, 2(\mu_\alpha-\nu_\alpha))}^-(f) = \gamma_\alpha \cdot T_{(\alpha, 2(\mu_\alpha-\nu_\alpha))}^+(f) \quad \mbox{\rm for all} \quad
\alpha \in \Pi
\bigr\}.
\end{equation}
Then the following results are true.
\begin{itemize}
\item $M(\vec{\nu}, \vec\gamma)$ is a Cohen--Macaulay $A$--module of rank one.
\item Conversely, any Cohen--Macaulay $A$--module of rank one is isomorphic to some $M(\vec{\nu}, \vec\gamma)$ for appropriate parameters
$\vec{\nu}, \vec\gamma$ as above.
\item $M(\vec{\nu}, \vec\gamma) \cong M(\vec{\nu}', \vec{\gamma}')$ if any only if $\vec{\nu} = \vec{\nu}'$ and $\vec\gamma = \vec{\gamma}'$.
\end{itemize}
\end{theorem}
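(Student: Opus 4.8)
The plan is to transport the classification through the equivalence $\CM(A)\xrightarrow{\FF}\Tri(A)$ of Theorem~\ref{T:BurbanDrozd}. For a rank one module the gluing data splits, branch by branch over $\alpha\in\Pi$, into independent matrix problems of exactly the type solved in Lemma~\ref{L:structureMP} and Proposition~\ref{P:normalform} (applied over the field $\kk=\CC(\rho)$). So the only new work will be (i) to assemble the branchwise normal forms into an object of $\Tri(A)$, (ii) to identify the corresponding Cohen--Macaulay module with the explicit submodule of $R$ written in (\ref{E:classificationCM}), and (iii) to read off the uniqueness statement from the rigidity clause of Proposition~\ref{P:normalform}.

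\textbf{Construction and first bullet.} Given admissible $\vec\nu,\vec\gamma$, I would form the triple
$$
U(\vec\nu,\vec\gamma):=\Bigl(R,\ \bigoplus_{\alpha\in\Pi}\bigl(K_\alpha\oplus K_\alpha/(\sigma^{\nu_\alpha})\bigr),\ (\vartheta_{\gamma_\alpha})_{\alpha\in\Pi}\Bigr),\qquad \vartheta_{\gamma_\alpha}=\bigl(1+\varepsilon\gamma_\alpha\,\mid\,\varepsilon^{2(\mu_\alpha-\nu_\alpha)+1}\bigr),
$$
which lies in $\Tri(A)$ because each $\vartheta_{\gamma_\alpha}$ is surjective with injective adjoint by Proposition~\ref{P:normalform} (with $m=\mu_\alpha$, $j=\nu_\alpha$), and because $\varepsilon^{2(\mu_\alpha-\nu_\alpha)+1}\varepsilon^{2\nu_\alpha}=0$ in $L_\alpha$. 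Thus $\FF^{-1}\bigl(U(\vec\nu,\vec\gamma)\bigr)$ is Cohen--Macaulay and of rank one, since its $R$--component is $R$. To identify it with (\ref{E:classificationCM}) I would repeat the computation from the proof of Theorem~\ref{T:listCMlf}: using $\FF^{-1}(U)\cong\Hom_{\Tri(A)}(U_\circ,U)$ for $U_\circ=\FF(A)$, a morphism $U_\circ\to U(\vec\nu,\vec\gamma)$ is a pair $\bigl(f,(g_\alpha)\bigr)$ with $f\in\Hom_R(R,R)=R$ and $g_\alpha=\binom{p_\alpha}{q_\alpha}$, and the commuting gluing square at $\alpha$ reads $(1+\varepsilon\gamma_\alpha)p_\alpha+\varepsilon^{2(\mu_\alpha-\nu_\alpha)+1}q_\alpha=T_{(\alpha,2\mu_\alpha)}(f)$ in $L_\alpha$; splitting this identity into even and odd parts in $\varepsilon$ and reducing modulo $\varepsilon^{2(\mu_\alpha-\nu_\alpha)}$ turns it precisely into $T_{(\alpha,2(\mu_\alpha-\nu_\alpha))}^-(f)=\gamma_\alpha\,T_{(\alpha,2(\mu_\alpha-\nu_\alpha))}^+(f)$, the free summand $K_\alpha/(\sigma^{\nu_\alpha})$ being exactly what absorbs the difference between $T^{\pm}_{(\alpha,2\mu_\alpha)}$ and their truncations at level $\mu_\alpha-\nu_\alpha$.

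\textbf{Exhaustiveness and uniqueness.} For an arbitrary rank one $M\in\CM(A)$, write $\FF(M)=(N,V,\theta)$. Since $R=\CC[z_1,z_2]$ is a two--dimensional regular UFD, the rank one Cohen--Macaulay $R$--module $N$ is free, so $N\cong R$ and $Q(\bar R)\otimes_R N\cong\bigoplus_\alpha L_\alpha$; correspondingly $V=\bigoplus_\alpha V_\alpha$, $\theta=(\theta_\alpha)$, and by the defining conditions of $\Tri(A)$ each adjoint $\widetilde\theta_\alpha\colon V_\alpha\hookrightarrow L_\alpha$ is injective with $\theta_\alpha$ surjective. Lemma~\ref{L:structureMP} then forces $V_\alpha\cong K_\alpha\oplus K_\alpha/(\sigma^{\nu_\alpha})$ for a unique $0\le\nu_\alpha\le\mu_\alpha$, and Proposition~\ref{P:normalform} a unique $\gamma_\alpha\in\CC(\rho)[\sigma]/(\sigma^{\mu_\alpha-\nu_\alpha})$ with $\theta_\alpha\sim\vartheta_{\gamma_\alpha}$; assembling the branchwise changes of basis (and the harmless rescaling by $\Aut_R(R)=\CC^\ast$) gives $(N,V,\theta)\cong U(\vec\nu,\vec\gamma)$, hence $M\cong M(\vec\nu,\vec\gamma)$. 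For the last bullet, via $\FF$ an isomorphism $M(\vec\nu,\vec\gamma)\cong M(\vec\nu',\vec\gamma')$ is the same as an isomorphism $U(\vec\nu,\vec\gamma)\cong U(\vec\nu',\vec\gamma')$ in $\Tri(A)$; restricting to branch $\alpha$ it gives a $K_\alpha$--module isomorphism $K_\alpha\oplus K_\alpha/(\sigma^{\nu_\alpha})\xrightarrow{\cong}K_\alpha\oplus K_\alpha/(\sigma^{\nu_\alpha'})$, forcing $\nu_\alpha=\nu_\alpha'$, and then the commuting square yields $\vartheta_{\gamma_\alpha}\sim\vartheta_{\gamma_\alpha'}$ (the scalar from $\Aut_R(R)$ being absorbed by a rescaling automorphism of $V_\alpha$), so $\gamma_\alpha=\gamma_\alpha'$ by the rigidity part of Proposition~\ref{P:normalform}. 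The converse is immediate.

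\textbf{Main obstacle.} All the structural input being already in place, the one genuinely delicate point is the $\varepsilon$--bookkeeping in the Construction step: checking that reducing the branch gluing identity modulo $\varepsilon^{2(\mu_\alpha-\nu_\alpha)}$ produces \emph{exactly} the truncated constraints of (\ref{E:classificationCM}) with no spurious extra condition, and conversely that every $f$ satisfying those constraints extends to a morphism of triples (i.e.\ the auxiliary components $q_\alpha\in K_\alpha/(\sigma^{\nu_\alpha})$ can always be solved for). This amounts to matching the precise shape $\vartheta_{\gamma_\alpha}=(1+\varepsilon\gamma_\alpha\,\mid\,\varepsilon^{2(\mu_\alpha-\nu_\alpha)+1})$ against the truncation level $\mu_\alpha-\nu_\alpha$, and is best done by separating even and odd powers of $\varepsilon$ as above.
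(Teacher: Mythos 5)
Your proof is correct and follows essentially the same route as the paper: the equivalence $\FF$ with $\Tri(A)$, the branchwise application of Lemma~\ref{L:structureMP} and Proposition~\ref{P:normalform} over $\kk=\CC(\rho)$, and the computation of $\Hom_{\Tri(A)}(U_\circ, U(\vec\nu,\vec\gamma))$ to extract the explicit description (\ref{E:classificationCM}), including the even/odd splitting in $\varepsilon$ at the truncation level $\mu_\alpha-\nu_\alpha$. The one divergence is that you take $V_\alpha=K_\alpha\oplus K_\alpha/(\sigma^{\nu_\alpha})$ where the paper's proof writes $K_\alpha\oplus K_\alpha/(\sigma^{\mu_\alpha-\nu_\alpha})$; your indexing is the one consistent with the normal form $(1+\varepsilon\gamma_\alpha\mid\varepsilon^{2(\mu_\alpha-\nu_\alpha)+1})$ of Proposition~\ref{P:normalform} and with the boundary cases $\vec\nu=\vec{0}$ (locally free, $V_\alpha=K_\alpha$) and $\nu_\alpha=\mu_\alpha$ (the module $R$, $V_\alpha=K_\alpha^2$), so the paper's proof contains a typo at this point which you have silently corrected.
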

\begin{proof}
According to Theorem \ref{T:BurbanDrozd}, the isomorphism classes of Cohen--Macaulay $A$--modules stand in bijection with the isomorphism classes of objects of the category of triples $\Tri(A)$. Let $U = (\widetilde{M}, V, \theta)$ be  a rank one object of $\Tri(A)$ (i.e.~an object corresponding to a Cohen--Macaulay $A$--module of rank one) then $\widetilde{M} \cong R$. Moreover, by Lemma \ref{L:structureMP}, there exists a uniquely determined vector $\vec{\nu} \in \NN_0^n$ as above such that
$$
V  \cong \bigoplus\limits_{\alpha \in \Pi} V_\alpha  =  \bigoplus\limits_{\alpha \in \Pi} K_\alpha \oplus \bigl(K_\alpha/(\sigma^{\mu_\alpha - \nu_\alpha})\bigr).
$$
Next, Proposition \ref{P:normalform} implies that there exists  an automorphism of the triple $U$ transforming every component
of the gluing map $\theta$ into the canonical form
$$
\theta_\alpha =  (1 + \varepsilon \gamma_\alpha \,\mid \, \varepsilon^{2 (\mu_\alpha-\nu_\alpha)+1})
$$
for an appropriate  vector $\vec\gamma =\bigl(\gamma_\alpha\bigr)_{\alpha \in \Pi}$ as above. Since $\Aut_R(R) = \CC^\ast$, in order to describe the  isomorphism classes of rank one objects of $\Tri(A)$
it is sufficient to take into account  only the action of  the groups $\Aut_{K_\alpha}(V_\alpha)$ on the matrices $\theta_\alpha$.
 Proposition \ref{P:normalform} then insures  that the vector $\vec{\gamma}$ is in fact uniquely determined.

\smallskip
\noindent
 Summing up, consider the following object of the category $\Tri(A)$:
$$
U(\vec{\nu}, \vec\gamma) := \Bigl(R, \oplus_{\alpha \in \Pi} \bigl(K_\alpha \oplus K_\alpha/(\sigma^{\mu_\alpha - \nu_\alpha})\bigr),
(1 + \varepsilon \gamma_\alpha \,\mid \, \varepsilon^{2 (\mu_\alpha-\nu_\alpha)+1})_{\alpha \in \Pi}\Bigr).
$$
Then the following results are true.
\begin{itemize}
\item Any rank one object of $\Tri(A)$  is isomorphic to some $U(\vec{\nu}, \vec\gamma)$ for appropriate parameters
$\vec{\nu}, \vec\gamma$ as in the statement of the theorem.
\item $U(\vec{\nu}, \vec\gamma) \cong U(\vec{\nu}', \vec{\gamma}')$ if any only if $\vec{\nu} = \vec{\nu}'$ and $\vec\gamma = \vec{\gamma}'$.
\end{itemize}
Let $M(\vec{\nu}, \vec\gamma) := \FF^{-1}\bigl(U(\vec{\nu}, \vec\gamma)\bigr)$, then we have:
$
M(\vec{\nu}, \vec\gamma)  \cong \Hom_{\Tri(A)}\bigl(U_\circ, U(\vec\nu, \vec{\gamma})\bigr),
$
where $U_\circ := \FF(A) = \bigl(R, \oplus_{\alpha \in \Pi} K_\alpha, \bigl((1), \dots, (1)\bigr)\bigr)$.  Following  formula (\ref{E:morphismtriples}) we have: $f \in R$ belongs to $\Hom_{\Tri(A)}\bigl(U_\circ, U(\vec{\gamma})\bigr)$
if and only if for any $\alpha \in \Pi$ there exists a $K_\alpha$--linear map $K_\alpha \stackrel{\left(\begin{smallmatrix} g_\alpha\\h_\alpha \end{smallmatrix}\right)}\lar K_\alpha \oplus \bigl(K_\alpha/(\sigma^{\mu_\alpha -\nu_\alpha)})\bigr)$ such that
\begin{equation}\label{E:constraints}
\begin{array}{c}
\xymatrix{
L_\alpha \ar[d]_{T_{(\alpha, 2\mu_\alpha)}(f)} & & & & L_\alpha \ar[d]^-{\left(\begin{smallmatrix} g_\alpha\\h_\alpha \end{smallmatrix}\right)} \ar[llll]_-{1} \\
L_\alpha & & & & L_\alpha \oplus \bigl(L_\alpha/(\varepsilon^{2(\mu_\alpha-\nu_\alpha)})\bigr) \ar[llll]_-{(1+ \varepsilon \gamma_\alpha\,|\, \varepsilon^{2(\mu_\alpha -\nu_\alpha)+1})}.
}
\end{array}
\end{equation}
 Writing down explicitly the constraints (\ref{E:constraints}), we end up with the description (\ref{E:classificationCM}).
\end{proof}

\begin{corollary}\label{C:remarkClassifCM} Let $(\vec{\nu}, \vec{\gamma})$ be as in Theorem \ref{T:listCM}. Then the following  results are true.
\begin{itemize}
\item The Cohen--Macaulay module $M(\vec\nu, \vec{\gamma})$ is locally free in codimension one if and only if $\vec{\nu} = \vec{0}$. In  notations
of Theorem \ref{T:listCMlf} we have: $M(\vec{0}, \vec{\gamma}) = B(\vec{\gamma})$.
\item Consider the weight function $\Pi \stackrel{\underline{\mu}'}\lar \NN_0$ given by the rule: $\underline{\mu}'(\alpha) = \mu_\alpha - \nu_\alpha$ for any $\alpha \in \Pi$.
Let $A' := A\bigl(\Pi, \underline{\mu}'\bigr)$ be the corresponding algebra of quasi--invariant polynomials. Theorem  \ref{T:listCMlf} implies that $M(\vec\nu, \vec{\gamma})$ is a Cohen--Macaulay $A'$--module of rank one, locally free in codimension one.
\end{itemize}
\end{corollary}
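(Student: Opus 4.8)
The plan is to read off both statements from the classification in Theorem~\ref{T:listCM}, combined with Theorem~\ref{T:BurbanDrozd} and Theorem~\ref{T:listCMlf}; no new computation is needed, only a reinterpretation of the parameters $(\vec\nu, \vec\gamma)$. For the first bullet point, recall from the proof of Theorem~\ref{T:listCM} that $M(\vec\nu, \vec\gamma)$ corresponds under the equivalence $\CM(A) \xrightarrow{\FF} \Tri(A)$ of Theorem~\ref{T:BurbanDrozd} to the triple $U(\vec\nu, \vec\gamma) = \bigl(R, \bigoplus_{\alpha \in \Pi} V_\alpha, (\theta_\alpha)_{\alpha \in \Pi}\bigr)$ whose $\alpha$--component is $V_\alpha = K_\alpha \oplus K_\alpha/(\sigma^{\nu_\alpha})$ with $\theta_\alpha = \bigl(1 + \varepsilon \gamma_\alpha \mid \varepsilon^{2(\mu_\alpha - \nu_\alpha)+1}\bigr)$. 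By Theorem~\ref{T:BurbanDrozd}, $\FF$ identifies $\CM^{\mathsf{lf}}(A)$ with the subcategory $\Tri^{\mathsf{lf}}(A)$ of triples for which $V$ is free over $Q(\bar A)$ and $\theta$ is an isomorphism, so I would determine for which $\vec\nu$ this holds. Tensoring up to $L_\alpha$, the $\alpha$--component of $\theta$ becomes a surjection $L_\alpha \oplus L_\alpha/(\varepsilon^{2\nu_\alpha}) \twoheadrightarrow L_\alpha$ (surjective already through the unit $1 + \varepsilon \gamma_\alpha$), and by a dimension count over $\CC(\rho)$ it is injective precisely when the summand $L_\alpha/(\varepsilon^{2\nu_\alpha})$ vanishes, i.e. when $\nu_\alpha = 0$. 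Hence $U(\vec\nu, \vec\gamma) \in \Tri^{\mathsf{lf}}(A)$ if and only if $\vec\nu = \vec 0$, which gives the first sentence. At $\vec\nu = \vec 0$ the triple $U(\vec 0, \vec\gamma)$ is literally $\bigl(R, \bigoplus_\alpha K_\alpha, (1 + \varepsilon \gamma_\alpha)_{\alpha}\bigr)$, which is the triple attached to $B(\vec\gamma)$ in the proof of Theorem~\ref{T:listCMlf} (equivalently, the defining relations~(\ref{E:classificationCM}) reduce to~(\ref{E:cosnstraints})), so $M(\vec 0, \vec\gamma) \cong B(\vec\gamma)$.

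For the second bullet point, set $\mu'_\alpha := \mu_\alpha - \nu_\alpha$ and $A' := A(\Pi, \underline{\mu}')$. Since $2\mu'_\alpha + 1 \le 2\mu_\alpha + 1$, the quasi--invariance conditions cutting out $A'$ are weaker than those cutting out $A$, so $A \subseteq A'$, and $(\Pi, \underline{\mu}')$ is again a weighted line arrangement, so $A'$ is Cohen--Macaulay and Gorenstein in codimension one by Theorem~\ref{T:CMandGorenstein}. I would then note that the subset
\[
M(\vec\nu, \vec\gamma) = \bigl\{f \in R \,\mid\, T^-_{(\alpha, 2\mu'_\alpha)}(f) = \gamma_\alpha\, T^+_{(\alpha, 2\mu'_\alpha)}(f)\ \text{for all}\ \alpha \in \Pi\bigr\}
\]
is in fact an $A'$--submodule of $R$: for $a \in A'$ one has $T^-_{(\alpha, 2\mu'_\alpha)}(a) = 0$, hence $T^{\pm}_{(\alpha, 2\mu'_\alpha)}(af) = T^+_{(\alpha, 2\mu'_\alpha)}(a)\, T^{\pm}_{(\alpha, 2\mu'_\alpha)}(f)$, so the defining relation is preserved by multiplication by $a$. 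Restricting scalars along $A \hookrightarrow A'$ recovers the $A$--module of Theorem~\ref{T:listCM}, and, since $\gamma_\alpha \in \CC(\rho)[\sigma]/(\sigma^{\mu'_\alpha})$ is exactly the constraint appearing in~(\ref{E:cosnstraints}), comparison with~(\ref{E:cosnstraints}) shows that $M(\vec\nu, \vec\gamma)$ is precisely the module $B(\vec\gamma)$ of Theorem~\ref{T:listCMlf} built from the datum $(\Pi, \underline{\mu}')$. Applying Theorem~\ref{T:listCMlf} to $A'$ then yields that $M(\vec\nu, \vec\gamma)$ is a rank one Cohen--Macaulay $A'$--module which is locally free in codimension one.

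The whole argument is bookkeeping once Theorems~\ref{T:BurbanDrozd}, \ref{T:listCMlf} and \ref{T:listCM} are in hand; the only steps that need a little care are the description of $\Tri^{\mathsf{lf}}(A)$ as the triples whose gluing maps are isomorphisms, and the verification that the concrete subspace $M(\vec\nu, \vec\gamma) \subset R$ genuinely carries the $A'$--module structure identifying it with the module $B(\vec\gamma)$ of the smaller algebra $A'$. There is no serious obstacle here; the subtlety is essentially notational, in keeping track of which multiplicity ($\mu_\alpha$ versus $\mu_\alpha - \nu_\alpha$) governs each $\alpha \in \Pi$.
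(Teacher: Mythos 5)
Your argument is correct and follows essentially the same route the paper intends: the corollary is read off by comparing the defining formula (\ref{E:classificationCM}) with (\ref{E:cosnstraints}) for the smaller arrangement $(\Pi,\underline{\mu}')$, together with the characterization of $\Tri^{\mathsf{lf}}(A)$ via free $V$ and invertible gluing maps, and your verification that $M(\vec\nu,\vec\gamma)$ is genuinely an $A'$--submodule of $R$ is sound. Note that your indexing $V_\alpha = K_\alpha \oplus K_\alpha/(\sigma^{\nu_\alpha})$ is the one consistent with Proposition \ref{P:normalform} and with the exponent $\varepsilon^{2(\mu_\alpha-\nu_\alpha)+1}$ in the gluing map (the $K_\alpha/(\sigma^{\mu_\alpha-\nu_\alpha})$ displayed in the proof of Theorem \ref{T:listCM} appears to be a slip in the paper), so your dimension count correctly yields local freeness in codimension one precisely when $\vec\nu=\vec{0}$.
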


\begin{example}
Consider the special case of a constant multiplicity  function $\Pi \stackrel{\underline{\mu}}\lar \NN_0$ given by the rule: $\mu_\alpha = 1$ for all $\alpha \in \Pi$. Then  the classification of   Cohen--Macaulay $A$--modules of rank one takes the following form.
\begin{itemize}
\item Any object  of $\CM_1^{\mathsf{lf}}(A)$ is isomorphic to some
\begin{equation}\label{E:listCMwhtone}
B(\vec{\gamma}) =
\bigl\{
f \in R \,|\, f'_{\alpha} = \gamma_\alpha f_\alpha \quad \mbox{\rm for all}\quad \alpha \in \Pi
\bigr\},
\end{equation}
where $\vec{\gamma} = (\gamma_\alpha)_{\alpha \in \Pi} \in \CC(\rho)^{\oplus n}$.
\item Moreover, $B(\vec{\gamma}) \cong B(\vec{\gamma}')$ if and only if $\vec{\gamma} = \vec{\gamma}'$ and
$
B(\vec{\gamma}) \boxtimes_A B(\vec{\gamma}') \cong B(\vec{\gamma}+\vec{\gamma}').
$
Note that $A = B(\vec{0})$.
\item We get the full list of objects of $\CM_1(A)$ by the following rule: one  takes any non--empty subset $\Pi^\circ \subseteq \Pi$ and  just omits   the conditions in (\ref{E:listCMwhtone}) for $\alpha \in \Pi^\circ$. In particular, for $\Pi^\circ =  \Pi$ (no conditions on $f$), we get the module $R$.
\end{itemize}
\end{example}

\subsection{Canonical module of the algebra of dihedral quasi--invariants}
It was already pointed  out by Etingof and Ginzburg in \cite[Section 6]{EtingofGinzburg} that the algebra $A= A\bigl(\Pi, \underline{\mu}\bigr)$ is not Gorenstein for a general weighted line arrangement $(\Pi, \underline{\mu})$. However, $A$ is a finitely generated Cohen--Macaulay algebra (see  Theorem \ref{T:CMandGorenstein}), hence it has a canonical module $\Omega$ (which is a Cohen--Macaualy $A$--module of rank one, uniquely determined up to a tensoring with an element of $\Pic(A)$). It is a natural question to describe  $\Omega$ in terms of our classification.

\smallskip
\noindent
We give an explicit description of $\Omega$ in the so--called \emph{Coxeter} (or \emph{dihedral} case), when
\begin{equation}
\Pi = \Lambda_n:= \left\{0, \frac{1}{n} \pi, \dots, \frac{n-1}{n} \pi \right\}
\end{equation}
for some $n \in \NN$.
For $m := \max\bigl\{\mu_\alpha \; \big| \; \alpha \in \Pi\bigr\}$ let  $\Lambda_n \stackrel{\underline{\kappa}}\lar \NN_0,\; \alpha \mapsto m$ be the corresponding constant multiplicity function and $C := A(\Lambda_n, \underline{\kappa}\bigr)$ the corresponding ring of quasi--invariant polynomials. By \cite[Corollary 5.6]{FeiginVeselov} (see also \cite[Theorem 1.2]{EtingofGinzburg}),  the algebra $C$ is  a graded Gorenstein domain.  In particular, $C$ viewed as a module over itself, is a canonical  module of $C$; see for instance \cite[Section I.3.6]{BrunsHerzog}. Therefore,
\begin{equation}\label{E:CanModule}
\Omega :=  \Hom_{C}(A, C)
\end{equation}
is a canonical  module of $A$, see for instance \cite[Section X.9.3]{Bourbaki}.
The following theorem is the main result of this subsection.

\begin{theorem}\label{T:canonicalmodule}
For any $\alpha \in \Lambda_n$ put: $\nu_\alpha := m - \mu_\alpha$. Then we have:
\begin{equation}\label{E:canmodule}
 \Omega \cong \bigl\{f \in C \,\big|\, f^{(0)}_\alpha = f^{(2)}_\alpha = \dots = f^{(2(\nu_\alpha-1))}_{\alpha} = 0 \; \mbox{\rm for all} \; \alpha \in \Lambda_n\bigr\}.
\end{equation}
\end{theorem}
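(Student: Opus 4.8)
The plan is to compute the fractional ideal $\Omega=\Hom_{C}(A,C)$ explicitly and to identify it, via the Taylor homomorphisms $T_{(\alpha,2m)}$, with the right hand side of (\ref{E:canmodule}). Since $C\subseteq A\subseteq R$ are module--finite ring extensions with common fraction field $Q=\CC(z_1,z_2)$ and $1\in A$, the module $\Hom_{C}(A,C)$ is canonically the fractional ideal $(C:_{Q}A)=\{x\in Q\mid xA\subseteq C\}$ (every $C$--linear map $A\to C$ extends uniquely to multiplication by an element of $Q$, and restricts back to itself because $1\in A$); evaluating the membership condition at $1\in A$ gives moreover $\Omega\cong\{x\in C\mid xA\subseteq C\}$. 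It therefore suffices to show that this set equals
$$
N:=\bigl\{f\in C\ \big|\ f^{(0)}_\alpha=f^{(2)}_\alpha=\dots=f^{(2(\nu_\alpha-1))}_\alpha=0\ \text{ for all }\alpha\in\Lambda_n\bigr\},\qquad \nu_\alpha=m-\mu_\alpha .
$$

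I would run the whole argument inside $\CC[\rho,\varepsilon]/(\varepsilon^{2m})$ and $\CC(\rho)[\varepsilon]/(\varepsilon^{2m})$, using the splitting into $\varepsilon$--even and $\varepsilon$--odd parts. Recall that $f\in C$ iff $T^{-}_{(\alpha,2m)}(f)=0$ for all $\alpha$, i.e.\ $T_{(\alpha,2m)}(f)$ is $\varepsilon$--even, and that $g\in A$ iff $T^{-}_{(\alpha,2\mu_\alpha)}(g)=0$, i.e.\ the $\varepsilon$--odd part of $T_{(\alpha,2m)}(g)$ has $\varepsilon$--order $\ge 2\mu_\alpha+1$. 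By multiplicativity of $T_{(\alpha,2m)}$, for $f\in C$ and $g\in A$ the $\varepsilon$--odd part of $T_{(\alpha,2m)}(fg)$ equals $T_{(\alpha,2m)}(f)$ times the odd part of $T_{(\alpha,2m)}(g)$, so its $\varepsilon$--order is at least $\ord\bigl(T_{(\alpha,2m)}(f)\bigr)+2\mu_\alpha+1$. This immediately gives $N\subseteq\Omega$: for $f\in N$ one has $\ord\bigl(T_{(\alpha,2m)}(f)\bigr)\ge 2\nu_\alpha$, so this $\varepsilon$--order is at least $2\nu_\alpha+2\mu_\alpha+1=2m+1$, hence the odd part vanishes modulo $\varepsilon^{2m}$, so $fg\in C$ for every $g\in A$ and thus $f\in\Omega$.

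For the reverse inclusion I need one key lemma, which I expect to be the crux: \emph{for every $\alpha$ with $\mu_\alpha<m$ there is an element $a\in A$ with $a^{(2\mu_\alpha+1)}_\alpha\ne 0$ in $\CC[\rho]$} --- equivalently, the $\varepsilon$--odd part of $T_{(\alpha,2m)}(a)$ has $\varepsilon$--order exactly $2\mu_\alpha+1$, hence equals $\varepsilon^{2\mu_\alpha+1}$ times a unit of $\CC(\rho)[\varepsilon]/(\varepsilon^{2m})$. Granting it, take $x\in\Omega$; then $x\in C$ (apply $xA\subseteq C$ to $1$), and for the above $a$ the relation $xa\in C$ forces the odd part of $T_{(\alpha,2m)}(x)\cdot T_{(\alpha,2m)}(a)$ to vanish; cancelling the unit yields $\varepsilon^{2\mu_\alpha+1}\,T_{(\alpha,2m)}(x)=0$ in $\CC(\rho)[\varepsilon]/(\varepsilon^{2m})$, i.e.\ $\ord\bigl(T_{(\alpha,2m)}(x)\bigr)\ge 2m-(2\mu_\alpha+1)=2\nu_\alpha-1$; since $T_{(\alpha,2m)}(x)$ is $\varepsilon$--even this improves to $\ge 2\nu_\alpha$, i.e.\ $x^{(0)}_\alpha=\dots=x^{(2\nu_\alpha-2)}_\alpha=0$. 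For $\alpha$ with $\mu_\alpha=m$ there is nothing to check, as $\nu_\alpha=0$. Running over all $\alpha$ gives $x\in N$, completing the identification $\Omega\cong N$.

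To prove the key lemma I would take
$$
a:=l_\alpha^{\,2\mu_\alpha+1}\prod_{\beta\in\Lambda_n,\ \beta\ne\alpha}\bigl(l_\beta\, s_\alpha(l_\beta)\bigr)^{2\mu_\beta}.
$$
That $a\in A$ follows from two observations: $s_\alpha(a)=-a$, so $l_\alpha^{2\mu_\alpha+1}\mid a-s_\alpha(a)=2a$; and for $\beta_0\ne\alpha$, writing $a=l_{\beta_0}^{2\mu_{\beta_0}}h$ one has $a-s_{\beta_0}(a)=l_{\beta_0}^{2\mu_{\beta_0}}\bigl(h-s_{\beta_0}(h)\bigr)$, where $h-s_{\beta_0}(h)$ vanishes along $V(l_{\beta_0})$ because $s_{\beta_0}$ fixes that line pointwise, hence is divisible by $l_{\beta_0}$, giving $l_{\beta_0}^{2\mu_{\beta_0}+1}\mid a-s_{\beta_0}(a)$. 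To read off $a^{(2\mu_\alpha+1)}_\alpha$, pass to the coordinates $u=\rho\cos(\varphi-\alpha)$, $v=l_\alpha=\rho\sin(\varphi-\alpha)$; then $l_\beta\, s_\alpha(l_\beta)=u^{2}\sin^{2}(\alpha-\beta)-v^{2}\cos^{2}(\alpha-\beta)$, so $a=v^{2\mu_\alpha+1}\,P(u^{2},v^{2})$ where $P(u^{2},0)=u^{\,4\sum_{\beta\ne\alpha}\mu_\beta}\prod_{\beta\ne\alpha}\sin^{4\mu_\beta}(\alpha-\beta)$ is nonzero precisely because $\alpha-\beta\notin\pi\ZZ$. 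Since $T_{(\alpha,2m)}$ sends $u\mapsto\rho\cos\varepsilon$ and $v\mapsto\rho\sin\varepsilon$, we obtain $T_{(\alpha,2m)}(a)=(\rho\sin\varepsilon)^{2\mu_\alpha+1}\,P\bigl((\rho\cos\varepsilon)^{2},(\rho\sin\varepsilon)^{2}\bigr)$, whose lowest $\varepsilon$--term is $\rho^{2\mu_\alpha+1}P(\rho^{2},0)\,\varepsilon^{2\mu_\alpha+1}\ne 0$; as $2\mu_\alpha+1<2m$ this term survives modulo $\varepsilon^{2m}$, so $a^{(2\mu_\alpha+1)}_\alpha\ne 0$, as required. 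I would also note that neither the lemma nor the rest of the argument uses the Coxeter structure of $\Lambda_n$; the only genuinely Coxeter input is that the ambient algebra $C$ is Gorenstein.
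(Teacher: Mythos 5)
Your proof is correct, and it takes a genuinely different route from the paper's. The paper works inside the category of triples $\Tri(C)$: it first identifies the triple $U_\bu=\FF(A)$, whose gluing datum at a line $\alpha$ with $\mu_\alpha<m$ is the matrix $(1\,\mid\,\varepsilon^{2\mu_\alpha+1})$, and then computes $\Omega\cong\Hom_{\Tri(C)}\bigl(U_\bu,\FF(C)\bigr)$ by unwinding the commutativity constraints on morphisms of triples; the two conditions ($f\in C$, and $f^{(0)}_\alpha=\dots=f^{(2(\nu_\alpha-1))}_\alpha=0$) drop out of the two components of the resulting matrix equation. You bypass the triple formalism entirely: you identify $\Hom_C(A,C)$ with the fractional ideal $\{x\in C\mid xA\subseteq C\}$ and compute it by hand in $\CC(\rho)[\varepsilon]/(\varepsilon^{2m})$ using the even/odd splitting. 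The role played in the paper by the normal form $(1\,\mid\,\varepsilon^{2\mu_\alpha+1})$ --- i.e.\ by the fact that the gluing map is onto in $\varepsilon$-degree exactly $2\mu_\alpha+1$ --- is played in your argument by the explicit witness $a=l_\alpha^{2\mu_\alpha+1}\prod_{\beta\ne\alpha}\bigl(l_\beta\,s_\alpha(l_\beta)\bigr)^{2\mu_\beta}\in A$ with $a^{(2\mu_\alpha+1)}_\alpha\ne0$, which certifies that the constraint $xA\subseteq C$ is exactly as strong as claimed and no stronger; your verification that $a$ is quasi-invariant and your computation of its Taylor expansion in the coordinates $(u,v)$ are both sound (the divisibility argument for the lines $\beta_0\ne\alpha$, via anti-invariance of $h-s_{\beta_0}(h)$, is exactly what is needed). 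What each approach buys: the paper's is uniform with its classification machinery (Theorem \ref{T:listCM}, Proposition \ref{P:normalform}) and requires no new element constructions; yours is self-contained, avoids invoking the equivalence $\FF$, and makes explicit your closing observation that the only genuinely Coxeter input is the Gorensteinness of $C$.
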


\begin{remark}
It is a pleasant exercise in elementary calculus to verify directly  that the  right hand side of the expression (\ref{E:canmodule}) is an ideal in the algebra $A$ given by (\ref{E:quasiinvpolar}).
\end{remark}
\begin{proof} For any $\alpha \in \Lambda_n$ we put: $\widetilde{K} := \CC(\rho)[\sigma]/(\sigma^m)$ and $\widetilde{L} := \CC(\rho)[\varepsilon]/(\varepsilon^{2m})$.
Then we have the following

\smallskip
\noindent
\underline{Claim}. Let $U_\bu := \FF(A)$, where $\CM(C) \stackrel{\FF}\lar \Tri(C)$ is the  equivalence of categories from Theorem \ref{T:BurbanDrozd}. Then we have: $U_\bu \cong (R, V, \theta)$, where $V = \oplus_{\alpha \in \Lambda_n} V_\alpha$ with
$$
\quad \quad
V_\alpha =
\left\{
\begin{array}{ccl}
\widetilde{K} & \mbox{\rm if} & \mu_\alpha = m \\
\widetilde{K} \oplus \bigl(\widetilde{K}/(\sigma^{\nu_\alpha})\bigr) & \mbox{\rm if} & 0 \le \mu_\alpha \le m-1
\end{array}
\right.
$$
and
$$
\theta_\alpha =
\left\{
\begin{array}{ccl}
1 & \mbox{\rm if} & \mu_\alpha = m \\
(1 \, |\, \varepsilon^{2\mu_\alpha+1}) & \mbox{\rm if} & 0 \le \mu_\alpha \le m-1.
\end{array}
\right.
$$
We prove this claim by computing  the morphism space $\Hom_{\Tri(C)}(U_\circ, U_\bu)$, where
$U_\circ$ is the canonical triple corresponding to the regular module $C$. As in the proof of Theorem \ref{T:listCM}, we get:
$f \in R$ belongs to $ \Hom_{\Tri(C)}(U_\circ, U_\bu)$
if and only if for any $\alpha \in \Lambda_n$ there exists a $\widetilde{K}$--linear map $\widetilde{K} \stackrel{\left(\begin{smallmatrix} g_\alpha\\h_\alpha \end{smallmatrix}\right)}\lar \widetilde{K} \oplus \bigl(\widetilde{K}/(\sigma^{\nu_\alpha})\bigr)$ making the diagram
\begin{equation}\label{E:constraints3}
\begin{array}{c}
\xymatrix{
\widetilde{L} \ar[d]_{T_{(\alpha, 2m)}(f)} & &  \widetilde{L} \ar[d]^-{\left(\begin{smallmatrix} g_\alpha\\h_\alpha \end{smallmatrix}\right)} \ar[ll]_-{1} \\
\widetilde{L} & & \widetilde{L} \oplus \bigl(\widetilde{L}/(\varepsilon^{2\nu_\alpha})\bigr) \ar[ll]_-{(1 \,|\, \varepsilon^{2\mu_\alpha+1})}.
}
\end{array}
\end{equation}
commutative, i.e. $T_{(\alpha, 2m)}(f) = g_\alpha + \varepsilon^{2\mu_\alpha+1} h_\alpha$ for some $g_\alpha, h_\alpha \in \widetilde{L}$.
This condition is equivalent to the vanishing $f^{(2l-1)}_{\alpha} = 0$ for any $\alpha \in \Lambda_n$ and $1 \le l \le \mu_\alpha$. Hence, $(R, V, \theta)$ is indeed
isomorphic to $\FF(A)$, as asserted (compare with Corollary \ref{C:DescriptQuasiInv}).

Now, in virtue of (\ref{E:CanModule}), we have an isomorphism:
$
\Omega \cong \Hom_{\Tri(C)}(U_\bu, U_\circ).
$
A polynomial $f \in R$ belongs to the vector space $\Hom_{\Tri(C)}(U_\bu, U_\circ)$ if and only if for every $\alpha \in \Lambda_n$ there exist
elements $p_\alpha, q_\alpha \in \widetilde{K}$, making the diagram
\begin{equation}\label{E:constraints2}
\begin{array}{c}
\xymatrix{
\widetilde{L} \ar[d]_{T_{(\alpha, 2m)}(f)} & & \widetilde{L} \oplus \bigl(\widetilde{L}/(\varepsilon^{2\nu_\alpha})\bigr) \ar[ll]_-{(1 \,|\, \varepsilon^{2\mu_\alpha+1})} \ar[d]^-{\left(\begin{smallmatrix} p_\alpha | \varepsilon^{2 \mu_\alpha} q_\alpha \end{smallmatrix}\right)} \\
\widetilde{L}  & &  \widetilde{L}  \ar[ll]_-{1} \\
}
\end{array}
\end{equation}
commutative. In other words, for any $\alpha \in \Lambda_n$ there exist
 $p_\alpha, q_\alpha \in \widetilde{K}$ such that
$$
\bigl(T_{(\alpha, 2m)}(f) \,|\, \varepsilon^{2\mu_\alpha+1} T_{(\alpha, 2m)}(f)\bigr) = \bigl(p_\alpha \,|\, \varepsilon^{2\mu_\alpha} q_\alpha\bigr).
$$
The first condition $T_{(\alpha, 2m)}(f) = p_\alpha$ just means that $T_{(\alpha, 2m)}(f) \in \widetilde{K}$ for any $\alpha \in \Lambda_n$, i.e.~$f$ is an element of the algebra $C$. The second condition $\varepsilon^{2\mu_\alpha+1} T_{(\alpha, 2m)}(f) = \varepsilon^{2\mu_\alpha} q_\alpha$
is equivalent to the vanishing
$
f^{(0)}_\alpha = f^{(2)}_\alpha = \dots = f^{(2(\nu_\alpha-1))}_{\alpha} = 0$ for all $\alpha \in \Lambda_n$.
\end{proof}

\noindent
As a further refinement  of  Theorem \ref{T:canonicalmodule}, we have the following result.
\begin{lemma}\label{L:mixedweight}
For any $k \in \NN_0$, consider the  multiplicity  function $\Lambda_n \stackrel{\underline{\mu}}\lar \NN_0$ given by the rule: $\mu_0 = k+1$  and
$\mu_\alpha = 1$ for any  $\alpha \in \Lambda_n^\circ := \Lambda_n \setminus \{0\}$. Let $\Omega$ be a canonical   module of the algebra $A = A\bigl(\Lambda_n, \underline{\mu}\bigr)$ given by (\ref{E:canmodule}). Then we have:
$$
\Omega  \cong \left\{
g \in R \left|
\begin{array}{l}
g_0' = g'''_0 = \dots = g^{(2k+1)}_0 = 0 \\
\left(\dfrac{g}{l_0^{2k}}\right)'_{\alpha} = 0 \quad \mbox{\rm for all}\quad \alpha \in \Lambda^\circ_n.
\end{array}
\right.
\right\},
$$
where $l_0 = x_1 = \rho \cos(\varphi)$.
\end{lemma}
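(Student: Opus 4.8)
The plan is to specialise Theorem~\ref{T:canonicalmodule} to the multiplicity function of the lemma, unwind the description~(\ref{E:canmodule}) in this case, and then transport the resulting submodule of $R$ by multiplication by a suitable element of $A$.

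First I would observe that here $m=\max_{\alpha\in\Lambda_n}\mu_\alpha=\mu_0=k+1$, so $\nu_0=m-\mu_0=0$ and $\nu_\alpha=m-1=k$ for every $\alpha\in\Lambda_n^\circ$. Substituting this into~(\ref{E:canmodule}) gives $\Omega\cong\Omega_1$, where
$$
\Omega_1=\bigl\{f\in C\ \big|\ f^{(0)}_\alpha=f^{(2)}_\alpha=\dots=f^{(2k-2)}_\alpha=0\ \text{ for all }\ \alpha\in\Lambda_n^\circ\bigr\}
$$
and $C=A(\Lambda_n,\underline{\kappa})$ is the Gorenstein ring of Theorem~\ref{T:canonicalmodule} attached to the constant function $\underline{\kappa}\equiv k+1$. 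Using the description $C=\{f\in R\mid f'_\gamma=f'''_\gamma=\dots=f^{(2k+1)}_\gamma=0\ \text{for all}\ \gamma\in\Lambda_n\}$ from Corollary~\ref{C:DescriptQuasiInv}, one reads off that at $\gamma=0$ the module $\Omega_1$ is cut out exactly by the conditions in the first line of the lemma, while at $\alpha\in\Lambda_n^\circ$ it is cut out by $f^{(j)}_\alpha=0$ for $0\le j\le 2k-1$ together with $f^{(2k+1)}_\alpha=0$. By Lemma~\ref{L:TaylorFractFields} the first batch is equivalent to $l_\alpha^{2k}\mid f$; writing $f=l_\alpha^{2k}h$ and Taylor-expanding $l_\alpha=\rho\sin(\varphi-\alpha)$ one obtains $f^{(2k+1)}_\alpha=(2k+1)!\,\rho^{2k}h^{(1)}_\alpha$, so the condition at $\alpha$ reads ``$l_\alpha^{2k}\mid f$ and $(f/l_\alpha^{2k})'_\alpha=0$''.

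Next I would set $D:=\prod_{\beta\in\Lambda_n^\circ}l_\beta$. Since the reflection $s_0$ permutes $\Lambda_n$ and fixes $0$, it permutes $\Lambda_n^\circ$, hence $s_0(D^2)=D^2$; as $D^2$ vanishes to order two along each $V(l_\beta)$, $\beta\in\Lambda_n^\circ$, it follows that $D^2\in A$, whence $D^{2k}=(D^2)^k\in A$ and multiplication by $D^{2k}$ is an injective $A$-linear endomorphism of $R$. I claim it carries the module $\Omega_2$ on the right-hand side of the claimed isomorphism onto $\Omega_1$. For $\alpha\in\Lambda_n^\circ$ put $P_\alpha:=\prod_{\gamma\in\Lambda_n\setminus\{\alpha\}}l_\gamma$; then $D^{2k}/l_\alpha^{2k}=(P_\alpha/l_0)^{2k}$, so for $g\in R$ and $f:=D^{2k}g$ one has $f/l_\alpha^{2k}=P_\alpha^{2k}\cdot(g/l_0^{2k})$ as rational functions regular near $V(l_\alpha)$ (there $l_0$ and all $l_\gamma$ with $\gamma\ne\alpha$ are invertible). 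The decisive point is that $s_\alpha$ permutes $\Lambda_n\setminus\{\alpha\}$ whereas $P_\alpha$ is non-vanishing at $\varphi=\alpha$, which forces $s_\alpha(P_\alpha)=P_\alpha$ and therefore $(P_\alpha)'_\alpha=0$; differentiating the displayed identity at $\varphi=\alpha$ and dividing by the nonzero quantity $\bigl(P_\alpha^{2k}\bigr)_\alpha$ shows that ``$(f/l_\alpha^{2k})'_\alpha=0$'' is equivalent to ``$(g/l_0^{2k})'_\alpha=0$''. Moreover $D^{2k}\mid D^{2k}g$ trivially, while conversely every $f\in\Omega_1$ satisfies $l_\alpha^{2k}\mid f$ for all $\alpha\in\Lambda_n^\circ$, hence $D^{2k}\mid f$ (the $l_\alpha$ being pairwise coprime), so $f=D^{2k}g$ for a unique $g\in R$ which then satisfies the $\Lambda_n^\circ$-conditions defining $\Omega_2$.

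Finally I would match the conditions at $\varphi=0$: because $D^{2k}$ is $s_0$-invariant, all of its odd $\varphi$-derivatives at $\varphi=0$ vanish and its value there is nonzero, so a short induction on the (odd) order, via the Leibniz rule, shows that $f=D^{2k}g$ satisfies $f'_0=f'''_0=\dots=f^{(2k+1)}_0=0$ if and only if $g$ does. Putting the three steps together, multiplication by $D^{2k}$ is an $A$-linear bijection $\Omega_2\to\Omega_1\cong\Omega$, which is the assertion. The step I expect to be the main obstacle is the local analysis along the lines $V(l_\alpha)$, $\alpha\in\Lambda_n^\circ$ --- pinning down the exact factor relating $f/l_\alpha^{2k}$ to $g/l_0^{2k}$ and establishing $(P_\alpha)'_\alpha=0$; the symmetry argument above is the cleanest route, and is equivalent to the trigonometric identity $\sum_{\gamma\in\Lambda_n\setminus\{\alpha\}}\cot(\alpha-\gamma)=0$, proved by pairing $\gamma$ with $2\alpha-\gamma$.
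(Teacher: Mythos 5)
Your proposal is correct and follows essentially the same route as the paper: specialise Theorem \ref{T:canonicalmodule} to get $\nu_0=0$, $\nu_\alpha=k$, factor out $\delta_\circ=\prod_{\beta\in\Lambda_n^\circ}l_\beta^{2k}$ from every $f\in\Omega$, use the $s_0$--invariance of $\delta_\circ$ to transfer the odd--derivative conditions at $\varphi=0$, and use the group structure of $\Lambda_n$ in $\RR/\pi\ZZ$ to convert $f^{(2k+1)}_\alpha=0$ into $(g/l_0^{2k})'_\alpha=0$. The only (cosmetic) difference is in this last step, where the paper compares the Leibniz expansions of $f^{(2k+1)}_\alpha$ and of $\delta^{(2k+1)}_\alpha=0$ for $\delta^2=l_0^{2k}\delta_\circ$, while you compute $f^{(2k+1)}_\alpha=(2k+1)!\,\rho^{2k}(f/l_\alpha^{2k})'_\alpha$ directly and exploit the $s_\alpha$--invariance of $P_\alpha$.
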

\begin{proof} In the notation of Theorem \ref{T:canonicalmodule} we have: $m = k+1$, $\nu_\alpha = k$ for each $\alpha \in \Lambda^\circ_n$. Let $C = A\bigl(\Lambda_n, \underline{\kappa}\bigr)$, then
$$
\Omega :=  \bigl\{f \in C \,\big|\, f^{(0)}_\alpha = f^{(2)}_\alpha = \dots = f^{(2k-2)}_{\alpha} = 0 \; \mbox{\rm for all} \; \alpha \in \Lambda^\circ_n\bigr\}
$$
is a canonical  module  of $A$.
More explicitly, for any $\alpha \in \Lambda^\circ_n$ and $f \in \Omega$ we have:
$$
f^{(l)}_\alpha = 0 \quad \mbox{for any} \quad 0 \le l \le 2k-1 \quad \mbox{and} \quad f^{(2k+1)}_\alpha = 0.
$$
The first condition is equivalent to the statement that $l_{\alpha}^{2k} \,|\, f$ for every $\alpha \in \Lambda^\circ_n$. Let
$$\delta_\circ := \prod\limits_{\alpha \in \Lambda_n^\circ} l_\alpha^{2 k} = \rho^{2k(n-1)} \Bigl(\prod_{l=1}^{n-1} \sin\Bigl(\varphi - \frac{l}{n}\pi\Bigr)\Bigr)^{2k}.
$$
Then there exists a uniquely determined $g \in R$ such that $f = g \cdot \delta_\circ$. Note that  $\delta_\circ(\rho, -\varphi) = \delta_\circ(\rho, \varphi)$, hence
$(\delta_\circ)^{(2p+1)}_{0} = 0$ for all $p \in \NN_0$. It is not difficult to show by induction that the condition
$
f^{(1)}_{0} = f^{(3)}_{0} = \dots = f^{(2k+1)}_{0} = 0
$
(recall  that $f \in C = A(\Lambda_n, \underline{\kappa})$)
is equivalent to
$
g^{(1)}_{0} = g^{(3)}_{0}
= \dots = g^{(2k+1)}_{0} = 0.
$

\smallskip
\noindent
Finally, it remains to interpret the constraint  $f^{(2k+1)}_{\alpha} = 0$ for $\alpha \in \Lambda_n^\circ$ in terms of the polynomial $g$.  Since
$(\delta_\circ)^{(l)}_{\alpha} = 0$ for all $0 \le l \le 2k-1$, we have:
\begin{equation}\label{E:constraint1}
f^{(2k+1)}_{\alpha} = (2k+1) (\delta_\circ)^{(2k)}_{\alpha} g'_\alpha + (\delta_\circ)^{(2k+1)}_{\alpha} g_\alpha = 0.
\end{equation}
As usual, we put
$
\delta^2:= \prod\limits_{\alpha \in \Lambda_n} l_\alpha^{2 k} = l_0^{2k}  \cdot \delta_\circ.
$
Then we have: $\delta^2(\rho, -\varphi) = \delta^2(\rho, \varphi)$, hence $\delta^{(2p+1)}_{0} = 0$ for all $p \in \NN_0$. Moreover,
$\delta^2(\rho, \varphi + \alpha) = \delta^2(\rho, \varphi)$ for any $\alpha \in \Lambda_n$ (here, we essentially use the fact that the image of the set
$\Lambda_n$ in $\RR/\pi\ZZ$ is a subgroup). Therefore, $\delta^{(2p+1)}_{\alpha} = 0$ for any
$\alpha \in \Lambda_n^\circ$ and  $p \in \NN_0$. In particular, we get:
\begin{equation}\label{E:constraint2}
\delta^{(2k+1)}_{\alpha} = (2k+1) (\delta_\circ)^{(2k)}_{\alpha} (l_0^{2k})'_\alpha + (\delta_\circ)^{(2k+1)}_{\alpha} (l_0^{2k})_\alpha = 0.
\end{equation}
Comparing the equations (\ref{E:constraint1}) and (\ref{E:constraint2}), we see  that the condition  $f^{(2k+1)}_{\alpha} = 0$ is equivalent to
$\left(\dfrac{g}{l_0^{2k}}\right)'_{\alpha} = 0$. Summing up, we obtain:
$$
\Omega  =  \delta_\circ \cdot \left\{
 g \in R \left|
\begin{array}{l}
g_0' = g'''_0 = \dots = g^{(2k+1)}_0 = 0 \\
\left(\dfrac{g}{l_0^{2k}}\right)'_{\alpha} = 0 \quad \mbox{\rm for all}\quad \alpha \in \Lambda^\circ_n
\end{array}
\right.
\right\},
$$
what implies the result.
\end{proof}

\begin{remark} According to a result of Feigin and Johnston \cite[Theorem 7.14]{FeiginJohnston}, the algebra of quasi--invariants from Lemma \ref{L:mixedweight} is Gorenstein if and only if $k = 0$, what matches with our description of a canonical   module.
\end{remark}

\subsection{Picard group of the algebra of quasi--invariants $A(\Pi, \underline{\mu})$} In this subsection, we describe the Picard group
$\Pic(A)$ of the algebra of planar quasi--invariants $A = A(\Pi, \underline{\mu})$.

\smallskip
\noindent
We begin with the following elementary observation.
\begin{lemma}\label{L:ArmAberSexy}
Let $\kk$ be any ring, $m \in \NN$,  $K = \kk[\varepsilon^2]/(\varepsilon^{2m})$ and $L = \kk[\varepsilon]/(\varepsilon^{2m})$. Then for any
$g \in L$, there exists a unique element $\gamma(g) \in K$ such that $g \cdot (1+ \varepsilon \gamma(g)) \in K$.
\end{lemma}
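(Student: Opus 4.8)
The plan hinges on the splitting of $K$-modules $L = K \oplus \varepsilon K$ coming from the $\kk$-basis $1,\varepsilon,\dots,\varepsilon^{2m-1}$ of $L$. First I would write the given $g \in L$ uniquely as $g = a + \varepsilon b$ with $a,b \in K$. Then, for an arbitrary $\gamma \in K$, using that $\varepsilon^2 \in K$ and that $\varepsilon$ is central one expands
$$
g\cdot(1+\varepsilon\gamma) \;=\; \bigl(a + \varepsilon^2 b\gamma\bigr) + \varepsilon\bigl(b + a\gamma\bigr),
$$
where $a+\varepsilon^2 b\gamma$ and $b + a\gamma$ both lie in $K$. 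Since multiplication by $\varepsilon$ is injective on $K$ (it carries the $\kk$-basis $1,\varepsilon^2,\dots,\varepsilon^{2m-2}$ to the linearly independent set $\varepsilon,\varepsilon^3,\dots,\varepsilon^{2m-1}$), the condition $g(1+\varepsilon\gamma)\in K$ is \emph{equivalent} to the single linear equation $a\gamma = -b$ in $K$. Thus the whole lemma reduces to the unique solvability of this equation.

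Next I would observe that $a\gamma = -b$ is uniquely solvable (for the given $b$, in fact for every $b$) precisely when the $K$-linear map $\gamma \mapsto a\gamma$ is bijective, i.e.\ when $a$ is a unit of $K$; and since $a$ and $g$ have the same image under $L \twoheadrightarrow L/(\varepsilon) = \kk$, this happens exactly when $g$ is a unit of $L$. So the statement has to be read with the (harmless and always satisfied) hypothesis that $g \in L^\times$ — indeed for $g = 0$ every $\gamma$ satisfies the conclusion, so uniqueness would fail without it, while in all applications $g$ is a Taylor expansion of $\exp(h)$ and hence invertible. Under this hypothesis one sets $\gamma(g) := -a^{-1}b \in K$; by the displayed identity $g\cdot(1+\varepsilon\gamma(g)) = a - \varepsilon^2 a^{-1}b^2 \in K$, and uniqueness has just been established.

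There is no real difficulty in the argument itself: once the decomposition $L = K \oplus \varepsilon K$ is set up, it is two lines, and in fact the lemma is merely the coordinate-free form of the normalization $(a)\sim(1+\varepsilon\gamma)$ performed in the proof of Lemma \ref{L:normalform}(1). The only point that deserves attention — and what I would regard as the main obstacle — is recognizing that invertibility of $g$ is exactly what is needed to make the resulting equation $a\gamma = -b$ uniquely solvable, rather than assuming it without comment.
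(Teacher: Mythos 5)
Your argument is correct, and the paper supplies no proof of this lemma (it is stated as an ``elementary observation''), so there is nothing to match it against; the direct computation via the $K$-module splitting $L = K \oplus \varepsilon K$ is the natural one. You have also correctly diagnosed a real imprecision in the statement as printed: for general $g \in L$ neither existence nor uniqueness holds --- for instance $g = \varepsilon$ admits no such $\gamma$ at all, while $g = 0$ admits every $\gamma$ --- and the missing hypothesis is exactly that the $K$-component $a$ in $g = a + \varepsilon b$ be a unit, equivalently that $g \in L^\times$. This is the same nonvanishing condition $a(0)\neq 0$ that appears in the closely related normalization in Lemma~\ref{L:normalform}(1), and you are right that it is automatic everywhere the lemma is used: in Lemma~\ref{L:prepCompPicard} one applies it to $g = \widehat{T}_{(\alpha,2m)}(h)$, which by~(\ref{L:TaylorSeriesVariation}) is a ratio of Taylor expansions of exponentials and hence a unit, so the paper's subsequent arguments are unaffected.
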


\smallskip
\noindent
Let $\widehat{R} := \CC\llbracket z_1, z_2\rrbracket = \CC\llbracket \rho\cos(\varphi), \rho \sin(\varphi)\rrbracket$. For any  $h \in \widehat{R}$ and a pair $(\alpha, k)  \in \CC \times \NN$, we define
the power series $\widehat{T}_{(\alpha, k)}(h) \in \CC\llbracket \rho\rrbracket[\varepsilon]/(\varepsilon^{2\mu_\alpha})$ by the rule
\begin{equation}\label{L:TaylorSeriesVariation}
T_{(\alpha, k)}\bigl(\exp(h)\bigr) =  \widehat{T}_{(\alpha, k)}(h) \cdot \exp\bigl(h_\alpha^{(0)}\bigr).
\end{equation}
It follows from the definition, that for any $h_1, h_2 \in \widehat{R}$ we have:
\begin{equation}
\widehat{T}_{(\alpha, k)}(h_1 + h_2) = \widehat{T}_{(\alpha, k)}(h_1) \cdot \widehat{T}_{(\alpha, k)}(h_2).
\end{equation}
\begin{lemma}\label{L:prepCompPicard}
For $(\alpha, m) \in \CC \times \NN$, let $\widehat{R} \xrightarrow{\Upsilon_{(\alpha, m)}} \CC\llbracket \rho \rrbracket[\sigma]/(\sigma^m)$
be given by the composition
\begin{equation}
\widehat{R} \xrightarrow{\widehat{T}_{(\alpha, 2m)}}  \CC\llbracket \rho\rrbracket[\varepsilon]/(\varepsilon^{2m}) \stackrel{\gamma}\lar \CC\llbracket \rho \rrbracket[\sigma]/(\sigma^m),
\end{equation}
where $\gamma$ is the map from Lemma \ref{L:ArmAberSexy}. Then we have: $(\widehat{R}, +) \xrightarrow{\Upsilon_{(\alpha, m)}} \bigl(\CC\llbracket \rho \rrbracket[\sigma]/(\sigma^m), \circ\bigr)$ is a group homomorphism.
\end{lemma}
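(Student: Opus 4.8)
The plan is to reduce the assertion to an identity internal to the local ring $\mathcal{L}:=\CC\llbracket\rho\rrbracket[\varepsilon]/(\varepsilon^{2m})$, exploiting that $\Upsilon_{(\alpha,m)}$ is by construction the composite of the map $\widehat{T}_{(\alpha,2m)}$ — already known to satisfy $\widehat{T}_{(\alpha,2m)}(h_{1}+h_{2})=\widehat{T}_{(\alpha,2m)}(h_{1})\cdot\widehat{T}_{(\alpha,2m)}(h_{2})$ — with the map $\gamma$ from Lemma~\ref{L:ArmAberSexy}. Write $\mathcal{K}:=\CC\llbracket\rho\rrbracket[\sigma]/(\sigma^{m})\subseteq\mathcal{L}$ under the identification $\sigma=\varepsilon^{2}$, so that $\mathcal{L}=\mathcal{K}\oplus\varepsilon\mathcal{K}$, and set $G:=1+\varepsilon\mathcal{L}$ and $H:=G\cap\mathcal{K}=1+\sigma\mathcal{K}$, both subgroups of the unit group $(\mathcal{L}^{\times},\cdot)$. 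The first step is to record that $\widehat{T}_{(\alpha,2m)}$ is in fact a group homomorphism $(\widehat{R},+)\to(G,\cdot)$: additivity-to-multiplicativity is the displayed identity preceding the lemma, and the image lands in $G$ because, comparing the $\varepsilon^{0}$-coefficients on both sides of $T_{(\alpha,2m)}(\exp h)=\widehat{T}_{(\alpha,2m)}(h)\cdot\exp\bigl(h^{(0)}_{\alpha}\bigr)$ — the $\varepsilon^{0}$-coefficient of $T_{(\alpha,2m)}(f)$ being $f^{(0)}_{\alpha}$ — the $\varepsilon^{0}$-coefficient of $\widehat{T}_{(\alpha,2m)}(h)$ equals $1$.

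It therefore suffices to prove that $\gamma(g_{1}g_{2})=\gamma(g_{1})\circ\gamma(g_{2})$ for all $g_{1},g_{2}\in G$. The key elementary ingredient is a \emph{unique factorization}: every $g\in G$ can be written uniquely as $g=h\cdot(1+\varepsilon\beta)$ with $h\in H$ and $\beta\in\mathcal{K}$. Indeed, decomposing $g=a+\varepsilon b$ along $\mathcal{L}=\mathcal{K}\oplus\varepsilon\mathcal{K}$, one has $a\in H$ (its $\sigma^{0}$-coefficient is $1$) and $b\in\mathcal{K}$, and since $\sigma$ is nilpotent $a$ is a unit, forcing $h=a$, $\beta=a^{-1}b$. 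Substituting $g=h(1+\varepsilon\beta)$ into $g\cdot(1+\varepsilon\gamma')=h\bigl(1+\sigma\beta\gamma'+\varepsilon(\beta+\gamma')\bigr)$ and matching the $\varepsilon\mathcal{K}$-component against $0$ shows that $g\cdot(1+\varepsilon\gamma')\in\mathcal{K}$ holds exactly when $\gamma'=-\beta$; hence $\gamma(g)=-\beta$ (this also reproves existence and uniqueness of $\gamma(g)$ on $G$, so the argument is self-contained modulo Lemma~\ref{L:ArmAberSexy}).

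Finally I would carry out the group-law computation. Given $h_{1},h_{2}\in\widehat{R}$, set $g_{i}:=\widehat{T}_{(\alpha,2m)}(h_{i})\in G$, factor $g_{i}=h'_{i}(1+\varepsilon\beta_{i})$, and put $c_{i}:=\Upsilon_{(\alpha,m)}(h_{i})=\gamma(g_{i})=-\beta_{i}$. Multiplicativity of $\widehat{T}_{(\alpha,2m)}$ gives
\[
\widehat{T}_{(\alpha,2m)}(h_{1}+h_{2})=g_{1}g_{2}=h'_{1}h'_{2}\bigl(1+\varepsilon(\beta_{1}+\beta_{2})+\sigma\beta_{1}\beta_{2}\bigr),
\]
and factoring the unit $1+\sigma\beta_{1}\beta_{2}\in H$ out of the bracket exhibits $g_{1}g_{2}=\bigl(h'_{1}h'_{2}(1+\sigma\beta_{1}\beta_{2})\bigr)\bigl(1+\varepsilon(\beta_{1}+\beta_{2})(1+\sigma\beta_{1}\beta_{2})^{-1}\bigr)$ with first factor in $H$. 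By the previous step, $\Upsilon_{(\alpha,m)}(h_{1}+h_{2})=\gamma(g_{1}g_{2})=-(\beta_{1}+\beta_{2})(1+\sigma\beta_{1}\beta_{2})^{-1}=(c_{1}+c_{2})(1+\sigma c_{1}c_{2})^{-1}=c_{1}\circ c_{2}$, the operation being the one of \eqref{E:PicardCusp}. I expect the only genuinely delicate point to be the bookkeeping in the unique-factorization step, together with keeping the $\varepsilon^{0}$-normalization in force; everything else is formal manipulation in $\mathcal{L}$.
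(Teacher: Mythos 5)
Your proof is correct and follows essentially the same route as the paper's: both arguments rest on the multiplicativity identity for $\widehat{T}_{(\alpha,2m)}$ displayed before the lemma together with the elementary identity $(1+\varepsilon\gamma_1)(1+\varepsilon\gamma_2)=(1+\sigma\gamma_1\gamma_2)+\varepsilon(\gamma_1+\gamma_2)$, and conclude via the uniqueness clause of Lemma~\ref{L:ArmAberSexy}. Your repackaging through the factorization $g=h\cdot(1+\varepsilon\beta)$ with $\gamma(g)=-\beta$ is a harmless reorganization of the same computation, and your explicit check that $\widehat{T}_{(\alpha,2m)}(h)$ has constant term $1$ (so that one only ever applies $\gamma$ to units of this form) is a welcome extra precision.
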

\begin{proof}
Let $h_k \in \widehat{R}$ and $\gamma_k := \Upsilon_{(\alpha, m)}(h_k)$ for $k = 1, 2$. By definition, we have:
$$
(1 + \varepsilon \gamma_k) \cdot \widehat{T}_{(\alpha, 2m)}(h_k) \in \CC\llbracket
\rho\rrbracket[\varepsilon^2]/(\varepsilon^{2m}) \quad \mbox{\rm for} \quad k = 1,2.
$$
Note that we have the following identity in $\CC\llbracket
\rho\rrbracket[\varepsilon^2]/(\varepsilon^{2m})$:
$$
(1 + \varepsilon \gamma_1) \cdot (1 + \varepsilon \gamma_2) \cdot \widehat{T}_{(\alpha, 2m)}(h_1) \cdot  \widehat{T}_{(\alpha, 2m)}(h_2) =
\bigl((1 + \varepsilon^2 \gamma_1 \gamma_2) + \varepsilon (\gamma_1 + \gamma_2)\bigr)\cdot  \widehat{T}_{(\alpha, 2m)}(h_1 + h_2).
$$
Then we have:
$
\widehat{T}_{(\alpha, 2m)}(h_1 + h_2) \cdot\bigl(1 + \varepsilon (\gamma_1 + \gamma_2) \cdot (1 + \varepsilon^2 \gamma_1 \gamma_2)^{-1}\bigr)$
belongs to  $\CC\llbracket
\rho\rrbracket[\varepsilon^2]/(\varepsilon^{2m})$. It follows from the definition of the operation $\circ$ that
$$
\Upsilon_{(\alpha, m)}(h_1 + h_2) = \Upsilon_{(\alpha, m)}(h_1) \circ \Upsilon_{(\alpha, m)}(h_2),
$$
proving the statement.
\end{proof}

\begin{lemma}\label{L:TriplesSpecialCase} Let $B = \CC\llbracket u, v^2, v^{2m+1}\rrbracket$ for some $m \in \NN$.  Then its normalization
is $\widehat{R} = \CC\llbracket u, v\rrbracket$ and the diagram (\ref{E:keydiagram}) has   the form
\begin{equation}
\begin{array}{c}
\xymatrix{
B \ar[r] \ar@{_{(}->}[d]& \widehat{K} = \CC\llbrace u\rrbrace[v^2]/(v^{2m}) \ar@{^{(}->}[d] \\\
\widehat{R}  \ar[r] &  \widehat{L} = \CC\llbrace u\rrbrace[v]/(v^{2m})
}
\end{array}
\end{equation}
for some $m \in \NN$.
Let $U = \bigl(\widehat{R}, \widehat{K}, (1+ \bar{v}\gamma)\bigr)$ be an object of $\Tri(B)$ for some $\gamma \in
\CC\llbrace u\rrbrace[v^2]/(v^{2m})$. Then we have: $\FF(B) \cong U$ if and only if $\gamma \in
\CC\llbracket u\rrbracket[v^2]/(v^{2m})$.
\end{lemma}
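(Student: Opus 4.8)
The plan is to compute $\FF(B)$ explicitly and then to translate the condition $\FF(B)\cong U$ into a single equation in $\widehat L$ via the definition of morphisms in $\Tri(B)$. First I would pin down the gluing data for $B$. Exactly as in the proof of Theorem~\ref{T:CMandGorenstein} (specialised to one line), the conductor of $B=\CC\llbracket u,v^2,v^{2m+1}\rrbracket$ in its normalisation $\widehat R=\CC\llbracket u,v\rrbracket$ is $(v^{2m})$ — this uses only that the numerical semigroup $\langle 2,2m+1\rangle$ has conductor $2m$ — so $\bar R=\CC\llbracket u\rrbracket[v]/(v^{2m})$ and $\bar B=\CC\llbracket u\rrbracket[v^2]/(v^{2m})$, and passing to total rings of fractions gives $Q(\bar R)=\widehat L=\CC\llbrace u\rrbrace[v]/(v^{2m})$ and $Q(\bar B)=\widehat K=\CC\llbrace u\rrbrace[v^2]/(v^{2m})$ with $\widehat K\hookrightarrow\widehat L$ the evident inclusion. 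Applying Theorem~\ref{T:BurbanDrozd} to the regular module $B$ yields $\FF(B)\cong U_\circ:=\bigl(\widehat R,\widehat K,(1)\bigr)$, the gluing map being the identity of $\widehat L$; thus the Lemma reduces to showing that $U_\circ\cong U$ in $\Tri(B)$ if and only if $\gamma\in\CC\llbracket u\rrbracket[v^2]/(v^{2m})$.

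Next I would unwind such isomorphisms. Since $\Aut_{\widehat R}(\widehat R)=\widehat R^{\times}$ and $\Aut_{\widehat K}(\widehat K)=\widehat K^{\times}$, an isomorphism $U_\circ\to U$ is a pair consisting of multiplication by a unit $f\in\widehat R^{\times}$ and multiplication by a unit $g\in\widehat K^{\times}$, and the commutative square~(\ref{E:morphismtriples}) collapses — after the identifications $\widehat L\otimes_{\widehat R}\widehat R=\widehat L=\widehat L\otimes_{\widehat K}\widehat K$ — to the single equation $f=g\,(1+\bar v\gamma)$ in $\widehat L$, where $f$ is read through $\widehat R\to\widehat L$. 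Using the $\widehat K$-module splitting $\widehat L=\widehat K\dotplus\bar v\widehat K$ together with $g,\gamma\in\widehat K$, this equation says that the ``even part'' of the image of $f$ equals $g$ and its ``odd part'' equals $\bar v\,(g\gamma)$.

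Now the two implications. For $(\Leftarrow)$, given $\gamma\in\CC\llbracket u\rrbracket[v^2]/(v^{2m})$ I would take $g=1$ and lift $\gamma$ to $\widetilde\gamma\in\CC\llbracket u,v\rrbracket$; then $f:=1+v\widetilde\gamma\in\widehat R^{\times}$ (constant term $1$) has image $1+\bar v\gamma$ in $\widehat L$, so $(f,\mathrm{id})$ is an isomorphism $U_\circ\to U$ and $\FF(B)\cong U$. For $(\Rightarrow)$, suppose such $f,g$ exist. The image of $f\in\widehat R$ in $\widehat L$ involves no negative powers of $u$, hence neither do its even and odd parts, which gives $g\in\CC\llbracket u\rrbracket[v^2]/(v^{2m})$ and $g\gamma\in\CC\llbracket u\rrbracket[v^2]/(v^{2m})$. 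Moreover $f\in\widehat R^{\times}$ forces $f(0,0)\ne0$, which is exactly the statement that the $v^0$-coefficient $g_0\in\CC\llbracket u\rrbracket$ of $g$ satisfies $g_0(0)\ne0$, i.e.\ $g_0\in\CC\llbracket u\rrbracket^{\times}$. Writing $g=g_0\bigl(1+g_0^{-1}(g-g_0)\bigr)$ with $g_0^{-1}(g-g_0)$ nilpotent in $\CC\llbracket u\rrbracket[v^2]/(v^{2m})$, its inverse $g^{-1}$ again lies in $\CC\llbracket u\rrbracket[v^2]/(v^{2m})$, and therefore $\gamma=g^{-1}(g\gamma)\in\CC\llbracket u\rrbracket[v^2]/(v^{2m})$, as required.

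The step I expect to be the crux is this last point in $(\Rightarrow)$: using only that $g$ is a unit of $\widehat K$ would merely give $g_0\ne0$ in $\CC\llbracket u\rrbracket\subset\CC\llbrace u\rrbrace$, which is too weak to invert $g$ inside $\CC\llbracket u\rrbracket[v^2]/(v^{2m})$; the unit hypothesis on the companion element $f\in\widehat R$ is exactly what upgrades this to $g_0\in\CC\llbracket u\rrbracket^{\times}$ and thereby prevents a pole in $u$ from reappearing when one solves for $\gamma$. Everything else — the conductor computation, the identification of $Q(\bar R)$ and $Q(\bar B)$, the value $\FF(B)\cong(\widehat R,\widehat K,(1))$, and the even/odd bookkeeping — is routine and largely already carried out in the proof of Theorem~\ref{T:CMandGorenstein}.
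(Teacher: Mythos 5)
Your proof is correct and follows essentially the same route as the paper: identify $\FF(B)$ with $\bigl(\widehat R,\widehat K,(1)\bigr)$, produce the explicit unit $1+v\widetilde\gamma\in\widehat R^{\times}$ for the ``if'' direction, and argue for the ``only if'' direction that a unit of $\widehat R$ cannot remove a Laurent part from $\gamma$. In fact your treatment of the converse — the even/odd splitting $\widehat L=\widehat K\dotplus\bar v\widehat K$ and the observation that $f\in\widehat R^{\times}$ forces $g_0\in\CC\llbracket u\rrbracket^{\times}$, so that $\gamma=g^{-1}(g\gamma)$ stays pole-free — supplies the detail that the paper only asserts in a parenthetical remark.
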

\begin{proof}
Recall that $\FF(B) = U_\circ := (\widehat{R}, \widehat{K}, 1)$.
Let $U = \bigl(\widehat{R}, \widehat{K}, (1+ \bar{v}\gamma)\bigr)$ be such that  $\gamma \in
\CC\llbracket u\rrbracket[v^2]/(v^{2m})$. Then we have an expansion
$
\gamma = \sum\limits_{j = 0}^{m-1} \gamma_j(u) \bar{v}^{2j}$, where $\gamma_j \in \CC\llbracket u\rrbracket$ for all
$0 \le j \le m-1$. Let $g := 1 + v \cdot \bigl(\sum\limits_{j = 0}^{m-1} \gamma_j v^{2j}\bigr) \in \widehat{R}$. Then $g$ is a unit and
$\bar{g}^{-1} \cdot (1 + \bar{v} \gamma) = 1$ in $\widehat{L}$. Hence $U_\circ \xrightarrow{(g, 1)} U$ is an isomorphism in the category $\Tri(B)$.

On the other hand, if $\gamma_j \in \CC\llbrace u\rrbrace$ has a non--trivial Laurent part  for some $0 \le j \le m-1$ then
$U \not\cong U_\circ$ (since we can not eliminate  the Laurent part of $\gamma$ by multiplying it with the image of a unit from  $\widehat{R}$).
\end{proof}

\begin{theorem}\label{T:descriptionPicard} Let $\bigl(\Pi, \underline{\mu})$ be any datum and $A = A\bigl(\Pi, \underline{\mu})$ be the corresponding algebra
of quasi--invariants. Consider the following homomorphism of abelian groups:
\begin{equation}\label{E:homomUpsilon}
\CC\llbracket z_1, z_2\rrbracket \xrightarrow{\Upsilon} \prod\limits_{\alpha \in \Pi} \bigl(\CC\llbracket \rho\rrbracket[\sigma]/(\sigma^{\mu_\alpha}),  \circ\bigr), \quad h \mapsto \bigl(\Upsilon_{(\alpha, \mu_\alpha)}(h)\bigr)_{\alpha \in \Pi}.
\end{equation}
Then we have:
\begin{equation}\label{E:descriptionPicard}
 \Pic(A) \cong \mathrm{Im}(\Upsilon) \cap K^\diamond\bigl(\Pi, \underline{\mu}\bigr),
 \end{equation}
 where $K^\diamond\bigl(\Pi, \underline{\mu}\bigr) := \prod\limits_{\alpha \in \Pi} \bigl(\CC[\rho][\sigma]/(\sigma^{\mu_\alpha}), \circ\bigr)$.
More explicitly, let
$$
\Gamma\bigl(\Pi, \underline{\mu}\bigr) := \bigl\{h \in \CC\llbracket z_1, z_2\rrbracket  \big|  \Upsilon(h) \in K^\diamond\bigl(\Pi, \underline{\mu}\bigr)\bigr\}.$$
 Then for any $h \in \Gamma\bigl(\Pi, \underline{\mu}\bigr)$, the corresponding  projective $A$--module of rank one is given by
\begin{equation}\label{E:projectiveQuasiInv}
P(h) := \bigl\{f \in  R \,\big|\, \exp(h) f \;\mbox{\rm is} \; (\Pi, \underline{\mu})-\mbox{\rm quasi--invariant}\bigr\} = B\bigl(\Upsilon(h)\bigr).
\end{equation}
Conversely,  for any $P \in \Pic(A)$, there exists $h \in \Gamma\bigl(\Pi, \underline{\mu}\bigr)$ such that $P \cong P(h)$. Moreover,
\begin{itemize}
\item $P(h_1) \cong P(h_2)$ if and only if $\Upsilon(h_1) = \Upsilon(h_2)$.
\item The multiplication map $P(h_1) \otimes_A P(h_2) \lar P(h_1 + h_2), \; f_1 \otimes f_2 \mapsto f_1 f_2$ is an isomorphism of $A$--modules.
\end{itemize}
\end{theorem}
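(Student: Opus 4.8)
The plan is to identify $\Pic(A)$, via the isomorphism $\Theta\colon\CM_1^{\mathsf{lf}}(A)\xrightarrow{\ \sim\ }K(\Pi,\underline{\mu})$ of Theorem~\ref{T:listCMlf} and the obvious inclusion $\Pic(A)\subseteq\CM_1^{\mathsf{lf}}(A)$, with the set of those $\vec{\gamma}$ for which $B(\vec{\gamma})$ is \emph{locally free} (and not just locally free in codimension one). By Theorem~\ref{T:CMandGorenstein}, the only maximal ideals of $A$ at which $B(\vec{\gamma})$ can fail to be free lie on the cuspidal curves forming the singular locus of $X$, so the characterization splits into a computation at the ``generic'' singular points $q\neq p$ of these curves and a separate, harder one at the distinguished point $p=\nu(0,0)$, where $\widehat{A}_p$ is the algebra of quasi-invariant power series \eqref{E:quasiinvariants1}.

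I would first treat the forward direction. For $h\in\Gamma(\Pi,\underline{\mu})$ one checks $P(h)=B(\Upsilon(h))$ as subsets of $R$ by a direct Taylor computation: writing $T_{(\alpha,2\mu_\alpha)}(\exp(h)f)=\widehat{T}_{(\alpha,2\mu_\alpha)}(h)\cdot\exp\bigl(h^{(0)}_\alpha\bigr)\cdot T_{(\alpha,2\mu_\alpha)}(f)$, multiplying out and separating $\varepsilon$-even and $\varepsilon$-odd parts, the requirement that $\exp(h)f$ be $(\Pi,\underline{\mu})$-quasi-invariant becomes exactly $T^-_{(\alpha,2\mu_\alpha)}(f)=\Upsilon_{(\alpha,2\mu_\alpha)}(h)\cdot T^+_{(\alpha,2\mu_\alpha)}(f)$ for all $\alpha$; here the defining property of $\Upsilon_{(\alpha,2\mu_\alpha)}(h)$ from Lemma~\ref{L:ArmAberSexy} does the bookkeeping, and the hypothesis $\Upsilon(h)\in K^\diamond(\Pi,\underline{\mu})$ is precisely what places the right-hand sides in $\CC[\rho][\sigma]/(\sigma^{\mu_\alpha})$, so these are the constraints~\eqref{E:cosnstraints} and $B(\Upsilon(h))\in\CM_1^{\mathsf{lf}}(A)$ by Theorem~\ref{T:listCMlf}. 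Then $P(h)$ is projective: at $p$, multiplication by the unit $\exp(h)\in\CC\llbracket z_1,z_2\rrbracket$ is an $\widehat{A}_p$-linear isomorphism $\widehat{P(h)}_p\xrightarrow{\ \sim\ }\widehat{A}_p$ (injective since $\exp(h)$ is a nonzerodivisor, surjective onto all quasi-invariant power series since $\exp(h)$ is a unit); at a singular point $q\neq p$ on the component indexed by $\alpha$, where $\widehat{A}_q\cong\CC\llbracket u,v^2,v^{2\mu_\alpha+1}\rrbracket$, compatibility of $\FF$ with completion (diagram~\eqref{E:LocalizeCompleteTriples}) puts the triple of $\widehat{P(h)}_q$ into the form of Lemma~\ref{L:TriplesSpecialCase} with parameter the completion at $q$ of $\Upsilon_{(\alpha,2\mu_\alpha)}(h)$, which is a polynomial in $\rho$, hence regular at $q$, so the lemma gives freeness. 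Being finitely generated and locally free everywhere, $P(h)\in\Pic(A)$.

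Next, multiplicativity and the injectivity statement. The map $P(h_1)\otimes_A P(h_2)\to P(h_1+h_2)$, $f_1\otimes f_2\mapsto f_1 f_2$, is well defined because quasi-invariance is preserved under products and $\exp(h_1+h_2)f_1f_2=(\exp(h_1)f_1)(\exp(h_2)f_2)$; by Lemma~\ref{L:prepCompPicard} one has $\Upsilon(h_1+h_2)=\Upsilon(h_1)\circ\Upsilon(h_2)$, hence by Theorem~\ref{T:listCMlf} $P(h_1+h_2)=B(\Upsilon(h_1)\circ\Upsilon(h_2))\cong B(\Upsilon(h_1))\boxtimes_A B(\Upsilon(h_2))=P(h_1)\boxtimes_A P(h_2)$, and since all three modules are invertible, $\boxtimes_A$ coincides with $\otimes_A$ (no torsion, already reflexive); a short check at each maximal ideal (using the $\exp$-twist at $p$) identifies the abstract isomorphism with the multiplication map. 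The equivalence $P(h_1)\cong P(h_2)\iff\Upsilon(h_1)=\Upsilon(h_2)$ is then immediate from the injectivity of $\Theta$. Altogether this establishes that $\Theta$ carries $\{P(h)\mid h\in\Gamma(\Pi,\underline{\mu})\}$ onto the subgroup $\mathrm{Im}(\Upsilon)\cap K^\diamond(\Pi,\underline{\mu})$ of $K(\Pi,\underline{\mu})$.

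It remains to prove surjectivity, which is the hard part. Given $P\in\Pic(A)$, write $P\cong B(\vec{\gamma})$ with $\vec{\gamma}\in K(\Pi,\underline{\mu})$. Running Lemma~\ref{L:TriplesSpecialCase} at every singular point $q\neq p$ forces each $\gamma_\alpha$ to be regular on $V(l_\alpha)\setminus\{0\}\cong\CC^\ast$. At $p$: since $P$ is invertible and $\FF$ is compatible with completion, $\widehat{B(\vec{\gamma})}_p$ is a free $\widehat{A}_p$-module, so its triple (the completion at $p$ of $U(\vec{\gamma})$) is isomorphic in $\Tri(\widehat{A}_p)$ to that of $\widehat{A}_p$; unwinding this isomorphism produces a unit $\tilde f\in\widehat{R}_p=\CC\llbracket z_1,z_2\rrbracket$ whose Taylor data satisfy the $\varepsilon$-evenness condition, and after normalising $\tilde f(0)=1$ and setting $h:=\log\tilde f$ the same even/odd Taylor computation as in the forward direction gives $\widehat{\gamma}_{\alpha,p}=\Upsilon_{(\alpha,2\mu_\alpha)}(h)$ inside $\CC\llbrace\rho\rrbrace[\sigma]/(\sigma^{\mu_\alpha})$. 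As $\Upsilon_{(\alpha,2\mu_\alpha)}(h)$ is an honest power series in $\rho$ while $\gamma_\alpha$ is regular on $\CC^\ast$, their common value is a polynomial in $\rho$; hence $\vec{\gamma}=\Upsilon(h)\in K^\diamond(\Pi,\underline{\mu})$, so $h\in\Gamma(\Pi,\underline{\mu})$ and $P\cong B(\vec{\gamma})=P(h)$. Combined with the previous paragraphs this yields $\Pic(A)\cong\mathrm{Im}(\Upsilon)\cap K^\diamond(\Pi,\underline{\mu})$. The main obstacle is exactly this last step — extracting an honest power series $h$ from the abstract freeness of $\widehat{B(\vec{\gamma})}_p$ at the worst singular point: concretely one must argue that the isomorphism of triples at $p$ is represented by multiplication by a \emph{unit} of $\CC\llbracket z_1,z_2\rrbracket$ (equivalently, that $B(\vec{\gamma})$ generates the unit ideal of $R$, which uses the factoriality of $R=\CC[z_1,z_2]$ together with the explicit local form of $B(\vec{\gamma})$ along the cuspidal curves), after which matching $\vec{\gamma}$ against $\Upsilon(h)$ through the Taylor homomorphisms is routine.
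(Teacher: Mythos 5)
Your proposal is correct and follows essentially the same route as the paper: reduction to the triples classification of Theorem~\ref{T:listCMlf}, local analysis at the singular points $q\neq p$ via Lemma~\ref{L:TriplesSpecialCase}, and at $p=\nu(0,0)$ the translation of freeness of the completed module into the existence of a unit $\exp(h)\in\widehat{R}$ with $\Upsilon(h)=\vec{\gamma}$, followed by the multiplicativity argument via Lemma~\ref{L:prepCompPicard}. The ``main obstacle'' you flag at the end is in fact immediate from the definition of morphisms in $\Tri(\widehat{A}_\idm)$ --- an isomorphism between two triples whose $R$--component is $\widehat{R}$ has first component an $\widehat{R}$--automorphism of $\widehat{R}$, i.e.~multiplication by a unit --- so no appeal to factoriality of $R$ is needed.
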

\begin{proof}
Let $P \in \CM_1^{\mathsf{lf}}(A)$, then we have:
$
\FF(P) := U \cong (R, V, \theta),
$
where $V \cong \oplus_{\alpha \in \Pi} K_\alpha$ and $\theta = (\theta_\alpha)_{\alpha \in \Pi}$, where  $\theta_\alpha =
1 + \varepsilon  \gamma_\alpha \in L_\alpha$ for some  $\gamma \in K_\alpha$.

Note that the  module $P$ is projective if and only if $\widehat{P}_\idm \cong \widehat{A}_\idm$ for any
maximal ideal  $\idm \in \mathsf{Max}(A)$. Let $p \in X$ be the point corresponding to $\idm$ and  $q := \nu^{-1}(p) \in \AA^2$.
Assume that $q \notin \cup_{\alpha \in \Pi} V(l_\alpha)$. Then $\widehat{P}_\idm \cong \widehat{A}_\idm$  is automatically true.

\smallskip
\noindent
Now, let $q \in \cup_{\alpha \in \Pi} V(l_\alpha)$. According to Theorem \ref{T:BurbanDrozd}, $\widehat{P}_\idm \cong \widehat{A}_\idm$ if and only if
the triples $\LL_\idm(U)$ and $\FF_\idm(\widehat{A}_\idm)$ are isomorphic in the category $\Tri(\widehat{A}_\idm)$.

\smallskip
\noindent
\underline{Case 1}. Assume that $q \ne (0,0)$.
Then there  exists uniquely determined $\alpha \in \Pi$ and $\rho_0 \in \CC^\ast$ such that $q = \bigl(\rho_0 \cos(\alpha), \rho_0 \sin(\alpha)\bigr) \in  V(l_\alpha)$.  Denote $u = \rho\cos(\varphi-\alpha) -\rho_0$ and $v = \rho \sin(\varphi-\alpha)$. Obviously, we have:
$R = \CC[u, v]$, $\widehat{R}:= \widehat{R}_\idm \cong \CC\llbracket u,v\rrbracket$ and $\widehat{I} := \widehat{I}_\idm = (v^{2\mu_\alpha})$.
Moreover, the following diagram is commutative:
$$
\xymatrix{
I \ar@{^{(}->}[r] \ar@{_{(}->}[d]_-{\mathsf{can}} & R \ar[r]^-{T_{(\alpha, 2\mu_\alpha)}} \ar@{_{(}->}[d]^-{\mathsf{can}} & \CC(\rho)[\varepsilon]/(\varepsilon^{2\mu_\alpha}) \ar[d] \ar@/^40pt/[dd]^-{\lambda_{\rho_0}} \\
\widehat{I} \ar@{^{(}->}[r] \ar@{_{(}->}[d]_-\cong & \widehat{R} \ar[r] \ar@{_{(}->}[d]^-\cong & Q(\widehat{R}/\widehat{I}) \ar[d]^-\cong\\
(v^{2\mu_\alpha}) \ar@{^{(}->}[r] & \CC\llbracket u, v\rrbracket \ar[r] & \CC\llbrace u\rrbrace[v]/(v^{2\mu_\alpha})
}
$$
where $\lambda_{\rho_0}(g) \in \CC\llbrace u\rrbrace[\sigma]/(\sigma^{\mu_\alpha})$ is the Laurent expansion of $g \in \CC(\rho)[\sigma]/(\sigma^{\mu_\alpha})$ at the point $\rho_0 \in \AA^1\cong V(l_\alpha)$.
Note that  we are in the setting
of Lemma \ref{L:TriplesSpecialCase}: $\widehat{A}_\idm \cong \CC\llbracket u, v^2, v^{2\mu_\alpha+1}\rrbracket$.
The key point is that we have the following formula for the localized and completed  triple:
$$
\LL_\idm(U) \cong \bigl(\CC\llbracket u, v\rrbracket, \CC\llbrace u\rrbrace[v^2]/(v^{2\mu_\alpha}), 1 + \bar{v} \lambda_{\rho_0}(\gamma_\alpha)\bigr).
$$

\smallskip
\noindent
According to Lemma \ref{L:TriplesSpecialCase}, the module $P$ is locally free at the point  $p \in X$ if and only if $\gamma_\alpha \in \CC(\rho)[\sigma]/(\sigma^{\mu_\alpha})$ has no pole at $\rho_0 \in \AA^1$.

\smallskip
\noindent
\underline{Case 2}. For  the point $q = (0,0)$ we have:
$$
\LL_\idm(U) \cong \bigl(\CC\llbracket z_1, z_2\rrbracket, \oplus_{\alpha \in \Pi} \CC\llbrace \rho\rrbrace[\sigma]/(\sigma^{\mu_\alpha}),
(1 + \varepsilon \gamma_{\alpha})_{\alpha \in \Pi}\bigr),
$$
where the element  $\gamma_\alpha \in \CC(\rho)[\sigma]/(\sigma^{\mu_\alpha})$ is viewed as an element of
$\CC\llbrace \rho\rrbrace[\sigma]/(\sigma^{\mu_\alpha})$ for each $\alpha \in \Pi$.
Similarly to the previous case we conclude  that $P$ can be projective only if $\gamma_\alpha \in K_\alpha$ is regular at $0$ for each $\alpha \in \Pi$. Hence, $\vec\gamma = (\gamma_\alpha)_{\alpha \in \Pi} \in K^\diamond\bigl(\Pi, \underline{\mu}\bigr)$.

It follows from the definition
of the category $\Tri(\widehat{A}_\idm)$ that $\LL_\idm(U) \cong \FF_\idm(\widehat{A}_\idm)$ if and only if there exists a \emph{unit}
$f \in \widehat{R}$ such that
\begin{equation}\label{E:condit}
T_{(\alpha, 2\mu_\alpha)}(f) \cdot (1 + \varepsilon \gamma_\alpha) \in \CC\llbracket \rho\rrbracket [\varepsilon^2]/(\varepsilon^{2\mu_\alpha})
\end{equation}
for every $\alpha \in \Pi$.
Every unit in the algebra $\widehat{R}$ can be written as the exponential of some power series, hence
$f = \exp(h)$ for some $h \in \widehat{R}$. In the notation of formula (\ref{L:TaylorSeriesVariation}),
the  condition (\ref{E:condit}) can be rewritten as:
$
\widehat{T}_{(\alpha, 2\mu_\alpha)}(h) \cdot (1 + \varepsilon \gamma_\alpha) \in \CC\llbracket \rho\rrbracket [\varepsilon^2]/(\varepsilon^{2\mu_\alpha}),
$
i.e. $\Upsilon(h) = \vec\gamma$.

\smallskip
\noindent
Note that the constraints on a polynomial $f \in R$  from the formula (\ref{E:cosnstraints}) defining the module $B(\vec{\gamma})$ and the ones from (\ref{E:condit}) are in fact the same.  It implies that $P(h) = B(\vec{\gamma}) = B\bigl(\Upsilon(h)\bigr)$. Moreover,
Theorem \ref{T:listCMlf} implies that $P(h_1) \cong P(h_2)$ if and only if  $\Upsilon(h_1) = \Upsilon(h_2)$. Finally, the diagram
\begin{equation}
\begin{array}{c}
\xymatrix{
P(h_1) \otimes_A P(h_2) \ar[rr]^-{\mathsf{mult}} \ar[d]_= & & P(h_1+h_2) \ar[d]^=\\
B\bigl(\Upsilon(h_1)\bigr) \boxtimes_A B\bigl(\Upsilon(h_2)\bigr)
 \ar[rr]^-\cong \ar@{_{(}->}[d]
  && B\bigl(\Upsilon(h_1) \circ \Upsilon(h_2)\bigr) \ar@{^{(}->}[d] \\
  R \otimes_R R \ar[rr]^-{\mathsf{mult}}  & & R
}
\end{array}
\end{equation}
is commutative. It implies that the multiplication map
$P(h_1) \otimes_A P(h_2) \xrightarrow{\mathsf{mult}} P(h_1 + h_2)$ is indeed an isomorphism of $A$--modules. Theorem is proven.
\end{proof}

\smallskip
\noindent
The following special cases of Theorem \ref{T:descriptionPicard} are perhaps of independent interest.

\begin{example}\label{EX:PicardSingSurfaces}
Let $\Pi = \left\{0, \dfrac{\pi}{2}\right\}$. Note that  for any $f \in \CC\llbracket x, y\rrbracket$, we have the following formulae for the directional derivatives (\ref{E:directderivat}):
$$
f'_{0}(\rho) = \rho \frac{\partial f}{\partial y}(\rho, 0) \quad \mbox{and} \quad f'_{\frac{\pi}{2}}(\rho) = -\rho \frac{\partial f}{\partial x}(0, \rho).
$$
\begin{enumerate}

\item Let $\Pi \stackrel{\underline{\mu}}\lar \NN_0$ be given by the rule: $\underline{\mu}(0) = \underline{\mu}\left(\dfrac{\pi}{2}\right) = 1$. Then we have: $$A = \CC[x^2, x^3, y^2, y^3] \cong \CC[x^2, x^3] \otimes_\CC \CC[y^2, y^3].$$
    Then the description of $\Pic(A)$ from Theorem \ref{T:descriptionPicard} gets the following form:
    \begin{equation}\label{E:PicExamp}
    \begin{tabular}{c}
    \xymatrix{
    \CM_{1}^{\mathsf{lf}}(A) \ar[rr]^-\Theta & & \bigl(\CC(\rho), +\bigr) \oplus \bigl(\CC(\rho), +\bigr)\\
    \Pic(A) \ar[rr]^-\cong  \ar@{^{(}->}[u] & & \bigl\{(\rho f, \rho g) \in \rho \CC[\rho] \oplus \rho \CC[\rho] \; \big| \; f'(0) + g'(0) = 0\bigr\}. \ar@{_{(}->}[u]
    }
    \end{tabular}
    \end{equation}
    Indeed, for any $h \in \CC\llbracket z_1, z_2\rrbracket$ we have: 
    $$
 \bigl(\widehat{T}_{(0, 2)}(h),  \widehat{T}_{(\frac{\pi}{2}, 2)}(h)\bigr) = 
 \Bigl(1 + \varepsilon  \rho \frac{\partial h}{\partial y}(\rho, 0), 
 1 - \varepsilon \rho \frac{\partial h}{\partial x}(0, \rho)
 \Bigr) \in \CC\llbracket \rho \rrbracket[\varepsilon]/(\varepsilon^2)  \times \CC\llbracket \rho \rrbracket[\varepsilon]/(\varepsilon^2).
    $$
    It follows that $\Upsilon(h) = \bigl(- \rho \dfrac{\partial h}{\partial y}(\rho, 0),  \rho \dfrac{\partial h}{\partial x}(0, \rho)\bigr) \in 
    \CC\llbracket \rho\rrbracket \times  \CC\llbracket \rho\rrbracket$. Applying  formula
(\ref{E:descriptionPicard}), we get    
      the    description of the Picard group of $A$ given by (\ref{E:PicExamp}).
Note that  $\Upsilon(h) \in  \CC[\rho] \times  \CC[\rho]$   if and only if
$h(z_1, z_2) = a(z_1) + c(z_2) + z_1^2 z_2^2 e(z_1, z_2) + z_2 b(z_1) + z_1 d(z_2)$, where 
$a(z_1) \in \CC\llbracket z_1\rrbracket$, $c(z_2) \in \CC\llbracket z_2\rrbracket$,
$e(z_1, z_2) \in \CC\llbracket z_1, z_2\rrbracket$, $b(z_1) \in \CC[z_1]$ and $d(z_2) \in \CC[z_2]$. 
      \item Let $\Pi \stackrel{\underline{\mu}}\lar \NN_0$ be given by the rule: $\underline{\mu}(0) = 1$ and $\underline{\mu}\left(\dfrac{\pi}{2}\right) = 0$. Then we have: $A = \CC[x^2, x^3, y]$. Proceeding as above one can show that the  Picard group $\Pic(A)$ has the following description:
    $$
    \xymatrix{
    \CM_{1}^{\mathsf{lf}}(A) \ar[rr]^-\Theta & & \bigl(\CC(\rho), +\bigr)\\
    \Pic(A) \ar[rr]^-\cong  \ar@{^{(}->}[u] & & \bigl(\rho \CC[\rho], +\bigr), \ar@{_{(}->}[u]
    }
    $$
where $\Theta$ is the isomorphism from Theorem \ref{T:listCMlf}. It is interesting to note that $\Pic\bigl(D[t]\bigr) \cong \Pic(D)$ for any Noetherian normal domain $D$; see for instance \cite{Swan}.
\end{enumerate}
\end{example}
\section{Spectral module of a rational Calogero--Moser system of dihedral type}\label{S:SpectralModule}
In this section, we shall discuss a link between results on Cohen--Macaulay modules over an algebra planar quasi--invariants with the theory  Calogero--Moser systems.

\begin{definition}\label{D:BakerAkhieserWL} For any $\alpha \in \CC$ we put: $e(\alpha) = \exp(2 i\alpha)$.
A weighted line arrangement $\bigl(\Pi, \underline{\mu}\bigr)$ is called  \emph{Baker--Akhieser} if for any $\alpha \in \Pi$ and $1 \le k \le \mu_\alpha$ we have:
\begin{equation}\label{E:BAarrangement}
\sum\limits_{\substack{\beta \in \Pi \\ \beta \ne \alpha}} \frac{\mu_\beta \bigl(e(\beta)+e(\alpha)\bigr)^{2k-1}}{\bigl(e(\beta)-e(\alpha)\bigr)^{2k-1}} = 0 \quad \mbox{\rm and}\quad
\sum\limits_{\substack{\beta \in \Pi \\ \beta \ne \alpha}} \frac{\mu_\beta (\mu_\beta +1)e(\beta) \bigl(e(\beta)+e(\alpha)\bigr)^{2k-1}}{\bigl(e(\beta)-e(\alpha)\bigr)^{2k+1}} = 0;
\end{equation}
see \cite{ChalykhStyrkasVeselov, ChalykhFeiginVeselov2} and \cite[Lemma 2.1]{FeiginJohnston}.
\end{definition}

\begin{example}\label{E:CoxeterLA}
A so--called  \emph{Coxeter} weighted line arrangement $\bigl(\Lambda_n, \underline{\mu}\bigr)$ defined below is Baker--Akhieser; see  \cite{ChalykhStyrkasVeselov, ChalykhFeiginVeselov2,FeiginJohnston}.
\begin{itemize}
\item $ \Lambda_n:= \bigl\{0, \frac{1}{n} \pi, \dots, \frac{n-1}{n} \pi \bigr\} \subset \RR$ for some $n \in \NN$.
\item $\mu_\alpha = m$ for all $\alpha \in \Lambda_n$ and some $m \in \NN$ (constant multiplicity function), or
\item $n = 2 \bar{n}$ and $\underline{\mu}\bigl(\frac{2k}{n}\pi\bigr) = m_1$ and $\underline{\mu}\bigl(\frac{2k+1}{n}\pi\bigr) = m_2$ for all
$0 \le k \le \bar{n}-1$ and some $m_1, m_2 \in \NN$.
\end{itemize}
\end{example}

\noindent
We put $\mu = \mu\bigl(\Pi, \underline{\mu}\bigr) := \sum_{\alpha \in \Pi} \mu_\alpha$ and
$\delta(z_1, z_2) := \prod_{\alpha\in \Lambda_n} l^{\mu_\alpha}_\alpha(z_1, z_2)$. Note that $\delta(z_1, z_2)$ is a homogeneous polynomial of degree $\mu$.

\subsection{Some results on two--dimensional  rational Calogero--Moser system} Let $\bigl(\Pi, \underline{\mu}\bigr)$ be a   \emph{Baker--Akhieser}
weighted line arrangement.
The \emph{rational Calogero--Moser operator} $H = H(\Pi, \underline{\mu})$ is defined by the formula
\begin{equation}\label{E:CMHamiltonianNonPerturb}
H  :=
\left(\frac{\partial^2}{\partial x_1^2} + \frac{\partial^2}{\partial x_2^2}\right)
- \sum\limits_{\alpha \in \Pi} \dfrac{\mu_\alpha(\mu_\alpha+1)}{l_\alpha^2(x_1, x_2)}.
\end{equation}
According to a result of Chalykh and Veselov \cite{ChalykhVeselov90} (elaborated in their later joint work  with Styrkas \cite{ChalykhStyrkasVeselov})  extending  earlier results   of Heckman and Opdam \cite{HeckmanOpdam} and Heckman \cite{Heckman}, the operator $H$ can be included into a large family of pairwise commuting partial differential operators. In order to make this statement more precise, we recall the following formula of Berest \cite[Theorem 2.8]{Berest} (see also \cite[Theorem 3.1]{ChalykhFeiginVeselov2}) for   the so--called  \emph{multivariate Baker--Akhieser function}, corresponding to the datum $\bigl(\Pi, \underline{\mu}\bigr)$:
\begin{equation}\label{E:Berest}
\Phi(x_1, x_2; z_1, z_2) := \frac{1}{2^\mu \cdot \mu!} \Bigl(H_{(x_1, x_2)} -z_1^2 - z_2^2\Bigr)^\mu \diamond \bigl(\delta(x_1, x_2) \cdot \exp(x_1 z_1 + x_2 z_2)\bigr).
\end{equation}
\begin{theorem}\label{T:BAfunction} The following results are true.
\begin{enumerate}
\item The Baker--Akhieser function $\Phi(x_1, x_2; z_1, z_2)$ is an eigenfunction of the Calogero--Moser operator (\ref{E:CMHamiltonianNonPerturb}) in the following sense:
\begin{equation}\label{E:BAEigenfunction}
H_{(x_1, x_2)} \diamond \Phi(x_1, x_2; z_1, z_2) = (z_1^2 + z_2^2) \cdot \Phi(x_1, x_2; z_1, z_2).
\end{equation}
\item Moreover, there exists an injective algebra homomorphism
\begin{equation}\label{E:QuantumIntegrabCM}
A\bigl(\Pi, \underline{\mu}\bigr) \stackrel{\Xi}\lar  \CC(x_1, x_2)[\partial_1, \partial_2].
\end{equation}
such that the highest symbol of $\Xi(f)$ is equal to the highest symbol of $f(\partial_1, \partial_2)$ and
\begin{equation}\label{E:QuantumIntegrabCM2}
\bigl(\Xi(f)\bigr)_{(x_1, x_2)}  \diamond \Phi(x_1, x_2; z_1, z_2) = f(z_1, z_2) \cdot \Phi(x_1, x_2; z_1, z_2).
\end{equation}
for any $f \in A\bigl(\Pi, \underline{\mu}\bigr)$.  In particular, $H = \Xi(\omega)$ for $\omega = z_1^2 + z_2^2$.
\item The Baker--Akhieser function $\Phi$ has the following expansion:
\begin{equation}\label{E:expBAf}
\Phi(x_1, x_2; z_1, z_2) = \bigl(\delta(z_1, z_2) + \sum\limits_{i_1 + i_2 < \mu} c_{i_1, i_2}(x_1, x_2) z_1^{i_1} z_2^{i_2}\bigr) \cdot \exp(x_1 z_1 + x_2 z_2),
\end{equation}
where $c_{i_1, i_2}(x_1, x_2) \in \CC(x_1, x_2)$ for all $(i_1, i_2)$. Moreover,
$$
c_{0, 0}(x_1, x_2) = c \prod\limits_{\alpha \in \Pi} \frac{1}{l_\alpha(x_1 - \xi_1, x_2 - \xi_2)^{\mu_\alpha}},
$$
where $c \in \CC$ is a certain explicit constant, whose value can be found in  \cite{FeiginNew}.
\item Let $z_1 = \rho \cos(\varphi)$ and $z_2 = \rho \sin\varphi$. Then we have:
\begin{equation}\label{E:BAquasiinv}
\Phi(x_1, x_2; \rho)^{(2l-1)}_\alpha := \left.\frac{\partial^{2l-1} \Phi}{\partial \varphi^{2l-1}} \right|_{\varphi = \alpha} = 0 \quad \mbox{\rm for all}\;  \alpha \in \Pi
\;  \mbox{\rm and} \;
1 \le l \le \mu_\alpha.
\end{equation}
\end{enumerate}
\end{theorem}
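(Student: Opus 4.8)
The plan is to derive all four assertions by transcribing, into the notation of this paper, the theory of multivariate Baker--Akhieser functions for generalized Calogero--Moser operators, following \cite{ChalykhVeselov90, ChalykhStyrkasVeselov, Berest, ChalykhFeiginVeselov2, Chalykh, FeiginJohnston}, and filling in the links. The first step will be to recall the \emph{axiomatic} characterization of the Baker--Akhieser function: for a Baker--Akhieser weighted line arrangement there is a unique function of the form $\psi(x_1,x_2;z_1,z_2)=\bigl(\delta(z_1,z_2)+\sum_{i_1+i_2<\mu}c_{i_1,i_2}(x_1,x_2)\,z_1^{i_1}z_2^{i_2}\bigr)\exp(x_1z_1+x_2z_2)$, with $c_{i_1,i_2}\in\CC(x_1,x_2)$ regular off $\bigcup_{\alpha\in\Pi}V(l_\alpha)$, satisfying the quasi--invariance conditions $\psi^{(2l-1)}_\alpha=0$ for all $\alpha\in\Pi$ and $1\le l\le\mu_\alpha$. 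The existence of such a $\psi$ is precisely where the two families of identities (\ref{E:BAarrangement}) enter --- they are the solvability conditions for the recursion that determines the coefficients $c_{i_1,i_2}$ --- and the value of $c_{0,0}$ is the one computed in \cite[Theorem 2.8]{Berest}. Granting this, assertions (3) and (4) will follow once I identify $\Phi$ of (\ref{E:Berest}) with $\psi$.

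I would make that identification and prove (1) simultaneously. Conjugating by the exponential, $e^{-(x_1z_1+x_2z_2)}(H-\omega)\,e^{x_1z_1+x_2z_2}=\partial_1^2+\partial_2^2+2(z_1\partial_1+z_2\partial_2)-V$ with $V:=\sum_{\alpha\in\Pi}\mu_\alpha(\mu_\alpha+1)/l_\alpha(x_1,x_2)^2$. One checks that $\frac{1}{2^\mu\mu!}(H-\omega)^\mu\diamond\bigl(\delta(x_1,x_2)\exp(x_1z_1+x_2z_2)\bigr)$ is of the shape occurring in the axiomatics: its top $z$--degree term comes from applying the first--order part $2(z_1\partial_1+z_2\partial_2)$ exactly $\mu$ times to the homogeneous degree $\mu$ polynomial $\delta(x_1,x_2)$, which yields $2^\mu\mu!\,\delta(z_1,z_2)$, while the remaining terms contribute the $c_{i_1,i_2}$ with $i_1+i_2<\mu$; and it satisfies the quasi--invariance conditions (\ref{E:BAquasiinv}). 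Uniqueness then gives $\Phi=\psi$, hence (\ref{E:expBAf}) and (\ref{E:BAquasiinv}). Applying $H-\omega$ once more, $(H-\omega)\diamond\Phi=\frac{1}{2^\mu\mu!}(H-\omega)^{\mu+1}\diamond\bigl(\delta(x_1,x_2)\exp(x_1z_1+x_2z_2)\bigr)$; this vanishes (\cite[Theorem 2.8]{Berest}), the conceptual reason being that $H-\omega$ maps functions of Baker--Akhieser shape to functions of the same shape but of $z$--degree strictly below $\mu$, and the only such function that is still regular on the complement of the arrangement is $0$. This is (\ref{E:BAEigenfunction}).

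For (2), I would invoke the algebraic (super)integrability of the rational Calogero--Moser system of Chalykh--Veselov \cite{ChalykhVeselov90, ChalykhStyrkasVeselov} (see also \cite{Chalykh, HeckmanOpdam, Heckman}): its essential content is that $\Phi$ is a common eigenfunction \emph{of rank one}, i.e.~for each $f\in A\bigl(\Pi,\underline{\mu}\bigr)$ the function $f(z_1,z_2)\,\Phi$ lies in the $\CC(x_1,x_2)$--linear span of the $\partial_1^{k_1}\partial_2^{k_2}\diamond\Phi$, so that there is a unique operator $\Xi(f)\in\CC(x_1,x_2)[\partial_1,\partial_2]$ with $\bigl(\Xi(f)\bigr)_{(x_1,x_2)}\diamond\Phi=f(z_1,z_2)\,\Phi$; this is (\ref{E:QuantumIntegrabCM2}). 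Matching the coefficients of $\exp(x_1z_1+x_2z_2)$ of top $z$--degree on the two sides and using that the leading $z$--coefficient of $\Phi$ is $\delta(z_1,z_2)$, which does not depend on $x$, shows that the highest symbol of $\Xi(f)$ equals that of $f(\partial_1,\partial_2)$. In particular the assignment sending a differential operator in $x$ to its action on $\Phi$ is injective, which makes $\Xi$ additive and multiplicative (both $\Xi(fg)$ and $\Xi(f)\Xi(g)$ act on $\Phi$ as multiplication by $f(z)g(z)$, and similarly for sums) and injective ($\Xi(f)=0$ forces $f(z_1,z_2)\,\Phi=0$, hence $f=0$); and $H=\Xi(\omega)$ by comparing (\ref{E:BAEigenfunction}) with (\ref{E:QuantumIntegrabCM2}), completing (\ref{E:QuantumIntegrabCM}).

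The step I expect to be the genuine obstacle --- the only part that is not formal bookkeeping --- is the consistency of the Baker--Akhieser axioms used in the first two steps, equivalently the vanishing $(H-\omega)^{\mu+1}\diamond\bigl(\delta(x_1,x_2)\exp(x_1z_1+x_2z_2)\bigr)=0$, which is exactly where the relations (\ref{E:BAarrangement}) defining a Baker--Akhieser arrangement are consumed, together with the rank one property of $\Phi$ needed in the third step; both of these I would take from \cite{ChalykhVeselov90, ChalykhStyrkasVeselov, Berest, ChalykhFeiginVeselov2, Chalykh}.
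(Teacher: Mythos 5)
Your proposal is correct and takes essentially the same route as the paper: the paper's own ``proof'' is a comment that starts from Berest's formula (\ref{E:Berest}), identifies $\Phi$ with the axiomatically characterized multivariate Baker--Akhieser function of Chalykh--Styrkas--Veselov (whose defining axioms are exactly (\ref{E:expBAf}) and (\ref{E:BAquasiinv}), with (\ref{E:QuantumIntegrabCM}) and (\ref{E:QuantumIntegrabCM2}) formal consequences), and defers the substantive verifications to \cite{ChalykhVeselov90, ChalykhStyrkasVeselov, Berest, ChalykhFeiginVeselov2}. You correctly isolate the same non-formal ingredients (the solvability of the recursion via (\ref{E:BAarrangement}) and the rank-one property) as the parts to be imported from the literature.
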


\noindent
\emph{Comment to the proof}. From the historical perspective, the development of results collected in Theorem \ref{T:BAfunction} was slightly different.
The notion of a  multivariate Baker--Akhieser function $\Phi(x_1, x_2; z_1, z_2)$  corresponding to a Baker--Akhieser datum $\bigl(\Pi, \underline{\mu}\bigr)$  was axiomatized (in arbitrary dimension) by Chalykh, Styrkas and Veselov in \cite{ChalykhVeselov90,ChalykhStyrkasVeselov}.
The properties (\ref{E:expBAf}) and (\ref{E:BAquasiinv}) were stated as defining axioms, whereas the eigenfunction properties (\ref{E:QuantumIntegrabCM}) and (\ref{E:QuantumIntegrabCM2})  were shown to be formal consequences of the proposed axiomatic.

In \cite[Theorem 2.8]{Berest}, Berest discovered an explicit formula (\ref{E:Berest}) for a multivariate Baker--Akhieser function; see also \cite[Theorem 3.1]{ChalykhFeiginVeselov2}. There is a closed expression  for the homomorphism
$\Xi$; see \cite[Section 2.3]{Berest}, \cite[Theorem 1.3]{ChalykhFeiginVeselov2} or \cite[Corollary 3.3]{EtingofStrickland}.

In our exposition, we start with Berest's formula (\ref{E:Berest}) for the Baker--Akhieser function $\Phi(x_1, x_2; z_1, z_2)$. The formula  (\ref{E:expBAf}) can be deduced from (\ref{E:Berest})  by induction on $\mu$. We refer to a paper of Chalykh, Feigin and Veselov \cite{ChalykhFeiginVeselov2} for further details. \qed

\medskip
Our next goal is to study the rational Calogero--Moser operator  (\ref{E:CMHamiltonianNonPerturb}) using methods of the higher--dimensional  Krichever correspondence
developed in \cite{KOZ, Zheglov, KurkeZheglov}. To do this, we have to introduce the following  minor modification of the operator
(\ref{E:CMHamiltonianNonPerturb}). Let $\vec\xi = (\xi_1, \xi_2) \in \CC^2$ be such that
\begin{itemize}
\item $l_\alpha(\vec{\xi}) = -\sin(\alpha) \xi_1 + \cos(\alpha) \xi_2 \ne 0$ for all $\alpha \in \Pi$.
\item $\xi_1^2 + \xi_2^2 \ne 0$ (for example, one can simply take  $\vec\xi \in \RR^2 \setminus \{\vec{0}\}$).
\end{itemize}
The second condition on $\vec\xi$ implies that  one can find  $(\rho_0, \varphi_0) \in \CC^\ast \times \CC$  such that $$\vec{\xi} =  \bigl(\rho_0 \cos(\varphi_0), \rho_0 \cos(\varphi_0)\bigr).$$
For any such vector $\vec\xi$, we have an automorphism $$\CC(x_1, x_2) \stackrel{t(\vec{\xi})}\lar \CC(x_1, x_2), \quad \mbox{\rm where} \;
\bigl(t(\vec\xi)(f)\bigr)(\vec{x}) = f(\vec{x} - \vec{\xi}),$$
which can obviously be extended  to an automorphism $t(\vec{\xi})$ of the algebra of partial differential operators $\CC(x_1, x_2)[\partial_1, \partial_2]$.

\smallskip
\noindent
Summing up, we have an injective algebra homomorphism $\Xi(\vec{\xi})$ given as the composition
\begin{equation}\label{E:mapXiTwist}
A\bigl(\Pi, \underline{\mu}\bigr) \stackrel{\Xi}\lar  \CC(x_1, x_2)[\partial_1, \partial_2] \stackrel{t(\vec\xi)}\lar  \CC(x_1, x_2)[\partial_1, \partial_2].
\end{equation}
Then the perturbed Calogero--Moser operator $H = H\bigl((\Pi, \underline{\mu}), \vec\xi\bigr) := \bigl(\Xi(\vec\xi)\bigr)(\omega)$ is given by the formula
\begin{equation}\label{E:CMHamiltonianN}
H  :=
\left(\frac{\partial^2}{\partial x_1^2} + \frac{\partial^2}{\partial x_2^2}\right)
- \sum\limits_{\alpha \in \Pi} \dfrac{\mu_\alpha(\mu_\alpha+1)}{l_\alpha^2(\vec{x}-\vec{\xi})},
\end{equation}
whereas the conventional Calogero--Moser operator (\ref{E:CMHamiltonianNonPerturb}) is $H\bigl((\Pi, \underline{\mu}), \vec{0}\bigr)$.
Note that the potential of (\ref{E:CMHamiltonianN}) is regular at the point $(0, 0)$. Moreover,
\begin{equation}\label{E:algebraB}
\gB := \mathsf{Im}\bigl(\Xi(\vec{\xi})\bigr) \subseteq \gD:= \CC\llbracket x_1, x_2\rrbracket[\partial_1, \partial_2],
\end{equation}
 hence we get the  embedding (\ref{E:IntegrabilityCM}).

\begin{definition}
The $\gB$--module $F:= \gD/(x_1, x_2)\gD \cong \CC[\partial_1, \partial_2]$ is called \emph{spectral module} of the algebra $\gB$.
\end{definition}

\noindent
Note that $F$ is  actually a \emph{right} $\gD$--module. However, since  the algebra $\gB$ is commutative, we shall view $F$ as a \emph{left} $\gB$--module,  having the natural right action $\circ$ in mind.

\begin{theorem}\label{T:spectralmodule}
The following results are true.
\begin{enumerate}
\item $F$ is a finitely generated Cohen--Macaulay $\gB$--module of rank one.
\item  For any character $\gB \stackrel{\chi}\lar \CC$ (i.e.~an algebra homomorphism), consider the vector space
\begin{equation}\label{E:solspace}
\mathsf{Sol}\bigl(\gB, \chi\bigr):= \Bigl\{f\in \CC\llbracket x_1, x_2\rrbracket \big| P\diamond f = \chi(P) f \; \mbox{\rm for all}\; P \in \gB\Bigr\}.
\end{equation}
Then there exists  a canonical isomorphism of vector spaces
\begin{equation}\label{E:solspaceisom}
F\big|_{\chi} := F \otimes_{\gB} \bigl(\gB/\mathrm{Ker}(\chi)\bigr) \cong \mathsf{Sol}\bigl(\gB, \chi\bigr)^\ast.
\end{equation}
assigning to a class $\overline{\partial_1^{p_1} \partial_2^{p_2}} \in F\big|_{\chi}$ the linear functional $f  \mapsto \left.\dfrac{1}{p_1! p_2!} \dfrac{\partial^{p_1+p_2}f}{\partial x_1^{p_1} \partial x_2^{p_2}}\right|_{(0, 0)}$ on the vector space $\mathsf{Sol}\bigl(\gB, \chi\bigr)$. In particular, $\dim_{\CC}\Bigl(\mathsf{Sol}\bigl(\gB, \chi\bigr)\Bigr)<\infty$  for any  $\chi$.
\end{enumerate}
\end{theorem}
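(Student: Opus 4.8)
The plan is to reduce the statement to the algebraic machinery already set up for quasi-invariants. The first observation is that $F = \gD/(x_1,x_2)\gD$ is, as a right $\gD$-module, free of rank one: indeed $\gD \cong \CC\llbracket x_1,x_2\rrbracket \otimes_\CC \CC[\partial_1,\partial_2]$ as a left $\CC\llbracket x_1, x_2\rrbracket$-module, so quotienting by the left ideal generated by $x_1, x_2$ leaves $\CC[\partial_1,\partial_2]$ with the induced right $\gD$-action. Restricting this right action along the embedding $\Xi(\vec\xi)\colon A = A(\Pi,\underline\mu) \to \gB \subset \gD$ from (\ref{E:mapXiTwist})--(\ref{E:algebraB}) turns $F$ into a $\gB$-module, equivalently an $A$-module. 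To prove part (1), I would use the same strategy indicated in the Introduction: combine Chalykh's description of the spectral module with the Kurke--Zheglov result, but here it is cleaner to argue directly. One identifies $F$, as an $A$-module, with the subspace $P(-u) \subset R$ where $u = \xi_1 z_1 + \xi_2 z_2$, via the Baker--Akhieser function: the pairing $w \mapsto (f \mapsto (w \circ \Xi(\vec\xi)(f)) \text{ evaluated at } (0,0))$ sends $F$ onto the dual of the span of the coefficients of $\Phi$, and Theorem \ref{T:BAfunction}, item (3)--(4), shows these coefficients span exactly $P(-u)$. Since $P(-u)$ is a projective (hence Cohen--Macaulay) $A$-module of rank one by Theorem \ref{T:descriptionPicard}, part (1) follows once the identification is shown to be $A$-linear, which is exactly the equivariance of (\ref{E:QuantumIntegrabCM2}) under $\varphi$-translation.

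For part (2), the key input is again the Baker--Akhieser eigenfunction property. Fix a character $\chi\colon \gB \to \CC$. The defining property (\ref{E:QuantumIntegrabCM2}), after the twist by $t(\vec\xi)$, says that for each $\vec{z} = (z_1,z_2) \in \CC^2$ the shifted Baker--Akhieser function $\Phi(x_1 - \xi_1, x_2 - \xi_2; z_1, z_2)$ is a simultaneous eigenfunction of all operators in $\gB$, with eigenvalue $f(z_1,z_2)$ for $P = \Xi(\vec\xi)(f)$. Since the algebra $\gB$ is (by the injectivity of $\Xi$) isomorphic to $A$, whose spectrum is the surface $X$ with bijective normalization $\AA^2 \to X$ (Proposition \ref{P:BasicsQuasiinv}), every character $\chi$ of $\gB$ is of the form $f \mapsto f(z_1^0, z_2^0)$ for a unique $(z_1^0, z_2^0) \in \CC^2$. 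Hence $\Phi(\,\cdot\,; z_1^0, z_2^0) \in \mathsf{Sol}(\gB,\chi)$, and I would show it spans this space: any formal solution has a leading term which must match the symbol of the eigenvalue equation, and the Baker--Akhieser function is characterized (up to scalar) among solutions by its asymptotics, by the axiomatics of \cite{ChalykhStyrkasVeselov}. This gives $\dim_\CC \mathsf{Sol}(\gB,\chi) = 1$, in particular finiteness.

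To obtain the explicit isomorphism (\ref{E:solspaceisom}), I would write down the pairing $F \times \mathsf{Sol}(\gB,\chi) \to \CC$ sending $(\overline{\partial_1^{p_1}\partial_2^{p_2}}, g)$ to $\frac{1}{p_1!p_2!}\partial_{x_1}^{p_1}\partial_{x_2}^{p_2} g|_{(0,0)}$. This is well-defined on $F|_\chi = F \otimes_\gB (\gB/\ker\chi)$: if $P \in \ker\chi$ then $\overline{\partial_1^{p_1}\partial_2^{p_2}} \circ P$ pairs with $g$ to give $\langle \overline{\partial_1^{p_1}\partial_2^{p_2}}, P \diamond g\rangle = \langle \cdots, \chi(P) g\rangle = 0$ — here one uses the adjunction between the right action $\circ$ on $\CC[\partial_1,\partial_2]$ and the left action $\diamond$ on $\CC\llbracket x_1, x_2\rrbracket$ via the residue pairing, i.e. $\langle w \circ P, g\rangle = \langle w, P \diamond g\rangle$, which is a formal computation with Taylor coefficients. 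Nondegeneracy in the first variable is the statement that a power series vanishing against all monomial functionals is zero; nondegeneracy in the second reduces to $\dim F|_\chi = \dim \mathsf{Sol}(\gB,\chi)$, and since $F$ is Cohen--Macaulay of rank one over the reduced algebra $\gB$, $\dim_\CC F|_\chi \geq 1$ always with equality for generic $\chi$; combined with part (1) and a semicontinuity argument this forces equality of dimensions for all $\chi$ where $\mathsf{Sol}$ is one-dimensional. The main obstacle I anticipate is the clean proof that $\mathsf{Sol}(\gB,\chi)$ is \emph{exactly} one-dimensional for \emph{every} $\chi$ (not merely generic ones): this requires invoking the superintegrability / uniqueness of the Baker--Akhieser function from \cite{ChalykhVeselov90, ChalykhStyrkasVeselov, Chalykh} at special (non-generic) spectral points, where a priori extra solutions supported at the singular locus of $X$ could appear; ruling these out is where the Cohen--Macaulayness of $F$ (rather than mere torsion-freeness) does the real work, via the fact that $F^\dagger = F$ so $F$ has no embedded components.
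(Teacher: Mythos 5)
Your proposal for part (2) overshoots the statement and, in doing so, runs into a step that does not work. The theorem only asserts that $\dim_\CC\mathsf{Sol}(\gB,\chi)<\infty$; it does \emph{not} claim one--dimensionality for every $\chi$. You try to prove $\dim_\CC\mathsf{Sol}(\gB,\chi)=1$ for all characters via uniqueness of the Baker--Akhieser function plus ``a semicontinuity argument'', but semicontinuity goes the wrong way here: $\dim_\CC F\big|_\chi$ is upper semicontinuous, so genericity gives no control at special points, and the fibre of a rank--one Cohen--Macaulay module at a singular point of $X$ can a priori jump. (One--dimensionality of $\mathsf{Sol}$ at \emph{every} point is in fact equivalent to local freeness of $F$, which is the content of the much later and harder Theorem \ref{T:DescriptSpectModule}; it is not available, and not needed, at this stage.) The paper's argument for (2) avoids all dimension counting: the Taylor--coefficient map $\Hom_\CC(F,\CC)\stackrel{\mathfrak{t}}\lar\CC\llbracket x_1,x_2\rrbracket$ is an isomorphism of left $\gD$--modules (this is exactly your adjunction $\langle w\circ P, g\rangle = \langle w, P\diamond g\rangle$, which you do have); restricting to $\gB$--linear functionals identifies $\Hom_{\gB}(F,\CC_\chi)$ with $\mathsf{Sol}(\gB,\chi)$, while tensor--hom adjunction identifies $\Hom_{\gB}(F,\CC_\chi)$ with $\bigl(F\big|_\chi\bigr)^\ast$. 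Since $F$ is finitely generated (part (1)), $F\big|_\chi$ is finite dimensional, so dualizing once more yields (\ref{E:solspaceisom}) and the finiteness of $\mathsf{Sol}(\gB,\chi)$ simultaneously. Your ``nondegeneracy in the second variable'' problem dissolves once the pairing is packaged as this chain of canonical isomorphisms.

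For part (1) there is a similar circularity. You propose to identify $F$ with $P(-u)$ and invoke Theorem \ref{T:descriptionPicard}, asserting that items (3)--(4) of Theorem \ref{T:BAfunction} show the derivatives of the Baker--Akhieser function ``span exactly $P(-u)$''. Item (4) only gives the inclusion $W\subseteq P(-u)$; the reverse inclusion is precisely the Hilbert--function estimate (\ref{E:HilbertFunctionEstimate}), whose proof (the divisibility claim for the top homogeneous component and the nonvanishing of the binomial minors of (\ref{E:matrixreloaded})) occupies most of the proof of Theorem \ref{T:DescriptSpectModule} and cannot be waved away. Moreover, an $A$--submodule of a Cohen--Macaulay module with finite--dimensional quotient need not itself be Cohen--Macaulay, so even the inclusion alone would not give (1). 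The paper simply quotes \cite[Theorem 3.1]{KurkeZheglov} and \cite[Corollary 3.1]{Chalykh} for this statement; if you want a self--contained argument you must carry out the full dimension count of Section \ref{S:SpectralModule}.
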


\begin{proof}
We get the first statement,  combining   \cite[Theorem 3.1]{KurkeZheglov} with \cite[Corollary 3.1]{Chalykh}; see also \cite[Theorem 2.1]{KOZ}.

\smallskip
\noindent
In the one--dimensional case, the isomorphism (\ref{E:solspaceisom}) is due to Mumford \cite[Section 2]{Mumford}. For partial differential operators, we  follow the exposition in \cite[Theorem 1.14]{BurbanZheglov}. The key point is the following  isomorphism of left $\gD$--modules:
\begin{equation}
\Hom_{\CC}\bigl(F, \CC\bigr) \stackrel{\mathfrak{t}}\lar \CC\llbracket x_1, x_2 \rrbracket, \quad l \mapsto \sum\limits_{p_1, p_2 = 0}^\infty \frac{1}{p_1! p_2!} l\bigl(\partial_1^{p_1} \partial_2^{p_2}\bigr) x_1^{p_1} x_2^{p_2},
\end{equation}
where we take the right action $\circ$ on $F$ and the usual right action $\diamond$ on $\CC\llbracket x_1, x_2\rrbracket$ of the algebra $\gD$.
Let $\gB \stackrel{\chi}\lar \CC$ be a character, then $\CC = \CC_\chi:= \gB/\ker(\chi)$ is a left $\gB$--module. Next, we have a $\gB$--linear map
\begin{equation}
\mathfrak{b}: \Hom_{\gB}(F, \CC_\chi) \stackrel{\mathfrak{i}}\lar \Hom_{\CC}(F, \CC) \stackrel{\mathfrak{t}}\lar \CC\llbracket x_1, x_2 \rrbracket,
\end{equation}
where $\mathfrak{i}$ is the forgetful map. The image of $\mathfrak{i}$ consists of those $\CC$--linear functionals, which are also $\gB$--linear, i.e.
\begin{equation*}
\mathsf{Im}(\mathfrak{i}) = \bigl\{l \in \Hom_{\CC}(F, \CC) \; \big| \;  l(P \diamond  \,-\,) = \chi(P)\cdot l(\,-\,)\; \mbox{for all} \; P \in \gB\bigr\}.
\end{equation*}
This implies that $\mathsf{Im}(\mathfrak{b}) = \mathsf{Sol}(\gB, \chi)$. By adjunction, we have  a canonical  isomorphism of $\gB$--modules:
$
\Hom_{\gB}(F, \CC_\chi) \cong \Hom_{\CC}\bigl(F \otimes_{\gB}  (\gB/\ker(\chi)), \CC\bigr).
$
Taking duals,  we get an isomorphism of vector spaces
$
 \mathsf{Sol}(\gB, \chi)^\ast \lar \bigl(F \otimes_{\gB}  (\gB/\ker(\chi))\bigr)^{\ast\ast} \cong F\big|_{\chi}.
$
It remains to observe that $F\big|_{\chi} \stackrel{(\mathfrak{b}^\ast)^{-1}}\lar  \mathsf{Sol}(\gB, \chi)^\ast$ is  the map from the statement of the theorem.
\end{proof}

\begin{remark} By Hilbert's Nullstellensatz, the characters $\gB \lar \CC$ stand in bijection with the points of the spectral surface $X$ of the Calogero--Moser system $\gB$ (i.e.~an affine surface, whose coordinate ring is isomorphic to $A\bigl(\Pi, \underline{\mu}\bigr) \cong \gB$). The finitely generated $\gB$--module $F$ determines a coherent sheaf of $X$, so the $\CC$--vector space
$F\big|_\chi$ is the fiber of $F$ over the point of $X$ corresponding to the character $\gB \stackrel{\chi}\lar \CC$.
\end{remark}

\subsection{Spectral module of a two--dimensional rational Calogero--Moser system}
In their recent paper  \cite[Section 8]{FeiginJohnston}, Feigin and Johnston raised a question about an explicit description of the spectral module $F$ of the algebra $\gB$ given by (\ref{E:algebraB}). In this subsection, we give  a solution of  this problem. To do this, we need a more concrete description  of the algebra homomorphisms $\Xi$ and $\Xi(\vec{\xi})$; see (\ref{E:QuantumIntegrabCM}) and (\ref{E:mapXiTwist}).

\begin{lemma}\label{L:BAshifted} Consider the following variation of the  function (\ref{E:Berest}):
\begin{equation}\label{E:BerestShifted}
\Psi(\vec{x}; \vec{z}; \vec{\xi}) := \frac{1}{2^\mu \cdot \mu!} \Bigl(H_{(x_1, x_2)} -z_1^2 - z_2^2\Bigr)^\mu \diamond \bigl(\delta(x_1 -\xi_1, x_2-\xi_2) \cdot \exp(x_1 z_1 + x_2 z_2)\bigr).
\end{equation}
Then the following results are true.
\begin{enumerate}
\item For any quasi--invariant polynomial $f \in A$ we have:
\begin{equation}\label{E:BAEigenfunctionModified2}
\bigl(\Xi(\vec{\xi})(f)\bigr)_{(x_1, x_2)} \diamond \Psi(\vec{x}; \vec{z}; \vec{\xi})  = f(z_1, z_2) \cdot \Psi(\vec{x}; \vec{z}; \vec{\xi}).
\end{equation}
\item For any $p_1, p_2 \in \NN_0$ we have:
\begin{equation}\label{E:elementsOfBasisW}
w_{(p_1, p_2)}(z_1, z_2)  := \left.\frac{\partial^{p_1+p_2} \Psi}{\partial x_1^{p_1} \partial x_2^{p_2}} \right|_{(x_1, x_2)= (0, 0)} \in \, \CC[z_1, z_2].
\end{equation}
Here, we view $w_{(p_1, p_2)}$ as a function of $\vec{z}$ depending on the parameter $\vec\xi \in \CC^2$.
Moreover, we have the following expansion:
\begin{equation}\label{E:basisvectorsW}
w_{(p_1, p_2)}(z_1, z_2) = z_1^{p_1} z_2^{p_2} \cdot \delta(z_1, z_2) + \; \mbox{\rm lower order terms}.\;
\end{equation}
\item For any $p_1, p_2 \in \NN_0$, the function $\exp\bigl(-\rho\rho_0 \cos(\varphi - \varphi_0)\bigr) \cdot w_{(p_1, p_2)}(\rho, \varphi)$ is quasi--invariant with respect to the datum $(\Pi, \underline{\mu})$, i.e.
\begin{equation}\label{E:BasisSpaceW}
\Bigl(\exp\bigl(-\rho\rho_0 \cos(\varphi - \varphi_0)\bigr) \cdot w_{(p_1, p_2)}(\rho, \varphi)\Bigr)^{(2l-1)}_\alpha = 0
\; \mbox{\rm for all}\;  \alpha \in \Pi
\;  \mbox{\rm and} \;
1 \le l \le \mu_\alpha,
\end{equation}
where as usual, $(z_1, z_2) = \bigl(\rho \cos(\varphi), \rho \sin(\varphi)\bigr)$ and $(\xi_1, \xi_2) = \bigl(\rho_0 \cos(\varphi_0), \rho_0 \sin(\varphi_0)\bigr)$.
\end{enumerate}
\end{lemma}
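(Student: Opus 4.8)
The plan is to reduce all three assertions to the properties of the unshifted Baker--Akhieser function $\Phi$ collected in Theorem~\ref{T:BAfunction}, by means of a single translation identity. \textbf{Step 1 (translation identity).} First I would prove that
\[
\Psi(\vec{x};\vec{z};\vec{\xi}) \;=\; \exp(\xi_1 z_1 + \xi_2 z_2)\cdot \Phi(x_1-\xi_1,\, x_2-\xi_2;\, z_1,z_2).
\]
To see this, substitute $y_i = x_i-\xi_i$ in Berest's formula (\ref{E:Berest}) for $\Phi$. Since $\partial_{y_i}=\partial_{x_i}$, the conjugate of the (unperturbed) operator $H=H(\Pi,\underline{\mu})$ of (\ref{E:CMHamiltonianNonPerturb}) by the translation $g(\vec{y})\mapsto g(\vec{x}-\vec{\xi})$ is exactly the perturbed operator $t(\vec{\xi})(H)$ of (\ref{E:CMHamiltonianN}): for any $g$ one has $\bigl(H_{(\vec{y})}\,g\bigr)\big|_{\vec{y}=\vec{x}-\vec{\xi}} = \bigl(t(\vec{\xi})(H)\bigr)_{(\vec{x})}\diamond g(\vec{x}-\vec{\xi})$. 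Applying this with $g(\vec{y})=\delta(\vec{y})\exp(y_1 z_1+y_2 z_2)$, writing $\exp\bigl((x_1-\xi_1)z_1+(x_2-\xi_2)z_2\bigr)=\exp(-\xi_1 z_1-\xi_2 z_2)\exp(x_1 z_1+x_2 z_2)$, and pulling the scalar $\exp(-\xi_1 z_1-\xi_2 z_2)$ (killed by all $\vec{x}$-derivatives) out of the operator, one reads off $\Phi(\vec{x}-\vec{\xi};\vec{z})=\exp(-\xi_1 z_1-\xi_2 z_2)\,\Psi(\vec{x};\vec{z};\vec{\xi})$, which is the claim.

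\textbf{Step 2 (parts (1) and (2)).} For (\ref{E:BAEigenfunctionModified2}), start from the eigenfunction identity (\ref{E:QuantumIntegrabCM2}), $\bigl(\Xi(f)\bigr)_{(\vec{y})}\diamond\Phi(\vec{y};\vec{z})=f(\vec{z})\Phi(\vec{y};\vec{z})$, and substitute $\vec{y}=\vec{x}-\vec{\xi}$; by the same conjugation fact the operator becomes $\bigl(t(\vec{\xi})(\Xi(f))\bigr)_{(\vec{x})}=\bigl(\Xi(\vec{\xi})(f)\bigr)_{(\vec{x})}$ by the definition (\ref{E:mapXiTwist}) of $\Xi(\vec{\xi})$. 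Multiplying through by the ($\vec{x}$-scalar) $\exp(\xi_1 z_1+\xi_2 z_2)$ and invoking Step~1 yields (\ref{E:BAEigenfunctionModified2}). For part (2), combine Step~1 with the expansion (\ref{E:expBAf}): $\Psi(\vec{x};\vec{z};\vec{\xi})=P(\vec{x},\vec{z})\exp(x_1 z_1+x_2 z_2)$, where $P(\vec{x},\vec{z})=\delta(\vec{z})+\sum_{i_1+i_2<\mu}c_{i_1,i_2}(\vec{x}-\vec{\xi})z_1^{i_1}z_2^{i_2}$; since the $c_{i_1,i_2}$ have poles only along the lines $l_\alpha=0$ and $l_\alpha(\vec{\xi})\neq 0$, each $c_{i_1,i_2}(\vec{x}-\vec{\xi})$ is regular at $\vec{x}=(0,0)$. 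Differentiating by $\partial_{x_1}^{p_1}\partial_{x_2}^{p_2}$ and evaluating at $(0,0)$ via the Leibniz rule produces a finite sum of products of numbers (the $\vec{x}$-derivatives of $P$ at the origin, packaged into polynomials in $\vec{z}$ of degree $\le\mu$) with monomials in $\vec{z}$ coming from the derivatives of $\exp(x_1 z_1+x_2 z_2)$; hence $w_{(p_1,p_2)}\in\CC[z_1,z_2]$. Its unique summand of top degree $p_1+p_2+\mu$ is $z_1^{p_1}z_2^{p_2}\delta(\vec{z})$ (all derivatives hitting the exponential), while every other summand involves an $\vec{x}$-derivative of $P$, which annihilates $\delta(\vec{z})$ and strictly lowers the $\vec{z}$-degree; this is (\ref{E:basisvectorsW}).

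\textbf{Step 3 (part (3)).} From $w_{(p_1,p_2)}(\vec{z})=\partial_{x_1}^{p_1}\partial_{x_2}^{p_2}\Psi\big|_{\vec{x}=(0,0)}$, Step~1, and the $\vec{x}$-independence of $\exp(\xi_1 z_1+\xi_2 z_2)$, one gets
\[
\exp(-\xi_1 z_1-\xi_2 z_2)\, w_{(p_1,p_2)}(\vec{z}) \;=\; \bigl(\partial_{y_1}^{p_1}\partial_{y_2}^{p_2}\Phi(\vec{y};\vec{z})\bigr)\big|_{\vec{y}=(-\xi_1,-\xi_2)},
\]
which is legitimate because $\Phi$ and its $\vec{y}$-derivatives are regular at $\vec{y}=-\vec{\xi}$ (the poles of $\Phi$ lie on $l_\alpha=0$, and $l_\alpha(-\vec{\xi})=-l_\alpha(\vec{\xi})\neq 0$). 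In polar coordinates the left-hand side equals $\exp(-\rho\rho_0\cos(\varphi-\varphi_0))\,w_{(p_1,p_2)}(\rho,\varphi)$. Now $\partial_{y_i}$ and the evaluation $\vec{y}\mapsto-\vec{\xi}$ act on the $\vec{x}$-variables, hence commute with the operator $h\mapsto h^{(2l-1)}_\alpha=\partial_\varphi^{2l-1}h\big|_{\varphi=\alpha}$, which acts on the $\vec{z}$-variables; applying the latter to the displayed identity and using the $\vec{z}$-quasi-invariance (\ref{E:BAquasiinv}) of $\Phi$, namely $\Phi(\vec{y};\rho)^{(2l-1)}_\alpha=0$ for $\alpha\in\Pi$ and $1\le l\le\mu_\alpha$, gives $0$, which is (\ref{E:BasisSpaceW}).

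\textbf{Expected main difficulty.} The only genuinely delicate point is Step~1: tracking carefully how $H$ transforms under the translation $\vec{x}\mapsto\vec{x}-\vec{\xi}$, so that the unperturbed operator of (\ref{E:CMHamiltonianNonPerturb}) becomes the perturbed one of (\ref{E:CMHamiltonianN}), and noticing that the scalar factor $\exp(-\xi_1 z_1-\xi_2 z_2)$ commutes past the differential operator. Alongside this are the two regularity observations (regularity of $c_{i_1,i_2}(\vec{x}-\vec{\xi})$ at the origin, and of $\Phi$ at $\vec{y}=-\vec{\xi}$), which are exactly where the standing hypotheses $l_\alpha(\vec{\xi})\neq 0$ are used. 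Once Step~1 is in place, parts (1)--(3) are formal consequences of Theorem~\ref{T:BAfunction}.
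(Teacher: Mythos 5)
Your proof is correct and follows essentially the same route as the paper: the paper's entire argument consists of the translation identity $\Phi(\vec{x}-\vec{\xi};\vec{z})=\Psi(\vec{x};\vec{z};\vec{\xi})\cdot\exp(-\xi_1z_1-\xi_2z_2)$ (your Step~1), after which all three parts are declared straightforward consequences of Theorem~\ref{T:BAfunction}. You merely spell out those consequences (and the regularity checks at $\vec{x}=0$) in more detail than the paper does.
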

\begin{proof} Observe that we have the following equality:
$$
\Phi(\vec{x} - \vec{\xi}, \vec{z}) = \Psi(\vec{x}; \vec{z}; \vec{\xi}) \cdot \exp(-\xi_1 z_1 -  \xi_2 z_2) = \Psi(\vec{x}; \vec{z}; \vec{\xi}) \cdot
\exp\bigl(-\rho\rho_0 \cos(\varphi - \varphi_0)\bigr).
$$
Hence, all statements of Lemma \ref{L:BAshifted} are straightforward  consequences of the corresponding results from  Theorem \ref{T:BAfunction}.
\end{proof}

Let $\gE := \CC\llbracket x_1, x_2\rrbracket\llbrace \partial_1^{-1}\rrbrace \llbrace \partial_2^{-1}\rrbrace$ be the algebra of partial \emph{pseudo--differential operators}; see for instance \cite{Parshin} for a  precise definition and main ring--theoretic properties. This algebra admits the following convenient characterization.

\begin{lemma}\label{E:DefPseudoDiffOp}
Let $\mathfrak{M} := \CC\llbracket x_1, x_2\rrbracket \llbrace z_1^{-1}\rrbrace \llbrace z_2^{-1}\rrbrace \cdot \exp(x_1 z_1 + x_2 z_2)$
be the so--called \emph{Baker--Akhieser module}. Then we have an injective algebra homomorphism
\begin{equation}\label{E:pseudo1}
\gE \stackrel{\mathfrak{e}}\lar \End_{\CC\llbrace z_1^{-1}\rrbrace \llbrace z_2^{-1}\rrbrace}(\mathfrak{M})
\end{equation}
mapping $\partial_j^\pm \in \gE$ to $z_j^\pm \in \End_{\CC\llbrace z_1^{-1}\rrbrace \llbrace z_2^{-1}\rrbrace}(\mathfrak{M})$ for $j = 1, 2$.
Moreover, for any element $ Q \in \mathfrak{M}$, there exists a uniquely determined element $S \in \gE$ such that $$
Q = S \diamond \exp(x_1 z_1 + x_2 z_2) :=  \bigl(\mathfrak{e}(S)\bigr)\bigl(\exp(x_1 z_1 + x_2 z_2)\bigr).$$
\end{lemma}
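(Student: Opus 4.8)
The plan is to realise $\mathfrak{M}$ as a free module of rank one over $\mathcal{R}:=\CC\llbracket x_1,x_2\rrbracket\llbrace z_1^{-1}\rrbrace\llbrace z_2^{-1}\rrbrace$ with basis $\mathbf{e}:=\exp(x_1z_1+x_2z_2)$, and to construct $\mathfrak{e}$ through the \emph{total symbol} map, so that multiplicativity and injectivity become automatic and the substance reduces to identifying the action on generators and the expansion $Q=S\diamond\mathbf{e}$.

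First I would record the symbol bijection. By the very definition of the ring of partial pseudo--differential operators, every element of $\gE$ is an iterated Laurent series $S=\sum_{\mathbf{k}\in\ZZ^2}s_{\mathbf{k}}(x_1,x_2)\,\partial_1^{k_1}\partial_2^{k_2}$ with $s_{\mathbf{k}}\in\CC\llbracket x_1,x_2\rrbracket$, the support of $\mathbf{k}$ being subject to exactly the condition that defines $\mathcal{R}$ once $z_j^{-1}$ is substituted for $\partial_j^{-1}$. Hence
\[
\sigma\colon\gE\lar\mathcal{R},\qquad \sum_{\mathbf{k}}s_{\mathbf{k}}(x)\,\partial_1^{k_1}\partial_2^{k_2}\ \longmapsto\ \sum_{\mathbf{k}}s_{\mathbf{k}}(x)\,z_1^{k_1}z_2^{k_2}
\]
is a $\CC\llbracket x_1,x_2\rrbracket$--linear bijection with $\sigma(1)=1$. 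Together with the $\mathcal{R}$--module isomorphism $\mathcal{R}\xrightarrow{\ \sim\ }\mathfrak{M}$, $\phi\mapsto\phi\,\mathbf{e}$, it lets us \emph{define}
\[
\mathfrak{e}\colon\gE\lar\End_{\CC\llbrace z_1^{-1}\rrbrace\llbrace z_2^{-1}\rrbrace}(\mathfrak{M}),\qquad \mathfrak{e}(S)\bigl(\phi\,\mathbf{e}\bigr):=\sigma\!\bigl(S\cdot\sigma^{-1}(\phi)\bigr)\,\mathbf{e},
\]
which is nothing but the left regular representation of $\gE$ transported along the bijection $\gE\xrightarrow{\sim}\mathfrak{M}$, $S\mapsto\sigma(S)\mathbf{e}$. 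Being a transported regular representation, $\mathfrak{e}$ is automatically an algebra homomorphism, and it is injective: if $\mathfrak{e}(S)=0$ then $0=\mathfrak{e}(S)(\mathbf{e})=\sigma(S)\,\mathbf{e}$, so $\sigma(S)=0$ and $S=0$ by uniqueness of the coefficients in an iterated Laurent series.

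Next I would check that $\mathfrak{e}$ behaves on generators as asserted. Computing $\sigma(\partial_jT)$ and $\sigma(a(x)T)$ from the multiplication of $\gE$ — moving $\partial_j$ past the coefficients of $T$ by the Leibniz rule $\partial_j^{k}a=\sum_{l\ge0}\binom{k}{l}(\partial_{x_j}^{\,l}a)\,\partial_j^{\,k-l}$ — yields $\mathfrak{e}(\partial_j)(\phi\mathbf{e})=(\partial_{x_j}\phi+z_j\phi)\mathbf{e}$ and $\mathfrak{e}(a(x))(\phi\mathbf{e})=(a\phi)\mathbf{e}$; so $\mathfrak{e}$ restricts on $\gD\subset\gE$ to the tautological action of differential operators on $\mathfrak{M}$, the operator $\mathfrak{e}(\partial_j)=\partial_{x_j}+z_j\cdot$ is invertible on $\mathfrak{M}$ with $\mathfrak{e}(\partial_j^{-1})=\sum_{l\ge0}(-z_j^{-1}\partial_{x_j})^{l}z_j^{-1}$ (which raises the $z_j^{-1}$--order), and on the cyclic vector $\partial_j^{\pm1}\diamond\mathbf{e}=z_j^{\pm1}\mathbf{e}$, which is the precise content of ``$\partial_j^{\pm}\mapsto z_j^{\pm}$''. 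Since differentiation in $x$ treats $z_1,z_2$ as scalars, both $\mathfrak{e}(\partial_j)$ and $\mathfrak{e}(x_i)$ commute with multiplication by every element of $\mathcal{S}:=\CC\llbrace z_1^{-1}\rrbrace\llbrace z_2^{-1}\rrbrace$, whence $\mathfrak{e}(\gE)\subseteq\End_{\mathcal{S}}(\mathfrak{M})$ after passing to the closure. Finally the ``moreover'' drops out: by definition $S\diamond\mathbf{e}=\mathfrak{e}(S)(\mathbf{e})=\sigma(S)\,\mathbf{e}$, so $S\mapsto S\diamond\mathbf{e}$ is the composite of the bijection $\sigma$ with $\mathcal{R}\xrightarrow{\sim}\mathfrak{M}$; given $Q=\phi\,\mathbf{e}$ one expands $\phi=\sum_{\mathbf{k}}s_{\mathbf{k}}(x)z^{\mathbf{k}}$, and $S:=\sum_{\mathbf{k}}s_{\mathbf{k}}(x)\partial^{\mathbf{k}}$ is the unique element of $\gE$ with $Q=S\diamond\mathbf{e}$.

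The step I expect to be the true obstacle is the one buried in ``$\gE$ is the ring of iterated Laurent series with the Parshin multiplication'': that the infinitely many Leibniz corrections $\partial_j^{-1}a(x)=\sum_{l\ge0}(-1)^{l}(\partial_{x_j}^{\,l}a)\,\partial_j^{-1-l}$ really sum to a well--defined element of $\gE$, that the resulting product is associative, and hence that $\sigma$ and the transported action make sense on all of $\gE$ rather than just on $\gD$. If this is imported from the cited theory of partial pseudo--differential operators, the argument above is essentially complete; otherwise one must install compatible gradings on $\gE$ and on $\End_{\mathcal{S}}(\mathfrak{M})$ and run an interchange--of--summation estimate guaranteeing the convergence of the series defining $\mathfrak{e}(S)$.
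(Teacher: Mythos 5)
Your proof is correct and is essentially the argument the paper intends: the paper gives no proof of this lemma, deferring to \cite{Parshin} for the ring structure of $\gE$ and recording only the symbol recipe of Remark \ref{R:SatoOperator}, which is exactly your bijection $S \mapsto \sigma(S)\cdot\exp(x_1z_1+x_2z_2)$ transporting the left regular representation. Your reading of ``$\partial_j^{\pm}\mapsto z_j^{\pm}$'' as evaluation on the cyclic vector is the right one (a literal reading as multiplication operators would contradict $[\partial_j,x_j]=1$), and you correctly isolate the only genuine input --- well--definedness and associativity of the Leibniz product on $\gE$ --- which both you and the paper import from the cited theory of partial pseudo--differential operators.
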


\begin{remark}\label{R:SatoOperator} The recipe to construct the operator $S \in \gE$ corresponding to an element $Q \in \mathfrak{M}$ is as follows. Let
$Q(x_1, x_2; z_1, z_2) = T(x_1, x_2; z_1, z_2) \cdot \exp(x_1 z_1 + x_2 z_2)$, where
\begin{equation}\label{E:operatorT}
T(x_1, x_2; z_1, z_2) = \sum\limits_{p_1, p_2} a_{p_1, p_2}(x_1, x_2) z^{p_1} z_2^{p_2} \in \CC\llbracket x_1, x_2\rrbracket \llbrace z_1^{-1}\rrbrace \llbrace z_2^{-1}\rrbrace.
\end{equation}
Then we have:
\begin{equation}\label{E:pseudo2}
 S = \sum\limits_{p_1, p_2} a_{p_1, p_2}(x_1, x_2) \partial_1^{p_1} \partial_2^{p_2} \in \CC\llbracket x_1, x_2\rrbracket\llbrace \partial_1^{-1}\rrbrace \llbrace \partial_2^{-1}\rrbrace.
\end{equation}
Here, both sums (\ref{E:pseudo1}) and (\ref{E:pseudo2}) are taken in the appropriate sense.
\end{remark}

\begin{definition} Let $\Psi(x_1, x_2; z_1, z_2; \vec{\xi}) \in \mathfrak{M}$ be the Baker--Akhieser function of $\gB$ given by (\ref{E:BerestShifted}).
 Then the corresponding pseudo--differential operator  $S \in \gE$, defined by the recipe (\ref{E:pseudo2}), is called \emph{Sato operator} of the algebra $\gB$.
\end{definition}

\begin{lemma}\label{L:Sato}  For any  quasi--invariant polynomial $f \in A$ we have:
\begin{equation}\label{E:conjSato}
\bigl(\Xi(\vec{\xi})\bigr)(f) = S \cdot f(\partial_1, \partial_2) \cdot S^{-1},
\end{equation}
where both sides of (\ref{E:conjSato}) are viewed as elements of the algebra $\gE$.
\end{lemma}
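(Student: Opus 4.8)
The plan is to deduce the dressing formula \eqref{E:conjSato} from the eigenfunction property of the shifted Baker--Akhieser function $\Psi$ recorded in Lemma~\ref{L:BAshifted}, combined with the fact from Lemma~\ref{E:DefPseudoDiffOp} that $\mathbf{e} := \exp(x_1 z_1 + x_2 z_2)$ is a cyclic vector of the Baker--Akhieser module $\mathfrak{M}$ over $\gE$. First I would collect three facts. (i) By definition of the Sato operator, $\Psi = S \diamond \mathbf{e} = \bigl(\mathfrak{e}(S)\bigr)(\mathbf{e})$, and $S$ is a unit of $\gE$: indeed, by \eqref{E:basisvectorsW} the operator $S$ is a differential operator of order $\mu$ with principal symbol $\delta(z_1, z_2) = \prod_{\alpha \in \Pi} l_\alpha(z_1, z_2)^{\mu_\alpha}$, and the Sato operator of a Baker--Akhieser function is always invertible in the ring of pseudo--differential operators (see \cite{Parshin, Zheglov, KOZ}). (ii) $\Xi(\vec\xi)(f) \in \gB \subseteq \gD \subseteq \gE$ by \eqref{E:algebraB}, the coefficients of \eqref{E:CMHamiltonianN} being regular at the origin because $l_\alpha(\vec\xi) \ne 0$ for every $\alpha$. (iii) Since $\mathfrak{e}\colon \gE \to \End_{\CC\llbrace z_1^{-1}\rrbrace\llbrace z_2^{-1}\rrbrace}(\mathfrak{M})$ is a $\CC\llbrace z_1^{-1}\rrbrace\llbrace z_2^{-1}\rrbrace$--linear algebra homomorphism sending $\partial_j$ to multiplication by $z_j$, we have $f(\partial_1, \partial_2) \diamond \mathbf{e} = f(z_1, z_2)\,\mathbf{e}$ and $S^{-1} \diamond \Psi = \mathbf{e}$.

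Granting these, the computation is immediate. Using $S^{-1} \diamond \Psi = \mathbf{e}$, then $f(\partial_1, \partial_2) \diamond \mathbf{e} = f(z_1, z_2)\,\mathbf{e}$, then the $\CC\llbrace z_1^{-1}\rrbrace\llbrace z_2^{-1}\rrbrace$--linearity of $\mathfrak{e}(S)$, we obtain
\[
\bigl(S \cdot f(\partial_1, \partial_2) \cdot S^{-1}\bigr) \diamond \Psi
= S \diamond \bigl(f(z_1, z_2)\,\mathbf{e}\bigr)
= f(z_1, z_2)\,\bigl(S \diamond \mathbf{e}\bigr)
= f(z_1, z_2)\,\Psi .
\]
On the other hand, Lemma~\ref{L:BAshifted}(1), i.e.\ equation \eqref{E:BAEigenfunctionModified2}, gives $\bigl(\Xi(\vec\xi)(f)\bigr) \diamond \Psi = f(z_1, z_2)\,\Psi$. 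Hence $\Xi(\vec\xi)(f)$ and $S f(\partial_1, \partial_2) S^{-1}$ act identically on $\Psi$. Writing $\Psi = S \diamond \mathbf{e}$, this reads $\bigl(\Xi(\vec\xi)(f)\,S\bigr) \diamond \mathbf{e} = \bigl(S f(\partial_1, \partial_2)\bigr) \diamond \mathbf{e}$, and the uniqueness clause of Lemma~\ref{E:DefPseudoDiffOp} --- that is, injectivity of the map $\gE \to \mathfrak{M}$, $P \mapsto P \diamond \mathbf{e}$ --- forces $\Xi(\vec\xi)(f)\,S = S f(\partial_1, \partial_2)$ in $\gE$. Multiplying by $S^{-1}$ on the right yields \eqref{E:conjSato}.

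The only genuinely delicate point, and the one I would spell out carefully, is the bookkeeping inside the ring $\gE$ of formal pseudo--differential operators: that $\Psi$ actually lies in $\mathfrak{M}$ (its coefficients are rational functions of $\vec{x}$ whose denominators are powers of the $l_\alpha(\vec{x} - \vec\xi)$, hence regular at $\vec{x} = \vec{0}$ and expandable in $\CC\llbracket x_1, x_2\rrbracket$), that $S$ is invertible in $\gE$, and that the associativity and $\CC\llbrace z_1^{-1}\rrbrace\llbrace z_2^{-1}\rrbrace$--linearity used in the displayed chain of equalities are legitimate for infinite-order operators acting on $\mathfrak{M}$. All of this is already packaged in Lemma~\ref{E:DefPseudoDiffOp} together with the standard invertibility of Sato operators; once it is in place, \eqref{E:conjSato} is a purely formal consequence and no further analytic input is required.
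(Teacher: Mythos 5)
Your argument is correct and is essentially the paper's own proof: both rest on the expansion of $\Psi\cdot\exp(-x_1z_1-x_2z_2)$ showing that $S$ is a differential operator with constant highest symbol $\delta(\partial_1,\partial_2)$ (hence a unit in $\gE$ by Parshin), the eigenfunction identity \eqref{E:BAEigenfunctionModified2}, the $\CC[z_1,z_2]$--linearity of $\mathfrak{e}(S)$, and the injectivity of $P\mapsto P\diamond\exp(x_1z_1+x_2z_2)$ from Lemma \ref{E:DefPseudoDiffOp}. The only cosmetic difference is that the paper cancels $S^{-1}$ at the end rather than conjugating first, and justifies regularity of the coefficients at the origin via the expansion of $\Theta$ rather than \eqref{E:basisvectorsW}.
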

\begin{proof}
Let $\Theta = \Theta(x_1, x_2; z_1, z_2; \vec{\xi}) := \Psi(x_1, x_2; z_1, z_2; \vec{\xi}) \cdot \exp(-x_1 z_1 - x_2 z_2)$. Then we have an expansion
$
\Theta = \delta(z_1, z_2) + \sum\limits_{i_1 + i_2 < \mu} b_{i_1, i_2}(x_1, x_2) z_1^{i_1} z_2^{i_2}
$
for some coefficients $b_{i_1, i_2} \in \CC\llbracket x_1, x_2\rrbracket$. In particular, the Sato operator of the algebra  $\gB$ belongs to $\gD$:
\begin{equation}
S = \delta(\partial_1, \partial_2) + \sum\limits_{i_1 + i_2 < \mu} b_{i_1, i_2}(x_1, x_2) \partial_1^{i_1} \partial_2^{i_2}.
\end{equation}
Since the  highest symbol $\delta(\partial_1, \partial_2)$ of $S$ is a partial differential operator with constant coefficients, $S$ is a unit in the algebra $\gE$; see for instance \cite[Proposition 1]{Parshin}.

\smallskip
\noindent
By definition, we have: $\Psi(x_1, x_2; z_1, z_2; \vec{\xi}) = S \diamond \exp(x_1 z_1 + x_2 z_2)$. Hence, the equality (\ref{E:BAEigenfunctionModified2}) can be rewritten in the form:
$$
\Bigl(\bigl(\bigl(\Xi(\vec{\xi})\bigr)(f)\bigr) \cdot S\Bigr) \diamond \exp(x_1 z_1 + x_2 z_2) = f(z_1, z_2) \cdot \bigl(S \diamond \exp(x_1 z_1 + x_2 z_2)\bigr).
$$
Since $\mathfrak{e}(S)$ is a $\CC[z_1, z_2]$--linear endomorphism of the Baker--Akhieser module $\mathfrak{M}$, we have:
$$
f(z_1, z_2) \cdot \bigl(S \diamond \exp(x_1 z_1 + x_2 z_2)\bigr) = S \diamond \bigl(f(z_1, z_2) \cdot \exp(x_1 z_1 + x_2 z_2)\bigr) =
$$
$$
S \diamond \bigl(f(\partial_1, \partial_2) \diamond \exp(x_1 z_1 + x_2 z_2)\bigr) = \bigl(S \cdot f(\partial_1, \partial_2)\bigr) \diamond \exp(x_1 z_1 + x_2 z_2).
$$
Summing up, $\Bigl(\bigl(\bigl(\Xi(\vec{\xi})\bigr)(f)\bigr) \cdot S - S \cdot f(\partial_1, \partial_2)\Bigr) \diamond \exp(x_1 z_1 + x_2 z_2) = 0$, implying the result.
\end{proof}

\begin{remark}
Using the identification $z_j = \partial_j$ for $j = 1,2$, we can view the algebra of quasi--invariants $A \subset R = \CC[z_1, z_2]$ as a subalgebra
of the algebra of partial differential operators with constant coefficients  $\CC[\partial_1, \partial_2]$. If $S$ the Sato operator of $\gB$ given by
(\ref{E:pseudo2}), then we have:
$
\gB = S \cdot A \cdot S^{-1}.
$
\end{remark}

\begin{proposition}\label{P:spectralmodule}
Consider the vector space
\begin{equation}\label{E:SpaceW}
W := \bigl\langle w_{p_1, p_2} \big| (p_1, p_2) \in \NN_0 \times \NN_0\bigr\rangle \subset R = \CC[z_1, z_2],
\end{equation}
where $w_{p_1, p_2}$ are the elements given by (\ref{E:elementsOfBasisW}). Then  $W$ is an $A$--module and  the map
\begin{equation}\label{E:isospecmodule}
F := \CC[\partial_1, \partial_2] \xrightarrow{-\circ S} W,  \quad f(\partial_1, \partial_2) \mapsto f(\partial_1, \partial_2) \circ S
\end{equation}
is a $(\gB-A)$--equivariant isomorphism, i.e.~the following diagram
$$
\xymatrix{
F \ar[rr]^-{-\circ S} \ar[d]_{-\,\circ \Xi(\vec\xi)(f)} & & W \ar[d]^-{-\,\cdot f} \\
F \ar[rr]^-{-\circ S} & & W
}
$$
is commutative for any $f \in A$.

\smallskip
\noindent
For any $j \in \NN_0$ put $W_j := \bigl\{w\in W \, \big|\, \deg(w) \le j\bigr\}$. Then  we have the following formula for the Hilbert function $H_W$ of the filtered $A$--module $W$:
\begin{equation}\label{E:HilbertFunction}
H_W(\mu+k):= \dim_{\CC}\bigl(W_{\mu+k}\bigr) = \frac{(k+1)(k+2)}{2} \quad \mbox{\rm for} \; k \in \NN_0.
\end{equation}
\end{proposition}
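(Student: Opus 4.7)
The argument splits into three independent steps. First I would unpack the right $\gD$--action $\circ$ on $F = \gD/(x_1, x_2)\gD \cong \CC[\partial_1, \partial_2]$: right multiplication by $\partial_j$ is ordinary polynomial multiplication, while the relation $P \cdot x_j = x_j P + [P, x_j] \equiv \partial P/\partial\partial_j \pmod{(x_1, x_2)\gD}$ identifies right multiplication by $x_j$ with the derivation $\partial/\partial \partial_j$. Taylor expansion at the origin then shows that right multiplication by $f(x_1, x_2) \in \CC\llbracket x_1, x_2\rrbracket$ equals the finite--sum operator $f(\partial/\partial\partial_1, \partial/\partial\partial_2)$ applied to any polynomial. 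Using $S = \sum a_{i_1, i_2}(x_1, x_2)\partial_1^{i_1}\partial_2^{i_2}$ and identifying $z_j \leftrightarrow \partial_j$, both $z_1^{p_1} z_2^{p_2} \circ S$ and $w_{p_1, p_2} = \partial_{x_1}^{p_1}\partial_{x_2}^{p_2}(T \cdot e^{x_1 z_1 + x_2 z_2})\big|_{x=0}$ with $T = \sum a_{i_1, i_2}(x)z_1^{i_1}z_2^{i_2}$ unfold to the same double Leibniz sum, so $\partial_1^{p_1}\partial_2^{p_2} \circ S = w_{p_1, p_2}$ and the map $F \to W$ hits every generator, hence is surjective.

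Second, for $(\gB-A)$--equivariance I would invoke Lemma \ref{L:Sato} in the rearranged form $\Xi(\vec\xi)(f)\cdot S = S \cdot f(\partial_1, \partial_2)$, valid a priori in $\gE$ but with both sides actually lying in $\gD$. Then for $g \in F$,
\[
(g \circ \Xi(\vec\xi)(f))\circ S = g \circ (\Xi(\vec\xi)(f)\cdot S) = g \circ (S\cdot f(\partial_1, \partial_2)) = (g\circ S)\circ f(\partial_1, \partial_2) = (g\circ S)\cdot f,
\]
the final step using that under $z_j \leftrightarrow \partial_j$ the right action of a constant--coefficient operator $f(\partial_1, \partial_2)$ is just multiplication by $f(z_1, z_2)$. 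This gives the commutative square, and simultaneously shows $W \cdot A \subseteq W$, so that $W$ is an $A$--submodule of $R$.

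Third, both injectivity of $F \to W$ and the Hilbert function fall out of Lemma \ref{L:BAshifted}(2), which gives $w_{p_1, p_2} = z_1^{p_1} z_2^{p_2}\,\delta + (\text{lower order})$. The leading monomials $z_1^{p_1} z_2^{p_2}\delta$ are pairwise distinct (as $\delta \ne 0$), so the family $\{w_{p_1, p_2}\}$ is linearly independent. For the filtration, in any combination $\sum c_{p_1, p_2} w_{p_1, p_2}$ the top--degree contributions at degree $\mu + k$ come precisely from the pairs with $p_1 + p_2 = k$ and cannot cancel; hence $W_{\mu+k}$ is freely spanned by $\{w_{p_1, p_2} : p_1 + p_2 \le k\}$ and has dimension $\binom{k+2}{2} = (k+1)(k+2)/2$.

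The bulk of the work sits in the first step, namely the careful symbol--calculus bookkeeping to identify $z_1^{p_1}z_2^{p_2}\circ S$ with $w_{p_1, p_2}$. Given that identification, the equivariance and the Hilbert function become formal consequences of Lemmas \ref{L:Sato} and \ref{L:BAshifted} respectively, and the remaining claims of the proposition follow without further input.
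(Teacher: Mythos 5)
Your proposal is correct and follows essentially the same route as the paper: identify $\partial_1^{p_1}\partial_2^{p_2}\circ S$ with $w_{p_1,p_2}$ via the slice/Leibniz calculus, deduce equivariance and the $A$--module structure from Lemma \ref{L:Sato}, and read off the Hilbert function from the leading terms $z_1^{p_1}z_2^{p_2}\delta$ in (\ref{E:basisvectorsW}). The only (harmless) divergence is that you obtain injectivity of $-\circ S$ from the linear independence of the leading terms, whereas the paper gets it from the fact that $S$ is a unit in the algebra $\gE$ of pseudo--differential operators.
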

\begin{proof} We have the following commutative diagram:
$$
\xymatrix{
\gD/(x_1, x_2) \gD \ar[r]^-{\cong} \ar@{_{(}->}[d] & \CC[\partial_1, \partial_2] \ar@{_{(}->}[d] \ar[r]^-{=}& \CC[z_1, z_2] \ar@{^{(}->}[d] \\
\gE/(x_1, x_2) \gE \ar[r]^-{\cong}  & \CC\llbrace\partial_1^{-1}\rrbrace \llbrace\partial_2^{-1}\rrbrace \ar[r]^-{=} & \CC\llbrace z_1^{-1}\rrbrace \llbrace z_2^{-1}\rrbrace.
}
$$
Let $W_0 = \CC[z_1, z_2]$. Then $W := W_0 \circ S \subset \CC\llbrace z_1^{-1}\rrbrace \llbrace z_2^{-1}\rrbrace$. Indeed,
it follows from the definition of the action $\circ$ of the algebra $\gE$ on the vector space $\CC\llbrace\partial_1^{-1}\rrbrace \llbrace\partial_2^{-1}\rrbrace$ that
$
\bigl(\partial_1^{p_1} \partial_2^{p_2}\bigr) \circ S = \left.\dfrac{\partial^{p_1+p_2} \Psi}{\partial x_1^{p_1} \partial x_2^{p_2}}\right|_{(x_1, x_2) = (0,0)}
$
for any $(p_1, p_2) \in \NN_0 \times \NN_0$, implying the claim.
Since the operator $S$ is a unit in the algebra $\gE$, the $\CC$--linear map (\ref{E:isospecmodule}) is an
isomorphism. Next, by Lemma \ref{L:Sato}, for any
$w \in F = W_0$ and $f \in A$ we have:
$$
w \circ \bigl(\bigl(\Xi(\vec{\xi})(f)\bigr) \cdot S\bigr) = (w \circ S) \cdot f
$$
Therefore, $W \cdot f \subseteq W$ and the map $F  \xrightarrow{-\circ S} W$ is $(\gB-A)$--equivariant, as claimed.

Next,
the highest  degree homogeneous term of $w_{p_1, p_2}(z_1, z_2)$ is $z_1^{p_1} z_2^{p_2} \cdot \delta(z_1, z_2)$; see formula  (\ref{E:basisvectorsW}).  This shows that
$$
W_{\mu +k}/W_{\mu+k-1} \cong \CC[z_1, z_2]_{k} := \bigl\{w \in \CC[z_1, z_2] \, \big|\,  w \; \mbox{\rm is homogeneous of degree} \; k\bigr\}.
$$
Hence, $\dim_{\CC}\bigl(W_{\mu +k}/W_{\mu+k-1}\bigr) = k+1$, implying that
$$
\dim_{\CC}\bigl(W_{\mu +k}\bigr) = 1 + 2 +\dots +(k+1) = \frac{(k+1)(k+2)}{2}.
$$
Proposition is proven.
\end{proof}
\begin{theorem}\label{T:DescriptSpectModule} Let $\vec\xi = \bigl(\rho_0 \cos(\varphi_0), \rho_0 \sin(\varphi_0)\bigr) \in \CC^2 \setminus\{\vec{0}\}$ be such that $\sin\bigl(2(\alpha -\varphi_0)\bigr) \ne 0$ for all $\alpha \in \Pi$.
Let $W$ be the $A$--module defined by (\ref{E:SpaceW}) (i.e.~$W$ is the spectral module of the algebra $\gB$). Then we have:
\begin{equation}
W \cong P\bigl(-\xi_1 z_1 - \xi_2 z_2\bigr) := \bigl\{f \in  R \,\big|\, \exp(-\xi_1 z_1 - \xi_2 z_2) f \;\mbox{\rm is} \; (\Pi, \underline{\mu})-\mbox{\rm quasi--invariant}\bigr\}.
\end{equation}
In particular, the spectral module $W$ is projective; see Theorem \ref{T:descriptionPicard}.
\end{theorem}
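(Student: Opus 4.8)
\textit{Proof (proposal).} Write $u := \xi_1 z_1 + \xi_2 z_2 \in R$; in polar coordinates $u = \rho\rho_0\cos(\varphi - \varphi_0)$, so part (3) of Lemma \ref{L:BAshifted} asserts exactly that every $w_{(p_1,p_2)}$ lies in $P(-u) = \bigl\{f \in R \mid \exp(-u)f \text{ is } (\Pi,\underline{\mu})\text{-quasi--invariant}\bigr\}$; hence $W \subseteq P(-u)$. Before proceeding one must check that $P(-u)$ really is a projective module of rank one, i.e.\ that $-u \in \Gamma(\Pi,\underline{\mu})$. A direct computation in the spirit of Lemma \ref{L:prepCompPicard} (factor off the even part of $\exp(-T_\alpha(u))$) gives $\Upsilon_{(\alpha, 2\mu_\alpha)}(-u) = -\varepsilon^{-1}\tanh\bigl(s_\alpha\sin\varepsilon\bigr)$ with $s_\alpha := \rho_0\rho\sin(\alpha - \varphi_0)$; since $\tanh(y)/y = \sum_{k\ge 0}\tau_k y^{2k}$ with all $\tau_k \ne 0$, the $\sigma^j$--coefficient of $\Upsilon_{(\alpha,2\mu_\alpha)}(-u)$ is a polynomial in $\rho$ of degree exactly $2j+1$ with leading coefficient $-\tau_j\bigl(\rho_0\sin(\alpha-\varphi_0)\bigr)^{2j+1}$, which is nonzero because $\sin\bigl(2(\alpha-\varphi_0)\bigr)\ne 0$. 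In particular $\Upsilon(-u) \in K^\diamond(\Pi,\underline{\mu})$, so by Theorem \ref{T:descriptionPicard} the module $P(-u) = B\bigl(\Upsilon(-u)\bigr)$ is projective of rank one.

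It remains to show the inclusion $W \subseteq P(-u)$ is an equality, and I would do this by comparing associated graded modules with respect to the degree filtration $R_{\le k}$ (writing $\mathrm{gr}(R) = R$). By Proposition \ref{P:spectralmodule} (formula (\ref{E:basisvectorsW})) the leading homogeneous term of $w_{(p_1,p_2)}$ is $z_1^{p_1}z_2^{p_2}\delta$, and the argument there shows these leading terms exhaust $\mathrm{gr}(W)$ in each degree; thus $\mathrm{gr}(W) = \delta\cdot R$, equivalently $H_W(\mu+k) = \binom{k+2}{2}$. Since $W \subseteq P(-u)$ this already gives $\delta R = \mathrm{gr}(W) \subseteq \mathrm{gr}\bigl(P(-u)\bigr)$, so the whole problem reduces to the reverse inclusion $\mathrm{gr}\bigl(P(-u)\bigr) \subseteq \delta R$, i.e.\ to proving that for every $f \in P(-u)$ the leading term $f_{\mathrm{top}}$ is divisible by $\delta$. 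Granting this, $\mathrm{gr}(W) = \mathrm{gr}\bigl(P(-u)\bigr)$, hence $\dim_{\CC}W_{\le k} = \dim_{\CC}P(-u)_{\le k}$ for all $k$; together with $W_{\le k} \subseteq P(-u)_{\le k}$ this forces $W_{\le k} = P(-u)_{\le k}$ for every $k$, so $W = P(-u)$. Projectivity of $W$ and the Hilbert function formula $H_{P(-u)}(k) = \tfrac{(k-\mu+1)(k-\mu+2)}{2}$ follow immediately.

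The remaining point---and the technical heart of the argument---is the divisibility $\delta \mid f_{\mathrm{top}}$ for $f \in P(-u)$. Fix $\alpha \in \Pi$ and put $d = \deg f$. Using $T_\alpha(\exp(-u)f) = \exp\bigl(-T_\alpha(u)\bigr)T_\alpha(f)$ and dividing by the even factor $\exp\bigl(-\rho\rho_0\cos(\alpha-\varphi_0)\cos\varepsilon\bigr)$, the condition $f\in P(-u)$ becomes: $\exp(s_\alpha\sin\varepsilon)\,T_\alpha(f)$ is even in $\varepsilon$ modulo $\varepsilon^{2\mu_\alpha}$, i.e.\ $\bigl[\varepsilon^{m}\bigr]\bigl(\exp(s_\alpha\sin\varepsilon)\,T_\alpha(f)\bigr) = 0$ in $\CC[\rho]$ for every odd $m$ with $1 \le m \le 2\mu_\alpha - 1$. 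Since $\sin\varepsilon = \varepsilon + O(\varepsilon^3)$, the $\varepsilon^m$--coefficient of $\exp(s_\alpha\sin\varepsilon)$ is a polynomial in $\rho$ of degree $\le m$ whose $\rho^m$--coefficient is $\bigl(\rho_0\sin(\alpha-\varphi_0)\bigr)^m/m!\ne 0$, so each such condition is the vanishing of a polynomial in $\rho$ of degree $\le m + d$. Reading off the coefficients of $\rho^{m+d},\ \rho^{m+d-1},\dots$ in turn and using that all of $m = 1,3,\dots,2\mu_\alpha - 1$ are available simultaneously, a finite descending induction on the $\rho$--degree should show successively that $f_{\mathrm{top}}|_{\varphi=\alpha} = 0$, then (after a Vandermonde--type decoupling of the resulting linear systems, non--degenerate precisely because $\sin(2(\alpha-\varphi_0))\ne 0$) that $f_{\mathrm{top}}$ vanishes to order $2,3,\dots$, and finally to order $\mu_\alpha$ at $\varphi = \alpha$, i.e.\ $l_\alpha^{\mu_\alpha}\mid f_{\mathrm{top}}$; taking the product over $\alpha \in \Pi$ gives $\delta \mid f_{\mathrm{top}}$. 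I expect the main obstacle to be organizing this induction cleanly: the jets $f^{(i)}_\alpha$ all carry the same degree bound $d$ and are coupled by the defining relations of $B(\Upsilon(-u))$, so one has to track carefully how the constraints attached to different $m$ interlock in order to peel off one power of $l_\alpha$ at a time.
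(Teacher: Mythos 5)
Your overall strategy coincides with the paper's: establish $W \subseteq P(-u)$ from Lemma \ref{L:BAshifted}(3), invoke the Hilbert function computation of Proposition \ref{P:spectralmodule}, and reduce everything to showing that $l_\alpha^{\mu_\alpha}$ divides the top homogeneous part of every $f \in P(-u)$, which forces $\dim_\CC P(-u)_{\mu+k} \le \binom{k+2}{2}$ and hence $W = P(-u)$. Your preliminary observations are sound and match the paper's setup: the $\tanh$ formula for $\Upsilon_{(\alpha,2\mu_\alpha)}(-u)$ is a correct (if superfluous) verification that $\Upsilon(-u) \in K^\diamond(\Pi,\underline{\mu})$, and your reformulation of membership in $P(-u)$ as the evenness of $\exp(s_\alpha\sin\varepsilon)\,T_\alpha(f)$ modulo $\varepsilon^{2\mu_\alpha}$, together with the identification of the leading $\rho$--coefficients, is exactly the system (\ref{E:BlueSystem}).

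However, the decisive step is missing. You write that a descending induction on the $\rho$--degree ``should show'' the successive vanishing of the jets of $f_{\mathrm{top}}$ at $\varphi = \alpha$, after ``a Vandermonde--type decoupling of the resulting linear systems, non--degenerate precisely because $\sin(2(\alpha-\varphi_0)) \ne 0$''. This is precisely where the real work lies, and your description of it is inaccurate. Once the hypothesis has been used to guarantee $\varrho = \rho\rho_0\sin(\alpha-\varphi_0) \ne 0$ and the powers of $\varrho$ have been normalized away, the matrix whose leading principal minors must be shown nonzero is the matrix (\ref{E:matrixreloaded}) with entries $\binom{2i-1}{j}\lambda^j$; it is not a Vandermonde matrix, and its non--degeneracy is not a consequence of the condition on $\vec\xi$ but a purely combinatorial fact, equivalent to the linear independence of $\overline{(1+\lambda)}, \overline{(1+\lambda)^3}, \dots, \overline{(1+\lambda)^{2m-1}}$ in $\CC[\lambda]/(\lambda^{m})$. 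The paper proves this by a separate argument (factor a putative dependence relation as $c_{\bar m -1}\prod_j\bigl((1+\lambda)^2-\zeta_j\bigr)$ and compare orders of vanishing at $\lambda = 0$). Without this lemma the claim $l_\alpha^{\mu_\alpha} \mid f_{\mathrm{top}}$ is not established, and with it falls the dimension estimate and the equality $W = P(-u)$. You correctly flagged this as the technical heart of the proof; it must be carried out, not deferred.
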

\begin{proof}
Let $u = \xi_1 z_1 + \xi_2 z_2  = \rho\rho_0 \cos(\varphi - \varphi_0)$.
According to (\ref{E:BasisSpaceW}), we have an inclusion $W \subseteq P(-u)$. Our goal is to show that in fact $W =  P(-u)$. By
(\ref{E:HilbertFunction}), $\dim_{\CC}\bigl(W_{\mu+k}\bigr) = \dfrac{(k+1)(k+2)}{2}$ for any $k \in \ZZ$. Hence, it is sufficient to prove that
\begin{equation}\label{E:HilbertFunctionEstimate}
\dim_{\CC}\bigl(P(-u)_{\mu+k}\bigr) \le  \frac{(k+1)(k+2)}{2}
\end{equation}
for any $k \in \ZZ$.
Let $g(\rho, \varphi) := \exp(-u) = \exp\bigl(-\rho\rho_0 \cos(\varphi - \varphi_0)\bigr)$ and $v = \rho\rho_0 \sin(\varphi - \varphi_0)$.
Consider polynomials $t_n \in \CC[u,v]$ defined by the rule:
$
\dfrac{\partial^n g}{\partial \varphi^n} = t_n  \cdot g.
$
We have:  $t_0 = 1$ and
\begin{equation}\label{E:BalabuevPolynomials}
t_{n+1}(u, v) := -v \frac{\partial t_n}{\partial u}(u, v) + u \frac{\partial t_n}{\partial v}(u, v) + v t_n(u, v)
\end{equation}
for any $n \in \NN$. Note that the highest order term of $t_n(u, v)$ is $v^{n}$. For any $\alpha \in \Pi$ and $j \in \NN_0$ we put:
$
t_{(\alpha, j)} := t_n(u, v)\Big|_{\varphi = \alpha} \in \CC[\rho].
$
Then in notation (\ref{L:TaylorSeriesVariation}) we have:
$$
\widehat{T}_{(\alpha, 2\mu_\alpha)}(-u) = \sum\limits_{j = 0}^{2\mu_\alpha -1} \frac{t_{(\alpha, j)}(\rho)}{j!} \varepsilon^j \; \in \;  \CC[\rho, \varepsilon]/(\varepsilon^{2\mu_\alpha}).
$$
By definition, a polynomial  $f \in R$ belongs to the subspace $P(-u)$ if and only if
\begin{equation}\label{E:systemWi}
\left(\sum\limits_{k = 0}^{2\mu_\alpha -1} \frac{f_{\alpha}^{(k)}}{k!} \varepsilon^k \right) \cdot
\left(\sum\limits_{j = 0}^{2\mu_\alpha -1} \frac{t_{(\alpha, j)}}{j!} \varepsilon^j\right)  \in \CC[\rho, \varepsilon^2]/(\varepsilon^{2\mu_\alpha}).
\end{equation}
The constraint (\ref{E:systemWi}) is equivalent to the following system of polynomial identities:
\begin{equation}\label{E:BlueSystem}
\left\{
\begin{array}{lcc}
f'_\alpha + t_{(\alpha, 1)} f_\alpha & = &  0 \\
\dfrac{f'''_\alpha}{3!} + \dfrac{f''_\alpha}{2!} \cdot \dfrac{t_{(\alpha, 1)}}{1!} + \dfrac{f'_\alpha}{1!} \cdot \dfrac{t_{(\alpha, 2)}}{2!} + f_\alpha \cdot
\dfrac{t_{(\alpha, 3)}}{3!} & = & 0 \\
\quad   \vdots  & & \\
\sum\limits_{j = 0}^{2\mu_\alpha -1} \dfrac{f^{(2\mu_\alpha-1-j)}_\alpha}{(2\mu_\alpha-1-j)!} \cdot \dfrac{t_{(\alpha, j)}}{j!} & = &  0.
\end{array}
\right.
\end{equation}
Let $d = \deg(f)$ and $f = f_d + f_{d-1} + \dots + f_0$ be a decomposition of $f$ into a sum of its homogeneous components. We prove the following

\smallskip
\noindent
\underline{Claim}. Suppose  that $f \in R$ satisfies  the system (\ref{E:BlueSystem}) for $m = \mu_\alpha$. Then  $l_\alpha^{m}$ divides $f_d$.

\smallskip
\noindent
\emph{Proof of the claim}.
It is instructive to begin with the special cases, when $m = 1$ or $2$.

\smallskip
\noindent
For $m = 1$, the system (\ref{E:BlueSystem}) consists only of one equation: $f'_\alpha + t_{(\alpha, 1)} f_\alpha = 0$.
We put $\varrho:= \rho \rho_0 \sin(\alpha - \varphi_0)$ (note, that the conditions on  the vector $\vec\xi$ insure that $\varrho \ne 0$).  Taking the homogeneous part of the top degree of the left--hand side, we obtain: $\varrho \cdot \bigl(f_d\bigr)_\alpha = 0$ (recall that the top degree homogeneous part of $t_{(\alpha, j)}(u, v)$
is $v^j$). Hence, we have the vanishing $\bigl(f_d\bigr)_\alpha = 0$,  which implies  that $l_\alpha \, \big| \,  f_d$.

\smallskip
\noindent
Let  $m = 2$. In this case,  the system (\ref{E:BlueSystem}) consists  of two equations:
\begin{equation}\label{E:BlueSystemSpecial}
\left\{
\begin{array}{lcl}
f'_\alpha + t_{(\alpha, 1)} f_\alpha & = &  0 \\
\dfrac{f'''_\alpha}{3!} + \dfrac{f''_\alpha}{2!} \cdot \dfrac{t_{(\alpha, 1)}}{1!} + \dfrac{f'_\alpha}{1!} \cdot \dfrac{t_{(\alpha, 2)}}{2!} + f_\alpha \cdot
\dfrac{t_{(\alpha, 3)}}{3!} & = & 0.
\end{array}
\right.
\end{equation}
We have already seen in the previous step, that the first equation of (\ref{E:BlueSystemSpecial}) implies that $\bigl(f_d\bigr)_\alpha = 0$. Taking
the top degree (with respect to $\rho$) of both equations of (\ref{E:BlueSystemSpecial}), we get the following system:
$$
\left\{
\begin{array}{lcc}
\bigl(f_d\bigr)'_\alpha + \varrho \bigl(f_{d-1}\bigr)_\alpha & = &  0 \\
\dfrac{\varrho^2}{2!} \bigl(f_d\bigr)'_\alpha + \dfrac{\varrho^3}{3!} \bigl(f_{d-1}\bigr)_\alpha
& = & 0.
\end{array}
\right.
$$
Since
$
\det
\left(
\begin{array}{cc}
1 & \varrho \\
\dfrac{\varrho^2}{2!} & \dfrac{\varrho^3}{3!}
\end{array}
\right) = -\dfrac{1}{3} \varrho^3 \ne 0,
$
we conclude that $\bigl(f_d\bigr)'_\alpha = \bigl(f_{d-1}\bigr)_\alpha = 0$. The conditions $\bigl(f_d\bigr)_\alpha  = \bigl(f_d\bigr)'_\alpha = 0$
imply that $l_\alpha^2 \, \big| \, f_d$; see Lemma \ref{L:TaylorFractFields}.

\smallskip
\noindent
Now we proceed to the general case. We prove by induction on $m$ that
\begin{equation}\label{E:systemW}
\left\{
\begin{array}{cccccccccccc}
\bigl(f_d\bigr)_\alpha & = &  \bigl(f_d\bigr)'_\alpha & =  & \dots & = & \bigl(f_d\bigr)^{(m-2)}_\alpha & = & \bigl(f_d\bigr)^{(m-1)}_\alpha & = & 0 \\
\bigl(f_{d-1}\bigr)_\alpha & = &  \bigl(f_{d-1}\bigr)'_\alpha & = & \dots  & = & \bigl(f_{d-1}\bigr)^{(m-2)}_\alpha & = & 0 & & \\
\vdots & & & & & & & & & & & \\
\bigl(f_{d-m+1}\bigr)_\alpha & = & 0. & & & & & & & & &
\end{array}
\right.
\end{equation}
Consider the following infinite matrix
\begin{equation}\label{E:matrixreloaded}
\left(
\begin{array}{ccccccccc}
\vspace{1mm}
\binom{1}{1} & \binom{1}{1} \lambda & 0 & 0 & 0 & 0 & 0 & 0  & \dots \\
\vspace{1mm}
\binom{3}{0} & \binom{3}{1} \lambda & \binom{3}{2} \lambda^2 & \binom{3}{3} \lambda^3 & 0 & 0 & 0 & 0  & \dots \\
\vspace{1mm}
\binom{5}{0} & \binom{5}{1} \lambda & \binom{5}{2} \lambda^2 & \binom{5}{3} \lambda^3 & \binom{5}{4} \lambda^4 & \binom{5}{5} \lambda^5 & 0 & 0 & \dots \\
\vspace{1mm}
\binom{7}{0} & \binom{7}{1} \lambda & \binom{7}{2} \lambda^2 & \binom{7}{3} \lambda^3 & \binom{7}{4} \lambda^4 & \binom{7}{5} \lambda^5 & \binom{7}{6} \lambda^5 & \binom{7}{7} \lambda^7 & \dots\\
\vspace{1mm}
\binom{9}{0} & \binom{9}{1} \lambda & \binom{9}{2} \lambda^2 & \binom{9}{3} \lambda^3 & \binom{9}{4} \lambda^4 & \binom{9}{5} \lambda^5 & \binom{9}{6} \lambda^5 & \binom{9}{7} \lambda^7 & \dots\\
\vspace{1mm}
\vdots & \vdots  & \vdots & \vdots  & \vdots & \vdots  & \vdots  & \vdots  & \ddots\\
\end{array}
\right).
\end{equation}
To prove the induction step, it is sufficient to show that the  first principal $(m \times m)$--minor  of the matrix (\ref{E:matrixreloaded}) in non--zero. Suppose it is not the case. Then the elements
$$
\overline{(1+\lambda)}, \overline{(1+\lambda)^3}, \dots, \overline{(1+\lambda)^{2m-1}} \in \CC[\lambda]/(\lambda^{m})
$$
are linearly dependent. Hence, there exist $c_0, c_1, \dots, c_{m-1} \in \CC$ such that
$\lambda^{m}$ divides the polynomial $c_0 + c_1 (1+\lambda)^2 + \dots + c_{m-1} (1+\lambda)^{2\cdot(m-1)}$. Let $1 \le \bar{m} \le m$ be such that
$c_{\bar{m}-1} \ne 0$, whereas $c_{\bar{m}} = c_{\bar{m}+1}  = \dots = 0$. Let $\zeta_1, \dots, \zeta_{\bar{m}-1} \in \CC$ be  such that
$$
\pi(t) := c_0 + c_1 t + \dots + c_{\bar{m}-1} t^{\bar{m}-1} = c_{\bar{m}-1} (t-\zeta_1)\cdot \dots \cdot (t-\zeta_{\bar{m}-1}) \in \CC[t].
$$
Taking the substitution $t = (1 + \lambda)^2$, we get:
\begin{equation}\label{E:polynomials}
\lambda^m \; \big|\; c_{\bar{m}-1} \prod\limits_{j=1}^{\bar{m}-1} \bigl((1+\lambda)^2 - \zeta_j\bigr).
\end{equation}
However, the order of vanishing at $0$ of the polynomial in the right hand side of (\ref{E:polynomials}) is at most $\bar{m}-1$, contradiction.
Hence, $c_0 = c_1 = \dots = c_{m-1} = 0$.

\smallskip
\noindent
Therefore, for any $f \in P(-u)$ we have: $\bigl(f_d\bigr)_\alpha =  \bigl(f_d\bigr)'_\alpha =  \dots = \bigl(f_d\bigr)^{(m-1)}_\alpha  = 0$.
By Lemma \ref{L:TaylorFractFields}, the polynomial $l_\alpha^m$ divides $f_d$, proving the claim.

\medskip
\noindent
Summing up, we have shown  that  the polynomial $\delta = \prod\limits_{\alpha \in \Pi} l_{\alpha}^{\mu_\alpha}$ divides the top homogeneous part of any element of the module $P(-u)$. Since $\deg(\delta) = \mu$, it  implies that
$$
\dim_\CC\bigl(P(-u)_{\mu}\bigr) \le 1 \quad \mbox{\rm and} \quad  \dim_\CC\bigl(P(-u)_{\mu+k}/P(-u)_{\mu+k-1}\bigr) \le k+1
$$
for any $k \in \NN$. Therefore,
$$
 \dim_\CC\bigl(P(-u)_{\mu+k}\bigr) \le 1 +2 + \dots + (k+1) = \frac{(k+1)(k+2)}{2}.
$$
Theorem is proven.
\end{proof}

\begin{corollary} Since the normalization map $\AA^2 \stackrel{\nu}\lar X$ is bijective, any character $A \stackrel{\chi}\lar \CC$ is given as the composition
$
\xymatrix{
A \ar@{^{(}->}[r] & R \ar[r]^-{\widetilde\chi} & \CC,
}
$
where $\widetilde{\chi}(P) = P(\zeta_1, \zeta_2)$ for any $P \in \CC[z_1, z_2]$ and some uniquely determined $(\zeta_1, \zeta_2) \in \AA^2$. For any $\vec\xi \in \CC^2$ satisfying the
conditions of Theorem \ref{T:DescriptSpectModule}, the power series
$\Psi(x_1, x_2; \zeta_1, \zeta_2; \xi_1, \xi_2)$, given by formula (\ref{E:BerestShifted}), is non--zero and regular at $(x_1, x_2) = (0, 0)$. Combining Theorem \ref{T:spectralmodule} with Theorem \ref{T:listCM}, we end up with  the following result:
$$
\mathsf{Sol}(\gB, \chi) = \Bigl\{f\in \CC\llbracket x_1, x_2\rrbracket \Big| P\diamond f = P(\zeta_1, \zeta_2)f \; \mbox{\rm for all}\; P \in \gB\Bigr\}   =  \bigl\langle\Psi(x_1, x_2; \zeta_1, \zeta_2; \xi_1, \xi_2)\bigr\rangle_\CC.
$$
\end{corollary}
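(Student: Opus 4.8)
The plan is to deduce the corollary by combining the explicit identification of the spectral module in Theorem~\ref{T:DescriptSpectModule} with the general relation $\mathsf{Sol}(\gB,\chi)^{\ast}\cong F|_{\chi}$ of Theorem~\ref{T:spectralmodule}. First I would record the bijection between characters of $A$ and points of $\AA^{2}$: since $A$ is a finitely generated $\CC$--algebra, a character $A\stackrel{\chi}\lar\CC$ is the same as a maximal ideal $\idm\subset A$, and by Proposition~\ref{P:BasicsQuasiinv} there is a unique maximal ideal $\idn=(z_{1}-\zeta_{1},z_{2}-\zeta_{2})$ of $R$ lying over $\idm$; as $A/\idm\hookrightarrow R/\idn=\CC$ with $A/\idm=\CC$, the character $\chi$ is the composition of $A\hookrightarrow R$ with evaluation at $(\zeta_{1},\zeta_{2})$. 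Transporting this along the isomorphism $\gB\cong A$ given by $\Xi(\vec\xi)$ fixes the meaning of $P(\zeta_{1},\zeta_{2})$ for $P\in\gB$ (namely $\chi(P)=f(\zeta_{1},\zeta_{2})$ whenever $P=\bigl(\Xi(\vec\xi)\bigr)(f)$, $f\in A$), so that the first equality in the statement is just a translation of notation.

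The next step is a dimension count valid for \emph{every} $\chi$. By Proposition~\ref{P:spectralmodule} and Theorem~\ref{T:DescriptSpectModule} the spectral module $F$ is isomorphic, as a module over $\gB\cong A$, to $W=P(-u)\subset R$ with $u=\xi_{1}z_{1}+\xi_{2}z_{2}$; by Theorem~\ref{T:descriptionPicard} this is a projective $A$--module of rank one, hence invertible, so $F|_{\chi}=F\otimes_{A}(A/\idm)$ is one--dimensional over $\CC$. Theorem~\ref{T:spectralmodule} then gives $\dim_{\CC}\mathsf{Sol}(\gB,\chi)=1$. Consequently it suffices to produce one non--zero element of $\mathsf{Sol}(\gB,\chi)$, and the natural candidate is $\Psi=\Psi(x_{1},x_{2};\zeta_{1},\zeta_{2};\vec\xi)$. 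Its membership in $\mathsf{Sol}(\gB,\chi)$ follows by specializing the eigenfunction identity (\ref{E:BAEigenfunctionModified2}) of Lemma~\ref{L:BAshifted} at $\vec z=(\zeta_{1},\zeta_{2})$ and using Step~1 to recognize $f(\zeta_{1},\zeta_{2})$ as $\chi\bigl(\Xi(\vec\xi)(f)\bigr)$; its regularity at $(0,0)$, which is needed for it to lie in $\CC\llbracket x_{1},x_{2}\rrbracket$ at all, follows from Theorem~\ref{T:BAfunction}(3), since the $\vec x$--poles of $\Psi$ lie on the shifted lines $V\bigl(l_{\alpha}(\,\cdot-\vec\xi)\bigr)$ and the hypothesis $\sin\bigl(2(\alpha-\varphi_{0})\bigr)\neq0$ forces $l_{\alpha}(\vec\xi)\neq0$, so these lines miss the origin.

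The step I expect to require the most care is the non--vanishing of $\Psi$ for \emph{all} admissible $(\zeta_{1},\zeta_{2})$, including those on the singular locus $\bigcup_{\alpha}V(l_{\alpha})$. Here I would use the expansion $\Psi=\sum_{p_{1},p_{2}}\frac{1}{p_{1}!\,p_{2}!}\,w_{p_{1},p_{2}}(\zeta_{1},\zeta_{2})\,x_{1}^{p_{1}}x_{2}^{p_{2}}$ coming from (\ref{E:elementsOfBasisW}): the power series $\Psi$ vanishes exactly when $(\zeta_{1},\zeta_{2})$ is a common zero of the subspace $W=\langle w_{p_{1},p_{2}}\rangle_{\CC}\subset R$, i.e.\ $W\subseteq\idn$. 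For $(\zeta_{1},\zeta_{2})\notin\bigcup_{\alpha}V(l_{\alpha})$ this is ruled out at once since $\delta^{2}\in W$ and $\delta^{2}(\zeta_{1},\zeta_{2})\neq0$. For the remaining points one passes to the localized and completed triple $\LL_{\idm}\bigl(\FF(W)\bigr)$: because $W$ is an invertible $A$--submodule of $R$, the normal forms computed in Lemma~\ref{L:TriplesSpecialCase} (at a point of a singular line) and in the proof of Theorem~\ref{T:descriptionPicard} (at $\nu(0,0)$, using the hypersurface description of $\widehat{A}_{\idm}$ from Theorem~\ref{T:CMandGorenstein}) show that $\widehat{W}_{\idm}$ is generated over $\widehat{A}_{\idm}$ by a \emph{unit} of $\widehat{R}_{\idm}$; such a generator does not vanish at $(\zeta_{1},\zeta_{2})$, so $W\not\subseteq\idn$ and $\Psi\neq0$. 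This is precisely where the global projectivity of $W$ is used, rather than mere rank--one Cohen--Macaulayness (which would, by Theorem~\ref{T:listCM}, only suffice to handle the generic point).

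Finally, assembling the pieces: by the dimension count $\mathsf{Sol}(\gB,\chi)$ is one--dimensional, and it contains the non--zero vector $\Psi(x_{1},x_{2};\zeta_{1},\zeta_{2};\vec\xi)$, whence $\mathsf{Sol}(\gB,\chi)=\bigl\langle\Psi(x_{1},x_{2};\zeta_{1},\zeta_{2};\vec\xi)\bigr\rangle_{\CC}$, which is the asserted description of the solution space.
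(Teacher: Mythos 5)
Your proof is correct and follows the route the paper itself indicates (the corollary is stated there without a separate proof): characters of $A$ correspond to points of $\AA^2$ via the bijective normalization, $\dim_\CC \mathsf{Sol}(\gB,\chi)=1$ for \emph{every} $\chi$ because the spectral module is invertible, and $\Psi(x_1,x_2;\zeta_1,\zeta_2;\vec\xi)$ is a regular, non--zero eigenfunction, hence spans. The only point where you supply more than the paper records is the non--vanishing of $\Psi$ for $\zeta$ on the lines $V(l_\alpha)$, which the paper merely asserts; your argument --- $\Psi=0$ iff $W\subseteq\idn$, which is ruled out because the completed stalk of the projective module $W$ at the corresponding point is generated by a unit of $\widehat{R}$ --- is sound, and correctly identifies this as the step where projectivity (rather than mere rank--one Cohen--Macaulayness) is essential.
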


\begin{remark}
The $A$--module $P(-u)$ from Theorem \ref{T:DescriptSpectModule} appeared also in  \cite[Proposition 7.6]{BerestEtingofGinzburg}, where another proof of its projectivity was given.
\end{remark}

\section{Elements of the higher--dimensional Sato theory}\label{S:SatoTheory}
For any $n \in \NN$, let $B = \CC[x_1, \dots, x_n]$ and  $\widehat{B} = \CC\llbracket x_1, \dots, x_n\rrbracket$. For any $d \in \ZZ$, we denote by  $B_d$ be the vector space of homogeneous elements of $B$ of degree $d$ (in particular, $B_d = 0$ for $d \le 0$). Next, let $\idm = (x_1, \dots, x_n)$ be the unique maximal ideal of $\widehat{B}$ and $\widehat{B} \stackrel{\upsilon}\lar \NN_0 \cup \{\infty\}$ be the corresponding valuation. To simplify the notation, we denote
$\Sigma := \NN_0^n$ and for any $\underline{k} = (k_1, \dots, k_n) \in \Sigma$ we  write:
\begin{itemize}
\item $\underline{x}^{\underline{k}} := x_1^{k_1} \dots x_n^{k_n}$ and $\underline{\partial}^{\underline{k}} := \partial_1^{k_1} \dots \partial_n^{k_n}$.
\item $\underline{k}! = k_1! \dots k_n!$ and $|\underline{k}| = k_1 + \dots + k_n$.
\end{itemize}
We denote $\underline{0} := (0, \dots, 0) \in \Sigma$ and for $\underline{k}, \underline{l} \in \Sigma$ say that $\underline{k} \ge \underline{l}$ if any only if $k_i \ge l_i$ for all $1 \le i \le n$. Next, consider the following $\CC$--vector space:
\begin{equation}
\gO := \CC\llbracket x_1, \dots, x_n\rrbracket\llbracket \partial_1, \dots, \partial_n\rrbracket = \left\{
\sum\limits_{\underline{k} \ge \underline{0}} a_{\underline{k}} \underline{\partial}^{\underline{k}} \; \left|\;  a_{\underline{k}} \in \widehat{B} \right. \;\mbox{for all}\;  \underline{k} \in \Sigma
\right\}.
\end{equation}
Note that $\gO$ has no natural $\CC$--algebra  structure since the natural product $\cdot$ is not defined on the whole vector space $\gO$.
\begin{definition}\label{D:AlgebraPi} For any element
$
P := \sum\limits_{\underline{k} \ge \underline{0}} a_{\underline{k}} \underline{\partial}^{\underline{k}} \in \gO
$
we define its \emph{order} $o(P) \in \mathbbm{Z} \cup \{\pm \infty\}$ to be
\begin{equation}\label{E:LastOrder}
o(P) := \left\{
\begin{array}{cl}
\sup\bigl\{|\underline{k}| - \upsilon(a_{\underline{k}}) \; \big|\; \underline{k} \in \Sigma \bigr\} & P \ne 0\\
-\infty & P = 0.
\end{array}
\right.
\end{equation}
In particular, if $d = o(P) < \infty$ then we have:
$$
\upsilon(a_{\underline{k}}) \ge |\underline{k}| - d  = (k_1 +\dots + k_n) - d \quad \mbox{for any} \; \underline{k} \in \Sigma.
$$
The key role in this section is plaid by the following subspace of the vector space $\gO$:
\begin{equation}
\gP := \bigl\{Q \in \gO \,\big|\, o(Q) < \infty \bigr\}.
\end{equation}
Note that for a partial differential  operator $$
P = \sum_{|\underline{k}| = m} a_{\underline{k}} \underline{\partial}^{\underline{k}} + \sum_{|\underline{i}| < m}
b_{\underline{i}}\underline{\partial}^{\underline{i}}
\in \CC\llbracket x_1, \dots, x_n\rrbracket[\partial_1, \dots, \partial_n],$$
with constant highest symbol $0 \ne \sigma(P) = \sum_{|\underline{k}| = m} \alpha_{\underline{k}} \underline{\partial}^{\underline{k}}
 \in
\CC[\partial_1, \dots, \partial_n]$, the order of $P$ taken in the sense (\ref{E:LastOrder}) is equal to $m$ and coincides with the usual definition of the order of a differential operator.

\smallskip
\noindent
Let $P \in \gP$. Then for any $\underline{k}, \underline{i} \in \Sigma$, we have a uniquely determined  $\alpha_{\underline{k}, \underline{i}} \in \CC$ such that
\begin{equation}\label{E:expOperatorP}
P = \sum\limits_{\underline{k}, \underline{i} \, \ge \, \underline{0}} \alpha_{\underline{k}, \underline{i}} \,  \underline{x}^{\underline{i}} \underline{\partial}^{\underline{k}}.
\end{equation}
For any $m \ge -d$ we put:
$$
P_m:= \sum\limits_{\substack{\underline{k}, \underline{i} \in \Sigma \\ |\underline{i}| - |\underline{k}| = m}} \alpha_{\underline{k}, \underline{i}} \,  \underline{x}^{\underline{i}} \underline{\partial}^{\underline{k}}
$$
to be the $m$-th \emph{homogeneous component} of $P$. Note that $o(P_m) = -m$ and we have a decomposition
$
P = \sum\limits_{m=-d}^\infty P_m.
$
Finally,  $\sigma(P) := P_{-d}$ is  the \emph{symbol} of $P$ (i.e.~the sum of all components of $P$ of maximal order). We say that $P \in \gP$ is \emph{homogeneous} if $P = \sigma(P)$.
\end{definition}

\begin{example}\label{E:elementsofPi} Let $n = 1$. Then we have:
\begin{itemize}
\item The operator
$
\exp(x \astar \partial) := \sum\limits_{k= 0}^\infty \dfrac{x^k}{k!} \partial^k
$
belongs to $\gP$. Moreover, $\exp(x \astar \partial)$ is homogeneous of order zero.
\item The element
$
\sum\limits_{k= 0}^\infty \dfrac{x^k}{k!} \partial^{2k}
$
of $\gO$
does not belong to $\gP$.
\end{itemize}
\end{example}

\begin{theorem} The following results are true.
\begin{enumerate}
\item The vector space $\gP$ is a $\CC$--algebra with respect to the natural operations $\cdot$ and $+$. In particular, $\gP$ contains the subalgebra
$\gD:= \CC\llbracket x_1, \dots, x_n\rrbracket[\partial_1, \dots, \partial_n]$ of partial differential operators.
\item We have a natural isomorphism of $\CC$--vector spaces
$
F:= \gP/\idm \gP \lar \CC[\partial_1, \dots, \partial_n].
$
\item We have a natural injective algebra homomorphism
$
\gP \lar \End_{\CC}^{\mathrm{c}}(\widehat{B}),
$
where $\End_{\CC}^{\mathrm{c}}(\widehat{B})$ is the algebra of $\CC$--linear endomorphisms of $\widehat{B}$, which are \emph{continuous} in the
$\idm$--adic topology. In particular, $\widehat{B}$ has a natural structure of a left $\gP$--module, which extends  its  natural structure of a left $\gD$--module.
\end{enumerate}
\end{theorem}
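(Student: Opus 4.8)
The plan is to prove the three assertions in turn; the common mechanism is that every operation on $\gP$ reduces to a multiply-indexed sum in the complete local ring $\widehat{B}$, and the finiteness of the order $o(\,\cdot\,)$ is precisely what makes all these sums $\idm$-adically convergent and freely re-indexable. For (1), given $P = \sum_{\underline{k}} a_{\underline{k}} \underline{\partial}^{\underline{k}}$ and $Q = \sum_{\underline{l}} b_{\underline{l}} \underline{\partial}^{\underline{l}}$ in $\gP$, I would define the product by formally applying the Leibniz rule $\underline{\partial}^{\underline{k}}(b_{\underline{l}} \,\cdot\,) = \sum_{\underline{j} \le \underline{k}} \binom{\underline{k}}{\underline{j}} \underline{\partial}^{\underline{j}}(b_{\underline{l}}) \, \underline{\partial}^{\underline{k} - \underline{j}}$ and collecting powers of $\underline{\partial}$:
\[
P \cdot Q \;=\; \sum_{\underline{m} \ge \underline{0}} c_{\underline{m}} \, \underline{\partial}^{\underline{m}}, \qquad
c_{\underline{m}} \;:=\; \sum_{\substack{\underline{k} - \underline{j} + \underline{l} = \underline{m} \\ \underline{j} \le \underline{k}}} \binom{\underline{k}}{\underline{j}} \, a_{\underline{k}} \, \underline{\partial}^{\underline{j}}(b_{\underline{l}}).
\]
The key observation is that the index constraints force $\underline{l} \le \underline{m}$ (finitely many choices) and then fix $\underline{k} = \underline{j} + (\underline{m} - \underline{l})$ in terms of the remaining free index $\underline{j}$; since $\upsilon\bigl(a_{\underline{k}} \underline{\partial}^{\underline{j}}(b_{\underline{l}})\bigr) \ge \upsilon(a_{\underline{k}}) \ge |\underline{k}| - o(P) \ge |\underline{j}| - o(P)$, the summands have valuations tending to $\infty$, so $c_{\underline{m}} \in \widehat{B}$ is well defined; combining this with $\upsilon\bigl(\underline{\partial}^{\underline{j}}(b_{\underline{l}})\bigr) \ge \upsilon(b_{\underline{l}}) - |\underline{j}|$ one gets $\upsilon(c_{\underline{m}}) \ge |\underline{m}| - o(P) - o(Q)$, hence $P \cdot Q \in \gP$ with $o(P \cdot Q) \le o(P) + o(Q)$. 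Bilinearity is immediate, associativity follows from the finite Leibniz identities once one checks (by the same valuation bounds) that the relevant triple sums are absolutely convergent in $\widehat{B}$ and may be reordered, and $\gD \subseteq \gP$ since a differential operator has only finitely many nonzero coefficients.

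For (2), I would use the $\CC$-linear map $\rho \colon \gP \to \CC[\partial_1, \dots, \partial_n]$, $\sum_{\underline{k}} a_{\underline{k}} \underline{\partial}^{\underline{k}} \mapsto \sum_{\underline{k}} a_{\underline{k}}(\underline{0}) \, \underline{\partial}^{\underline{k}}$. This is well defined because $a_{\underline{k}}(\underline{0}) \ne 0$ forces $\upsilon(a_{\underline{k}}) = 0$, hence $|\underline{k}| \le o(P)$, leaving only finitely many nonzero terms; it is surjective since it restricts to the identity on $\CC[\partial_1, \dots, \partial_n] \subseteq \gD \subseteq \gP$. One has $\idm \gP = \sum_i x_i \gP \subseteq \ker \rho$ because each $\underline{\partial}$-coefficient of $x_i Q$ is divisible by $x_i$; conversely, if $P \in \ker \rho$ then every $a_{\underline{k}} = a_{\underline{k}} - a_{\underline{k}}(\underline{0}) \in \idm$, say $a_{\underline{k}} = \sum_i x_i a_{\underline{k}}^{(i)}$ with $\upsilon\bigl(a_{\underline{k}}^{(i)}\bigr) \ge \upsilon(a_{\underline{k}}) - 1 \ge |\underline{k}| - o(P) - 1$, so $Q_i := \sum_{\underline{k}} a_{\underline{k}}^{(i)} \underline{\partial}^{\underline{k}} \in \gP$ and $P = \sum_i x_i Q_i \in \idm \gP$. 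Thus $\rho$ induces the asserted (and evidently natural) isomorphism $F = \gP / \idm \gP \xrightarrow{\ \sim\ } \CC[\partial_1, \dots, \partial_n]$.

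For (3), I would set $P \diamond f := \sum_{\underline{k}} a_{\underline{k}} \cdot \underline{\partial}^{\underline{k}}(f)$ for $P \in \gP$, $f \in \widehat{B}$; since $\upsilon\bigl(a_{\underline{k}} \underline{\partial}^{\underline{k}}(f)\bigr) \ge \upsilon(a_{\underline{k}}) \ge |\underline{k}| - o(P)$, this converges in $\widehat{B}$, and if $\upsilon(f) \ge N + o(P)$ then every summand has valuation $\ge N$ (treating $|\underline{k}| \le N + o(P)$ via $\upsilon(\underline{\partial}^{\underline{k}} f) \ge \upsilon(f) - |\underline{k}|$, and $|\underline{k}| > N + o(P)$ via $\upsilon(a_{\underline{k}}) > N$), so $P \diamond (\,\cdot\,)$ is $\idm$-adically continuous. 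The map $\gP \to \End_{\CC}^{\mathrm{c}}(\widehat{B})$, $P \mapsto (f \mapsto P \diamond f)$, is $\CC$-linear, and it is a ring homomorphism because expanding $(P \cdot Q) \diamond f$ through the coefficients $c_{\underline{m}}$ of (1), and $P \diamond (Q \diamond f)$ through continuity of the fixed operators $\underline{\partial}^{\underline{k}}$ and the Leibniz rule, both yield the one absolutely convergent triple sum $\sum_{\underline{k}, \underline{l}, \underline{j} \le \underline{k}} \binom{\underline{k}}{\underline{j}} a_{\underline{k}} \, \underline{\partial}^{\underline{j}}(b_{\underline{l}}) \, \underline{\partial}^{\underline{k} - \underline{j} + \underline{l}}(f)$. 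Injectivity: if $P \ne 0$, pick $\underline{k}_0$ of minimal length with $a_{\underline{k}_0} \ne 0$; then $P \diamond \underline{x}^{\underline{k}_0} = \underline{k}_0! \, a_{\underline{k}_0} \ne 0$, because any other $\underline{k} \le \underline{k}_0$ satisfies $|\underline{k}| < |\underline{k}_0|$ (whence $a_{\underline{k}} = 0$) or else $\underline{k} = \underline{k}_0$. Finally, on $\gD$ the formula for $\diamond$ is the finite sum defining the usual action of a partial differential operator, so this structure extends the standard left $\gD$-module structure on $\widehat{B}$.

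The genuinely delicate step, recurring in (1) and (3), is the bookkeeping: the naive term-by-term valuation estimate for $c_{\underline{m}}$ and for $(P \cdot Q) \diamond f$ gives only a constant lower bound, so one must exploit the combinatorial constraint $\underline{l} \le \underline{m}$ — which confines all but one summation index to a finite range — both to obtain $\idm$-adic convergence and to license the reordering of the iterated sums; once that is in place, the remaining verifications are routine.
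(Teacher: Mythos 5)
Your proof is correct, and it establishes everything the theorem asserts; the difference from the paper's argument is one of bookkeeping rather than substance, but it is worth recording. The paper organizes all convergence questions through the homogeneous decomposition $P = \sum_{m \ge -d} P_m$: the product and the action are first defined for homogeneous operators (and homogeneous polynomials), where the relevant sums are literally finite for degree reasons, and are then extended componentwise; this grading also makes $o(P\cdot Q) = o(P)+o(Q)$ and $\sigma(P\cdot Q)=\sigma(P)\cdot\sigma(Q)$ transparent, facts the paper uses repeatedly later. You instead work directly with the full coefficient $c_{\underline{m}}$ and justify its existence by an $\idm$--adic valuation estimate in the complete ring $\widehat{B}$, exploiting that the constraint $\underline{j}\le\underline{k}$ confines $\underline{l}$ to the finite range $\underline{l}\le\underline{m}$ and that $\upsilon(a_{\underline{k}})\ge|\underline{k}|-o(P)$ sends the remaining terms to zero; the same estimate gives $\upsilon(c_{\underline{m}})\ge|\underline{m}|-o(P)-o(Q)$ and hence closure of $\gP$ under $\cdot$. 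Your route has the advantage of forcing explicit two additional points that the paper leaves implicit: in (2) you actually verify $\ker(\mathrm{ev}_{\underline{0}})=\idm\gP$ (the paper only asserts injectivity of the induced map), and in (3) you check multiplicativity of $P\mapsto P\diamond(-)$ by reordering the absolutely convergent triple sum (the paper does not address this). Your injectivity argument in (3) — evaluate at $\underline{x}^{\underline{k}_0}$ for $\underline{k}_0$ of minimal length with $a_{\underline{k}_0}\ne 0$ — is also slightly cleaner than the paper's reduction to homogeneous operators, and is valid since every $\underline{k}\le\underline{k}_0$ other than $\underline{k}_0$ itself has strictly smaller length.
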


\begin{proof} (1) The main point is to show  that the natural product $\cdot$ is well--defined for any pair of elements $P, Q \in \gP$. Let $d = o(P)$ and
$e = o(Q)$. Assume first that $P$ and $Q$ are homogeneous. Then we have presentations
$
P = \sum\limits_{\underline{k} \in \Sigma} a_{\underline{k}} \underline{\partial}^{\underline{k}} \quad \mbox{\rm and} \quad
Q = \sum\limits_{\underline{l} \in \Sigma} b_{\underline{l}} \underline{\partial}^{\underline{l}},
$
where $a_{\underline{k}} \in B_{|\underline{k}|-d}$ and $b_{\underline{l}} \in B_{|\underline{l}|-e}$ for any $\underline{k}, \underline{l} \in \Sigma$.

\smallskip
\noindent
Having  the Leibniz formula in mind, we \emph{define}:
\begin{equation}\label{E:DefProduct}
P\cdot Q :=
\sum_{\underline{k} \in \Sigma} \sum_{\underline{l} \in \Sigma} \sum_{\underline{0} \, \le \, \underline{i} \, \le  \, \underline{k}}
\binom{k_1}{i_1} \dots
\binom{k_n}{i_n} a_{\underline{k}}
\frac{\partial^{|\underline{i}|} \, b_{\underline{l}}}{\partial x_1^{i_1} \dots \partial x_n^{i_n}} \partial^{\underline{k} + \underline{l} - \underline{i}}.
\end{equation}
Since for any $\underline{j} \in \Sigma$,  there exist only finitely many $\underline{k}, \underline{l}, \underline{i} \in \Sigma$ such that
$$
\underline{j} = \underline{k} + \underline{l} - \underline{i}, \quad \underline{k} \ge \underline{i} \; \; \mbox{\rm and} \; \;  \underline{l} \ge \underline{i},
$$
the right--hand side of (\ref{E:DefProduct}) is a well--defined \emph{homogeneous} element of $\gP$. Moreover, we have: 
$o(P\cdot Q) = o(P) + o(Q)$ provided $P\cdot Q \ne 0$.

Now, let $P, Q \in \gP$ be arbitrary elements and $P = \sum\limits_{m=-d}^\infty P_m$ respectively  $Q = \sum\limits_{l=-e}^\infty Q_l$ be the corresponding homogeneous decompositions. Then we put:
\begin{equation}
P\cdot Q :=
\sum\limits_{p=-(d+e)}^\infty\left(\sum\limits_{\substack{m+l = p \\ m \ge -d \\ l \ge -e}}  P_m \cdot Q_l\right).
\end{equation}
It is a tedious but straightforward computation to verify  that $\gP$ is indeed a $\CC$--algebra with respect to the introduced operations $\cdot$ and $+$.
Note that $\sigma(P \cdot Q) = \sigma(P) \cdot \sigma(Q)$, provided $\sigma(P) \cdot \sigma(Q) \ne 0$.

\smallskip
\noindent
(2) Note that we have a well--defined injective $\CC$--linear map
$$
\gP/\idm \gP \lar \CC\llbracket \partial_1, \dots, \partial_n\rrbracket, \quad P = \sum\limits_{\underline{k} \ge \underline{0}} a_{\underline{k}} \underline{\partial}^{\underline{k}} \mapsto P\big|_{\underline{0}}  :=  \sum\limits_{\underline{k} \ge \underline{0}}   a_{\underline{k}}(0) \underline{\partial}^{\underline{k}},
$$
whose image contains the subspace $\CC[\partial_1, \dots, \partial_n]$. Let $d = o(P)$, then by the definition we have: $\upsilon(a_{\underline{k}}) \ge |\underline{k}|-d$ for any $\underline{k} \in \Sigma$. In particular, $a_{\underline{k}}(0) = 0$ for any $\underline{k} \in \Sigma$ such that $|\underline{k}| \ge d+1$, hence
$P\big|_{\underline{0}} \in \CC[\partial_1, \dots, \partial_n]$ as claimed.

\smallskip
\noindent
(3) In order to define the natural left action of the $\CC$--algebra $\gP$ on $\widehat{B}$, take first $P \in \gP$ homogeneous of order $d \in \ZZ$ and
$f \in B_e$ for some $e \in \NN_0$. Then we have an expansion $P = \sum\limits_{\underline{k} \ge \underline{0}} a_{\underline{k}} \underline{\partial}^{\underline{k}}$ with $a_{\underline{k}} \in B_{|\underline{k}|-d}$ for any $\underline{k} \in \Sigma$. Since
$\underline{\partial}^{\underline{k}} \circ f = 0$ for any $\underline{k} \in \Sigma$ such that $|\underline{k}| \ge e+1$, we have a well--defined element
$P\circ f \in B_{e-d}$.

Now, let $P \in \gP$ and $f \in \widehat{B}$ be arbitrary elements and $d = o(P)$. Since we have homogeneous decompositions $P = \sum\limits_{m=-d}^\infty P_m$ and $f = \sum\limits_{e=0 }^\infty f_e$, we can  define:
$$
P\circ f :=
\sum\limits_{k = 0}^\infty \bigl(\sum\limits_{\substack{ m \ge -d,\;  e \ge 0 \\ m+e = k}} P_m \circ f_e\bigr).
$$
It follows from the definition that $P \circ f \in \idm^k$ provided $f \in \idm^{k+d}$. This shows   that the action of $\gP$ on $\widehat{B}$ is indeed continuous in the $\idm$--adic topology.

\smallskip
\noindent
It remains to prove  that the algebra homomorphism $
\gP \lar \End_{\CC}^{\mathrm{c}}(\widehat{B})
$
is injective. For this, it  is sufficient to show that for any homogeneous operator
$$
P = \sum\limits_{\substack{\underline{k}, \underline{i} \, \ge \, \underline{0}\\ |\underline{k}|- |\underline{i}| = d}} \alpha_{\underline{k}, \underline{i}} \,  \underline{x}^{\underline{i}} \underline{\partial}^{\underline{k}} \in \gP
$$
of order $d$, there exists $f \in \widehat{B}$ such that $P \circ f \ne 0$. Let $\underline{l}$ be an element
of the set
$$
\left\{\underline{k} \in \Sigma \big| \; \mbox{\rm there exists} \;  \underline{i} \in \Sigma \; \mbox{\rm such that} \; \alpha_{\underline{k}, \underline{i}} \ne 0\right\}
$$
 with $|\underline{l}|$  smallest possible. Then $P \circ \underline{x}^{\underline{l}} = \underline{l}! \, \alpha_{\underline{l}, \underline{i}}\, \underline{x}^{\underline{i}} \ne 0$, implying the statement.
\end{proof}

\noindent
One evidence that the algebra $\gP$ deserves a further study is due to the fact that it contains several  operators, which do not belong to the subalgebra $\gD$ but act ``naturally'' on $\widehat{B}$.

\begin{example}\label{Ex:DeltaFunctionSeries} Let $n = 1$. For any $u  \in x\CC\llbracket x\rrbracket$, consider the following operator:
\begin{equation}
\exp(u \astar \partial) := \sum\limits_{k= 0}^\infty \dfrac{u^k}{k!} \partial^k.
\end{equation}
Obviously, $\exp(u \astar \partial)$ is an operator of the algebra $\gP$ of non--positive order. Moreover, for any $f \in \widehat{B}$ we have:
\begin{equation}
\exp(u \astar \partial) \circ f(x) = f\bigl(u + x\bigr),
\end{equation}
i.e.~the operator $\exp(u \astar \partial)$ can realize an arbitrary $\CC$--linear endomorphism of $\CC\llbracket x\rrbracket$. Indeed,
$$
 \exp(u \astar \partial)\circ x^m = \left(\sum\limits_{k = 0}^m \dfrac{u^k}{k!} \partial^k\right) \circ x^m = \sum\limits_{k = 0}^m \binom{m}{k} u^k x^{m-k} = \bigl(u+ x\bigr)^m,
$$
implying the statement.

\smallskip
\noindent
In particular, let $E:= \exp((-x) \astar \partial)$. Then $E  \circ f(x) = f(0)$, i.e.~the operator $E$ is  Dirac's delta--function.

\smallskip
\noindent
Now, let $n \in \NN$ be arbitrary. For any $1 \le i \le n$ and $u \in \idm$, consider the operator
\begin{equation}\label{E:operatorE}
\exp(u \astar \partial_i) := \sum\limits_{k= 0}^\infty \dfrac{u^k}{k!} \partial_i^k.
\end{equation}
Then  for any $f \in \widehat{B}$, we have the formula:
\begin{equation*}
\exp(u \astar \partial_i)
 \circ f(x_1, \dots, x_{i-1}, x_i, x_{i+1}, \dots, x_n) = f\bigl(x_1, \dots, x_{i-1}, x_i + u, x_{i+1}, \dots, x_n\bigr).
\end{equation*}
As a special case,  for $E_i:= \exp((-x_i) \astar \partial_i)$ we have the following formula:
\begin{equation}\label{E:DeltaFunctSeries}
E_i \circ f(x_1, \dots, x_{i-1}, x_i, x_{i+1}, \dots, x_n) = f\bigl(x_1, \dots, x_{i-1}, 0, x_{i+1}, \dots, x_n\bigr).
\end{equation}

\smallskip
\noindent
Finally, note that  the formula (\ref{E:DeltaFunctSeries}) implies that $\partial_i \cdot E_i = 0$ in $\gP$. As a consequence,
$\bigl(E_i \cdot \partial_i\bigr)^2 = 0$ for any $1 \le i \le n$.
\end{example}

\begin{example}\label{Ex:Integration} Again, let us first assume that $n = 1$. Consider the operator
\begin{equation}
G:= \bigl(1- \exp((-x) \astar \partial)\bigr) \cdot \partial^{-1} =
\sum\limits_{k = 0}^\infty \frac{x^{k+1}}{(k+1)!} (-\partial)^k.
\end{equation}
Then for any $m \in \NN_0$ we have:
$$
G \circ x^m = \sum\limits_{k= 0}^m (-1)^k \frac{x^{k+1}}{(k+1)!} \bigl(\partial^k \circ x^m) = \frac{x^{m+1}}{m+1}\sum\limits_{k = 0}^m (-1)^{k+1} \binom{m+1}{k+1} =  \frac{x^{m+1}}{m+1}.
$$
Hence,
\begin{equation}
G \circ \sum\limits_{m = 0}^\infty a_m  x^m = \sum\limits_{m = 0}^\infty   \frac{a_m}{m+1} x^{m+1},
\end{equation}
i.e.~$G$ acts on $\widehat{B}$ as  the integration operator. In particular, we have: $\partial \cdot G = 1$ in $\gP$.

\smallskip
\noindent
Similarly, for $n \in \NN$ and any $1 \le i \le n$, the operator $G_i := \bigl(1- \exp((-x_i) \astar \partial_i)\bigr) \cdot \partial_i^{-1}$ is the operator of indefinite integration in the $i$-th variable.
\end{example}

\smallskip
\noindent
In what follows, we shall study more precisely the right action of the algebra $\gP$ on the $\CC$--vector space $F = \gP/\idm \gP = \CC[\partial_1, \dots, \partial_n]$.

\begin{definition}
Let $P \in \gP$ be an operator of order $d$ given by the expansion (\ref{E:expOperatorP}). Then we have another form of the formal power series expansion of $P$ called \emph{slice decomposition}:
\begin{equation}
P = \sum\limits_{\underline{i} \ge \underline{0}} \frac{\underline{x}^{\underline{i}}}{\underline{i}!} \, P_{(\underline{i})}, \quad \mbox{\rm where} \quad
P_{(\underline{i})} = \underline{i}! \sum_{\substack{\underline{k} \ge \underline{0} \\ |\underline{k}| - |\underline{i}| \le d }} \alpha_{\underline{k}, \underline{i}} \, \underline{\partial}^{\underline{k}}.
\end{equation}
For any $\underline{i} \in \Sigma$, the partial differential operator with constant coefficients  $P_{(\underline{i})} \in \CC[\partial_1, \dots, \partial_n]$ is called $\underline{i}$-th slice of $P$.
\end{definition}

\begin{remark}\label{R:slicedecomp} Note that for any $\underline{i} \in \Sigma$ we have the following identity
$\underline{\partial}^{\underline{i}} \circ P = P_{(\underline{i})}$, where $P_{(\underline{i})}$ is viewed as an element of
 the module $F$. In particular, for any  $P, Q \in \gP$, the following statement is true:
$$
P = Q \quad \mbox{if and only if} \quad \underline{\partial}^{\underline{i}} \circ P = \underline{\partial}^{\underline{i}} \circ Q
\quad \mbox{for any} \quad \underline{i} \in \Sigma.
$$
In other words, the algebra homomorphism $\gP \lar \End_{\CC}(F)$ is injective.
\end{remark}

\begin{definition}\label{D:regular}
An element $P \in \gP$ is called \emph{regular} if the $\CC$--linear map $F \xrightarrow{-\circ \sigma(P)} F$ is injective. In particular, $P$ is regular if and only if its symbol $\sigma(P)$ is regular.
\end{definition}

\begin{lemma}\label{L:regularOp}
Let $P \in \gP$. Then the following results are true.
\begin{enumerate}
\item The operator $P$ is regular if and only if for any $m \in \NN_0$, the elements of the set
$\bigl\{\underline{\partial}^{\underline{k}} \circ \sigma(P)\,  \big| \, \underline{k} \in \Sigma: |\underline{k}| = m\bigr\} \subset F$
are linearly independent.
\item Assume that $P$ is regular. Then $P$ is not a right zero divisor in $\gP$, i.e.~the equation $Q \cdot P = 0$ in $\gP$ implies that $Q = 0$.
\end{enumerate}
\end{lemma}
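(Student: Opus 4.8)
The plan is to reduce both parts to the slice identity $\underline{\partial}^{\underline{i}}\circ P = P_{\underline{i}}$ of Remark~\ref{R:slicedecomp}, the injectivity of the homomorphism $\gP\to\End_\CC(F)$ recorded there, and the multiplicativity of symbols $\sigma(P\cdot Q)=\sigma(P)\cdot\sigma(Q)$ (valid whenever the right-hand side is nonzero) established inside the proof of the preceding theorem. Throughout I use the grading $F=\CC[\partial_1,\dots,\partial_n]=\bigoplus_{m\ge 0}F_m$ by total degree, so that $F_m$ is the finite-dimensional space with basis $\{\underline{\partial}^{\underline{k}}:|\underline{k}|=m\}$.

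For part~(1), the first step is to check that the right-multiplication operator $-\circ\sigma(P)\colon F\to F$ is homogeneous of degree $d:=o(P)$, i.e. sends $F_m$ into $F_{m+d}$: since $\sigma(P)$ is a homogeneous operator of order $d$, only monomials $\underline{x}^{\underline{i}}\underline{\partial}^{\underline{k}}$ with $|\underline{k}|-|\underline{i}|=d$ occur in its expansion, so by the slice identity $\underline{\partial}^{\underline{k}}\circ\sigma(P)=\sigma(P)_{\underline{k}}$ lies in $F_{|\underline{k}|+d}$. Because source and target are direct sums of these homogeneous pieces and the map respects the grading up to the fixed shift, it is injective if and only if its restriction to each $F_m$ is injective, and injectivity on the finite-dimensional $F_m$ is exactly the statement that the images of its basis, namely $\{\underline{\partial}^{\underline{k}}\circ\sigma(P):|\underline{k}|=m\}$, are linearly independent. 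Since, by Definition~\ref{D:regular}, $P$ is regular precisely when $-\circ\sigma(P)$ is injective, this yields the equivalence in both directions at once.

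For part~(2), assume $Q\cdot P=0$ with $P$ regular and suppose, for contradiction, that $Q\ne 0$. Then $\sigma(Q)\ne 0$: the supremum defining $o(Q)$ is a supremum of integers bounded above, hence attained, so some homogeneous component of $Q$ of maximal order is nonzero. By injectivity of $\gP\to\End_\CC(F)$ there is $w\in F$ with $v:=w\circ\sigma(Q)\ne 0$, and regularity of $\sigma(P)$ then gives $v\circ\sigma(P)\ne 0$; by associativity of the right $\gP$-action on $F$ this reads $w\circ\bigl(\sigma(Q)\cdot\sigma(P)\bigr)\ne 0$, whence $\sigma(Q)\cdot\sigma(P)\ne 0$. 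Multiplicativity of symbols now gives $\sigma(Q\cdot P)=\sigma(Q)\cdot\sigma(P)\ne 0$, contradicting $Q\cdot P=0$; hence $Q=0$. The only points requiring genuine care are the degree bookkeeping in part~(1) — confirming that a homogeneous symbol of order $d$ produces slices in degree $|\underline{k}|+d$, so that $-\circ\sigma(P)$ is indeed graded — and the small observation that $o(Q)<\infty$ forces $\sigma(Q)\ne 0$ for $Q\ne 0$; beyond these, the argument is formal.
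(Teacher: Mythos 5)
Your proof is correct and follows essentially the same route as the paper: part (1) via the observation that $-\circ\sigma(P)$ is graded of degree $d=o(P)$ so injectivity reduces to linear independence of the images of the monomial basis of each $F_m$, and part (2) via $\sigma(Q)\ne 0$, the faithfulness of the action on $F$, and the multiplicativity of symbols. No gaps.
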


\begin{proof} (1) Let $d := o(P) = o\bigl(\sigma(P)\bigr)$. Then for any $\underline{k} \in \Sigma$ we have:
$o\bigl(\underline{\partial}^{\underline{k}} \circ \sigma(P)\bigr) = |\underline{k}| + d$ provided $\underline{\partial}^{\underline{k}} \circ \sigma(P) \ne 0$. Therefore, the linear map $F \xrightarrow{-\circ \sigma(P)} F$ splits into a direct sum of its graded components
$F_m \xrightarrow{-\circ \sigma(P)} F_{m+d}$, implying the first statement.

\smallskip
\noindent
(2) Let $Q \ne 0$ be such that $Q \cdot P = 0$. Then $\sigma(Q) \ne 0$ as well,  whereas  $\sigma(Q) \cdot \sigma(P) = 0$. Next, there exists
$\underline{k} \in \Sigma$ such that $\overline{\underline{\partial}^{\underline{k}} \cdot \sigma(Q)} \ne 0$ in $F$. On the other hand,
$$
\overline{\underline{\partial}^{\underline{k}} \cdot \sigma(Q)} \circ \sigma(P) = \overline{\underline{\partial}^{\underline{k}} \cdot \sigma(Q) \cdot \sigma(P)} = 0,
$$
hence $P$ is not regular, contradiction.
\end{proof}

\begin{definition}
Let $\gP_{-} := \bigl\{P \in \gP \, \big| \, o(P) \le 0 \bigr\}$ and $\gP_{-}^\ast$ be the group of units of $\gP_{-}$.
\end{definition}

\begin{lemma} The following results are true.
\begin{enumerate}
\item Let $P \in \gP_{-}$. Then we have: $P \in \gP_{-}^\ast$ if and only if $\sigma(P) \in \gP_{-}^\ast$.
\item Let $Q \in \gP_{-}^\ast$. Then $o(Q) = 0$ and
$\overline{\sigma(Q)} = Q_{(\underline{0})} \in \CC^\ast$.
\end{enumerate}
\end{lemma}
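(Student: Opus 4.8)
The plan is to treat $\gP_{-}$ as a complete filtered algebra and deduce everything from the homogeneous decomposition $P=\sum_{m\ge -o(P)}P_m$ (so $\sigma(P)=P_{-o(P)}$, $o(P_m)=-m$) together with the two structural facts established above: if $P\cdot Q\ne 0$ then $o(P\cdot Q)=o(P)+o(Q)$, and a product of two nonzero homogeneous operators is again homogeneous. For the necessity in part (1): if $P\in\gP_{-}^\ast$ with two--sided inverse $Q\in\gP_{-}$, then $o(P)+o(Q)=o(1)=0$ with $o(P),o(Q)\le 0$, whence $o(P)=o(Q)=0$ and $\sigma(P)=P_0$, $\sigma(Q)=Q_0$. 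Comparing the degree--$0$ homogeneous components of $PQ=1$ — only $P_0Q_0$ contributes, since $P_iQ_j$, when nonzero, is homogeneous of order $-(i+j)$ — gives $\sigma(P)\sigma(Q)=1$, and symmetrically $\sigma(Q)\sigma(P)=1$. Both $\sigma(P),\sigma(Q)$ have order $0$, hence lie in $\gP_{-}$, so $\sigma(P)\in\gP_{-}^\ast$. This also shows, for part (2), that $Q\in\gP_{-}^\ast$ forces $o(Q)=0$.

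For the sufficiency in part (1), assume $\sigma(P)\in\gP_{-}^\ast$. Applying the previous paragraph to the homogeneous operator $\sigma(P)$ (which is its own symbol) gives $o(\sigma(P))=0$, hence $o(P)=0$ and $P=\sigma(P)+R$ with $o(R)\le -1$. Factor $P=\sigma(P)\cdot(1+S)$ with $S:=\sigma(P)^{-1}R$, so $o(S)\le -1$. The heart of the argument is that $1+S$ is a unit of $\gP_{-}$: writing $S^{k}=\sum_{\underline j}a^{(k)}_{\underline j}\,\underline\partial^{\underline j}$, the bound $o(S^{k})\le -k$ forces $\upsilon\bigl(a^{(k)}_{\underline j}\bigr)\ge|\underline j|+k$, so for each fixed $\underline j$ the series $\sum_{k\ge 0}(-1)^{k}a^{(k)}_{\underline j}$ converges $\idm$--adically in $\widehat B$; the resulting operator $T:=\sum_{k\ge 0}(-S)^{k}$ has all coefficients of valuation $\ge|\underline j|$ — strictly $>$ for $\underline j\ne\underline 0$, since only $k=0$ contributes to $\underline\partial^{\underline 0}$ — hence $o(T)\le 0$ and $T\in\gP_{-}$. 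Since addition and multiplication in $\gP$ are continuous for the $\idm$--adic filtration, the usual telescoping identity yields $(1+S)T=T(1+S)=1$. Thus $P=\sigma(P)(1+S)$ is a product of two elements of $\gP_{-}^\ast$, so $P\in\gP_{-}^\ast$ with inverse $T\sigma(P)^{-1}$. I expect this Neumann--series step to be the only genuinely technical point: one has to check that $\sum_k(-S)^k$ is a bona fide element of $\gP$ (finite order) and that the geometric series identity is legitimate $\idm$--adically; the rest is formal bookkeeping with orders and homogeneous components.

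For part (2), let $Q\in\gP_{-}^\ast$; by the first paragraph $o(Q)=0$, so $\sigma(Q)=Q_0$ is homogeneous of order $0$. Then the coefficient of $\underline\partial^{\underline k}$ in $\sigma(Q)$ has valuation $\ge|\underline k|$, hence vanishes at the origin for $|\underline k|\ge 1$, while the coefficient of $\underline\partial^{\underline 0}$ is a scalar $c$; therefore the image $\overline{\sigma(Q)}$ of $\sigma(Q)$ in $F=\gP/\idm\gP\cong\CC[\partial_1,\dots,\partial_n]$ equals $c\in\CC$. Moreover $Q-\sigma(Q)=\sum_{m\ge 1}Q_m$ has order $\le -1$, and any operator of negative order has all its coefficients in $\idm$ and (dividing these coefficients by the $x_i$, which keeps the order finite) lies in $\idm\gP$, so it maps to $0$ in $F$; hence the image of $Q$ in $F$ is also $c$, and by Remark \ref{R:slicedecomp} this image is precisely $Q_{(\underline 0)}$, i.e. $Q_{(\underline 0)}=\overline{\sigma(Q)}=c$. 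Finally $c\ne 0$: by part (1), $\sigma(Q)$ is invertible in $\gP$ with $\sigma(Q^{-1})\sigma(Q)=1$; applying both sides to the constant function $1\in\widehat B$ through the injective algebra homomorphism $\gP\to\End_{\CC}^{\mathrm{c}}(\widehat B)$ and using $\sigma(Q)\diamond 1=c$, one would get $1=\sigma(Q^{-1})\diamond(c\cdot 1)=c\cdot\bigl(\sigma(Q^{-1})\diamond 1\bigr)$, which is impossible if $c=0$. Hence $\overline{\sigma(Q)}=Q_{(\underline 0)}\in\CC^\ast$, completing the proof.
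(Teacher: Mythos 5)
The paper states this lemma without proof, so there is nothing to compare your argument against; judged on its own, your proof is correct and supplies exactly the missing details: the order/symbol bookkeeping for the necessity direction and part (2), and a Neumann-series inversion of $1+S$ for the sufficiency direction, which is indeed the only nontrivial step and which you justify adequately (the bound $\upsilon\bigl(a^{(k)}_{\underline j}\bigr)\ge|\underline j|+k$ gives both $\idm$--adic convergence of each coefficient and $o(T)\le 0$, and the telescoping identity can be checked homogeneous degree by homogeneous degree since $(S^k)_p=0$ for $p<k$).

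One imprecision you should fix: the ``structural fact'' you invoke, that $P\cdot Q\ne 0$ implies $o(P\cdot Q)=o(P)+o(Q)$, is established in the paper only for \emph{homogeneous} operators and is false in general (e.g.\ $P=Q=E_1\partial_1+1$ has $o(P)=o(Q)=1$ but $o(PQ)=1$, because the symbols annihilate each other). What is true for arbitrary $P,Q$ is the sub-additivity $o(P\cdot Q)\le o(P)+o(Q)$, which is all you actually need: from $PQ=1$ one gets $0=o(1)\le o(P)+o(Q)\le 0$, hence $o(P)=o(Q)=0$, and the rest of your first paragraph (comparison of degree-zero homogeneous components, $\sigma(P)\sigma(Q)=\sigma(Q)\sigma(P)=1$) goes through unchanged. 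With that wording corrected, the argument is complete.
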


\begin{remark} It is not true that any  unit in the algebra $\gP$ belongs to its subalgebra $\gP_-$. Indeed,
let $P = \exp\bigl((-x) \astar \partial\bigr)\cdot \partial  \in \gP$. Then $P^2 = 0$, hence  $1 + P$ is a unit in $\gP$, which is not an element of  $\gP_-$.
\end{remark}

\begin{definition} Let $W \subseteq F = \CC[\partial_1, \dots, \partial_n]$ be a $\CC$--linear subspace.
\begin{enumerate}
\item For any $k \in \ZZ$, we put:
$
W_k := \bigl\{w \in W \, \big| \, o(w) \le k \bigr\}.
$
\item $H_W(k):= \dim_{\CC}\bigl(W_k\bigr)$ is the \emph{Hilbert function} of $W$.
\end{enumerate}
\end{definition}

\begin{definition}\label{D:SatoGrassmannian} Let $\mu \in \NN_0$.
\begin{enumerate}
\item We put:
$
\mathsf{Gr}_\mu(F) := \Bigl\{W \subseteq F \, \Big| \, H_W(\mu +k) = \binom{n+k}{n} \quad \mbox{\rm for any} \quad k \in \NN_0 \Bigr\}
$ (recall that $H_F(k) = \binom{n+k}{n}$).
\item Let $W \in \mathsf{Gr}_\mu(F)$. Then $S \in \gP$ is a  \emph{Sato operator} of $W$ if the following conditions are fulfilled:
\begin{itemize}
\item $S$ is regular and $o(S) = \mu$.
\item We have: $W = F \circ S$.
\end{itemize}
\end{enumerate}
\end{definition}

\begin{proposition}\label{P:SatoOperators}
Let $\mu \in \NN_0$, $W \in \mathsf{Gr}_\mu(F)$ and $T$ be a Sato operator of $W$. Then the following results are true.
\begin{enumerate}
\item $U \cdot T$ is a Sato operator of $W$ for any $U \in \gP_-^{\ast}$.
\item For any $m \in \NN_0$, the elements $\bigl\{T_{(\underline{k})} \, \big|\, \underline{k} \in \Sigma \; \mbox{\rm such that} \; |\underline{k}| \le m \bigr\}$ form a basis of the vector space
$W_{\mu +m}$, where $T_{(\underline{k})} \in F$ is the $\underline{k}$-th slice of the operator $T$.
\item Moreover, for any $m \in \NN_0$, the elements $\bigl\{\overline{T}_{(\underline{k})} \, \big|\, \underline{k} \in \Sigma \; \mbox{\rm such that}\; |\underline{k}| =  m \bigr\}$ form a basis of the vector space
$W_{\mu + m}/W_{\mu + m -1}$, where $\overline{T}_{(\underline{k})}$ denotes the class of $T_{(\underline{k})}$.
\item The linear map $F \xrightarrow{-\circ T} W$ is a bijection.
\end{enumerate}
\end{proposition}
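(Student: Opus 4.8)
The plan is to obtain part (1) from the structural properties of $\gP_-^\ast$ established just above, to prove part (2) by isolating top-order components and invoking the regularity of $T$, and then to read off parts (3) and (4) formally. \emph{Part (1).} Let $U \in \gP_-^\ast$. By the preceding lemmas, $o(U) = 0$ and $\sigma(U) \in \gP_-^\ast$; denoting by $U^{-1}$ and $\sigma(U)^{-1}$ their inverses in $\gP_-$ and using that $-\circ$ is a right action, the relations $UU^{-1} = U^{-1}U = 1$ and $\sigma(U)\sigma(U)^{-1} = \sigma(U)^{-1}\sigma(U) = 1$ show that $F \xrightarrow{\,-\circ U\,} F$ and $F \xrightarrow{\,-\circ\sigma(U)\,} F$ are bijections. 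Since $U$ is a unit in $\gP$ and $T \ne 0$, we have $U\cdot T \ne 0$, hence $o(U\cdot T) = o(U) + o(T) = \mu$; moreover $\sigma(U\cdot T) = \sigma(U)\,\sigma(T)$, so $-\circ\sigma(U\cdot T)$ is the composition of the bijection $-\circ\sigma(U)$ followed by the injection $-\circ\sigma(T)$ (injective because $T$ is regular, Definition \ref{D:regular}), hence injective; thus $U\cdot T$ is regular. Finally $F\circ(U\cdot T) = (F\circ U)\circ T = F\circ T = W$ because $-\circ U$ is surjective, so $U\cdot T$ is a Sato operator of $W$.

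\emph{Part (2).} I will use the identity $\underline{\partial}^{\underline{k}}\circ T = T_{(\underline{k})}$ in $F$ (Remark \ref{R:slicedecomp}) together with the observation, read off from the slice decomposition of $T$ and the normalization $o(T) = \mu$, that $T_{(\underline{k})}$ has order at most $|\underline{k}| + \mu$ and that its homogeneous component of order exactly $|\underline{k}| + \mu$ equals $\underline{\partial}^{\underline{k}}\circ\sigma(T)$. Suppose scalars $c_{\underline{k}} \in \CC$, indexed by the multi-indices $\underline{k} \in \Sigma$ with $|\underline{k}| \le m$, satisfy $\sum_{|\underline{k}|\le m} c_{\underline{k}}\,T_{(\underline{k})} = 0$ in $F$. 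Extracting the homogeneous component of order $m+\mu$, only the indices with $|\underline{k}| = m$ contribute, and we obtain $\sum_{|\underline{k}| = m} c_{\underline{k}}\,\bigl(\underline{\partial}^{\underline{k}}\circ\sigma(T)\bigr) = 0$; since $T$ is regular, Lemma \ref{L:regularOp}(1) forces $c_{\underline{k}} = 0$ for all $|\underline{k}| = m$. Applying this repeatedly with $m$ replaced by $m-1, m-2, \dots, 0$ yields $c_{\underline{k}} = 0$ for all $\underline{k}$, so the slices $T_{(\underline{k})}$ with $|\underline{k}| \le m$ are linearly independent; there are exactly $\binom{n+m}{n}$ such multi-indices. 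Each $T_{(\underline{k})}$ lies in $F\circ T = W$ and has order $|\underline{k}| + \mu \le m+\mu$, hence lies in $W_{\mu+m}$, and since $H_W(\mu+m) = \binom{n+m}{n}$ by the defining condition of $\mathsf{Gr}_\mu(F)$ (Definition \ref{D:SatoGrassmannian}), these slices form a basis of $W_{\mu+m}$.

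\emph{Parts (3) and (4).} By part (2), the slices $T_{(\underline{k})}$ with $|\underline{k}| \le m$ form a basis of $W_{\mu+m}$ and those with $|\underline{k}| \le m-1$ form a basis of $W_{\mu+m-1}$, so the classes of the slices with $|\underline{k}| = m$ form a basis of $W_{\mu+m}/W_{\mu+m-1}$; this is part (3). Letting $m$ range over $\NN_0$ shows that the family $\{T_{(\underline{k})} : \underline{k} \in \Sigma\}$ is a basis of $W = \bigcup_{m} W_{\mu+m}$. Since $F \xrightarrow{\,-\circ T\,} W$ sends the basis $\{\underline{\partial}^{\underline{k}} : \underline{k}\in\Sigma\}$ of $F$ precisely to this family, it is a linear isomorphism; this is part (4). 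The step I expect to require the most care is the order bookkeeping in part (2) — verifying, inside $F = \gP/\idm\gP$, that the top-order part of the slice $T_{(\underline{k})}$ is indeed $\underline{\partial}^{\underline{k}}\circ\sigma(T)$, which is precisely where the hypothesis $o(T) = \mu$ enters; the rest is formal once the lemmas on regular operators and on $\gP_-^\ast$ are in hand.
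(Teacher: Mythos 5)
Your proposal is correct and follows essentially the same route as the paper: part (1) via multiplicativity of order and symbol together with the structure of $\gP_-^\ast$, part (2) by extracting the top homogeneous component of a linear relation among slices and invoking the regularity criterion of Lemma \ref{L:regularOp} plus the dimension count from Definition \ref{D:SatoGrassmannian}, and parts (3)--(4) by counting. The only (harmless) deviation is that you deduce (3) formally from the bases produced in (2), whereas the paper re-runs the symbol argument in the quotient $W_{\mu+m}/W_{\mu+m-1}$.
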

\begin{proof} (1) If $U \in \gP_-^\ast$ then $U \cdot T \ne 0$ and $o(U \cdot T) = o(U) + o(T) = \mu$. Next,
$F \circ (U \cdot T) = (F \circ U) \circ T = F \circ T = W$. Finally, $\sigma(U \cdot T) = \sigma(U) \cdot \sigma(T)$ and $\sigma(U) \in \CC^*$.
Hence, the linear map $F \xrightarrow{- \circ \sigma(U\cdot T)} F$ is injective, implying that the operator $U \cdot T$ is regular. Hence, $U \cdot T$ is indeed
a Sato operator of $W$.

\smallskip
\noindent
(2)  Recall that we have a slice expansion:
$
T = \sum\limits_{\underline{k} \ge \underline{0}} \dfrac{\underline{x}^{\underline{k}}}{\underline{k}!} T_{(\underline{k})}.
$
Since $o(T) = \mu$, we have: $o\bigl(T_{(\underline{k})}\bigr) \le \mu + |\underline{k}|$, i.e.
$T_{(\underline{k})} \in W_{|\underline{k}| + \mu}$ for any $\underline{k} \in \Sigma$.  Moreover,  there exists
$\underline{l} \in \Sigma$ such that $o\bigl(T_{(\underline{l})}) = \mu + |\underline{l}|$.

\smallskip
\noindent
Next, for any $\underline{k} \in \Sigma$ we put:
$
\bar{\sigma}\bigl(T_{(\underline{k})}\bigr) =
\left\{
\begin{array}{ccc}
\sigma\bigl(T_{(\underline{k})}\bigr) &\mbox{\rm if} & o\bigl(T_{(\underline{k})}) = \mu + |\underline{k}| \\
0 &\mbox{\rm if} & o\bigl(T_{(\underline{k})}) < \mu + |\underline{k}|. \\
\end{array}
\right.
$
Then we have the formula:
$
\sigma(T)  = \sum\limits_{\underline{k} \ge \underline{0}} \dfrac{\underline{x}^{\underline{k}}}{\underline{k}!} \bar{\sigma}\bigl(T_{(\underline{k})}\bigr),
$
which implies that
$\underline{\partial}^{\underline{k}} \circ \sigma(T) = \bar{\sigma}\bigl(T_{(\underline{k})}\bigr)$ for all $\underline{k} \in \Sigma$. Since $T$ is regular,
$\bar{\sigma}\bigl(T_{(\underline{k})}\bigr) \ne 0$  and $o\bigl(T_{(\underline{k})}) = \mu + |\underline{k}|$ for all $\underline{k} \in \Sigma$, hence
\begin{equation}\label{E:symbolT}
\sigma(T)  = \sum\limits_{\underline{k} \ge \underline{0}} \dfrac{\underline{x}^{\underline{k}}}{\underline{k}!} \sigma\bigl(T_{(\underline{k})}\bigr).
\end{equation}
By assumption, $\dim_{\CC}(W_{m + \mu}) = \binom{n+m}{n}$. Therefore,
to prove the second statement, it  is sufficient to show that for any $m \in \NN_0$, the elements $\bigl\{T_{(\underline{k})} \, \big|\, \underline{k} \in \Sigma \; \mbox{\rm such that} \; |\underline{k}| \le m \bigr\}$ are linearly independent. We prove it by induction on $m$. The case $m = 0$ is clear:
since $\dim_{\CC}(W_\mu) = 1$ and  $o\bigl(T_{(\underline{k})}\bigr) = |\underline{k}| + \mu$, we have $W_\mu = \bigl\langle T_{(\underline{0})}\bigr\rangle_\CC$. Next,
let $\bigl\{\beta_{\underline{k}} \in \CC \,\big|\, \underline{k} \in \Sigma:   |\underline{k}| \le m \bigr\}$ be such that
$
\sum\limits_{|\underline{k}| \le m} \beta_{\underline{k}} T_{(\underline{k})} = 0.
$
Then we have:  $
\sum\limits_{|\underline{k}| = m} \beta_{\underline{k}} \, \sigma\bigl(T_{(\underline{k})}\bigr) = 0
$
in the vector space $F_{m + \mu}$. From  formula (\ref{E:symbolT}) and Lemma \ref{L:regularOp} we deduce  that $\beta_{\underline{k}} = 0$ for any
$\underline{k} \in \Sigma$ such that $|\underline{k}| = m$. Proceeding by induction, we get the second claim.

\smallskip
\noindent
(3) Analogously, assume that $
\sum\limits_{|\underline{k}| = m} \gamma_{\underline{k}} \, \overline{T}_{(\underline{k})} = 0
$
in the quotient vector space $W_{m+\mu}/W_{m+\mu-1}$.
Then we get: $
\sum\limits_{|\underline{k}| = m} \gamma_{\underline{k}} \, \sigma\bigl({T}_{(\underline{k})}\bigr) = 0
$
in $F_{m + \mu}$, hence $\beta_{\underline{k}} = 0$ for any
$\underline{k} \in \Sigma$ such that $|\underline{k}| = m$. The third claim follows from the fact that $\dim_{\CC}\bigl(W_{m+\mu}/W_{m+\mu-1}\bigr) =
\binom{n+m-1}{m}.
$

\smallskip
\noindent
(4) For any $m \in \NN_0$, the  linear map $F_m \xrightarrow{-\circ T} W_{m+\mu}, \; \underline{\partial}^{\underline{k}} \mapsto T_{(\underline{k})}$
is an isomorphism by the dimension reasons. This implies the fourth statement.
\end{proof}

\begin{theorem}\label{T:SatoAction}
Let $\mu \in \NN_0$ and $W \in \mathsf{Gr}_\mu(F)$. Then the following statements are true.
\begin{enumerate}
\item The vector space $W$ possesses  a Sato operator $S$.
\item If $T$ is another Sato operator for $W$ then there exists a uniquely determined $U \in \gP_-^\ast$ such that $S = U\cdot T$.
\end{enumerate}
In other words, a Sato operator of $W$ exists and is unique up to a unit of the algebra $\gP_-$.
\end{theorem}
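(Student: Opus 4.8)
The plan is to produce a Sato operator by placing into slice position a basis of $W$ adapted to the order filtration $(W_k)_k$, and then to read off the uniqueness statement from the fact (Lemma~\ref{L:regularOp}(2)) that a regular operator is not a right zero divisor in $\gP$.

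\emph{Existence.} Since $\dim_\CC W_\mu=1$ and, for each $m\ge 1$, the jump $H_W(\mu+m)-H_W(\mu+m-1)=\binom{n+m-1}{n-1}$ equals the number of $\underline k\in\Sigma$ with $|\underline k|=m$, I would choose for every $\underline k\in\Sigma$ an element $w_{\underline k}\in W$ of order \emph{exactly} $\mu+|\underline k|$ such that, for each $m$, the classes $\{\overline{w_{\underline k}}\mid |\underline k|=m\}$ form a basis of $W_{\mu+m}/W_{\mu+m-1}$; inductively $\{w_{\underline k}\mid |\underline k|\le m\}$ is then a basis of $W_{\mu+m}$ and $\{w_{\underline k}\}_{\underline k}$ a basis of $W=\bigcup_m W_{\mu+m}$. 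Put $S:=\sum_{\underline k\ge\underline 0}\frac{\underline x^{\underline k}}{\underline k!}\,w_{\underline k}$. One checks that $o(S)=\mu$: the $\underline\partial^{\underline j}$-coefficient of $S$ has valuation $\ge|\underline j|-\mu$, because $w_{\underline k}$ only involves $\underline\partial^{\underline j}$ with $|\underline j|\le\mu+|\underline k|$, and the value $\mu$ is attained through the leading term of $w_{\underline 0}$; hence $S\in\gP$ and its slices are $S_{(\underline k)}=w_{\underline k}$. By Remark~\ref{R:slicedecomp}, $\underline\partial^{\underline k}\circ S=w_{\underline k}$, so $F\circ S=\langle w_{\underline k}\rangle_\CC=W$. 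Finally, as in the proof of Proposition~\ref{P:SatoOperators}(2) one has $\underline\partial^{\underline k}\circ\sigma(S)=\sigma(w_{\underline k})$ (here the choice $o(w_{\underline k})=\mu+|\underline k|$ is used), and the leading-term map $W_{\mu+m}/W_{\mu+m-1}\hookrightarrow F_{\mu+m}$ is injective (its kernel is $W_{\mu+m-1}$); hence $\{\underline\partial^{\underline k}\circ\sigma(S)\mid|\underline k|=m\}$ is linearly independent for every $m$, and $S$ is regular by Lemma~\ref{L:regularOp}(1). Thus $S$ is a Sato operator of $W$.

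\emph{Uniqueness.} Let $S,T$ be Sato operators of $W$. By Proposition~\ref{P:SatoOperators}(2), $\{T_{(\underline j)}\mid|\underline j|\le m\}$ is a basis of $W_{\mu+m}$ for every $m$, so $\{T_{(\underline j)}\}_{\underline j}$ is a basis of $W$; moreover $S_{(\underline k)}=\underline\partial^{\underline k}\circ S\in W$ lies in $W_{\mu+|\underline k|}$ because $o(S)=\mu$. Hence there are unique scalars $u_{\underline k,\underline j}\in\CC$ with $u_{\underline k,\underline j}=0$ whenever $|\underline j|>|\underline k|$ such that $S_{(\underline k)}=\sum_{\underline j}u_{\underline k,\underline j}T_{(\underline j)}$. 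Set $U:=\sum_{\underline k,\underline j}\frac{\underline x^{\underline k}}{\underline k!}\,u_{\underline k,\underline j}\,\underline\partial^{\underline j}$. The triangularity of $(u_{\underline k,\underline j})$ forces $o(U)\le 0$, so $U\in\gP_{-}$, and $U_{(\underline 0)}=u_{\underline 0,\underline 0}\in\CC^{\ast}$. Since $U_{(\underline k)}=\sum_{\underline j}u_{\underline k,\underline j}\underline\partial^{\underline j}$, for every $\underline k$ one gets $\underline\partial^{\underline k}\circ(U\cdot T)=(\underline\partial^{\underline k}\circ U)\circ T=\sum_{\underline j}u_{\underline k,\underline j}T_{(\underline j)}=S_{(\underline k)}=\underline\partial^{\underline k}\circ S$, so $U\cdot T=S$ by Remark~\ref{R:slicedecomp}. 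Exchanging the roles of $S$ and $T$ produces $V\in\gP_{-}$ with $V\cdot S=T$; then $(VU-1)\cdot T=V\cdot S-T=0$ and, $T$ being regular, Lemma~\ref{L:regularOp}(2) gives $VU=1$, and symmetrically $UV=1$ using regularity of $S$. Hence $U\in\gP_{-}^{\ast}$. If also $S=U'\cdot T$ with $U'\in\gP_{-}^{\ast}$, then $(U-U')\cdot T=0$, so $U=U'$ by the same argument.

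\emph{Main obstacle.} The mathematics is light; the care lies in the bookkeeping. The two points that need attention are: (i) that the formally defined series $S$ and $U$ genuinely have \emph{finite} order, i.e.\ belong to $\gP$ --- this rests on the estimate $o(w_{\underline k})\le\mu+|\underline k|$ (respectively $o(S_{(\underline k)})\le\mu+|\underline k|$) and on the ``lower triangularity in $|\underline k|$'' of the transition matrix $(u_{\underline k,\underline j})$; and (ii) the regularity of $S$, which one extracts from the injectivity of the leading-term map on the associated graded of the filtration $(W_k)_k$ via Lemma~\ref{L:regularOp}(1). The small device worth flagging is that building the inverse $V$ of $U$ directly, slice by slice, sidesteps having to invoke a separate criterion for when an order-$0$ element is a unit of $\gP_{-}$.
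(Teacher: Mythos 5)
Your proposal is correct and follows essentially the same route as the paper's own proof: existence by assembling $S$ slice-by-slice from a basis of $W$ adapted to the filtration $(W_{\mu+m})_m$ and checking regularity via the linear independence of the symbols $\sigma(w_{\underline k})$ through Lemma~\ref{L:regularOp}(1), and uniqueness by building $U$ (and its inverse $V$) explicitly from the triangular change-of-basis matrix between the slices of $S$ and $T$, then concluding $UV=VU=1$ from the regularity of the Sato operators via Lemma~\ref{L:regularOp}(2). The only cosmetic difference is that you cancel on the right of $T$ where the paper cancels on the right of $S$; the content is identical.
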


\begin{proof} (1) Our construction of a Sato operator $S$ is algorithmic and depends on the following choice. Namely, for any $\underline{k} \in \Sigma$, we choose
$w_{\underline{k}} \in W_{\mu + |\underline{k}|}$ such that for any $m \in \NN_0$, the set
$\bigl\{\bar{w}_{\underline{k}} \, \big| \, \underline{k} \in \Sigma: |\underline{k}| = m \bigr\}$ forms a basis of the vector space $W_{m+\mu}/W_{m+\mu-1}$ (at this place, we essentially use the assumption on the Hilbert function of $W$). Then the following statements are true:
\begin{itemize}
\item $o\bigl(w_{\underline{k}}\bigr) = \mu + |\underline{k}|$ for any $\underline{k} \in \Sigma$;
\item the  set $\bigl\{{w}_{\underline{k}} \, \big| \, \underline{k} \in \Sigma: |\underline{k}| \le  m \bigr\}$ is  a basis of the vector space $W_{m+\mu}$.
\end{itemize}
Consider the operator $S :=
 \sum\limits_{\underline{k} \ge \underline{0}} \dfrac{\underline{x}^{\underline{k}}}{\underline{k}!} S_{(\underline{k})} \in \gP
$
such that $S_{(\underline{k})} = w_{\underline{k}}$ for all $\underline{k} \in \Sigma$. By construction of $S$ we have:
\begin{itemize}
\item $\sigma(S) =
 \sum\limits_{\underline{k} \ge \underline{0}} \dfrac{\underline{x}^{\underline{k}}}{\underline{k}!} \sigma\bigl(S_{(\underline{k})}\bigr)$, hence
 $o(S) = \mu$.
 \item $\underline{\partial}^{\underline{k}} \circ S = w_{\underline{k}}$ for all $\underline{k} \in \Sigma$, hence $W = F \circ S$.
 \item $\underline{\partial}^{\underline{k}} \circ \sigma(S) = \sigma\bigl(w_{\underline{k}})$ for all $\underline{k} \in \Sigma$. Since  $\bigl\{\bar{w}_{\underline{k}} \, \big| \, \underline{k} \in \Sigma: |\underline{k}| = m \bigr\}$ form a basis of the vector space $W_{m+\mu}/W_{m+\mu-1}$, the vectors from the set $\bigl\{\sigma\bigl(S_{\underline{k}}\bigr) \, \big| \, \underline{k} \in \Sigma:   |\underline{k}| = m \bigr\}$ are linearly independent. According to Lemma \ref{L:regularOp}, the operator $S$ is regular.
\end{itemize}
Summing up, $S$ is a Sato operator of the vector space $W$.

\smallskip
\noindent
(2) Let $S$ and $T$ be two Sato operators of $W$ and
$$
S = \sum\limits_{\underline{k} \ge \underline{0}} \frac{\underline{x}^{\underline{k}}}{\underline{k}!} S_{(\underline{k})}
\quad \mbox{\rm respectively} \quad T = \sum\limits_{\underline{k} \ge \underline{0}} \frac{\underline{x}^{\underline{k}}}{\underline{k}!} T_{(\underline{k})}
$$
be the corresponding slice decompositions. According to Proposition \ref{P:SatoOperators}, the following statements are true:
\begin{itemize}
\item $o\bigl(S_{(\underline{k})}\bigr)  = o\bigl(T_{(\underline{k})}\bigr) =  \mu + |\underline{k}|$ for any $\underline{k} \in \Sigma$.
\item For any $m \in \NN_0$, the elements of the set $\bigl\{T_{(\underline{k})} \, \big|\, \underline{k} \in \Sigma \; \mbox{\rm such that} \; |\underline{k}| \le m \bigr\}$ form a basis of the vector space
$W_{\mu +m}$.
\end{itemize}
Therefore, for any $\underline{i} \in \Sigma$ we can find (uniquely determined) scalars $\bigl\{\gamma_{\underline{i}, \underline{k}} \in \CC \, \big| \,
\underline{k} \in \Sigma: |\underline{k}| \le |\underline{i}|\bigr\}$ such that $S_{(\underline{i})} = \sum\limits_{|\underline{k}| \le |\underline{i}|}
\gamma_{\underline{i}, \underline{k}} T_{(\underline{k})}$. Moreover, there exists at least one $\underline{l} \in \Sigma$ such that $|\underline{l}| =
|\underline{i}|$ and $\gamma_{\underline{i}, \underline{l}} \ne 0$. Now we put:
$
U:= \sum\limits_{\underline{i} \ge \underline{0}} \dfrac{\underline{x}^{\underline{i}}}{\underline{i}!} \cdot
\sum\limits_{|\underline{k}| \le |\underline{i}|} \gamma_{\underline{i}, \underline{k}} \underline{\partial}^{\underline{k}} \in \gP.
$
It is clear that $o(U) = 0$, hence $U \in \gP_-$. We claim that $S = U \cdot T$. According to Remark \ref{R:slicedecomp}, it is equivalent to the statement that
$
\underline{\partial}^{\underline{i}}\circ S = \underline{\partial}^{\underline{i}}\circ (U \cdot T) = \bigl(\underline{\partial}^{\underline{i}}\circ U\bigr)  \circ T$ for all $\underline{i} \in \Sigma.
$

\noindent
By the construction of $U$, we have:
$$
\bigl(\underline{\partial}^{\underline{i}}\circ U\bigr)  \circ T = \sum\limits_{|\underline{k}| \le |\underline{i}|} \gamma_{\underline{i}, \underline{k}} \, \underline{\partial}^{\underline{k}} \circ T = \sum\limits_{|\underline{k}| \le |\underline{i}|} \gamma_{\underline{i}, \underline{k}} T_{(\underline{k})} = S_{(\underline{i})},
$$
so $S = U\cdot T$ as asserted. In a similar way, we can find $V \in \gP_-$ such that $T = V \cdot S$. Therefore, we have: $(1- U\cdot V)\cdot S = 0$. Since
$S$ is regular, Lemma \ref{L:regularOp} implies that $U\cdot V = 1$. In a similar way, $V\cdot U = 1$, hence $U, V \in \gP_-^\ast$ as claimed.

\smallskip
\noindent
The uniqueness of the unit $U$ also follows from Lemma \ref{L:regularOp}. Theorem is proven.
\end{proof}

\begin{definition}\label{D:embSchurPair} Let $\mu \in \NN_0$ and $F = \CC[\partial_1, \dots, \partial_n]$ (viewed as the polynomial algebra). We say that $(W, A)$ is a \emph{Schur pair} of index $\mu$ if
$W \in \mathsf{Gr}_\mu(F)$ and $A \subseteq F$ is a subalgebra such that $W \cdot A = W$.
\end{definition}

\begin{theorem}\label{T:SatoSchur}
Let $(W, A)$ be a Schur pair of index $\mu \in \NN_0$ and $S$ be a Sato operator of $W$. Then the following statements are true.
\begin{enumerate}
\item For any polynomial $f \in A$, there exists a uniquely determined operator $\mathtt{L}_S(f) \in \gP$ such that
$
S \cdot f = \mathtt{L}_S(f) \cdot S.
$
Moreover, $o\bigl(\mathtt{L}_S(f)\bigr) = \deg(f)$ for any $f\in A$.
\item Next, for any $f_1, f_2 \in A$ and $\lambda_1, \lambda_2 \in C$ we have:
\begin{equation*}
\mathtt{L}_S(f_1 \cdot f_2) = \mathtt{L}_S(f_1) \cdot \mathtt{L}_S(f_2) \quad \mbox{\rm and} \quad \mathtt{L}_S(\lambda_1 f_1 + \lambda_2 f_2) = \lambda_1 \mathtt{L}_S(f_1) + \lambda_2 \mathtt{L}_S(f_2)
\end{equation*}
In other words, the map $A \lar \gP, f \mapsto \mathtt{L}_S(f)$ is a homomorphism of $\CC$--algebras, which is moreover
injective.
\item Finally, for any polynomial $f \in A$, the following diagram of $\CC$--linear maps
\begin{equation}\label{E:diagSpectModule}
\begin{array}{c}
\xymatrix{
F \ar[d]_-{-\circ \mathtt{L}_S(f)}\ar[rr]^-{-\circ S} & & W \ar[d]^-{\cdot f} \\
F  \ar[rr]^-{-\circ S} & & W
}
\end{array}
\end{equation}
is commutative.
\end{enumerate}
\end{theorem}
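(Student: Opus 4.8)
The plan is to construct $\mathtt{L}_S(f)$ one slice at a time and then read off all three assertions from Proposition~\ref{P:SatoOperators}, Lemma~\ref{L:regularOp} and Remark~\ref{R:slicedecomp}. Throughout I use that $S$ is regular with $o(S) = \mu$, that the slices $S_{(\underline{k})}$, $\underline{k} \in \Sigma$, form a $\CC$--basis of $W$ with $o\bigl(S_{(\underline{k})}\bigr) = \mu + |\underline{k}|$, and that $-\circ S \colon F \to W$ is bijective (all from Proposition~\ref{P:SatoOperators}), together with $W \cdot A = W$ (the defining property of the Schur pair).

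For part (1), uniqueness is immediate: if $L_1 \cdot S = L_2 \cdot S$ then $(L_1 - L_2)\cdot S = 0$, and regularity of $S$ forces $L_1 = L_2$ by Lemma~\ref{L:regularOp}(2). For existence I would argue as follows. For each $\underline{i} \in \Sigma$ the element $S_{(\underline{i})} \cdot f$ lies in $W$, since $S_{(\underline{i})} \in W$ and $f \in A$; hence, by bijectivity of $-\circ S$, there is a unique $L_{(\underline{i})} \in F = \CC[\partial_1,\dots,\partial_n]$ with $L_{(\underline{i})} \circ S = S_{(\underline{i})} \cdot f$. I then set
$$
\mathtt{L}_S(f) := \sum\limits_{\underline{i} \ge \underline{0}} \frac{\underline{x}^{\underline{i}}}{\underline{i}!}\, L_{(\underline{i})}.
$$
The key estimate: since $o\bigl(S_{(\underline{i})}\bigr) = \mu + |\underline{i}|$ and $f$ has order $\deg(f)$, the product $S_{(\underline{i})} \cdot f$ lies in $W_{\mu + |\underline{i}| + \deg(f)}$, which by Proposition~\ref{P:SatoOperators}(2) is spanned by the $S_{(\underline{k})}$ with $|\underline{k}| \le |\underline{i}| + \deg(f)$; so $L_{(\underline{i})}$ is a polynomial of degree $\le |\underline{i}| + \deg(f)$, and therefore $o\bigl(\frac{\underline{x}^{\underline{i}}}{\underline{i}!} L_{(\underline{i})}\bigr) \le \deg(f)$ for every $\underline{i}$. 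This shows the slice series defines an element of $\gP$ with $o\bigl(\mathtt{L}_S(f)\bigr) \le \deg(f)$; inspecting the symbol, $\sigma\bigl(S_{(\underline{i})}\cdot f\bigr) = \sigma\bigl(S_{(\underline{i})}\bigr)\cdot\sigma(f) \ne 0$ for $f \ne 0$ (the ring $\CC[\partial_1,\dots,\partial_n]$ is a domain), so $L_{(\underline{i})}$ has degree exactly $|\underline{i}| + \deg(f)$ and hence $o\bigl(\mathtt{L}_S(f)\bigr) = \deg(f)$. Finally $\mathtt{L}_S(f)\cdot S = S\cdot f$ is checked slicewise: applying $\underline{\partial}^{\underline{i}}\circ(-)$ gives $L_{(\underline{i})}\circ S$ on the left and $\bigl(\underline{\partial}^{\underline{i}}\circ S\bigr)\circ f = S_{(\underline{i})}\cdot f$ on the right, which coincide by construction, so the two operators are equal by the injectivity of $\gP \to \End_\CC(F)$ (Remark~\ref{R:slicedecomp}).

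Part (2) is then formal. From $S\cdot(f_1 f_2) = \bigl(\mathtt{L}_S(f_1)\cdot S\bigr)\cdot f_2 = \mathtt{L}_S(f_1)\cdot\mathtt{L}_S(f_2)\cdot S$ and the uniqueness in (1) I get $\mathtt{L}_S(f_1 f_2) = \mathtt{L}_S(f_1)\mathtt{L}_S(f_2)$, and likewise $S\cdot(\lambda_1 f_1 + \lambda_2 f_2) = \bigl(\lambda_1\mathtt{L}_S(f_1) + \lambda_2\mathtt{L}_S(f_2)\bigr)\cdot S$ yields additivity; here I use that multiplication and $\CC$--linear combinations inside $A \subseteq \CC[\partial_1,\dots,\partial_n]$ agree with those taken in $\gP$. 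Injectivity follows at once from $o\bigl(\mathtt{L}_S(f)\bigr) = \deg(f)$: if $f \ne 0$ then $\sigma\bigl(\mathtt{L}_S(f)\bigr) \ne 0$, so $\mathtt{L}_S(f) \ne 0$. For part (3), commutativity of the diagram~(\ref{E:diagSpectModule}) is a diagram chase in the right $\gP$--module $F$: for $w \in F$,
$$
\bigl(w\circ\mathtt{L}_S(f)\bigr)\circ S = w\circ\bigl(\mathtt{L}_S(f)\cdot S\bigr) = w\circ(S\cdot f) = (w\circ S)\circ f = (w\circ S)\cdot f,
$$
the final equality holding because $w\circ S \in W \subseteq \CC[\partial_1,\dots,\partial_n]$ and $f \in \CC[\partial_1,\dots,\partial_n]$, so their product in $\gP$ is ordinary multiplication of polynomials in the $\partial_j$, which is exactly the right-hand vertical arrow of~(\ref{E:diagSpectModule}).

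The only genuinely fiddly point will be in part (1): confirming that the formally assembled series $\sum_{\underline{i}} \frac{\underline{x}^{\underline{i}}}{\underline{i}!} L_{(\underline{i})}$ is a bona fide element of $\gO$ with finite order and that its slices are the $L_{(\underline{i})}$ we started from. Everything else reduces mechanically to the structural results already established about $\gP$, Sato operators, and regularity.
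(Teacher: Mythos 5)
Your proposal is correct and follows essentially the same route as the paper: both constructions assemble $\mathtt{L}_S(f)$ slice by slice from the relations $S_{(\underline{i})}\cdot f\in W_{\mu+|\underline{i}|+\deg(f)}$ (your use of the bijection $-\circ S$ to define $L_{(\underline{i})}$ is just a repackaging of the paper's explicit coefficients $\gamma_{\underline{i},\underline{k}}$ with respect to the basis from Proposition~\ref{P:SatoOperators}), verify the identity $S\cdot f=\mathtt{L}_S(f)\cdot S$ slicewise via Remark~\ref{R:slicedecomp}, get uniqueness from regularity and Lemma~\ref{L:regularOp}, and deduce (2) and (3) formally. The "fiddly point" you flag is harmless: the bound $\deg L_{(\underline{i})}\le|\underline{i}|+\deg(f)$ is exactly the defining condition for membership in $\gP$ with order $\le\deg(f)$.
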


\begin{proof} (1) Let
$
S = \sum\limits_{\underline{k} \ge \underline{0}} \dfrac{\underline{x}^{\underline{k}}}{\underline{k}!} S_{(\underline{k})}$ be the slice decomposition
of $S$ and $f$ be an element of $A$ with  $d = \deg(f)$. Then for any $\underline{i} \in \Sigma$ we have:
$$
\underline{\partial}^{\underline{i}} \circ S = S_{(\underline{i})} \in W_{|\underline{i}|+\mu} \quad \mbox{\rm and} \quad
\underline{\partial}^{\underline{i}} \circ (S \cdot f) = S_{(\underline{i})} \cdot f \in W_{|\underline{i}|+d+\mu}.
$$
 According to Proposition \ref{P:SatoOperators},  there exists uniquely determined
 scalars $\bigl\{\gamma_{\underline{i}, \underline{k}} \in \CC \, \big| \,
\underline{k} \in \Sigma: |\underline{k}| \le |\underline{i}| +d\bigr\}$ such that
$S_{(\underline{i})} \cdot f = \sum\limits_{|\underline{k}| \le |\underline{i}| +d}
\gamma_{\underline{i}, \underline{k}} S_{(\underline{k})} \in W_{|\underline{i}|+d+\mu}.$ In these terms, we put:
\begin{equation}
\mathtt{L}_S(f) := \sum\limits_{\underline{i} \ge \underline{0}} \dfrac{\underline{x}^{\underline{i}}}{\underline{i}!} \cdot
\sum\limits_{|\underline{k}| \le |\underline{i}| + d} \gamma_{\underline{i}, \underline{k}} \, \underline{\partial}^{\underline{k}} \in \gP.
\end{equation}
Since for any $\underline{i} \in \Sigma$,
there exists at least one $\underline{l} \in \Sigma$ such that $|\underline{l}| =
|\underline{i}| +d $ and $\gamma_{\underline{i}, \underline{l}} \ne 0$, we have: $o\bigl(\mathtt{L}_S(f)\bigr) = d$.
The identity $S \cdot f = \mathtt{L}_S(f) \cdot S$ in the algebra $\gP$ follows from the fact that $\underline{\partial}^{\underline{i}} \circ (S \cdot f) =
\underline{\partial}^{\underline{i}} \circ \bigl(\mathtt{L}_S(f)  \cdot S\bigr)$ for any $\underline{i} \in \Sigma$ (which is true by the construction of $\mathtt{L}_S(f)$).
The uniqueness of $\mathtt{L}_S(f)$ follows from the regularity of $S$ and Lemma  \ref{L:regularOp}.

\smallskip
\noindent
(2) Let $f_1, f_2 \in A$. By construction, we have:
$$
S \cdot (f_1 \cdot f_2) = \bigl(\mathtt{L}_S(f_1) \cdot S\bigr) \cdot f_2 = \bigl(\mathtt{L}_S(f_1) \cdot \mathtt{L}_S(f_2)\bigr)\cdot S.
$$
Since the operator $L_S(f)$ is uniquely determined by $f \in A$, we get:  $\mathtt{L}_S(f_1) \cdot \mathtt{L}_S(f_2) = \mathtt{L}_S(f_1 \cdot f_2)$. The proof of the second statement is analogous, hence
$A \lar \gP, \, f\mapsto L_S(f)$ is indeed a homomorphism of $\CC$--algebras. For $f \ne 0$ we have: $\sigma(S \cdot f) = \sigma(S) \cdot \sigma(f) \ne 0$, hence $\mathtt{L}_S(f) \ne 0$, too.

\smallskip
\noindent
(3) The commutativity of diagram (\ref{E:diagSpectModule}) is a reformulation of the first assertion of this theorem. Note that by  Proposition \ref{P:SatoOperators}, the linear map $- \, \circ S$ is an isomorphism.
\end{proof}

\begin{remark}
Let $(W, A)$ be a Schur pair of index $\mu \in \NN_0$. According to Theorem \ref{T:SatoSchur}, any choice of Sato operator $S \in \gP$ of the vector space $W$ specifies
an injective algebra homomorphism $A \xrightarrow{\mathtt{L}_S} \gP$. However,  a Sato operator  is  determined only up to a unit of the algebra
$\gP_-$; see Theorem \ref{T:SatoAction}. If $V \in \gP_-^\ast$ and $T = V \cdot S$ is any other Sato operator of $W$, then we have: $\mathtt{L}_T = V \cdot
\mathtt{L}_S \cdot V^{-1}$. In other words,
any Schur pair defines an injective algebra homomorphism $A \stackrel{\mathtt{L}}\lar \gP$, which is unique up to an appropriate inner automorphism of the algebra $\gP$.
\end{remark}

\begin{remark} The modern algebro--geometric study  of commuting differential operators
was initiated  by Krichever \cite{Krichever77, Krichever} and investigated  by many authors in what follows. Our  work was especially  influenced by the approaches  of Mumford \cite{Mumford} and Mulase \cite{Mulase}. In particular, the higher--dimensional Sato theory  developed in this section was inspired  by  \cite{Mulase}.
The idea to enlarge the algebra of differential operators in the context of a generalized Krichever correspondence was suggested by the second--named author in \cite{Zheglov}, see also \cite{KOZ, KurkeZheglov}. The algebra $\gP$ introduced in Definition  \ref{D:AlgebraPi} deviates from its cousins   studied in \cite{Zheglov} (on the one--hand side it  is larger, on the other hand it is much more symmetric).  The construction
of commutative subalgebras of $\gP$ based on  Schur pairs $(W, A)$ can be thought as an attempt to generalize Wilson's theory of bispectral commutative subalgebras of ordinary differential operators of rank one \cite{WilsonCrelle}.
\end{remark}

\section{On the algebraic inverse scattering method in dimension two}\label{S:InverseScattering}
In this section, we are going to discuss some examples of the theory developed in the previous section in the special case $n = 2$. Let $R = \CC[z_1, z_2]$ and
$\gD = \CC\llbracket x_1, x_2\rrbracket[\partial_1, \partial_2]$, whereas
$$
\gP := \left\{\sum\limits_{k_1, k_2 \ge 0} a_{k_1, k_2}(x_1, x_2) \partial_1^{k_1} \partial_2^{k_2} \, \Big| \, \exists d \in \ZZ: \, k_1 + k_2 - \upsilon\bigl(a_{k_1, k_2}(x_1, x_2)\bigr) \le d \;\, \forall  k_1, k_2 \ge 0\right\}
$$
is   the algebra from Definition \ref{D:AlgebraPi} (here, $\upsilon\bigl(a(x_1, x_2)\bigr)$ is the valuation of the power series $a(x_1, x_2) \in \CC\llbracket x_1, x_2\rrbracket$).

\begin{lemma} Let $\bigl(\Pi, \underline{\mu}\bigr)$ be a Baker--Akhieser weighted line arrangement,
$A = A\bigl(\Pi, \underline{\mu}\bigr)$  the corresponding algebra of quasi--invariants  and
$W = P(-\xi_1 z_1 - \xi_2 z_2) \subset R$ the projective $A$--module from Theorem \ref{T:DescriptSpectModule} for an appropriate $(\xi_1, \xi_2) \in \CC^2$. Then the embedding $A \lar \gP$ determined by the  Schur pair $(W, A)$ (see Theorem \ref{T:SatoSchur}) coincides (up to an appropriate inner automorphism of $\gP$) with the embedding $\Xi(\vec{\xi})$  of Chalykh and Veselov defined by the formula  (\ref{E:mapXiTwist}).
\end{lemma}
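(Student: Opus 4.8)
The plan is to compare the two embeddings $A \hookrightarrow \gP$ by exhibiting an explicit Sato operator for the module $W = P(-u)$, where $u = \xi_1 z_1 + \xi_2 z_2$, and checking that the associated homomorphism $\mathtt{L}_S$ is precisely $\Xi(\vec\xi)$ — after which the uniqueness clause of Theorem \ref{T:SatoSchur} (together with Theorem \ref{T:SatoAction}) gives the result up to an inner automorphism of $\gP$. First I would recall that in Section \ref{S:SpectralModule} we already introduced the pseudo-differential Sato operator $S \in \gE$ of the algebra $\gB$, defined from the Baker--Akhieser function $\Psi(\vec x; \vec z; \vec\xi)$ by $\Psi = S \diamond \exp(x_1 z_1 + x_2 z_2)$, and that by Lemma \ref{L:Sato} it satisfies $\bigl(\Xi(\vec\xi)\bigr)(f) = S \cdot f(\partial_1, \partial_2) \cdot S^{-1}$ for all $f \in A$. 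The expansion (\ref{E:expBAf})--(\ref{E:basisvectorsW}) shows that $S = \delta(\partial_1,\partial_2) + \sum_{i_1+i_2<\mu} b_{i_1,i_2}(x_1,x_2)\partial_1^{i_1}\partial_2^{i_2}$ actually lies in $\gD \subset \gP$, with constant highest symbol $\delta(\partial_1,\partial_2)$; in particular $o(S) = \mu$ in the sense of Definition \ref{D:AlgebraPi}.

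Next I would verify that this $S$ is a Sato operator of $W$ in the sense of Definition \ref{D:SatoGrassmannian}: regularity of $S$ amounts to regularity of $\sigma(S) = \delta(\partial_1,\partial_2)$, which holds because $\delta$ is a nonzero homogeneous polynomial (so the map $F \xrightarrow{-\circ\delta} F$ is injective by the standard fact that multiplication by a nonzero polynomial in $\CC[\partial_1,\partial_2]$ is injective — this is where Lemma \ref{L:regularOp}(1) enters). The identity $W = F \circ S$ is exactly Proposition \ref{P:spectralmodule}: the slices of $S$ are $S_{(\underline i)} = \underline\partial^{\underline i}\circ S = w_{(i_1,i_2)}$, which span $W$, and the Hilbert-function computation (\ref{E:HilbertFunction}) together with Theorem \ref{T:DescriptSpectModule} shows $W = P(-u) \in \mathsf{Gr}_\mu(R)$ under the identification $z_j = \partial_j$. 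Then $(W, A)$ is a Schur pair of index $\mu$ (the condition $W \cdot A = W$ is the $(\gB\text{--}A)$-equivariance of Proposition \ref{P:spectralmodule}, rephrased: $W \cdot f \subseteq W$ for $f \in A$, and equality follows by dimension count). So Theorem \ref{T:SatoSchur} produces an injective algebra homomorphism $A \xrightarrow{\mathtt{L}_S} \gP$, characterized by $S \cdot f = \mathtt{L}_S(f) \cdot S$.

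The punchline is then a matter of matching formulas: the defining relation $S \cdot f(\partial_1,\partial_2) = \mathtt{L}_S(f) \cdot S$ in $\gP$ coincides, after passing into $\gE$ where $S$ is invertible, with $\mathtt{L}_S(f) = S \cdot f(\partial_1,\partial_2) \cdot S^{-1} = \bigl(\Xi(\vec\xi)\bigr)(f)$ by Lemma \ref{L:Sato}; one only needs to know that $\Xi(\vec\xi)(f)$, a priori an element of $\CC(x_1,x_2)[\partial_1,\partial_2]$, actually lies in $\gP$ — which is clear since $\gB = \mathrm{Im}(\Xi(\vec\xi)) \subseteq \gD \subseteq \gP$ by (\ref{E:algebraB}) and the potential of $H$ is regular at the origin. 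Thus the Schur-pair embedding and $\Xi(\vec\xi)$ agree on the nose for this particular choice of Sato operator. Finally, by Theorem \ref{T:SatoAction} any other Sato operator of $W$ is $U \cdot S$ with $U \in \gP_-^\ast$, and the last Remark after Theorem \ref{T:SatoSchur} shows the embedding changes by the inner automorphism $\mathrm{Ad}(U)$; hence the embedding determined by the Schur pair $(W,A)$ coincides with $\Xi(\vec\xi)$ up to an inner automorphism of $\gP$, as claimed.

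The main obstacle I anticipate is the bookkeeping that the pseudo-differential operator $S$ built in Section \ref{S:SpectralModule} from $\Psi$ really is an element of $\gP$ with $o(S) = \mu$ and constant top symbol, and that all the ``order'' notions used in Definition \ref{D:AlgebraPi} match the degree filtration on $R$ under $z_j \leftrightarrow \partial_j$; the rest is a direct invocation of Proposition \ref{P:spectralmodule}, Lemma \ref{L:Sato}, and the uniqueness statements of Theorems \ref{T:SatoAction} and \ref{T:SatoSchur}. A secondary point worth stating carefully is why regularity of $\delta(\partial_1,\partial_2)$ holds, i.e. the injectivity of multiplication by a nonzero homogeneous polynomial on $\CC[\partial_1,\partial_2]$ — but this is elementary (the polynomial ring is a domain).
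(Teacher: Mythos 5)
Your proposal is correct and follows essentially the same route as the paper's proof: both rest on Proposition \ref{P:spectralmodule} to identify $W$ as an element of $\mathsf{Gr}_\mu(R)$ with the differential operator $S$ from Lemma \ref{L:Sato} as its Sato operator, and then on the conjugation identity (\ref{E:conjSato}) to match $\mathtt{L}_S$ with $\Xi(\vec\xi)$, with Theorem \ref{T:SatoAction} handling the ambiguity up to a unit of $\gP_-$. The paper's version is simply terser; your added checks (regularity of $\sigma(S)=\delta(\partial_1,\partial_2)$, the order bookkeeping) are exactly the details it leaves implicit.
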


\begin{proof} The fact that $W \in \mathsf{Gr}_\mu(R)$ (where $\mu := \sum_{\alpha \in \Pi} \mu_\alpha$) was established in Proposition \ref{P:spectralmodule}. The same Proposition implies
 that the differential operator $S$ of order $\mu$ from Lemma \ref{L:Sato}   is a Sato operator of the vector space $W$
 in the sense of Definition \ref{D:SatoGrassmannian}. Recall that for  any $f \in A$, we have the following equality
$S \cdot f(\partial_1, \partial_2)  = \bigl(\Xi(\vec{\xi})\bigr)(f) \cdot S$ in $\gD \subset \gP$;  see formula (\ref{E:conjSato}). This implies the result.
\end{proof}

\begin{remark} Let $A = A\bigl(\Pi, \underline{\mu}\bigr)$ be an  algebra of planar quasi--invariants and $W \in \mathsf{Gr}_\mu(R)$ be a finitely generated torsion free $A$--module of rank one (as usual, $\mu := \sum_{\alpha \in \Pi} \mu_\alpha$).

\smallskip
\noindent
1.~It is not true that $W$ is automatically a Cohen--Macaulay $A$--module. Indeed, let
$A := \CC[x^2, x^3, y^2, y^3]$ and $W := \bigl\langle x^2, x^3, y^3, y^4\bigr\rangle_A$. Obviously, $W$ is a finitely generated torsion free $A$--module of rank one and  $W \in \mathsf{Gr}_2(R)$.  Moreover, $A/W =  \langle 1, \bar{y}^2\rangle_\CC$ is non--zero and finite dimensional. Hence, the module $W$ is not Cohen--Macaulay (in fact, the regular module $A$ is the Macaulayfication of $W$).

\smallskip
\noindent
2.~Nevertheless, there are good reasons to focus on those Schur pairs $(W, A)$ of index $\mu$,  for which $W$ is a Cohen--Macaulay $A$--module of rank one. Assume that $(\Pi, \underline{\mu})$ is a Baker--Akhieser weighted line arrangement (see Definition \ref{D:BakerAkhieserWL}) and  $A = A(\Pi, \underline{\mu})$ is the corresponding algebra of quasi--invariants. Consider
the Rees algebra (respectively, the Rees module)
$$\widetilde{A} = \bigoplus\limits_{k = 0}^\infty A_k t^k \subset A[t] \quad \mbox{\rm respectively }\quad \widetilde{W} = \bigoplus\limits_{k = 0}^\infty W_{k+\mu} t^k \subset W[t].
$$
Let $\widetilde{X} := \mathsf{Proj}(\widetilde{A})$ (projective spectral surface), $C := V(t) \subset \widetilde{X}$ and $\kF := \mathsf{Proj}(\widetilde{W})$ (projective spectral sheaf). Then the following statements are true; see \cite[Lemma 3.3 and Lemma 3.8]{Zheglov} as well as \cite[Theorem 2.1]{KOZ}.
\begin{enumerate}
\item There exists an isomorphism of algebraic varieties $X \cong \widetilde{X} \setminus C$. In particular, we have an isomorphism of $\CC$--algebras
$A \cong \Gamma\bigl(\widetilde{X} \setminus C, \kO_{\widetilde{X}}\bigr)$.
\item Moreover, there exists a natural isomorphism of $A$--modules $W \cong \Gamma\bigl(\widetilde{X} \setminus C, \kF\bigr)$.
\item The variety $C$ is an integral projective curve. Moreover, there exists $d \in \NN$ such that $C'= dC$ is a Cartier divisor,  and $\kL = \kO_{\widetilde X}(C')$ is an ample  line bundle.
\item $\kF$ is a torsion free coherent sheaf of rank one on $X$. Moreover, we have: $$\chi\bigl(\widetilde{X}, \kF \otimes \kL^{\otimes{k}}\bigr) = \binom{kd+1}{2} \; \mbox{\rm for any}\, k \in \NN_0.$$
\end{enumerate}
Let $M_\chi$ be the moduli space of stable torsion free coherent sheaves on $X$ with Hilbert polynomial $\chi = \binom{kd+1}{2}$ with respect to the ample line bundle
$\kL$. Then $M_\chi$ is a projective variety (see e.g.~\cite[Theorem 4.3.4]{HuybrechtsLehn}) and $\kF \in M_\chi$. More precisely, let $\kU \in \Coh(X \times M_\chi)$
be a
universal family of the moduli functor $\underline{M}_\chi$. Then  any Schur pair $(W, A)$ as above defines a point $p = p(W) \in M_\chi$ such that $\kU_p:=  \kU\big|_{X \times \{p\}} \cong \kF := \mathsf{Proj}(\widetilde{W})$.

Now, let
$\gB := \mathsf{Im}\bigl(\Xi(\vec{\xi})\bigr) \subset \gD$ be the algebra defined by  (\ref{E:mapXiTwist}). Then the corresponding projective spectral sheaf $\kF$ is \emph{Cohen--Macaulay}; see \cite[Theorem 3.1]{KurkeZheglov}. Let $p \in M_\chi$ be the corresponding point. Then by
    \cite[Th\'eor\`eme 12.2.1]{EGAIV}, there exists an open neighbourhood $p \in U \subset M_\chi$ such that for any $q \in U$, the coherent sheaf $\kU_q$ is Cohen--Macaulay of rank one. As a consequence, the $A$--module $W_q:= \Gamma(X, \kU_q)$ is Cohen--Macaulay of rank one, too.
Therefore, in order to construct algebra embeddings $A \lar \gP$ arising from  Schur pairs $(W, A)$, which are  ``deformations'' of the standard Calogero--Moser system $A  \xrightarrow{\Xi(\vec{\xi})} \gD$ given by (\ref{E:mapXiTwist}), it is natural to take those  rank one torsion free $A$--modules $W \in \mathsf{Gr}_\mu(R)$, which are \emph{Cohen--Macaulay}. \qed
\end{remark}

In the remaining part of this section, we illustrate
the  ``algebraic inverse scattering method'' of Theorem \ref{T:SatoSchur} by constructing an ``isospectral deformation'' of the simplest dihedral Calogero--Moser system associated with the operator
\begin{equation}\label{E:LameSplit}
H =
\left(\frac{\partial^2}{\partial x_1^2} + \frac{\partial^2}{\partial x_2^2}\right)
- 2 \left(
\frac{1}{(x_1 - \xi_1)^2} + \frac{1}{(x_2 - \xi_2)^2}\right),
\end{equation}
where $(\xi_1, \xi_2) \in \CC^2$ is such that $\xi_1 \xi_2 \ne 0$. In this case we have:
\begin{itemize}
\item $A = A(\Pi, \underline{\mu}) = \CC[z_1^2, z_1^3, z_2^2, z_2^3]$, where $\Pi = \Lambda_2 = \left\{0, \dfrac{\pi}{2}\right\}$ and
$\underline{\mu}(0) = \underline{\mu}\Bigl(\dfrac{\pi}{2}\Bigr) = 1$.
\item It follows from Theorem \ref{T:DescriptSpectModule} that the spectral module $F$ of the corresponding Calogero--Moser system has the following description:
$$
F =  \left\{
 f \in R \left|
\begin{array}{l}
\dfrac{\partial f}{\partial z_1}(0, \rho) = \xi_1 \rho f(0, \rho) \vspace{1mm}\\
\dfrac{\partial f}{\partial z_2}(\rho, 0) = \xi_2 \rho f(\rho, 0)
\end{array}
\right.
\right\}.
$$
\item Let $K = \CC(\rho)$ and $L = K[\varepsilon]/(\varepsilon^2)$. Then the diagram (\ref{E:keydiag}) for the algebra $A$ has the following form:
\begin{equation}\label{E:Examplekeydiag}
\begin{array}{c}
\xymatrix{
A  \ar@{_{(}->}[d] \ar[rr] & & K \times K \ar@{^{(}->}[d] \\
R \ar[rr]^-{T}   & & L \times L,
}
\end{array}
\end{equation}
where $T(f) = \left(f(\rho, 0) + \varepsilon \rho \dfrac{\partial f}{\partial z_2}(\rho, 0), f(0, \rho) - \varepsilon \rho \dfrac{\partial f}{\partial z_1}(0, \rho)\right)$.
\item In terms of Theorem \ref{T:listCMlf} we have: $F \cong \kB(\vec\gamma)$, where $\vec{\gamma} = (\xi_2 \rho, -\xi_1 \rho)$.
\end{itemize}

\begin{lemma} For $(\xi_1, \xi_2) \in \CC^2$ such that $\xi_1 \xi_2 \ne 0$ and $\beta \ne \xi_1 \xi_2 \in \CC$, consider the Cohen--Macaulay $A$--module
$W_\beta := B\bigl(\vec{\gamma}_\beta\bigr) \in \CM_1^{\mathsf{lf}}(A)$, where
$$
\vec{\gamma}_\beta = \left(\frac{\xi_2^2 \rho}{\xi_2 + \beta \rho},  -\frac{\xi_1^2 \rho}{\xi_1 + \beta \rho}\right) \in \CC(\rho) \oplus \CC(\rho).
$$
Then $W_\beta \in \mathsf{Gr}_2(R)$. Moreover, $W_\beta$ is not projective over $A$ for $\beta \ne 0$.
\end{lemma}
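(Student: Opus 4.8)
The plan is first to make $W_\beta$ explicit, then to compute its Hilbert function, and finally to detect non-projectivity via Theorem~\ref{T:descriptionPicard}. Since $\vec\gamma_\beta=\bigl(\tfrac{\xi_2^{2}\rho}{\xi_2+\beta\rho},\,-\tfrac{\xi_1^{2}\rho}{\xi_1+\beta\rho}\bigr)$ is a well-defined element of $\CC(\rho)\oplus\CC(\rho)$ (the hypothesis $\xi_1\xi_2\neq 0$ guarantees that neither denominator vanishes identically), Theorem~\ref{T:listCMlf} already yields that $W_\beta:=B(\vec\gamma_\beta)$ is a Cohen--Macaulay $A$--module of rank one, locally free in codimension one. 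Here $\Pi=\Lambda_2=\{0,\tfrac{\pi}{2}\}$, $\mu_0=\mu_{\pi/2}=1$, $\mu=2$, $A=\CC[z_1^{2},z_1^{3},z_2^{2},z_2^{3}]$. Using the directional-derivative formulae recalled in Example~\ref{EX:PicardSingSurfaces}, namely $f'_0=\rho\,\partial_{z_2}f(\rho,0)$, $f_0=f(\rho,0)$, $f'_{\pi/2}=-\rho\,\partial_{z_1}f(0,\rho)$, $f_{\pi/2}=f(0,\rho)$, and cancelling the common factor $\rho$, the defining relations $f'_\alpha=(\vec\gamma_\beta)_\alpha\,f_\alpha$ of Theorem~\ref{T:listCMlf} turn into the pair of polynomial identities
\[
(\xi_2+\beta z_1)\,\partial_{z_2}f(z_1,0)=\xi_2^{2}\,f(z_1,0),\qquad
(\xi_1+\beta z_2)\,\partial_{z_1}f(0,z_2)=\xi_1^{2}\,f(0,z_2),
\]
which therefore describe $W_\beta$ as a subspace of $R=\CC[z_1,z_2]$ (for $\beta=0$ this is the spectral module recorded in the computation preceding the lemma).

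To prove $W_\beta\in\mathsf{Gr}_2(R)$ I have to check that $\dim_\CC\{f\in W_\beta\mid\deg f\le 2+k\}=\binom{k+2}{2}$ for all $k\in\NN_0$. Writing $f=\sum a_{ij}z_1^{i}z_2^{j}$ with $\deg f\le d$ and comparing coefficients of $z_1^{i}$, resp.\ $z_2^{j}$, the space $\{f\in W_\beta\mid\deg f\le d\}$ is cut out in $R_{\le d}$ by the $2d+2$ linear equations
\[
\xi_2^{2}a_{i0}=\xi_2 a_{i1}+\beta a_{i-1,1}\ \ (0\le i\le d),\qquad
\xi_1^{2}a_{0j}=\xi_1 a_{1j}+\beta a_{1,j-1}\ \ (0\le j\le d),
\]
with the convention $a_{kl}=0$ for $k<0$, $l<0$ or $k+l>d$; the task is to compute the rank of this system. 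The equations with $i\ge 2$ (resp.\ $j\ge 2$) express the ``boundary'' coefficient $a_{i0}$ (resp.\ $a_{0j}$) through interior coefficients $a_{kl}$ ($k,l\ge 1$); these $2(d-1)$ equations have pairwise distinct leading unknowns, lying in two disjoint families, so they are independent of one another and of the four remaining ``corner'' equations (those with $i,j\in\{0,1\}$). A short direct computation --- the only place where $a_{00},a_{01},a_{10},a_{11}$ genuinely interact, and one using only $\xi_1\xi_2\neq 0$ --- shows that the four corner equations are equivalent to the three relations $a_{10}=\xi_1 a_{00}$, $a_{01}=\xi_2 a_{00}$, $a_{11}=(\xi_1\xi_2-\beta)a_{00}$, hence have rank exactly $3$. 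Thus the whole system has rank $(2d-2)+3=2d+1$ and
\[
\dim_\CC\{f\in W_\beta\mid\deg f\le d\}=\binom{d+2}{2}-(2d+1)=\binom{d}{2};
\]
taking $d=2+k$ gives $\binom{k+2}{2}$, the defining condition for $\mathsf{Gr}_2(R)$. (The hypothesis $\beta\neq\xi_1\xi_2$ is not needed for this step; it will matter later, when one solves $a_{11}=(\xi_1\xi_2-\beta)a_{00}$ for $a_{00}$ in building the deformed operator.) The delicate point, and the main obstacle, is precisely this rank computation --- in particular the consistency of the corner equations.

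For the last claim I would specialize Theorem~\ref{T:descriptionPicard} to the present case $\mu_0=\mu_{\pi/2}=1$, where $K^{\diamond}(\Pi,\underline\mu)=\CC[\rho]\oplus\CC[\rho]$ and $\Pic(A)\cong\mathrm{Im}(\Upsilon)\cap\bigl(\CC[\rho]\oplus\CC[\rho]\bigr)$: under the isomorphism $\Theta$ of Theorem~\ref{T:listCMlf}, a module $B(\vec\gamma)$ can be projective only if both components of $\vec\gamma$ are polynomials in $\rho$. For $\beta\neq 0$ the first component $\tfrac{\xi_2^{2}\rho}{\xi_2+\beta\rho}$ is not a polynomial: $\xi_2+\beta\rho$ is non-constant and, since $\xi_2\neq 0$, coprime to $\xi_2^{2}\rho$, so $\tfrac{\xi_2^{2}\rho}{\xi_2+\beta\rho}$ has a genuine pole at $\rho=-\xi_2/\beta$. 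Hence $W_\beta=B(\vec\gamma_\beta)\notin\Pic(A)$, i.e.\ $W_\beta$ is not projective. (Equivalently one may localize and complete $A$ at the point of $V(l_0)\cong\AA^{1}$ with $\rho=-\xi_2/\beta$ and invoke Lemma~\ref{L:TriplesSpecialCase}: the corresponding triple is $\bigl(\CC\llbracket u,v\rrbracket,\ \CC\llbrace u\rrbrace[v^{2}]/(v^{2}),\ 1+\bar v\,\lambda_{-\xi_2/\beta}(\gamma)\bigr)$ with $\gamma=\tfrac{\xi_2^{2}\rho}{\xi_2+\beta\rho}$, whose gluing datum has non-trivial principal part and hence is not isomorphic to that of the regular module.) Everything here besides the rank count is routine bookkeeping together with direct appeals to the structure theory already established.
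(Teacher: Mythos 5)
Your proof is correct, and for the Hilbert--function step it takes a genuinely different route from the paper's. The paper first exhibits, by what it calls ``a lengthy but straightforward computation'', the explicit spanning decomposition
$$
W_\beta = \CC \cdot w + (\xi_2 + \xi_2^2 z_2 + \beta z_1) z_1^2 \CC[z_1] + (\xi_1 + \xi_1^2 z_1 + \beta z_2) z_2^2 \CC[z_2] + z_1^2 z_2^2 \CC[z_1, z_2],
$$
and reads off the Hilbert function from the leading terms; you instead compute the corank of the linear system cutting out $W_\beta\cap R_{\le d}$ inside $R_{\le d}$. Your translation of the defining relations into the two polynomial identities $(\xi_2+\beta z_1)\partial_{z_2}f(z_1,0)=\xi_2^2 f(z_1,0)$ and $(\xi_1+\beta z_2)\partial_{z_1}f(0,z_2)=\xi_1^2 f(0,z_2)$ is the correct normalization (it is the one consistent with the paper's spanning set above and with the $\beta=0$ spectral module $P(-\xi_1z_1-\xi_2z_2)$), and the rank count checks out: each equation with $i\ge 2$ (resp.\ $j\ge 2$) has the pivot unknown $a_{i0}$ (resp.\ $a_{0j}$), occurring there with coefficient $\xi_2^2\ne 0$ (resp.\ $\xi_1^2\ne0$) and in no other equation, while the four corner equations reduce, using $\xi_1\xi_2\ne0$, to $a_{01}=\xi_2a_{00}$, $a_{10}=\xi_1a_{00}$ and two coinciding expressions $a_{11}=(\xi_1\xi_2-\beta)a_{00}$, hence have rank exactly $3$. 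This yields rank $2d+1$, $\dim_\CC(W_\beta)_{\le d}=\binom{d}{2}$ for $d\ge 2$, and so $W_\beta\in\mathsf{Gr}_2(R)$. What you lose relative to the paper is the explicit generator $w$ and spanning set, which the paper records because they make the subsequent verification of the Sato operator $S_\beta$ concrete; what you gain is a transparent dimension count that does not require guessing generators. The non--projectivity argument --- a pole of a component of $\vec\gamma_\beta$ forces $\vec\gamma_\beta\notin K^\diamond(\Pi,\underline{\mu})$, hence $W_\beta\notin\Pic(A)$ by Theorem \ref{T:descriptionPicard}, or locally by Lemma \ref{L:TriplesSpecialCase} --- is the same as the paper's.
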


\begin{proof}
By definition, we have:
\begin{equation}\label{E:SpaceWbeta}
W_\beta =  \left\{
 f \in R \left|
\begin{array}{l}
\dfrac{\partial f}{\partial z_1}(0, \rho) =  \dfrac{\xi_1^2 \rho}{\xi_1 + \beta \rho}  f(0, \rho)\\
\dfrac{\partial f}{\partial z_2}(\rho, 0) =  \dfrac{\xi_2^2 \rho}{\xi_2 + \beta \rho} f(\rho, 0)
\end{array}
\right.
\right\}.
\end{equation}
A lengthy but straightforward computation shows that
\begin{equation}\label{E:DeformedSchurPair}
W_\beta = \CC \cdot w + (\xi_2 + \xi_2^2 z_2 + \beta z_1) z_1^2 \CC[z_1] + (\xi_1 + \xi_1^2 z_1 + \beta z_2) z_2^2 \CC[z_2] + z_1^2 z_2^2 \CC[z_1, z_2],
\end{equation}
where
$
w = 1 + \xi_1 z_1 + \xi_2 z_2 + (\xi_1 \xi_2 - \beta) \cdot \left(z_1 z_2 + \left(\dfrac{z_1^2}{\xi_2^2} + \dfrac{z_2^2}{\xi_1^2}\right)\right).
$
 The description (\ref{E:DeformedSchurPair}) implies that $W_\beta \in \mathsf{Gr}_2(R)$. Since the rational functions $\gamma_1(\rho)$ and $\gamma_2(\rho)$ have a pole provided  $\beta  \ne 0$, Lemma \ref{L:TriplesSpecialCase} (see also the proof of Theorem \ref{T:descriptionPicard}) implies that the $A$--module  $W_\beta$ is not projective.
\end{proof}

\begin{remark} It is interesting to note that the $A$--module $W_\beta$ is actually \emph{locally free} at the point $p:= \nu(0,0) \in X$ (the ``most singular'' point of $X$),
where $\AA^2 \stackrel{\nu}\lar X$ is the normalization of the spectral surface $X$ of the algebra $A$.  Indeed, the Cohen--Macaulay $A$--module $W_\beta$ corresponds to the following object
$$
\left(R, K \oplus K,
\left(
1 + \varepsilon \dfrac{\xi_2^2 \rho}{\xi_2 + \beta \rho}, 1- \varepsilon \dfrac{\xi_1^2 \rho}{\xi_1 + \beta \rho}
\right)\right).
$$
of the category of triples $\Tri(A)$; see Theorem \ref{T:listCMlf}. As in the course of the proof of Theorem  \ref{T:descriptionPicard} one can show, that
$W_\beta$ is locally free at the point $p$ if and only if there exist $h \in \CC\llbracket z_1, z_2\rrbracket$ and $f, g \in \CC\llbrace \rho\rrbrace$ making the following two diagrams
\begin{equation*}
\begin{array}{ccc}
\xymatrix{
L \ar[d]_-{f}\ar[rr]^-{1} & & L \ar[d]^-{(1 + \varepsilon \rho \frac{\partial h}{\partial z_2}(\rho, 0)) \cdot \exp(h(\rho, 0))} \\
L  \ar[rr]_-{1 + \varepsilon \frac{\xi_2^2 \rho}{\xi_2 + \beta \rho}} & & L
}
& \;
\;
&
\xymatrix{
L \ar[d]_-{g}\ar[rr]^-{1} & & L \ar[d]^-{(1 - \varepsilon \rho \frac{\partial h}{\partial z_1} (0, \rho)) \cdot \exp(h(0, \rho))} \\
L  \ar[rr]_-{1 - \varepsilon \frac{\xi_1^2 \rho}{\xi_1 + \beta \rho}} & & L
}
\end{array}
\end{equation*}
commutative in the category of $L$--modules. To achieve this,  we have to  put $f := \exp(h(\rho, 0))$ and  $g:= \exp(h(0, \rho))$, whereas the power series $h$ has to fulfil the constraint
$$
\left(\dfrac{\partial h}{\partial z_1}(0, \rho), \dfrac{\partial h}{\partial z_2}(\rho, 0)\right) = \left(\frac{\xi_1^2}{\xi_1 + \beta \rho}, \frac{\xi_2^2}{\xi_2 + \beta \rho}\right),
$$
which  (as one can easily see) is consistent. \qed
\end{remark}

\begin{theorem}\label{L:ExampleSato} The operator $S_\beta := S_0 + \beta T\in \gP$, where
$$
S_0:= \partial_1 \partial_2 + \frac{1}{\xi_2 - x_2} \partial_1 + \frac{1}{\xi_1 - x_1} \partial_2 + \frac{1}{(\xi_1 - x_1)(\xi_2 - x_2)}
$$
and
\begin{equation*}
\begin{split}
T =  &   \frac{1}{(\xi_1 - x_1)(\xi_2 - x_2)}\left(\frac{1}{\xi_2}  \left(E_2 \partial_1 + (\xi_1-x_1) E_2\partial_1^2\right)  +
         \frac{1}{\xi_1}\left(E_1 \partial_2 + (\xi_2-x_2) E_1 \partial_2^2\right)\right)   + \\
           &  \frac{1}{(\xi_1 \xi_2 - \beta)(\xi_1 - x_1) (\xi_2 - x_2)} E_1 E_2\left(1 + \beta \left(\frac{\partial_1}{\xi_2} + \frac{\partial_2}{\xi_1}\right)\right),
\end{split}
\end{equation*}
with  $E_1, E_2 \in \gP$ defined in Example \ref{Ex:DeltaFunctionSeries}, is a Sato operator of the space $W_\beta$, given by formula (\ref{E:DeformedSchurPair}).
\end{theorem}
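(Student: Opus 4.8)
The plan is to verify directly that $S_\beta$ is a Sato operator of $W_\beta$ in the sense of Definition \ref{D:SatoGrassmannian}, i.e.~to check the three conditions: $S_\beta \in \gP$ with $o(S_\beta) = 2$, the operator $S_\beta$ is regular, and $W_\beta = F \circ S_\beta$. First I would observe that $S_0$ is precisely the Sato operator attached to the undeformed spectral module $W_0 = F$; indeed $S_0 = \partial_1\partial_2 + (\xi_2-x_2)^{-1}\partial_1 + (\xi_1-x_1)^{-1}\partial_2 + (\xi_1-x_1)^{-1}(\xi_2-x_2)^{-1}$ is (up to the shift $t(\vec\xi)$) the operator $S$ produced by Berest's formula (\ref{E:BerestShifted}) for the datum $\Pi = \Lambda_2$, $\mu_\alpha = 1$, as in Lemma \ref{L:Sato} and Proposition \ref{P:spectralmodule}; its symbol is $\partial_1\partial_2$, which is a homogeneous differential operator with constant coefficients, hence $o(S_0) = 2$ and $S_0$ is regular. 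Then I would check that the correction term $T$ has order $o(T) \le 1 < 2$: each summand is a product of the regular rational functions $(\xi_i - x_i)^{-1} \in \CC\llbracket x_1, x_2\rrbracket$ (expanded as power series) with one of the operators $E_i$, $E_i\partial_i$, $E_1E_2$, $E_i\partial_i^2$, $E_1E_2\partial_i$; using $E_i = \exp((-x_i)\astar \partial_i)$ from Example \ref{Ex:DeltaFunctionSeries}, which is homogeneous of order $0$, one computes the orders of these building blocks to be $0$, $1$, $0$, $2$, $1$ respectively, but the crucial point is that $E_i$ kills all but the $x_i$-independent slice, so the genuine order in $\gP$ is controlled and one verifies $o(T) = 1$ (or at least $\le 1$). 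Consequently $\sigma(S_\beta) = \sigma(S_0) = \partial_1\partial_2$, so $S_\beta$ is regular and $o(S_\beta) = 2 = \mu$ automatically.

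The substantive step is the identity $W_\beta = F \circ S_\beta$. Here I would use the slice description from Proposition \ref{P:SatoOperators}: $F \circ S_\beta$ is spanned by the slices $(S_\beta)_{(\underline{k})} = \underline{\partial}^{\underline k}\circ S_\beta \in F = \CC[\partial_1, \partial_2] = \CC[z_1, z_2]$ for $\underline k \in \NN_0^2$, and it suffices to show that these slices span exactly the space (\ref{E:DeformedSchurPair}). Equivalently — and this is cleaner — I would exploit the $(\gB-A)$-equivariance philosophy: since $S_\beta$ differs from the genuine Sato operator $S_0$ of $F = W_0$ by a lower-order term, and since the defining constraints of $W_\beta$ in (\ref{E:SpaceWbeta}) are obtained from those of $F$ by replacing the characters $\xi_i\rho$ by $\xi_i^2\rho/(\xi_i + \beta\rho)$, the operator $S_\beta$ should be realizable as $S_0$ composed with a suitable unit of $\gP_-$ or, more precisely, should satisfy $w \circ S_\beta \in W_\beta$ for a spanning set of $w \in F$. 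Concretely I would compute the low-degree slices $(S_\beta)_{(0,0)}, (S_\beta)_{(1,0)}, (S_\beta)_{(0,1)}, (S_\beta)_{(1,1)}$ explicitly — these are the "generating" elements $w$, $(\xi_2 + \xi_2^2 z_2 + \beta z_1)z_1^2$-type polynomials, etc.~appearing in (\ref{E:DeformedSchurPair}) — using the action formulas $\partial_i^k \circ (a(x)\,\partial^{\underline j}) = $ (Leibniz) and $\partial_i^k \circ (E_i \,\cdot) = 0$ for $k \ge 1$ together with $\partial_i^0 \circ E_i\,\partial_i^{\underline j}\,(\cdot)\big|_{x = 0}$ picking out a single monomial. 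The Hilbert function bookkeeping then closes the argument: by construction the slices of orders $\le 2 + m$ have the correct count $\binom{m+2}{2}$ for each $m$ since $o((S_\beta)_{(\underline k)}) = 2 + |\underline k|$ (which follows from $S_\beta$ being regular of order $2$), and $W_\beta \in \mathsf{Gr}_2(R)$ was established in the preceding lemma, so the inclusion $F \circ S_\beta \subseteq W_\beta$ of graded pieces of matching finite dimension forces equality.

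The main obstacle I anticipate is the explicit verification that each slice $(S_\beta)_{(\underline k)}$ lands in $W_\beta$, i.e.~satisfies the two boundary conditions of (\ref{E:SpaceWbeta}) along $z_1 = 0$ and $z_2 = 0$; this is the "lengthy but straightforward computation" the authors allude to, and it requires carefully tracking how the $E_i$ and $(\xi_i - x_i)^{-1}$ factors interact under the right action $\circ$, in particular the non-commutativity relations $\partial_i \cdot E_i = 0$ and $(E_i\partial_i)^2 = 0$ noted at the end of Example \ref{Ex:DeltaFunctionSeries}. A secondary subtlety is confirming that the denominator $\xi_1\xi_2 - \beta$ appearing in $T$ (which is why the hypothesis $\beta \ne \xi_1\xi_2$ is imposed) is exactly what is needed to match the coefficient $(\xi_1\xi_2 - \beta)$ in the vector $w$ of (\ref{E:DeformedSchurPair}); I would check this by computing the $(0,0)$-slice $(S_\beta)_{(0,0)} = S_\beta \circ 1$ (the constant term of $S_\beta$ evaluated via its action on $1 \in \widehat B$, read off in $F$) and matching it against $w$ term by term. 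Once the low-order slices are pinned down, the higher slices are determined by applying $\partial_i$ repeatedly, and the remaining checks reduce to the Leibniz rule, so no further conceptual difficulty arises.
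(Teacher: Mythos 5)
Your overall two--step architecture --- first establish that $S_\beta$ is a regular operator of order $2$, then prove $F\circ S_\beta \subseteq W_\beta$ and close by comparing Hilbert functions, since both spaces lie in $\mathsf{Gr}_2(R)$ --- is the paper's. The second half is also essentially the paper's argument, just unpackaged: the paper assembles the slices of $S_\beta$ into a deformed generating series $\Psi_\beta = \Psi_0 + \beta\overline{\Psi}$ and verifies the two boundary conditions of (\ref{E:SpaceWbeta}) once, for the whole series in the $z$--variables, which handles all slices simultaneously; your plan of checking low slices explicitly and propagating by Leibniz amounts to the same computation in a less efficient packaging.

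There is, however, a genuine error in your first step. The claim $o(T)\le 1$ is false: after cancelling $(\xi_1-x_1)$, the summand $\frac{1}{\xi_2(\xi_2-x_2)}\,E_2\partial_1^2$ has order $2$, since $E_2$ is homogeneous of order $0$ and $\partial_1^2$ has order $2$; concretely, the coefficient of $\partial_1^2\partial_2^k$ in $E_2\partial_1^2$ is $(-x_2)^k/k!$, so $|\underline{k}|-\upsilon(a_{\underline{k}})=2$ for every $k$ in the sense of Definition \ref{D:AlgebraPi}. Your remark that ``$E_i$ kills all but the $x_i$--independent slice'' does not lower the order of the operator as an element of $\gP$. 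Consequently $\sigma(S_\beta)=\partial_1\partial_2+\frac{\beta}{\xi_2^2}E_2\partial_1^2+\frac{\beta}{\xi_1^2}E_1\partial_2^2$, not $\partial_1\partial_2$, and regularity is \emph{not} automatic: one must check, for each $m$, that the images of the monomials $\partial_1^i\partial_2^j$ with $i+j=m$ under $-\circ\sigma(S_\beta)$ are linearly independent. The paper does this explicitly, finding $\partial_1^i\partial_2^j\mapsto\partial_1^{i+1}\partial_2^{j+1}$ for $ij\ne 0$ and $\partial_1^m\mapsto(-1)^m\frac{\beta}{\xi_2^2}\partial_1^{m+2}+\partial_1^{m+1}\partial_2$ (symmetrically for $\partial_2^m$), which are independent, and only then invokes Lemma \ref{L:regularOp}. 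This check must be restored, both for regularity itself and because your final counting step uses $o\bigl((S_\beta)_{(\underline{k})}\bigr)=2+|\underline{k}|$ from Proposition \ref{P:SatoOperators}, which is contingent on regularity.
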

\begin{proof}
It is easy to see that $o(S_{\beta})=2$ and
$
\sigma (S_{\beta})=\partial_1\partial_2+ \dfrac{\beta}{\xi_2^2}\,  E_2 \partial_1^2  + \dfrac{\beta}{\xi_1^2} \, E_1\partial_2^2.
$
Let $i, j \in \NN_0$ be such that $i + j = m$. Then we have:
$$
\CC[\partial_1, \partial_2] \ni \partial_1^i \partial_2^j \xrightarrow{-\circ \, \sigma(S_\beta)}
\left\{
\begin{array}{lcl}
\partial_1^{i+1} \partial_2^{j+1} & \mbox{\rm if} &  i \cdot j \ne 0 \\
(-1)^m \dfrac{\beta}{\xi_2^2} \partial_1^{m+2} + \partial_1^{m+1} \partial_2 &  \mbox{\rm if} & (i, j) = (m, 0) \vspace{1mm}\\
(-1)^m \dfrac{\beta}{\xi_1^2} \partial_2^{m+2} + \partial_2^{m+1} \partial_1 &  \mbox{\rm if} & (i, j) = (0, m).
\end{array}
\right.
$$
By Lemma \ref{L:regularOp}, the operator $S_\beta$  is regular.

Let $\Theta_0(x_1, x_2; z_1, z_2; \xi_1, \xi_2) = z_1 z_2 + \dfrac{z_1}{\xi_2 - x_2}  + \dfrac{z_2}{\xi_1 - x_1}  + \dfrac{1}{(\xi_1 - x_1)(\xi_2 - x_2)}$. It follows from Berest's formula \cite{Berest} that
$$\Psi_0(x_1, x_2; z_1, z_2; \xi_1, \xi_2) := \Theta_0(x_1, x_2; z_1, z_2; \xi_1, \xi_2) \cdot \exp(x_1 z_1 + x_2 z_2)$$ is a Baker--Akhieser
function of the Calogero--Moser operator $H$ given by (\ref{E:LameSplit}). This implies that $S_0$ is a Sato operator of the unperturbed space $W_0$.
For general $\beta$, consider the formal power series $\Psi_\beta = \Psi_0 + \beta \overline{\Psi}$, where $\overline{\Psi}(x_1, x_2; z_1, z_2; \xi_1, \xi_2) = $
\begin{equation*}
\begin{split}
\frac{1 + \beta \left(\dfrac{z_1}{\xi_2} + \dfrac{z_2}{\xi_1}\right)}{(\xi_1 \xi_2 - \beta)(\xi_1 - x_1) (\xi_2 - x_2)} + &             \frac{1}{(\xi_1 - x_1)(\xi_2 - x_2)\xi_2}\left(\exp(x_1 z_1) z_1 + (\xi_1-x_1) \exp(x_1 z_1) z_1^2\right)+   \\
        &       \frac{1}{(\xi_1 - x_1)(\xi_2 - x_2)\xi_2}\left(\exp(x_2 z_2) z_2 + (\xi_2-x_2) \exp(x_2 z_2) z_2^2\right). \\
\end{split}
\end{equation*}

\smallskip
\noindent
A straightforward computation shows  that $\Psi_\beta$ satisfies  the same equations (\ref{E:SpaceWbeta})
$$
\left\{
\begin{array}{l}
\left.\dfrac{\partial \Psi_{\beta}}{\partial z_1}\right|_{(z_1, z_2) = (0, \rho)} = \left.
\dfrac{\xi_1^2\rho}{\xi_1+\beta \rho} \Psi_\beta\right|_{(z_1, z_2) = (0, \rho)} \\
\left.\dfrac{\partial \Psi_{\beta}}{\partial z_2}\right|_{(z_1, z_2) = (\rho, 0)} = \left.
\dfrac{\xi_2^2\rho}{\xi_2+\beta \rho} \Psi_\beta\right|_{(z_1, z_2) = (\rho, 0)}
\end{array}
\right.
$$
which define the vector space $W_{\beta}$.
Thus,
$$
W_\beta \supseteq W_{\beta}':= \CC[\partial_1, \partial_2] \circ S_\beta =
\left\langle \left. \left.
\dfrac{\partial^{p_1+p_2} \Psi_\beta}{\partial x_1^{p_1} \partial x_2^{p_2}}
\right|_{(x_1, x_2) = (0, 0)} \; \right| \; (p_1, p_2)
\in \NN_0 \times \NN_0 \right\rangle_\CC.
$$
Since both vector spaces $W_\beta$ and
$W_{\beta}'$ belong to  $\mathsf{Gr}_2(R)$, we conclude that $W_{\beta}' = W_\beta$.
\end{proof}

\begin{example} Let $A \stackrel{\mathtt{L}}\lar  \gP$ be the algebra homomorphism corresponding to the Sato operator $S_\beta$ from Theorem \ref{L:ExampleSato} for the special value
$(\xi_1, \xi_2) = (1, 1)$ and $H_\beta := \mathtt{L}(\omega)$, where  $\omega = z_1^2 + z_2^2 \in A$. Proceeding along the lines of the proof of Theorem \ref{T:SatoSchur}, we get the formula $H_\beta = H_0 + \beta (Z + Z')$, where $H_0 = H$ is the unperturbed operator (\ref{E:LameSplit}),
\begin{equation*}
\begin{split}
Z = & \left(\frac{1}{1-x_1} E_1  + \frac{1}{1-x_2} E_2\right)
\partial_1 \partial_2 + \frac{\beta}{(1-\beta)(1-x_1)(1-x_2)}E_1 E_2 (\partial_1 + \partial_2) \\
 & - \frac{1}{1-x_1} E_1 \partial_2 - \frac{1}{1-x_2} E_2 \partial_1  +\frac{2\beta}{\beta-1} E_1 E_2 -\left(
 G_2 E_1 \partial_1 + G_1 E_2 \partial_2\right)
\end{split}
\end{equation*}
and $Z'$ is some operator of negative order (it would be interesting to find a compact form of this operator).

\smallskip
\noindent
On the other hand, for any $f \in A$  of the form $f = z_1^2 z_2^2 \cdot g$ for some $g\in R$ an explicit compact form of corresponding operators can be given: it can be shown that
$$
\mathtt{L}(f) = S_\beta \cdot g(\partial_1, \partial_2) \cdot
\left(\partial_1 + \frac{1}{x_1-1}\right) \cdot  \left(\partial_2 + \frac{1}{x_2-1}\right).
$$
Finally, the formal power series $\Psi_\beta(x_1, x_2; z_1, z_2) = \Psi_\beta(x_1, x_2; z_1, z_2; 1,1)$ introduced in the course of the proof of Theorem \ref{L:ExampleSato}, is a Baker--Akhieser function of the \emph{deformed} Calogero--Moser system $\mathtt{L}(A) \subset \gP$:
for any $f \in A$ we have:
$$
\mathtt{L}(f)_{(x_1, x_2)} \diamond \Psi_\beta(x_1, x_2; z_1, z_2) = f(z_1, z_2) \cdot \Psi_\beta(x_1, x_2; z_1, z_2).
$$
\end{example}

\begin{remark} Let $A = A\bigl(\Pi, \underline{\mu}\bigr)$ be the algebra of planar quasi--invariants of Baker--Akhieser type. A description
of those $\vec\gamma \in K\bigl(\Pi, \underline{\mu}\bigr)$, for which the corresponding Cohen--Macaualy module $B(\vec{\gamma}) \in \CM_1^{\mathsf{lf}}(A)$
belong to $\mathsf{Gr}_\mu(R)$ (see Theorem \ref{T:listCMlf} for the used notations), is a non--trivial problem, which  will be studied in a future work. Theorem \ref{T:SatoAction} on existence and uniqueness of the corresponding Sato operator can be understood as an analogue of the axiomatic description (in terms of the works \cite{ChalykhFeiginVeselov1,ChalykhStyrkasVeselov}) of a Baker--Akhieser function for the corresponding deformed Calogero--Moser system $A \stackrel{\mathtt{L}}\lar  \gP$.
\end{remark}

\section{Appendix: the compactified Picard variety of  an affine  cuspidal curve}

\noindent
For any $m \in \NN$, let  $A_m := \CC[t^2, t^{2m+1}]$.  Although the (compactified) Picard variety  of an algebraic curve is a well--studied object, we were not able to find a precise reference in the literature for an explicit  description of the Picard group $\Pic(A_m)$, mentioned in Introduction. For  the reader's convenience we give  its proof below.
\begin{theorem}\label{T:Cusp} There is  an isomorphism of algebraic groups:
\begin{equation}\label{E:PicardCusp2}
\Pic(A_m) \cong K_m := \bigl(\CC[\sigma]/(\sigma^m), \circ\bigr),
\end{equation} where
$\gamma_1 \circ \gamma_2 := (\gamma_1+ \gamma_2)\cdot (1 + \sigma \gamma_1 \gamma_2)^{-1}$ for any $\gamma_1, \gamma_2 \in K_m$.
Next, let $Q$ be a torsion free $A_m$--module of rank one. Then either $Q$ is projective or there exists $m' < m$ and a projective module of rank one $Q'$ over $A_{m'}$ such that $Q$ is isomorphic to the restriction of $Q'$ on $A_m \subset A_{m'}$.
\end{theorem}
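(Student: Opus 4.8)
The plan is to treat $\Pic(A_m)$ as the subgroup of the divisor class group $\CM_1^{\mathsf{lf}}(A_m)$ of those modules which are locally free at the unique singular point $p = \nu(0) \in \Spec(A_m)$, and then to compute everything via the category of triples, exactly as in the two-dimensional case but now one dimension lower so that the calculations become completely explicit. First I would note that $A_m = \CC[t^2, t^{2m+1}]$ is a one-dimensional reduced Cohen--Macaulay (indeed Gorenstein, as a plane curve singularity $\CC[x,y]/(x^{2m+1}-y^2)$) $\CC$--algebra with normalization $R = \CC[t]$, conductor ideal $I = (t^{2m})_R = t^{2m}R$, and $\bar{A} = A_m/I$, $\bar{R} = R/I = \CC[t]/(t^{2m})$, both local Artinian with $Q(\bar{R}) = \bar{R} = \CC[\varepsilon]/(\varepsilon^{2m}) =: L$ and $Q(\bar{A}) = \bar{A}$. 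A direct computation shows $\bar{A} \cong \CC[\sigma^2]/(\sigma^{2m})$ sitting inside $L$ via $\sigma^2 = \varepsilon^2$; more precisely $\bar A \cong \CC[\varepsilon^2]/(\varepsilon^{2m}) =: K$. Then the category of triples $\Tri(A_m)$ has objects $(N, V, \theta)$ with $N$ a Cohen--Macaulay (hence free, since $R = \CC[t]$ is a PID) $R$--module, $V$ a $K$--module, and $\theta\colon L \otimes_K V \to L \otimes_R N$ a gluing map; restricting to rank one gives $N = R$ and $V = K \oplus K_j$ for some $0 \le j \le m$ by Lemma \ref{L:structureMP}. The locally-free-in-codimension-one subcategory forces $V = K$ and $\theta$ an isomorphism, so by Lemma \ref{L:normalform}(1) every rank one object of $\Tri^{\mathsf{lf}}(A_m)$ is of the form $(R, K, 1 + \varepsilon\gamma)$ for a uniquely determined $\gamma \in K \cong \CC[\sigma]/(\sigma^m)$, and the monoidal structure of Theorem \ref{T:BurbanDrozd} gives $(1+\varepsilon\gamma') \otimes (1+\varepsilon\gamma'') = (1 + \sigma\gamma'\gamma'') + \varepsilon(\gamma' + \gamma'')$, which after rescaling is $1 + \varepsilon(\gamma'\circ\gamma'')$; this is exactly the group law (\ref{E:PicardCusp2}). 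Hence $\CM_1^{\mathsf{lf}}(A_m) \cong (K_m, \circ)$ as abstract groups, and unravelling the natural algebraic structure on the moduli of such triples (or simply observing $K_m$ is an affine space with polynomial group law) upgrades this to an isomorphism of algebraic groups.

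Next I would argue that in the one-dimensional case \emph{every} rank one Cohen--Macaulay $A_m$--module which is locally free in codimension one is automatically locally free, i.e.\ $\Pic(A_m) = \CM_1^{\mathsf{lf}}(A_m)$: the only non-smooth point is $p$, and ``locally free in codimension one'' over a one-dimensional ring already means free at every point of height one, which for a curve is every closed point. Therefore the first assertion, $\Pic(A_m) \cong (\CC[\sigma]/(\sigma^m), \circ)$, follows. For the classification of \emph{all} torsion free rank one $A_m$--modules, I would take an arbitrary such $Q$ (note over a curve, torsion free $=$ Cohen--Macaulay), pass to $\FF(Q) = (R, V, \theta) \in \Tri(A_m)$, use Lemma \ref{L:structureMP} to write $V = K \oplus K_j$ with $0 \le j \le m$, and apply Proposition \ref{P:normalform} to bring $\theta$ into the normal form $\vartheta_\gamma = (1 + \varepsilon\gamma \mid \varepsilon^{2(m-j)+1})$ with $\gamma \in \CC(\rho)[\sigma]/(\sigma^{m-j})$ — here there is no $\rho$ since $\kk = \CC$, so simply $\gamma \in \CC[\sigma]/(\sigma^{m-j})$. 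The case $j = 0$ gives the projective modules; for $j \ge 1$ set $m' := m - j < m$ and $A_{m'} := \CC[t^2, t^{2m'+1}]$, and observe that the constraints defining $Q$ via $\vartheta_\gamma$ are precisely the constraints (with the analogue of Corollary \ref{C:DescriptQuasiInv} in one variable) for the algebra extension $A_m \subset A_{m'} \subset R$, namely $Q$ first satisfies the ``quasi-invariance up to order $2(m-j)+1$'' conditions cutting out $A_{m'}$ and then the $\gamma$-twisted condition cutting out a projective $A_{m'}$--module $Q' = B(\gamma)$ over $A_{m'}$. This identifies $Q$ with $Q'$ viewed as an $A_m$--module, which is the second assertion.

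The steps I expect to be routine are the explicit identifications $\bar A \cong \CC[\varepsilon^2]/(\varepsilon^{2m})$, $\bar R \cong \CC[\varepsilon]/(\varepsilon^{2m})$, and $\widehat{A_m} \cong \CC\llbracket z, t\rrbracket/(t^2 - z^{2m+1})$, which are one-variable shadows of the computations already carried out in the proof of Theorem \ref{T:CMandGorenstein} and in Lemma \ref{L:TriplesSpecialCase}. The main obstacle, and the one place requiring genuine care rather than transcription, is to make the algebraic-group statement precise: one must check that the bijection $\CM_1^{\mathsf{lf}}(A_m) \to K_m$ is not merely a group isomorphism but an isomorphism of \emph{algebraic} groups, which amounts to identifying $\CM_1^{\mathsf{lf}}(A_m)$ with a genuine moduli space (a component of the compactified Jacobian, or the ``generalized Jacobian'' of the cuspidal curve) and verifying that the gluing-parameter $\gamma$ furnishes algebraic coordinates on it with the stated polynomial group law. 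For this I would invoke the standard description of the generalized Jacobian of a rational curve with a single unibranch singularity as $\Spec$ of the completed local ring's ``principal parts'', under which the group operation is exactly multiplication of units of $\CC\llbracket t\rrbracket/t^{2m}$ modulo the image of units of $\CC\llbracket t^2, t^{2m+1}\rrbracket$, and then check by a direct computation that this quotient group is isomorphic, via $\gamma \mapsto 1 + \varepsilon\gamma$, to $(K_m, \circ)$ — the same computation that produced the $\circ$-law above. Everything else is bookkeeping with the equivalence $\FF$ of Theorem \ref{T:BurbanDrozd} and the normal forms of Lemma \ref{L:normalform} and Proposition \ref{P:normalform}.
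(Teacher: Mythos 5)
Your proposal is correct and follows essentially the same route as the paper's Appendix proof: the one--dimensional category of triples with $K=\CC[\sigma]/(\sigma^m)\subset L=\CC[\varepsilon]/(\varepsilon^{2m})$, the normal forms of Lemma \ref{L:normalform} and Proposition \ref{P:normalform} (with Lemma \ref{L:structureMP} for the module $V$), the monoidal structure of $\FF$ producing the $\circ$--law, and the identification of a non--projective $Q$ with a projective $A_{m-j}$--module. The only difference is that you flag and sketch a justification of the \emph{algebraic}--group structure via the generalized Jacobian, a point the paper leaves implicit; this is a reasonable addition, not a divergence.
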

\begin{proof} To simplify the notation, we denote $A = A_m$. Then the normalization of the algebra $A$ is $R = \CC[t]$  and we have:
$$
A = \bigl\{f\in R \, \big|\, f'(0) = f'''(0) = \dots = f^{(2m-1)}(0) = 0\bigr\}.
$$
Let
$I := \mathsf{Ann}_A(R/A)$ be the conductor ideal, then we have: $I = (t^{2m})_R$ and  $K := \CC[\sigma]/(\sigma^{m}) \cong A/I$, whereas
$L:= \CC[\varepsilon]/(\varepsilon^{2m}) \cong R/I$. The canonical inclusion $A/I \subset R/I$ realizes $K$  as a subalgebra of $L$ via the identification
$\sigma = \varepsilon^{2}$.  Note that $L = K \dotplus \varepsilon K$.

Let $\TF(A)$ be the category of finitely generated torsion free $A$--modules. Similarly to Theorem \ref{T:BurbanDrozd}, we have an equivalence of categories
$$
\TF(A) \stackrel{\FF}\lar \Tri(A), \quad M \mapsto \Bigl(\bigl(R\otimes_A M\bigr)/\mathsf{tor}, K \otimes_A M, \theta_M\Bigr),
$$
where $\mathsf{tor}$ denotes the torsion part of the $R$--module $R\otimes_A M$.
Here, the category of triples $\mathsf{Tri}(A)$ is the one--dimensional prototype of its two--dimensional descendent from Definition \ref{D:triples}.
Namely, it is the full subcategory of the comma category associated with the pair of functors
$$
 \TF(R) \xrightarrow{\; \; L \otimes_{R} \,-\,\;\;  }  L-\mathsf{mod} \xleftarrow{L \otimes_{K} \,-\,} K-\mathsf{mod}.
$$
consisting of those triples $(\widetilde{M}, V, \theta)$, for which the morphism of $L$--modules (gluing map)
$$L \otimes_{K} V \stackrel{\theta}\lar L \otimes_{R} \widetilde{M}$$ is surjective and its adjoint morphism of $K$--modules
$V \lar L \otimes_{K} V \stackrel{\theta}\lar L \otimes_{R} \widetilde{M}$ is injective. The functor $\FF$ restricts
to an equivalence of categories $\Pro(A) \stackrel{\FF}\lar \Tri^{\mathsf{lf}}(A)$, where $\Pro(A)$ is the category of finitely generated projective modules
and $\Tri^{\mathsf{lf}}(A)$ consists of those triples $(\widetilde{M}, V, \theta)$, for which the gluing map  $\theta$ is an isomorphism (this condition actually implies that $V$ is a free $K$--module); see \cite[Theorem 1.3 and Theorem 3.2]{Thesis}, \cite[Theorem 16]{Survey},  \cite[Theorem 2.5]{BDNonIsol} as well as the beginning of \cite[Chapter 2]{BDNonIsol} for a survey of similar constructions.

\smallskip
\noindent
It follows from the definition of the functor $\FF$ that $\FF(P_1 \otimes_A P_2) \cong \FF(P_1) \otimes \FF(P_2)$ for any  finitely generated
projective $A$--modules $P_1$ and $P_2$, where the definition of the monoidal structure on the category $\Tri^{\mathsf{lf}}(A)$ is straightforward.

Let $P \in \Pic(A)$. Then $\FF(P) \cong \bigl(R, K, \theta\bigr)$, where the gluing map $L \stackrel{\theta}\lar  L$ can be written in
the normal form: $\theta \sim \vartheta_\gamma = 1 + \varepsilon \cdot {\gamma}$ for a uniquely determined $\gamma \in K$. Conversely, we put
$P_\gamma := \FF^{-1}\bigl(R, K, \vartheta_\gamma)$ for any $\gamma \in K$. Then we have:
$$
\FF\bigl(P_{\gamma_1} \otimes_A P_{\gamma_2}\bigr) \cong \Bigl(R, K,
\bigl((1 + \varepsilon^2 \gamma_1 \cdot \gamma_2) + \varepsilon\cdot(\gamma_1 + \gamma_2)\bigr)\Bigr) \cong
\bigl(R, K, 1 + \varepsilon (\gamma_1+ \gamma_2)\cdot (1 + \varepsilon^2 \gamma_1 \gamma_2)^{-1}
\bigr).
 $$
 This implies that $P_{\gamma_1} \otimes_A P_{\gamma_2} \cong P_{\gamma_1 \circ \gamma_2}$ for any $\gamma_1, \gamma_2 \in K$,
 proving the isomorphism (\ref{E:PicardCusp2}).  It is clear that the regular module $A = P_0$ corresponds to the triple
 $(A, K, 1)$. Hence, for any $\gamma \in K$, we have the following realization of the module $P_\gamma$:
$$
 P_\gamma \cong \Hom_A(A, P_\gamma) \cong \Hom_{\Tri(A)}\bigl((A, K, 1), (A, K, 1 + \varepsilon \gamma)\bigr) =
 \bigl\{
f \in R \; \big| \; T_{2m}^-(f) = \gamma \cdot T_{2m}^+(f)\bigr\},
 $$
 where the elements $T_{2m}^\pm(f)  \in K$ are defined by the rules: $$T_{2m}^+(f) = \sum\limits_{j=0}^{m-1} \dfrac{f^{(2j)}(0)}{(2j)!} \sigma^j\quad
  \mbox{\rm and} \quad T_{2m}^-(f) = \sum\limits_{j=0}^{m-1} \dfrac{f^{(2j+1)}(0)}{(2j+1)!} \sigma^j.
  $$
Let $\overline{\Pic}(A)$ be the set  of the isomorphism classes of finitely generated torsion free $A$--modules of rank one (compactified Picard variety) and
$Q \in \overline{\Pic}(A)$. Then $\FF(Q) \cong \bigl(R, V, \theta)$, where $V \cong K \oplus K/(\sigma^i)$ for some uniquely determined $1 \le i \le m$; see Lemma \ref{L:structureMP}. By   Proposition \ref{P:normalform}, we can transform the gluing map $\theta$ into a uniquely determined normal form
$
\theta \sim \vartheta_\gamma:= (1 + \varepsilon \gamma \,\mid \, \varepsilon^{2 (m-j)+1}),
$
where  $\gamma = \alpha_0 + \alpha_1 \varepsilon^2 + \dots + \alpha_{m-j-1} \varepsilon^{2 (m-j-1)} \in L$. This implies that
$$
Q \cong \Hom_{\Tri(A)}\bigl((A, K, 1), (A, K \oplus K/(\sigma^j), \vartheta_\gamma)\bigr) = \bigl\{
f \in R \; \big| \; T_{2(m-j)}^-(f) = \gamma \cdot T_{2(m-j)}^+(f)\bigr\}.
 $$
Hence, the vector space $Q \subset R$ is stable under the multiplication with the elements of the algebra $A_{m-j} = \CC[t^2, t^{2(m-j)+1}]$. Moreover, from the above description of the Picard group $\Pic(A)$ it follows that $Q$ is a projective module over the algebra $A_{m-j} \supset A_m = A$.
\end{proof}


\begin{thebibliography}{99}

\bibitem{Berest}
 Yu.~Berest, \emph{Huygens' principle and the bispectral problem},  The bispectral problem 11–30, CRM Proc. Lecture Notes \textbf{14}, AMS Providence, RI, 1998.

\bibitem{BerestEtingofGinzburg}
Yu.~Berest, P.~Etingof, V.~Ginzburg, \emph{ Cherednik algebras and differential operators on quasi--invariants},
 Duke Math. J.~\textbf{118} (2003), no.~{2}, 279--337.


\bibitem{Survey} L.~Bodnarchuk, I.~Burban,  Yu.~Drozd, G.-M.~Greuel,
  \textit{Vector bundles and torsion free sheaves on degenerations of elliptic
  curves},
Global aspects of complex geometry,  83--128, Springer (2006).

\bibitem{Bourbaki}
N.~Bourbaki,  \emph{\'El\'ements de math\'ematique. Alg\`ebre commutative. Chapitre 10},  Springer (2007).

\bibitem{Thesis} I.~Burban, \textit{Abgeleitete Kategorien und Matrixprobleme},
PhD Thesis, Kaiserslautern 2003, available at
\textsf{https://kluedo.ub.uni-kl.de/frontdoor/index/index/year/2003/docId/1434}.

\bibitem{SurvOnCM} I.~Burban, Yu.~Drozd,
\emph{Maximal Cohen--Macaulay modules over surface singularities},
  Trends in representation theory of algebras and related topics, EMS Ser. Congr. Rep., Eur. Math. Soc.,  Z\"urich,  2008, 101--166.


\bibitem{BDNonIsol} I.~Burban, Yu.~Drozd,
\emph{Maximal Cohen--Macaulay modules over non--isolated surface singularities}, Memoirs of the AMS Memoirs of the AMS \textbf{248}, no. 1178 (2017). 

\bibitem{BurbanZheglov}
I.~Burban, A.~Zheglov, \emph{Fourier--Mukai transform on Weierstra\ss{} cubics and commuting differential operators},  Internat.~J.~Math. \textbf{29} (2018), no. 10, 1850064, 46 pp.


\bibitem{BrunsHerzog}
W.~Bruns, J.~Herzog,  \emph{Cohen--Macaulay Rings},
 Cambridge Studies in Advanced Mathematics \textbf{39},
 Cambridge Univ.~Press (1993).







\bibitem{Chalykh}
O.~Chalykh, \emph{Algebro--geometric Schr\"odinger operators in many dimensions}, Philos. Trans. R. Soc. Lond. Ser. A Math.~Phys.~Eng.~Sci.~\textbf{366} (2008), no.~{1867}, 947--971.



\bibitem{ChalykhVeselov90}
 O.~Chalykh, A.~Veselov, \emph{Commutative rings of partial differential operators and Lie algebras},  Comm.~Math.~Phys.~\textbf{126} (1990), no.~{3}, 597--611.

\bibitem{ChalykhStyrkasVeselov}
 O.~Chalykh, K.~Styrkas, A.~Veselov, \emph{Algebraic integrability for the Schr\"odinger equation, and groups generated by reflections},  Theoret. and Math. Phys.~\textbf{94} (1993), no.~{2}, 182--197.

\bibitem{ChalykhFeiginVeselov1}
O.~Chalykh, M.~Feigin, A.~Veselov, \emph{New integrable generalizations of Calogero--Moser quantum problem},  J.~Math.~Phys.~\textbf{39} (1998),
no.~{2}, 695--703.


\bibitem{ChalykhFeiginVeselov2}
O.~Chalykh, M.~Feigin, A.~Veselov, \emph{Multidimensional Baker--Akhiezer functions and Huygens' principle},  Comm.~Math.~Phys.~\textbf{206} (1999),
no.~{3}, 533--566.



\bibitem{EtingofGinzburg}
P.~Etingof, V.~Ginzburg, \emph{On m--quasi--invariants of a Coxeter group},  Mosc.~Math.~J.~\textbf{2} (2002), no.~{3}, 555--566.

\bibitem{EtingofStrickland}
P.~Etingof, E.~Strickland, \emph{Lectures on quasi--invariants of Coxeter groups and the Cherednik algebra},
Enseign. Math. (2) \textbf{49} (2003), no.~{1}-{2}, 35--65.


\bibitem{FeiginNew}
M.~Feigin, M.~Halln\"as, A.~Veselov, \emph{Baker--Akhiezer functions and generalised Macdonald-Mehta integrals},  J. Math. Phys. \textbf{54} (2013), no. 5, 052106, 22 pp.

\bibitem{FeiginJohnston}
M.~Feigin, D.~Johnston,  \emph{A class of Baker--Akhiezer arrangements},  Comm. Math. Phys.~\textbf{328} (2014), no.~{3}, 1117--1157.




\bibitem{FeiginVeselov}
M.~Feigin, A.~Veselov,  \emph{Quasi--invariants of Coxeter groups and m--harmonic polynomials},  Int. Math. Res. Not. (2002), no.~{10}, 521--545.

\bibitem{FeiginVeselov2}
M.~Feigin, A.~Veselov,  \emph{Quasi--invariants and quantum integrals of the deformed Calogero--Moser systems},
 Int. Math. Res. Not. (2003), no.~{46}, 2487--2511.

\bibitem{EGAIV}
 A.~Grothendieck, \emph{\'El\'ements de g\'eom\'etrie alg\`ebrique IV. \'Etude locale des sch\'emas et des morphismes de sch\'emas III},
Inst.~Hautes \'Etudes Sci.~Publ.~Math. \textbf{28} (1966).

\bibitem{Johnston} D.~Johnston, \emph{Quasi--invariants of hyperplane arrangements}, PhD thesis, University of Glasgow (2011).

\bibitem{Krichever77} I.~Krichever, \textit{Methods of algebraic geometry in the theory of nonlinear equations}, Uspehi Mat.~Nauk \textbf{32} (1977), no.~6 (198), 183--208, 287.

\bibitem{Krichever} I.~Krichever,
\textit{Commutative rings of ordinary linear differential operators}, Func.~Anal.~Appl.~\textbf{12}, no.~3 (1978), 175--185.





\bibitem{Heckman}
 G.~Heckman, \emph{A remark on the Dunkl differential--difference operators},  Harmonic analysis on reductive groups 181--191, Progr. Math. \textbf{101}, Birkh\"auser 1991.

\bibitem{HeckmanOpdam}
 G.~Heckman, E.~Opdam, \emph{Root systems and hypergeometric functions I}, Compositio Math.~\textbf{64} (1987), no.~{3}, 329--352.

\bibitem{HuybrechtsLehn}
D.~Huybrechts,  M.~Lehn, \emph{The geometry of moduli spaces of sheaves},
Second edition, Cambridge University Press, Cambridge, 2010.

\bibitem{KOZ}
H.~Kurke, D.~Osipov, A.~Zheglov, \textit{Commuting differential operators and higher--dimensional algebraic varieties},
Selecta Math.~{\bf 20} (2014),  1159--1195.

\bibitem{KurkeZheglov}
H.~Kurke, A.~Zheglov,  \emph{Geometric properties of commutative subalgebras of partial differential operators},
Sb. Math.~\textbf{206} (2015), no.~{5}--{6}, 676--717.

\bibitem{LeuschkeWiegand}
G.~Leuschke, R.~Wiegand, \emph{Cohen--Macaulay representations},
  Mathematical Surveys and Monographs AMS  \textbf{181}, Providence, RI (2012).

\bibitem{MacLane}
S.~Mac~Lane, \emph{Categories for the working mathematician},
Graduate Texts in Mathematics \textbf{5}, Springer (1971).

\bibitem{Mulase}
M.~Mulase,  \emph{Category of vector bundles on algebraic curves and infinite-dimensional Grassmannians},
Internat.~J.~Math.~\textbf{1} (1990), no.~3, 293--342.

\bibitem{Mumford} D.~Mumford,  \emph{An algebro--geometric construction of commuting operators and of solutions to the Toda lattice equation, Korteweg--de Vries equation and related nonlinear equation},  Proceedings of the International Symposium on Algebraic Geometry, 115--153, Kinokuniya Book Store, Tokyo (1978).

\bibitem{Parshin}
A.~Parshin,  \emph{On a ring of formal pseudo--differential operators},
Proc.~Steklov Inst.~Math.~\textbf{224}, no.~1 (1999) 266--280.

\bibitem{Sato}
M.~Sato,  \emph{Soliton equations as dynamical systems on an infinite dimensional Grassmann manifolds},
Random systems and dynamical systems, Proc.~Symp.~Kyoto 1981, RIMS Kokyuroku \textbf{439}, 30--46 (1981).

\bibitem{Serre} J.~-P.~Serre, \emph{Local Algebra}, Springer Monographs in Mathematics (2000).

\bibitem{Swan}
R.~Swan, \emph{On seminormality},  J.~Algebra \textbf{67} (1980), no.~1, 210--229.


\bibitem{WilsonCrelle}
G.~Wilson,  \emph{Bispectral commutative ordinary differential operators},
J.~Reine Angew.~Math.~\textbf{442} (1993), 177--204.


\bibitem{Zheglov}
A.~Zheglov, \emph{On rings of commuting differential operators}, St. Petersburg Math. J.~\textbf{25} (2014), no.~{5}, 775--814.


\end{thebibliography}
\end{document}